\documentclass[pdflatex,sn-mathphys-num]{sn-jnl}

\usepackage{graphicx}%
\usepackage{multirow}%
\usepackage{amsmath,amssymb,amsfonts}%
\usepackage{amsthm}%
\usepackage{tikz-cd}%
\usepackage{mathrsfs}%
\usepackage[title]{appendix}%
\usepackage{xcolor}%
\usepackage{textcomp}%
\usepackage{manyfoot}%
\usepackage{booktabs}%
\usepackage{algorithm}%
\usepackage{algorithmicx}%
\usepackage{algpseudocode}%
\usepackage{listings}%
\usepackage{wrapfig}%

\theoremstyle{thmstyleone}%
\newtheorem{theorem}{Theorem}[section]

\newtheorem{lemma}[theorem]{Lemma}
\newtheorem{corollary}[theorem]{Corollary}

\theoremstyle{thmstyletwo}%

\newtheorem{remark}[theorem]{Remark}

\theoremstyle{thmstylethree}%
\newtheorem{definition}[theorem]{Definition}

\numberwithin{equation}{section}

\renewcommand{\orcidlogo}{%
  \includegraphics[width=10pt]{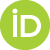}%
}

\begin{document}

\title[Square Metric Spaces]{Square Metric Spaces}


\author*{\fnm{Charles} \sur{Fanning} \orcid{https://orcid.org/0009-0002-8251-2802}}\email{cfannin8@students.kennesaw.edu}

\author{\fnm{Mehmet} \sur{Aktas} \orcid{https://orcid.org/0000-0002-9527-9600}}\email{maktas1@kennesaw.edu}

\affil{\orgdiv{School of Data Science and Analytics}, 
\orgname{Kennesaw State University}, 
\orgaddress{\street{1000 Chastain Rd NW}, 
\city{Kennesaw}, 
\postcode{30144}, 
\state{Georgia}, 
\country{United States}}}


\abstract{
Product decompositions of metric spaces are built from coordinate maps, but these maps are not part of the resulting metric space. We recover this missing coordinate structure through equivalence relations whose classes are candidate coordinate fibers, and the resulting quotient metrics reconstruct the coordinate factors. This framework characterizes exactly when a metric space admits a finite product or power presentation. We prove an equivalence of categories showing that these equivalence-relation data preserve exactly the ordered coordinate information of power presentations. For spaces with suitable \(\ell^\infty\)-prime factorizations, we use prime multiplicities to determine the existence and classification of roots. We also study metric spaces satisfying \(X\cong X\times_\infty X\), where repeated coordinate splitting gives a family of metric quotients indexed by infinite binary sequences. We prove that these binary tree structures exactly characterize metric spaces satisfying \(X\cong X\times_\infty X\). As an application to persistent homology, we show how to recover filtration parameters whose products or powers form a given space of intervals. 
}


\pacs[MSC Classification]{
54E35,
54B10,
54B15,
18A05
}

\maketitle

\tableofcontents

\section{Introduction}\label{sec1}

Let \((X,d)\) be a metric space. We ask whether the metric \(d\) arises from an isometric identification of \(X\) with a finite product of metric spaces. The coordinate factors need not be mutually isometric. When all coordinate factors are isometric to a single metric space, we ask whether \(X\) is a finite power of a metric space. We study this recognition problem for finite \(\ell^\infty\)-products, finite \(\ell^p\)-products with \(1\leq p<\infty\), and the corresponding \(k\)-fold powers with \(k\geq 2\). In the power case, the common factor is a \(k\)-th root of \(X\). A metric space does not specify its coordinate factors or the corresponding coordinate maps, which we must reconstruct from intrinsic data on \((X,d)\).

Questions about whether products and powers determine their factors ask how much of the coordinate structure remains visible in the product metric. Ulam's problem asks whether an isometry \(A^2\cong B^2\) forces \(A\cong B\) for metric spaces \(A\) and \(B\) \cite{ulam1960collection}. Fournier showed that this implication fails for noncomplete spaces by constructing metric spaces \(A\) and \(B\) with \(A^2\cong B^2\) and \(A\not\cong B\) \cite[p.~622]{fournier1971problem}. The corresponding complete-space question remains open. This uniqueness problem extends from powers to more general metric products, where Moszy{\'n}ska studied the corresponding product question \cite{moszynska1992uniqueness}. In the broader theory of metric products, Tardif developed a factorization theory for Cartesian products based on prefibers, Herburt and Moszy{\'n}ska analyzed families of product metrics, and Herburt studied cancellation for metric products \cite{tardif1992prefibers,herburt1991metric,herburt1994there}. We reverse this perspective by starting with the metric space \((X,d)\) itself and asking how to recover its product and power structures.

The key step in this paper is to replace the unavailable coordinate maps by equivalence relations on \(X\). A coordinate map in a product presentation determines fibers, and equality of coordinate values defines an equivalence relation on \(X\). We use equivalence relations on \(X\) whose classes are candidate coordinate fibers and ask when these classes define coordinates. We identify exactly when the quotient distances induced by \(d\) define metrics on the quotient sets. The equivalence classes then supply the coordinate values absent from the original data, as shown in Figure~\ref{fig:square-metric-space}, and the quotient metric spaces become the reconstructed coordinate factors. The product recognition problem becomes the question of deciding when the resulting quotient-coordinate map is a bijective isometry. For power recognition, we also require that the quotient metric spaces are mutually isometric.

\begin{wrapfigure}[18]{r}{0.48\textwidth}
\vspace{-0.75em}
\centering
\includegraphics[width=0.46\textwidth]{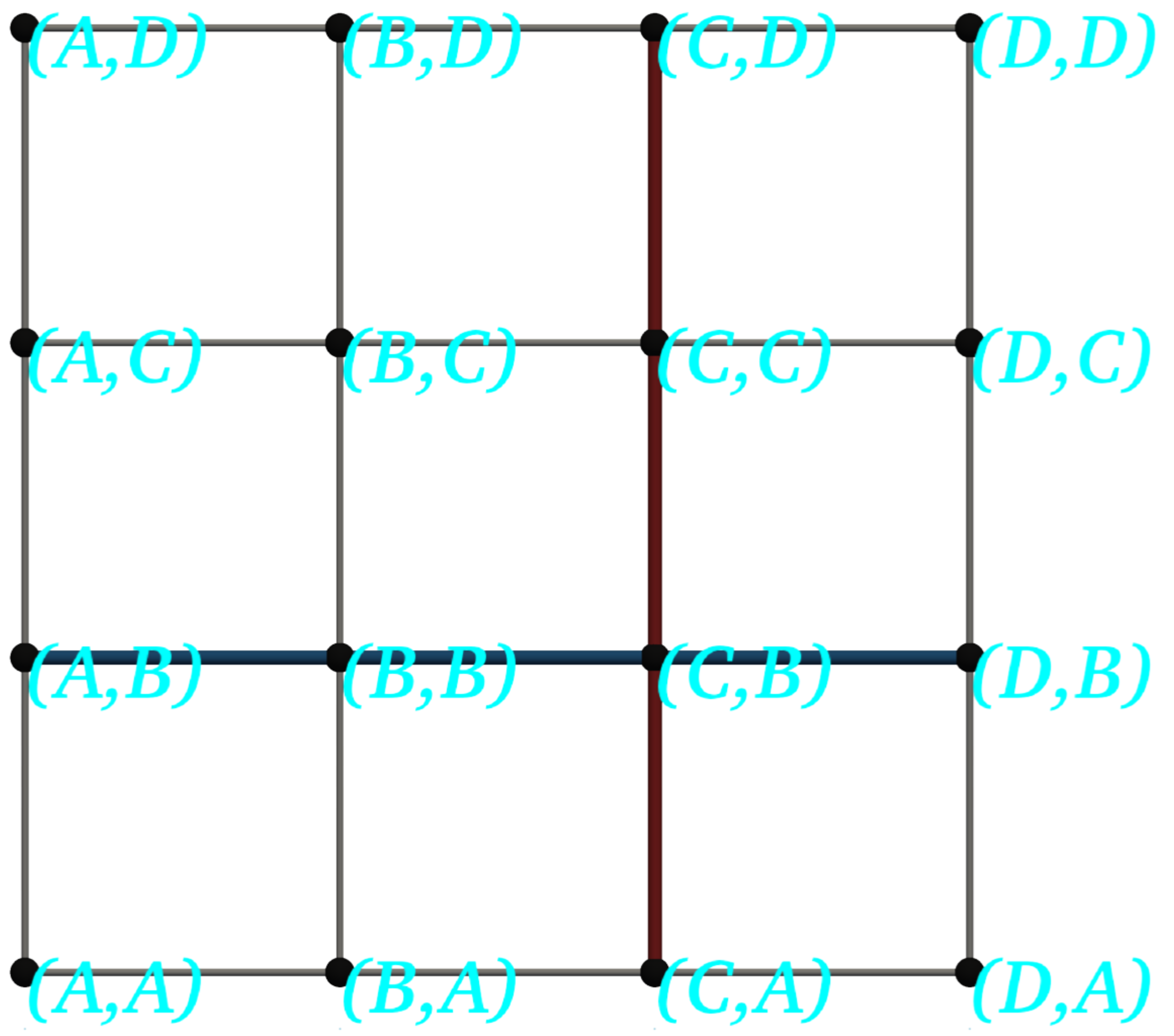}
\caption{
A square metric space \(P^2\) formed from the four-point path metric space \(P=\{A,B,C,D\}\). The horizontal and vertical copies of \(P\) are coordinate fibers.
}
\label{fig:square-metric-space}
\vspace{-0.75em}
\end{wrapfigure}

We first prove necessary and sufficient conditions for equivalence relations on \(X\) to give a product presentation. The quotient-coordinate map gives the product decomposition precisely when it is a bijective isometry.

For \(1\leq p<\infty\), we characterize \(\ell^p\)-product decompositions by three intrinsic conditions on the equivalence relations, the quotient distances determined by these equivalence relations, and the metric \(d\). They require each quotient distance to be a metric, the intersection of one equivalence class from each quotient to contain exactly one point, and the metric \(d\) to equal the \(\ell^p\)-product formula defined using the quotient distances.

\par\WFclear

\begin{theorem}
Let \((X,d)\) be a metric space, let \(k\geq2\), and let \(1\leq p<\infty\). The following are equivalent.
\begin{enumerate}
\item
There are metric spaces \((P_j,d_j)\) for \(1\leq j\leq k\) such that \((X,d)\cong\prod_{j=1}^k(P_j,d_j)\) with the \(\ell^p\)-product metric from Equation~\eqref{eq:kp-product-metric}.

\item
There are equivalence relations \(E_1,\dots,E_k\) on \(X\) such that:
\begin{enumerate}
\item
For each \(1\leq j\leq k\), the quotient distance \(\bar d_{E_j}\) is a metric on \(X/E_j\).

\item
For every choice of classes \(C_j\in X/E_j\), where \(1\leq j\leq k\), the intersection \(\bigcap_{j=1}^k C_j\) consists of exactly one point.

\item
For all \(x,y\in X\),
\[
d(x,y)
=
\left(
\sum_{j=1}^k
\bar d_{E_j}([x]_{E_j},[y]_{E_j})^p
\right)^{1/p}.
\]
\end{enumerate}
\end{enumerate}
\end{theorem}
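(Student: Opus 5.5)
For (1)\(\Rightarrow\)(2) I will transport the coordinate structure through the isometry. Fix a bijective isometry \(\phi\colon X\to\prod_{j}P_j\) and write \(\pi_j\colon \prod_j P_j\to P_j\) for the coordinate projections. Define \(x\,E_j\,y\) iff \(\pi_j\phi(x)=\pi_j\phi(y)\). Each class of \(E_j\) is then \(\phi^{-1}(\pi_j^{-1}(a))\) for some \(a\in P_j\).

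The quotient distance is \(\bar d_{E_j}(C,C')=\inf\{d(x,y):x\in C,\ y\in C'\}\); this is how I read the paper's earlier definition. I will show \(\bar d_{E_j}([x],[y])=d_j(\pi_j\phi x,\pi_j\phi y)\).
\begin{itemize}
\item The lower bound \(\geq\) holds because every term of the \(\ell^p\) sum is nonnegative.
\item The infimum is attained by the following choice. Take \(x'=x\), and let \(y'\) be the point whose \(j\)-th coordinate is that of \(y\) and whose other coordinates are those of \(x\). This point exists because \(\phi\) is surjective onto the full product.
\end{itemize}
Hence \([x]\mapsto\pi_j\phi(x)\) is an isometric bijection \(X/E_j\to P_j\). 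So \(\bar d_{E_j}\) is a metric, which gives (a). Condition (b) says that each tuple of coordinates has exactly one preimage, which follows from \(\phi\) being a bijection. Condition (c) is the product formula rewritten through these identifications.

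For (2)\(\Rightarrow\)(1), set \(P_j=(X/E_j,\bar d_{E_j})\); these are metric spaces by (a). Let \(q\colon X\to\prod_j P_j\) be \(x\mapsto([x]_{E_1},\dots,[x]_{E_k})\).
\begin{itemize}
\item Surjectivity follows from (b), since every tuple of classes meets in a point.
\item Injectivity follows from the uniqueness in (b). If \(q(x)=q(y)\), then both \(x\) and \(y\) lie in \(\bigcap_j [x]_{E_j}\), so \(x=y\).
\item The isometry property \(d(x,y)=d_p(q(x),q(y))\) is literally (c).
\end{itemize}
So \(q\) is the desired isometry.

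The step I expect to need care is the infimum computation in the forward direction. The paper's \(\bar d_{E}\) might be defined not as a plain infimum but, say, as a chain or Hausdorff-type quotient pseudometric. In the chain version one takes the infimum over chains \(x=x_0,y_0\sim x_1,\dots\) of \(\sum d(x_i,y_i)\). Then I must also bound chains from below: each hop changes the \(j\)-th coordinate by at most \(d(x_i,y_i)\) in \(d_j\), and hops within a class do not change it at all. The triangle inequality in \(P_j\) then gives the bound \(\geq d_j\). In the Hausdorff version one instead needs the sup–inf computation, where the same coordinate-swapping point \(y'\) serves as the witness. In every version the key is the existence of the "mixed" point \(y'\) together with the monotonicity of the \(\ell^p\) norm in each coordinate.
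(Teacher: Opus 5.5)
Your proposal is correct and follows the paper's proof essentially step for step. In the forward direction you pull back the coordinate-equality relations through the isometry, identify each quotient with its factor (lower bound from the $j$-th summand, upper bound from representatives differing only in the $j$-th coordinate), and read off (b) and (c); in the converse you use the quotient-coordinate map $x\mapsto([x]_{E_1},\dots,[x]_{E_k})$ exactly as the paper does. The paper defines $\bar d_E$ as the plain infimum over representatives (Equation~\eqref{eq:quotient-distance}), so your first reading is the right one and the chain/Hausdorff contingency is unnecessary.
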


For \(\ell^\infty\)-products, the criterion uses the same equivalence relations, with the maximum in place of the \(\ell^p\)-norm. For powers, one further condition is necessary and sufficient: the quotient metric spaces \((X/E_j,\bar d_{E_j})\) must all be isometric to a common metric space. Under this condition, as Figure~\ref{fig:cubic-metric-space} shows, the quotient-coordinate map \(x\mapsto([x]_{E_1},\dots,[x]_{E_k})\) is the required bijective isometry onto the \(k\)-fold product.

\begin{figure}[htbp]
\centering
\includegraphics[width=0.72\textwidth]{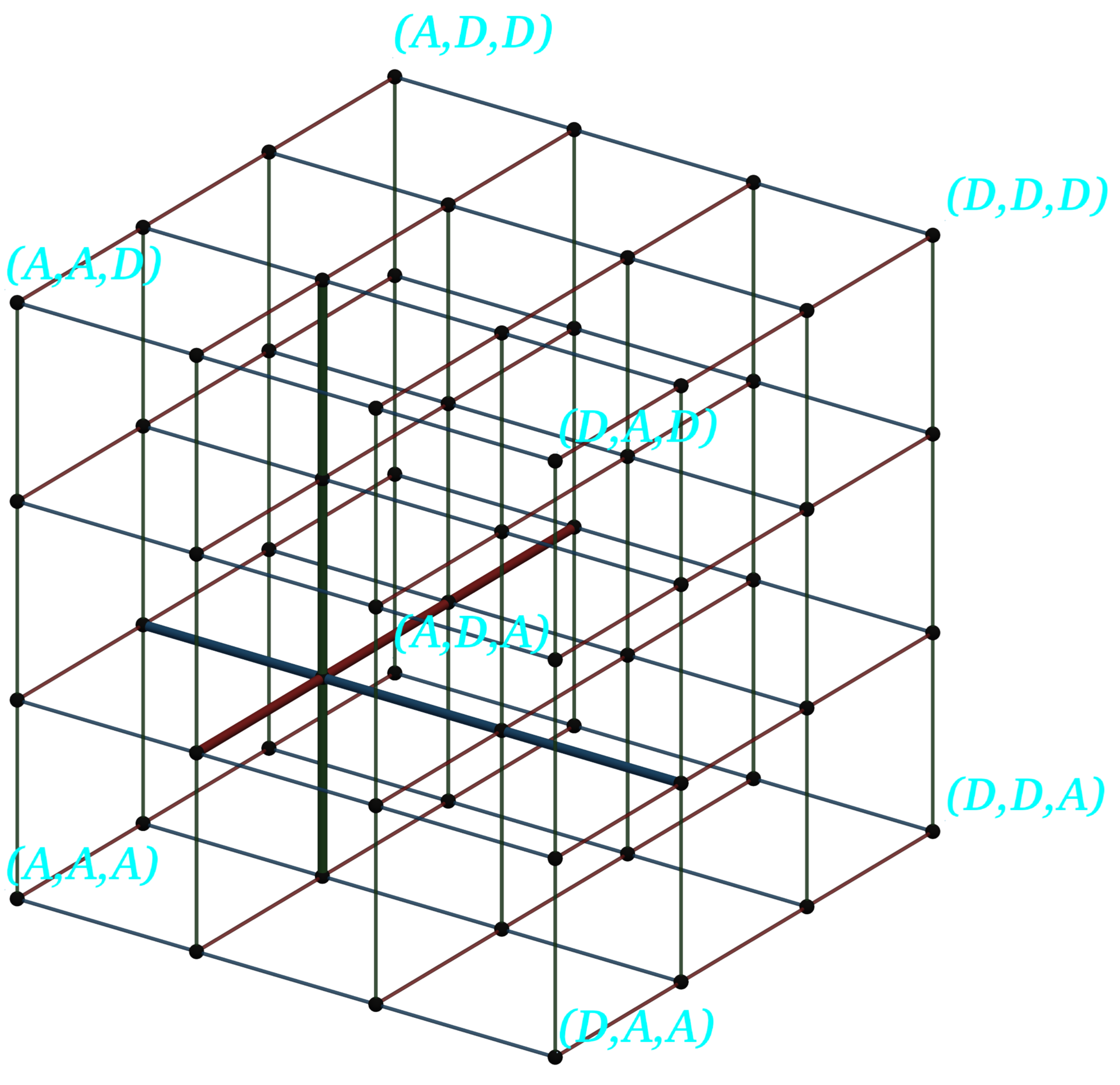}
\caption{
A cubic metric space \(P^3\) obtained from three copies of the same four-point metric space \(P\). A \(k\)-power structure requires the coordinate quotients to recover mutually isometric factors. The higher-dimensional coordinate structure is therefore encoded by compatible equivalence relations whose quotient-coordinate map reconstructs the product.
}
\label{fig:cubic-metric-space}
\end{figure}

The power case has two ways to record the same ordered coordinate information. In the presentation category \(\mathbf{PowPres}_{k,p}(X)\), objects consist of a common factor together with an ordered presentation map onto \(X\). In the structure category \(\mathbf{PowStr}_{k,p}(X)\), objects consist of equivalence relations on \(X\), quotient metrics, and comparison isometries between the quotient factors. We identify these two ways of recording the data through an equivalence of categories.

\begin{theorem}
There is a functor \(\mathcal F_{k,p}:\mathbf{PowPres}_{k,p}(X)\to\mathbf{PowStr}_{k,p}(X)\), and there is a functor \(\mathcal G_{k,p}:\mathbf{PowStr}_{k,p}(X)\to\mathbf{PowPres}_{k,p}(X)\). These functors form an equivalence of categories.

Moreover, \(\mathcal F_{k,p}\mathcal G_{k,p}=\mathrm{id}_{\mathbf{PowStr}_{k,p}(X)}\). There is a natural isomorphism
\[
\eta:
\mathcal G_{k,p}\mathcal F_{k,p}
\Rightarrow
\mathrm{id}_{\mathbf{PowPres}_{k,p}(X)}
\]
which identifies the second composite with the identity.

Consequently, the isomorphism classes of ordered \(\ell^p\) \(k\)-power presentations of \((X,d)\) correspond bijectively to ordered \(\ell^p\) \(k\)-power structures on \((X,d)\). The category \(\mathbf{PowPres}_{k,p}(X)\) is thin, and every ordered \(\ell^p\) \(k\)-power presentation has trivial automorphism group.
\end{theorem}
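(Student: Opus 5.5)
We have to guess the definitions. Presumably an object of \(\mathbf{PowPres}_{k,p}(X)\) is a pair \((P,\phi)\), with \(P\) a metric space and \(\phi:P^k\to X\) a bijective isometry for the \(\ell^p\)-product metric. A morphism \((P,\phi)\to(Q,\psi)\) is an isometry \(f:P\to Q\) with \(\psi\circ f^{k}=\phi\), where \(f^k\) acts coordinatewise. An object of \(\mathbf{PowStr}_{k,p}(X)\) is a tuple \((E_1,\dots,E_k,\theta_2,\dots,\theta_k)\) satisfying conditions (a)--(c) of the first theorem, together with comparison isometries \(\theta_j:X/E_1\to X/E_j\). These comparisons must be compatible in the sense that the points of \(X\) have the expected coordinates. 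Its morphisms are identities only, or equivalently the structure data is literally determined by \(X\).

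The plan is to define the two functors explicitly, check functoriality, and then verify the two composite identities.

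\textbf{Defining \(\mathcal F_{k,p}\).} Given \((P,\phi)\), let \(E_j\) be the relation \(x\sim y\) iff \(\pi_j\phi^{-1}x=\pi_j\phi^{-1}y\). Since \(\phi\) is an isometry, the quotient distance satisfies \(\bar d_{E_j}([x],[y])=d_P(\pi_j\phi^{-1}x,\pi_j\phi^{-1}y)\). To see this, take the infimum over fiber representatives: the \(\ell^p\) sum is minimized by making the other coordinates equal, and this is possible because \(\phi\) is surjective onto all of \(P^k\). So \(X/E_j\cong P\) via \(\pi_j\phi^{-1}\), and we set \(\theta_j\) to be the composite of these isometries. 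Conditions (a)--(c) then follow from the first theorem's direction (1)\(\Rightarrow\)(2).

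On morphisms, if \(\psi\circ f^k=\phi\), then \(\phi\) and \(\psi\) induce the same fibers and the same comparison maps, because \(f\) cancels in the composite. So \(\mathcal F\) sends the morphism to an identity. Checking this cancellation is the key point making the structure category discrete.

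\textbf{Defining \(\mathcal G_{k,p}\).} Given a structure, set \(P=X/E_1\) and \(\phi^{-1}(x)=([x]_{E_1},\theta_2^{-1}[x]_{E_2},\dots,\theta_k^{-1}[x]_{E_k})\). By the first theorem this is a bijective isometry \(X\to P^k\).

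\textbf{The composites.} First, \(\mathcal F\mathcal G=\mathrm{id}\) on the nose. The fibers of the constructed \(\phi\) are the \(E_j\)-classes, and the recomputed comparison maps are exactly \(\theta_j\); this is a direct unwinding. Second, for \((P,\phi)\), the composite \(\mathcal G\mathcal F(P,\phi)\) has factor \(X/E_1\). We define \(\eta_{(P,\phi)}:X/E_1\to P\) by \([x]\mapsto\pi_1\phi^{-1}x\). This is an isometry, and the identity \(\phi\circ\eta^k=\phi'\) holds coordinatewise by the definition of \(\theta_j\). Naturality holds because the category is thin, so any square of morphisms commutes.

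\textbf{Thinness.} If \(f,g:(P,\phi)\to(Q,\psi)\), then \(\psi f^k=\psi g^k\) with \(\psi\) injective, so \(f^k=g^k\) and hence \(f=g\). In particular the automorphism groups are trivial. The bijection on isomorphism classes then follows from the equivalence of categories, since \(\mathbf{PowStr}\) is discrete.

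\textbf{Main obstacle.} The main obstacle is the computation of the quotient distance \(\bar d_{E_j}\) as the factor metric, which is where surjectivity of \(\phi\) is used. The other delicate point is the bookkeeping showing that \(\mathcal F\mathcal G\) is strictly the identity rather than merely isomorphic to it. For that I would fix the convention \(\theta_1=\mathrm{id}\) and the factor choice \(X/E_1\) consistently in both functors.
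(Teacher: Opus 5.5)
Your proposal is correct and matches the paper's proof essentially step for step. It uses the same fiber relations $E_j^\phi$, the same quotient-distance computation (the other coordinates are made equal, using surjectivity of $\phi$), the same reconstruction of the presentation on $X/E_1$, the same $\eta_{(P,\phi)}=\bar q_1^\phi$, and the same injectivity argument for thinness. The differences are cosmetic. The paper orients the comparison isometries as $\tau_j:X/E_j\to X/E_1$ and imposes no extra compatibility condition, so your unused compatibility clause is unnecessary. The paper also checks naturality of $\eta$ directly via $f\circ\bar q_1^\phi=\bar q_1^\psi$, whereas your appeal to thinness is an equally valid shortcut, since both sides of the naturality square are parallel morphisms $\mathcal G_{k,p}\mathcal F_{k,p}(P,\phi)\to(Q,\psi)$.
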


The thinness statement means that there is at most one coordinatewise comparison map between any two ordered \(\ell^p\) \(k\)-power presentations. In particular, such a presentation has no nontrivial coordinatewise automorphisms. Through the categorical equivalence, we identify each ordered presentation with the corresponding ordered quotient-coordinate structure on \(X\), with only the representative of the common factor varying up to isometry. We therefore express root classification through structures defined directly on \(X\).

We separate the study of \(k\)-th roots into the existence of roots and the classification of ordered \(k\)-power presentations arising from roots. From a root \(R\) and an isometry \(\Delta_k(R)\cong(X,d)\), we obtain an ordered \(k\)-power presentation of \(X\). We then use the categorical equivalence to identify these ordered presentations with ordered quotient-coordinate structures on \(X\). When \(X\) has a finite \(\ell^\infty\)-prime decomposition, a \(k\)-th root exists precisely when \(k\) divides each prime multiplicity.


In the \(\ell^\infty\)-case, finite decompositions into \(\ell^\infty\)-prime factors determine prime multiplicities, and root existence is equivalent to divisibility conditions on these multiplicities. For the finite distance set \(D=d(X\times X)\), the threshold relational structure \(\mathcal T_D(X)\) records the threshold relations determined by the distances in \(D\). Under \(\mathcal T_D\), \(\ell^\infty\)-product decompositions correspond to direct decompositions of relational structures. We then apply Theorem~\ref{thm:mckenzie-aq} to those relational decompositions. Using this encoding and strict refinement, we prove existence of finite \(\ell^\infty\)-prime factorizations in the finite case and uniqueness under the connected and separating threshold hypothesis.

Here \(\mathbf{PowStr}_k(X)\) denotes the category of ordered \(\ell^\infty\) \(k\)-power structures on \(X\).

\begin{theorem}
Let \((X,d)\) satisfy the hypotheses of Theorem~\ref{thm:finite-distance-prime-uniqueness}. Suppose that
\[
(X,d)
\cong
A_1^{m_1}
\times_\infty
\cdots
\times_\infty
A_s^{m_s},
\]
where \(A_1,\dots,A_s\) are pairwise nonisometric \(\ell^\infty\)-prime metric spaces and \(m_i\geq1\) for every \(1\leq i\leq s\). Let \(k\geq2\).

If some \(m_i\) is not divisible by \(k\), then \(\mathbf{PowStr}_k(X)\) has no objects. If \(k\mid m_i\) for every \(1\leq i\leq s\), then
\[
\sqrt[k]{X}
=
A_1^{m_1/k}
\times_\infty
\cdots
\times_\infty
A_s^{m_s/k}
\]
is well defined up to isometry, and every ordered \(\ell^\infty\) \(k\)-power structure on \(X\) arises from a presentation whose root is isometric to \(\sqrt[k]{X}\).

After fixing an isometry \(\phi_0:\Delta_k(\sqrt[k]{X})\to X\), define
\[
H_{\phi_0,k}
=
\{\phi_0\circ f^{\times k}\circ\phi_0^{-1}:f\in\operatorname{Iso}(\sqrt[k]{X})\}.
\]
This set is a subgroup of \(\operatorname{Iso}(X)\). Then the right-coset set \(\operatorname{Iso}(X)/H_{\phi_0,k}\) is in bijection with \(\operatorname{Ob}(\mathbf{PowStr}_k(X))\).
\end{theorem}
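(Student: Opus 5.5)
The plan is to combine three earlier results: the prime-factorization uniqueness of Theorem~\ref{thm:finite-distance-prime-uniqueness}, the $\ell^\infty$ form of the equivalence of categories between $\mathbf{PowPres}_k(X)$ and $\mathbf{PowStr}_k(X)$, and the thinness of $\mathbf{PowPres}_k(X)$. The proof splits into three parts: non-existence when $k$ fails to divide some $m_i$, existence and uniqueness of the root otherwise, and the coset count.

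\emph{Non-existence.} Suppose $\mathbf{PowStr}_k(X)$ has an object. Applying $\mathcal G_k$ gives a presentation $X\cong R^k$ with respect to $\times_\infty$. The first step is to show that $R$ satisfies the hypotheses of Theorem~\ref{thm:finite-distance-prime-uniqueness}:
\begin{itemize}
\item it is finite, since it embeds as a fiber of $X$;
\item its distance set $d_R(R\times R)$ is contained in $D$;
\item its threshold graphs are connected and separating, because the threshold structure of an $\ell^\infty$-power is the direct power of the threshold structures.
\end{itemize}
This preservation of the connected and separating hypotheses under passage to factors is the step I expect to be the main obstacle. The direction needed is "the product is connected implies each factor is connected". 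I would first check it through the correspondence, stated earlier, between $\ell^\infty$-decompositions and direct decompositions of $\mathcal T_D(X)$. Given that, the finite existence result factors $R\cong B_1^{n_1}\times_\infty\cdots\times_\infty B_t^{n_t}$ into $\ell^\infty$-primes. Then $X\cong\prod B_j^{kn_j}$, and uniqueness of prime factorization for $X$ forces the $B_j$ to be the $A_i$ up to reordering, with $m_i=kn_i$. So $k\mid m_i$ for every $i$, which proves the first claim by contraposition.

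\emph{Root when $k\mid m_i$ for all $i$.} Clearly $\Delta_k\bigl(\prod A_i^{m_i/k}\bigr)\cong X$. The argument just given shows that every root $R$ has multiplicities $n_i=m_i/k$, hence $R\cong\sqrt[k]{X}$. Since every object of $\mathbf{PowStr}_k(X)$ is $\mathcal F_k$ of a presentation, every structure arises from a presentation whose root is isometric to $\sqrt[k]{X}$.

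\emph{Coset bijection.} Let $R=\sqrt[k]{X}$. By the equivalence $\mathcal F_k\mathcal G_k=\mathrm{id}$, objects of $\mathbf{PowStr}_k(X)$ correspond bijectively to isomorphism classes of presentations. Because every root is isometric to $R$, each such class has a representative of the form $(R,\phi)$ with $\phi:\Delta_k(R)\to X$ an isometry. Two representatives $(R,\phi)$ and $(R,\psi)$ are isomorphic exactly when $\psi=\phi\circ f^{\times k}$ for some $f\in\operatorname{Iso}(R)$, since morphisms are coordinatewise comparison maps. Write $\phi=g\circ\phi_0$ with $g\in\operatorname{Iso}(X)$. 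This gives a surjection $\operatorname{Iso}(X)\to\operatorname{Ob}(\mathbf{PowStr}_k(X))$, and $g$ and $g'$ have the same image iff $g'\phi_0=g\phi_0f^{\times k}$, i.e.\ $g^{-1}g'\in H_{\phi_0,k}$.

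$H_{\phi_0,k}$ is a subgroup because $f\mapsto\phi_0f^{\times k}\phi_0^{-1}$ is a group homomorphism. With this convention the fibers are the left cosets $gH_{\phi_0,k}$. To match the statement's right-coset set, I would either compose with inversion $g\mapsto g^{-1}$ or parametrize by $\phi^{-1}$; either choice yields the stated bijection with $\operatorname{Iso}(X)/H_{\phi_0,k}$. Well-definedness and injectivity follow directly from thinness.
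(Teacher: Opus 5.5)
There is a genuine gap where you produce a prime factorization of the root. The hypotheses of Theorem~\ref{thm:finite-distance-prime-uniqueness} require only that $D=d(X\times X)$ be finite, not that $X$ be finite. For example, $A=(\mathbb Z,\min\{|x-y|,2\})$ is infinite, and a degree count in its threshold graph $R_1$ shows it is $\ell^\infty$-prime. Moreover $X=A\times_\infty A$ has $R_1$ connected and separating.

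So ``$R$ is finite since it embeds as a fiber of $X$'' is unjustified. Without finiteness, Lemma~\ref{lem:finite-prime-existence} is unavailable: its proof is an induction on cardinality, which is exactly why the paper does not reuse it in the finite-distance setting. As written, your non-existence and root-uniqueness steps only reprove the finite-case Lemmas~\ref{lem:k-root-existence} and~\ref{lem:k-root-uniqueness}. The surjectivity half of your coset bijection inherits the gap, because it needs every root to be isometric to $\sqrt[k]{X}$.

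By contrast, the step you flagged as the main obstacle is easy and also unnecessary. It is easy because projecting an $R_r$-chain to one coordinate preserves connectedness, and projecting a separating point for two points of $R^k$ that differ in one coordinate preserves separation. It is unnecessary because you only ever invoke uniqueness for $X$, never for $R$.

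The missing idea is to build the factorization of $R$ out of that of $X$ using the refinement property, as in Lemma~\ref{lem:finite-factor-root-inheritance}. By Lemma~\ref{lem:finite-distance-threshold-refinement}, $\mathcal T_D(X)$ has strict refinement. Hence the decompositions $\mathcal T_D(X)\cong\prod_i\mathcal T_D(A_i)^{m_i}$ and $\mathcal T_D(X)\cong\mathcal T_D(R)^k$ have a common refinement. Direct indecomposability forces each copy of $\mathcal T_D(A_i)$ to land in exactly one of the $k$ coordinates. So $\mathcal T_D(R)$ is the product of the copies assigned to a single coordinate, i.e.\ $\mathcal T_D(R)\cong\prod_i\mathcal T_D(A_i)^{a_i}$. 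Lemmas~\ref{lem:threshold-factor-recovery} and~\ref{lem:threshold-recovery} convert this into $R\cong\prod_i A_i^{a_i}$.

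From there your own finish works, and it is slightly shorter than the paper's cancellation argument via Lemma~\ref{lem:direct-factor-finite-uniqueness}: $X\cong R^k\cong\prod_i A_i^{ka_i}$, and Theorem~\ref{thm:finite-distance-prime-uniqueness} gives $m_i=ka_i$.

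The rest of your coset argument is fine and matches Lemma~\ref{lem:k-root-coset-criterion}. The fibres are $\{h:g^{-1}h\in H_{\phi_0,k}\}$, and inversion reconciles the left/right coset conventions. One small correction: injectivity there comes from fullness of $\mathcal F_k$ (Lemma~\ref{lem:powpres-equality-morphism}), not from thinness.
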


Thus a \(k\)-th root exists exactly when \(k\) divides every prime multiplicity. Once a root exists, the ordered \(\ell^\infty\) \(k\)-power structures are in bijection with the right cosets of \(H_{\phi_0,k}\) in \(\operatorname{Iso}(X)\).

We next consider metric spaces satisfying \((X,d)\cong(X\times X,d_\infty)\). An isometry \(\Phi:(X,d)\to X\times_\infty X\) defines coordinate components \(f_i:=\pi_i\circ\Phi\) for \(i\in\{0,1\}\). Iterating these maps gives coordinate maps indexed by finite binary words. For \(\Sigma:=\{0,1\}^{\mathbb N}\), limits along branches define pseudometrics indexed by \(\Sigma\). A reconstructive binary branch system consists of the associated quotient metric spaces, the branch isometries, and the subset on which reconstruction occurs. Such a system defines a metric space \((S,d_{\sup})\) and an associated map \(\Theta:S\to S\times_\infty S\).

\begin{theorem}
A metric space \((X,d)\) is isometric to its \(\ell^\infty\)-square if and only if there exists a reconstructive binary branch system whose associated metric space \((S,d_{\sup})\) is isometric to \((X,d)\).

For such a reconstructive binary branch system, the associated map \(\Theta:S\to S\times_\infty S\) is a bijective isometry from \((S,d_{\sup})\) to \((S\times S,d_\infty)\).
\end{theorem}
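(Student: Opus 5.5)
The plan is to prove the two directions separately, together with the claim about \(\Theta\), directly from the definitions of the branch system.

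\textbf{Forward direction: extracting the branch system.} Fix an isometry \(\Phi:(X,d)\to(X\times X,d_\infty)\) and set \(f_i=\pi_i\circ\Phi\). For a finite word \(w=w_1\cdots w_n\), define
\[
f_w=f_{w_n}\circ\cdots\circ f_{w_1}.
\]
An induction on \(n\) gives
\[
d(x,y)=\max_{|w|=n} d(f_w(x),f_w(y))
\]
for every \(n\). Consequently, along each branch \(\sigma\in\Sigma\) the sequence \(d(f_{\sigma|_n}(x),f_{\sigma|_n}(y))\) is nonincreasing in \(n\). Its limit \(\rho_\sigma(x,y)\) is therefore a pseudometric, since the triangle inequality passes to limits.

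Take the quotients \(X_\sigma=X/\rho_\sigma\). The branch isometries \(X_{i\sigma}\to X_\sigma\) are induced by \(f_i\), using the shift relation \(\rho_{i\sigma}(x,y)=\rho_\sigma(f_i x,f_i y)\). Let \(S\) be the image of the map \(x\mapsto([x]_\sigma)_\sigma\) inside \(\prod_\sigma X_\sigma\), with the sup metric.

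The main obstacle is to show \(d_{\sup}=d\), that is, \(d(x,y)=\sup_\sigma\rho_\sigma(x,y)\). The inequality \(\ge\) is immediate. For \(\le\), I would use a compactness (K\"onig-type) argument on the binary tree. Given \(\varepsilon>0\), the words \(w\) with \(d(f_w x,f_w y)>d(x,y)-\varepsilon\) form a subtree, because the value along any word is bounded by that at its prefixes. Each level of this subtree is nonempty, since the maximum over that level equals \(d(x,y)\). The subtree is finitely branching, so K\"onig's lemma gives an infinite branch \(\sigma\) with \(\rho_\sigma(x,y)\ge d(x,y)-\varepsilon\). This yields an isometry \(X\cong S\). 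It is automatically injective, since \(d\) is a metric. If the paper's definition of a "reconstructive" system carries extra axioms, such as compatibility of the subset with the branch maps, I would verify them using that \(\Phi\) is onto.

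\textbf{Converse direction, and the claim about \(\Theta\).} Given a reconstructive system, define \(\Theta(s)=(s^0,s^1)\), where \(s^i_\sigma\) is the image of the coordinate \(s_{i\sigma}\) under the branch isometry. The first step is the split
\[
\sup_\sigma=\max\Bigl(\sup_{\sigma\text{ begins with }0},\ \sup_{\sigma\text{ begins with }1}\Bigr),
\]
combined with the fact that the branch maps are isometries. Together these give
\[
d_{\sup}(s,t)=\max\bigl(d_{\sup}(s^0,t^0),d_{\sup}(s^1,t^1)\bigr),
\]
so \(\Theta\) is an isometry and hence injective. The second step is that \(\Theta\) maps into \(S\times S\) and is surjective. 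This should be precisely what the reconstructive condition on the subset \(S\) encodes: any pair \((a,b)\in S\times S\) glues to a point of \(S\) whose \(0\)-branches come from \(a\) and whose \(1\)-branches come from \(b\).

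Finally, composing an isometry \(X\cong S\) with \(\Theta\) shows \(X\cong X\times_\infty X\). This gives the converse and the second assertion of the theorem at once.
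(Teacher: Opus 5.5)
Your proposal follows the paper's proof essentially step for step. The shared steps are the iterated max identity, the monotone limits defining the branch pseudometrics, the K\"onig's lemma argument on the subtree of words with large distance to get \(d=\sup_\beta\rho_\beta\), and the shift relation to induce the branch isometries. The converse also matches: splitting the supremum over branches beginning with \(0\) and with \(1\) shows that \(\Theta\) preserves distances, and invariance of \(S\) and bijectivity of \(\Theta\) are exactly the axioms of a reconstructive system, as you guessed. The only part you left as a promise is checking those axioms in the forward direction. As in the paper, they follow from \(L\circ J=J\circ f_0\) and \(R\circ J=J\circ f_1\) together with bijectivity of \(\Phi\). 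Surjectivity of \(\Phi\) also makes each branch map \(\alpha_i^\beta\) surjective, which you need because isometries in this setting are bijective.
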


Along each branch \(\beta\in\Sigma\), the iterated coordinate maps define a limiting pseudometric \(\rho_\beta\). For all \(x,y\in X\), the branch pseudometrics satisfy
\[
d(x,y)
=
\sup_{\beta\in\Sigma}\rho_\beta(x,y).
\]
The reconstructive condition then makes \(\Theta:(S,d_{\sup})\to S\times_\infty S\) a bijective isometry.

In products with finitely many isometry classes of prime factors, infinite multiplicity for every such isometry class implies \(X\cong X\times_\infty X\). Under cardinal multiplicity uniqueness, the isometry \(X\cong X\times_\infty X\) implies that every isometry class of prime factors has infinite multiplicity.

\begin{theorem}
Let \((X,d)\) satisfy the hypotheses of Theorem~\ref{thm:finite-distance-prime-uniqueness}. Suppose that
\[
(X,d)
\cong
\prod_{a\in A}
P_a^{\kappa_a},
\]
where \(A\) is finite, where the metric spaces \(P_a\) are pairwise nonisometric finite \(\ell^\infty\)-prime metric spaces, and where every \(\kappa_a\) is a nonzero cardinal. Assume that \(X\) has cardinal multiplicity uniqueness. Then \((X,d)\) is isometric to its \(\ell^\infty\)-square if and only if every \(\kappa_a\) is infinite.
\end{theorem}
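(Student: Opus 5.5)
We prove the two directions separately. For the "if" direction, suppose every \(\kappa_a\) is infinite. Then \(\kappa_a+\kappa_a=\kappa_a\) for each \(a\in A\), so for each \(a\) we may fix a bijection \(\kappa_a\to\kappa_a\sqcup\kappa_a\). We then construct the isometry \(X\to X\times_\infty X\) coordinatewise. Write \(X\cong\prod_{a}P_a^{\kappa_a}\) with the sup-metric, and send a point \((x_{a,\alpha})_{a,\alpha}\) to the pair obtained by reindexing each \(a\)-block along the chosen bijection. The resulting map is
\[
\prod_{a\in A}P_a^{\kappa_a}\to\prod_{a\in A}P_a^{\kappa_a}\times_\infty\prod_{a\in A}P_a^{\kappa_a}.
\]
Reindexing coordinates is a bijection of the index set, and the \(\ell^\infty\) (sup) product metric is invariant under permutations of the index set. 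The remaining point is associativity: the \(\ell^\infty\)-square of a sup-product is the sup-product over the disjoint union of the index sets, because the supremum over \(\{0,1\}\) of suprema equals the supremum over the union. Composing these identifications gives \(X\cong X\times_\infty X\). I would record this associativity and reindexing as a short lemma for possibly infinite sup-products, noting that distances stay finite because each \(P_a\) is finite, so the product diameter is at most \(\max_a\operatorname{diam}P_a\).

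For the "only if" direction, suppose \(X\cong X\times_\infty X\). We compose the given decomposition on each factor to obtain
\[
X\times_\infty X\cong\prod_{a}P_a^{\kappa_a}\times_\infty\prod_a P_a^{\kappa_a}\cong\prod_a P_a^{\kappa_a+\kappa_a}=\prod_a P_a^{2\kappa_a},
\]
using the same reindexing lemma. This gives two prime decompositions of \(X\), with multiplicities \(\kappa_a\) and \(2\kappa_a\) respectively, over the same finite set of pairwise nonisometric primes. Cardinal multiplicity uniqueness (the hypothesis on \(X\), together with the uniqueness of the prime isometry classes supplied by Theorem~\ref{thm:finite-distance-prime-uniqueness}) then forces \(\kappa_a=2\kappa_a\) for every \(a\in A\). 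Each \(\kappa_a\) is nonzero, so it cannot be finite, since for finite nonzero \(n\) we have \(n\neq 2n\). Hence every \(\kappa_a\) is infinite.

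The main obstacle is applying cardinal multiplicity uniqueness to the two decompositions correctly. That property compares multiplicities attached to isometry classes of primes, so we must check that the decomposition \(\prod_a P_a^{2\kappa_a}\) is a legitimate prime decomposition of \(X\) in the sense of that definition. In particular, it has to be transported along the isometry \(X\cong X\times_\infty X\), and it must use the same finite family \(\{P_a\}\), so that no new prime classes appear and none merge. If the definition instead phrases uniqueness through matching index bijections, I would extract \(\kappa_a=2\kappa_a\) by restricting such a matching to the factors isometric to \(P_a\), which is possible because the \(P_a\) are pairwise nonisometric.
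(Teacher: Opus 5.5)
Your proposal is correct and matches the paper's proof. The ``if'' direction is the reindexing argument packaged in Lemma~\ref{lem:product-reindexing}, Lemma~\ref{lem:self-square-double-multiplicity}, and Lemma~\ref{lem:cardinal-double} (that is, Theorem~\ref{thm:infinite-multiplicity-self-square-construction}), and the ``only if'' direction transports $\prod_{a}P_a^{2\kappa_a}$ along $X\cong X\times_\infty X$ and applies cardinal multiplicity uniqueness to get $\kappa_a=2\kappa_a$. Your appeal to Theorem~\ref{thm:finite-distance-prime-uniqueness} in that step is superfluous, since that theorem only covers finite decompositions; cardinal multiplicity uniqueness, together with the pairwise nonisometry of the $P_a$, does all the work, exactly as in the paper.
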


The decomposition of \(X\times_\infty X\) doubles each multiplicity, so the isometry class of \(P_a\) has multiplicity \(2\kappa_a\), where \(2\kappa_a\) denotes the cardinal sum \(\kappa_a+\kappa_a\). By cardinal multiplicity uniqueness, one has \(\kappa_a=2\kappa_a\) for each \(a\in A\), and a nonzero cardinal satisfies this equality exactly when it is infinite.

Ulam asked whether an isometry between metric squares determines an isometry between the original spaces \cite{ulam1960collection}. Fournier separated square isometry from factor recovery for the Euclidean product metric with the subspaces \(X=\mathbb Q\) and \(Y=\sqrt{2}\mathbb Q\) of \(\mathbb R\), equipped with the subspace metric \cite[p.~622]{fournier1971problem}. Restricting the rotation of \(\mathbb R^2\) through angle \(\pi/4\) gives an isometry between their Euclidean squares.

Moszy{\'n}ska formulated Ulam's question as a special case of the general uniqueness problem for metric products \cite{moszynska1992uniqueness}. For compact connected metric spaces of finite dimension, Moszy{\'n}ska reduced metric-product uniqueness to uniqueness for direct sums of subsets of a linear space. In this formulation, the product metric remains part of the uniqueness question. Herburt and Moszy{\'n}ska studied this dependence through product metrics induced by functions from \((\mathbb R^+)^2\) to \(\mathbb R^+\) \cite{herburt1991metric}. Herburt showed that cancellation can fail in general for metric products satisfying \(A\times C\cong B\times C\) and \(A\not\cong B\) \cite{herburt1994there}.

For the Cartesian product metric obtained by summing factor distances, Tardif introduced prefibers as intrinsic analogues of coordinate fibers \cite[Definition~2.1]{tardif1992prefibers}. A prefiber \(A\subseteq X\) has a projection \(p_A:X\to A\) such that \(d(x,y)=d(x,p_A(x))+d(p_A(x),y)\) for each \(x\in X\) and each \(y\in A\). Tardif used prefiber projections to prove finite Cartesian factorization into indecomposable metric spaces, with factors determined up to isomorphism and coordinate permutation \cite[Theorem~1.2]{tardif1992prefibers}. Avgustinovich and Fon-Der-Flaass proved uniqueness of finite decompositions for a class of metric product operations that includes finite \(\ell^p\)-products with \(1\leq p<\infty\) \cite{avgustinovich2000cartesian}. These uniqueness results compare decompositions once product structures have been specified. We study recognition from the total metric by characterizing when equivalence relations, quotient metrics, and the quotient-coordinate map reconstruct a product or power presentation.

We use the interaction between the maximum in the supremum product and relations defined by distance thresholds to encode metric spaces with finite distance sets as threshold relational structures. For a distance value \(r\in D\), the corresponding binary relation consists of pairs whose distance is at most \(r\). Under this encoding, finite \(\ell^\infty\)-product decompositions correspond to direct decompositions of the associated relational structures. McKenzie's strict refinement theory provides the corresponding uniqueness theory for finite relational decompositions \cite[Definition~2.1, p.~71 and Theorem~9.2, p.~98]{mckenzie1971cardinal}. This identifies McKenzie's refinement theory as the uniqueness mechanism for \(\ell^\infty\)-prime decompositions under the connected and separating threshold hypothesis. The resulting prime multiplicities serve as the invariants connecting finite \(\ell^\infty\)-factorization with the original power and square questions.

Foertsch and Lytchak proved a de Rham decomposition theorem for geodesic metric spaces of finite affine rank with the Euclidean product operation \cite[Theorem~1.1]{Foertsch2008}. They proved that such spaces decompose into a Euclidean factor and irreducible factors that are neither points nor isometric to the real line, with the non-Euclidean factor fibers through each point uniquely determined up to permutation. We prove recognition results for finite \(\ell^p\)- and \(\ell^\infty\)-product and power presentations of arbitrary metric spaces.

One could expect to use the categorical classification of square structures to reduce square cancellation to the comparison of square structures. Given a square presentation \(\phi:\Delta_2(P)\to X\), we form the coordinate maps \(q_i^\phi=\pi_i\circ\phi^{-1}\) for \(i\in\{1,2\}\). We define equivalence relations \(E_i^\phi\) from the maps \(q_i^\phi\), and we recover \(P\), up to isometry, as the quotient metric space \((X/E_i^\phi,\bar d_{E_i^\phi})\) for each \(i\in\{1,2\}\). By Theorem~\ref{thm:categorical-classification}, we preserve all ordered presentation data through this reconstruction. Let \(\mathfrak s=(E_1,E_2,\tau)\) and \(\mathfrak t=(F_1,F_2,\sigma)\) be square structures on \((X,d)\). A proof of square cancellation through quotient-coordinate structures would require a theorem that proves \((X/E_1,\bar d_{E_1})\cong(X/F_1,\bar d_{F_1})\) for every two square structures \(\mathfrak s\) and \(\mathfrak t\). We use \(\tau\) and \(\sigma\) to compare the quotient factors inside their respective square structures. A cross-structure comparison therefore requires either a rule defined from \(d\) that recovers or enumerates the coordinate classes, or an invariant common to all square structures on \(X\) that determines the quotient-factor isometry type.

It is natural to ask whether the finite-spectrum root theory extends to compact or complete square-cancellation problems. Within the finite-distance threshold hypotheses, we use finite \(\ell^\infty\)-prime multiplicities in place of direct recognition of coordinate classes. Using Theorem~\ref{thm:finite-prime-factorization}, Lemma~\ref{lem:k-root-existence}, and Lemma~\ref{lem:k-root-uniqueness}, we reduce \(k\)-th root existence and uniqueness to divisibility of these multiplicities. After identifying the root, we use Theorem~\ref{thm:ordered-k-power-structure-classification} to parametrize ordered structures by the corresponding isometry-group quotient. To obtain such an extension, one would need replacements for the finite distance set, the finite threshold relational structure, and the finite collection of prime isometry classes with unique multiplicities. Thus a compact or complete cancellation theorem would require a multiplicity invariant, or another cross-structure invariant, that compares quotient factors across square structures.

One could also try to use binary branch systems to obtain canonical invariants of self-square spaces. Given a chosen self-square isometry \(X\to X\times_\infty X\), we obtain binary branch data that record its iterated coordinate splittings. Under cardinal multiplicity uniqueness, we use Theorem~\ref{thm:conditional-finite-spectrum-self-square-classification} to identify infinite prime multiplicities as the finite-spectrum source of self-square behavior. The construction uses the chosen self-square isometry, so the resulting branch data may contain features from the chosen splitting map together with features forced by \(d\). Thus repeated splitting of one space and comparison of quotient factors from two square structures require different types of invariants. A future self-square classification must therefore distinguish metric-forced branch features from splitting-dependent branch features.

Let \(X\) be an arbitrary nonempty metric space. We characterize finite ordered product presentations and finite ordered power presentations in terms of equivalence relations on \(X\), the quotient distances that these relations define, and the associated quotient-coordinate map. When the quotient distances are metrics and the relevant product formula for the chosen product metric holds, the quotient metric spaces are the coordinate factors, and the quotient-coordinate map is the presentation map. We formulate the categorical equivalence for ordered power presentations and coordinatewise morphisms. A separate quotient construction identifies unordered presentations and product isometries that are not induced coordinatewise. For \(1\leq p<\infty\), we apply analogous criteria to finite \(\ell^p\)-product presentations and finite ordered \(\ell^p\)-power presentations.

For \(\ell^\infty\), we state root existence and classification under finite prime multiplicities determined by a finite \(\ell^\infty\)-prime decomposition. We prove that finite metric spaces admit such decompositions. For metric spaces with finite distance set, we prove uniqueness and root classification under a finite \(\ell^\infty\)-prime decomposition and the connected and separating threshold hypothesis. We use the equilateral examples to show that finite prime decomposability alone does not imply the threshold hypothesis. For \(1\leq p<\infty\), our results on this product metric concern finite product and power recognition.

A choice of self-square isometry determines an ordered binary splitting of \(X\), and different choices may determine different binary branch systems. Thus the branch system is part of the chosen self-square structure rather than part of \(X\) alone. We derive the converse from a self-square structure to infinite prime multiplicities under the cardinal multiplicity uniqueness hypothesis. For infinite products, we require the supremum product metric to be finite-valued.

In the setting of persistent homology, let \(P\) denote a metric space of filtration parameters. In classical persistence, both birth and death parameters take values in the same filtration-parameter metric space \(P\) \cite{edelsbrunner2008persistent}. Consequently, the space of intervals has the form \(P^2\). The same filtration-parameter description applies to multidimensional intervals in multiparameter persistence \cite{botnan2023introduction}. When a multidimensional interval has \(k\) filtration parameters, the space of intervals has the form \(P^k\) if all filtration parameters take values in a common metric space \(P\), and it has the form \(P_1\times\cdots\times P_k\) if the filtration parameters take values in metric spaces \(P_1,\dots,P_k\). Thus spaces of intervals in persistence have power structures when all filtration parameters take values in a common metric space and product structures when the filtration parameters take values in different metric spaces.

We write \(X\) for the resulting space of intervals and equip \(X\) with a product metric \(d\) of one of the types considered in this paper. A filtration compares parameter values through a preorder on the filtration parameters. Parameter values that are equivalent under the preorder determine the same filtration step. Intervals whose filtration parameter values determine the same filtration step are precisely the trivial intervals. The collection of such trivial intervals forms the diagonal subset \(A\subseteq X\) determined by the preorder. When equivalence under the preorder reduces to equality of parameter values, \(A\) equals the usual diagonal. When one equivalence class for the preorder contains more than one parameter value, several intervals can correspond to the same filtration step, so \(A\) can be larger than the usual diagonal defined by equality of filtration parameter values. Thus \((X,d,A)\) is the space of intervals together with the diagonal subset determined by the preorder and hence is a metric pair \cite[Sect.~2.1]{bubenik2022virtual}. The filtration-parameter metric spaces specify \(X\), the filtration preorder specifies \(A\), and the pair of choices gives the metric pair used for persistence diagrams.

The metric pair \((X,d,A)\) gives the input for persistence diagrams, and we define the diagrams as formal sums on metric pairs \cite[Sect.~2.3]{bubenik2022virtual}. Grothendieck completion then gives virtual persistence diagrams \cite[Sect.~4]{bubenik2022virtual}. As in Figure~\ref{fig:square-persistence-diagram}, the product and power structures that we study in this paper concern the space \(X\) before the formal-sum construction of persistence diagrams.

\begin{figure}[htbp]
\centering
\includegraphics[width=0.78\textwidth]{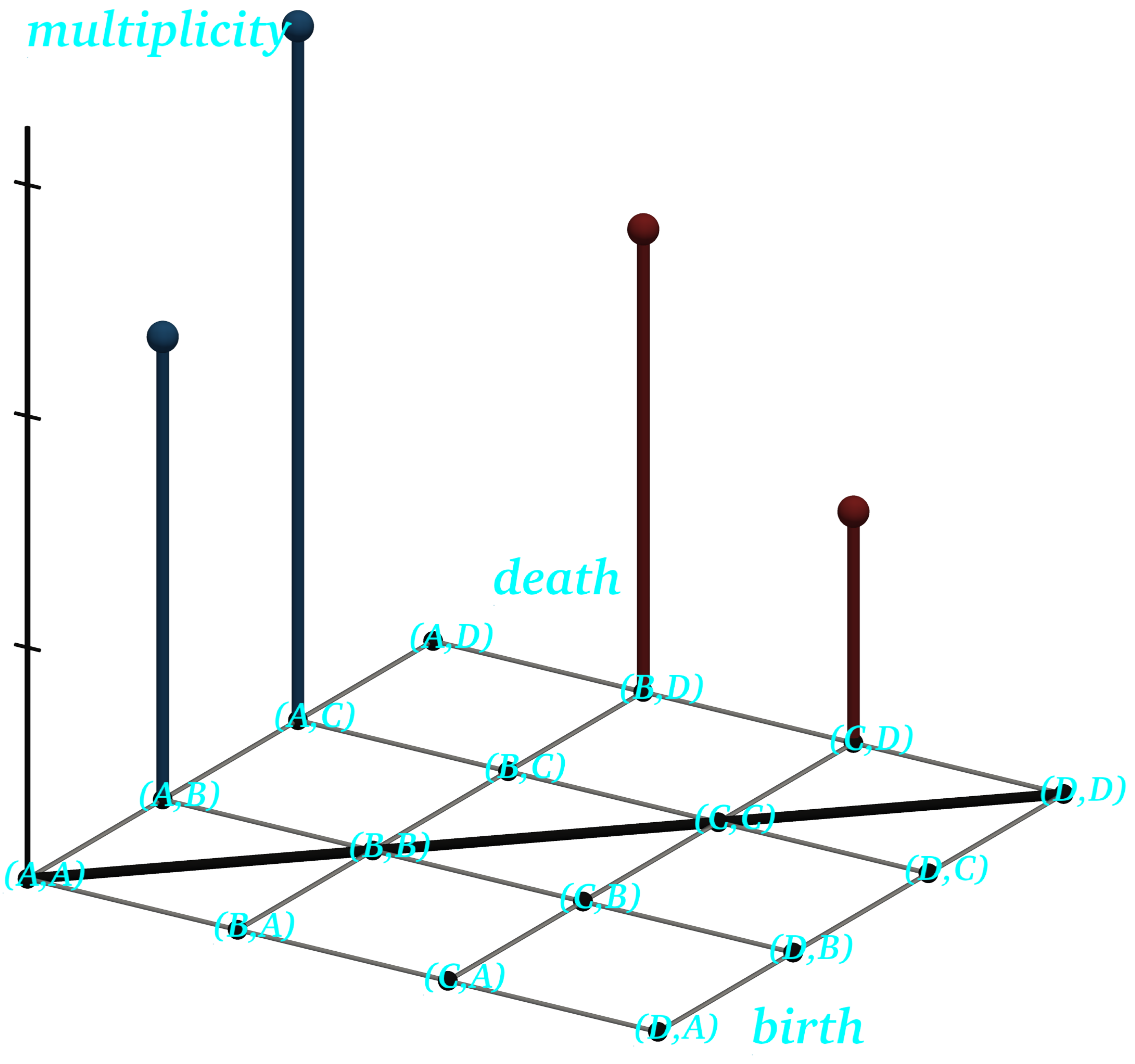}
\caption{
A persistence diagram represented as a formal sum over an interval space \(P^2\). The base square is the same product metric space as in Figure~\ref{fig:square-metric-space}, and the vertical coordinate records multiplicity. Product and power recognition apply to the underlying interval geometry before passing to persistence diagrams.
}
\label{fig:square-persistence-diagram}
\end{figure}

We may therefore study interval geometries independently of a chosen filtration realization. This viewpoint gives an inverse reconstruction problem in which we recover the filtration-parameter metric spaces whose products or powers realize a given interval geometry. Under the hypotheses of the relevant classification results, we use the quotient-coordinate and categorical classifications to recover the product and power presentations of the underlying interval space \(X\). From product presentations, we recover collections of filtration-parameter metric spaces. From power presentations, we identify common filtration-parameter metric spaces. In the \(k\)-power case, roots are precisely the common filtration-parameter metric spaces whose \(k\)-fold powers realize the interval space. When \(X\cong X\times_\infty X\), we use the self-square theory to describe interval geometries that admit iterated coordinate splitting and to obtain branch representations of those splittings. Through those branch representations, we record how parameter-space realizations of the interval geometry split into iterated coordinates.

\section{McKenzie's Refinement Theory for Finite Relational Structures}

A metric space with finite distance set determines a finite system of threshold relations through the conditions \(d(x,y)\leq r\), and these relations convert the metric decomposition problem into a decomposition problem for finite relational structures. The maximum operation in an \(\ell^\infty\)-product makes threshold relations compatible with direct products, so direct decompositions of the threshold structure encode \(\ell^\infty\)-product decompositions. We use McKenzie's strict refinement theory to compare these direct decompositions and obtain the directly indecomposable factor multiplicities needed for root existence and root classification.

The strength of this reduction is that local relational hypotheses control global product decompositions. McKenzie's criterion turns thinness and membership in \(\mathcal Q\) for one binary relation into strict refinement for the whole structure. Thus one relation can impose common refinements on all finite direct decompositions of the threshold structure. The resulting prime multiplicities give the divisibility criterion for \(k\)-th roots and provide the classification input for ordered root structures.

We later represent metric spaces with finite distance set by nested threshold relations. For a finite distance set \(D\), the indexed family of relations \(R_r=\{(x,y):d(x,y)\leq r\}\), where \(r\in D\), determines the metric \(d\). This representation places finite-distance \(\ell^\infty\)-product questions inside the theory of direct decompositions of finite relational structures. McKenzie's refinement theory gives the corresponding decomposition theory for relational structures through factor relations, decomposition functions, and strict refinement \cite[pp.~60--101]{mckenzie1971cardinal}.

We follow McKenzie~\cite[pp.~60--101]{mckenzie1971cardinal}. We state only the definitions and results that we use later. The definitions below apply to relational structures in general, and we apply McKenzie's criterion to finite relational structures later.

A structure is a system \(\mathcal M=\langle A,(R_t)_{t\in T}\rangle\), where \(A\) is a nonempty fundamental set and each \(R_t\) is a fundamental relation of finite rank over \(A\). Let \(\mathcal M_t=\langle A_t,(R_{t,s})_{s\in S}\rangle\) for \(t\in T\) be a family of structures of the same similarity type. Their direct product is the structure \(\prod_{t\in T}\mathcal M_t=\langle\prod_{t\in T}A_t,(R_s)_{s\in S}\rangle\), where, for every \(s\in S\) of rank \(n\),
\begin{equation}
\label{eq:mckenzie-product-relation}
(\mathbf a_1,\dots,\mathbf a_n)\in R_s
\Longleftrightarrow
(a_1(t),\dots,a_n(t))\in R_{t,s}
\quad\text{for every }t\in T.
\end{equation}

Let \(F\) be an equivalence relation on the fundamental set \(A\) of a structure \(\mathcal M\). Let \(\pi_F:A\to A/F\) denote the canonical quotient map. McKenzie defines the quotient structure by \(\mathcal M/F=\pi_F(\mathcal M)\).

Suppose that \(H\subseteq F_i\) for each \(i\in I\), where \(H\) and each \(F_i\) are equivalence relations on \(A\). McKenzie defines
\begin{equation}
\label{eq:mckenzie-product-equivalence}
H
=
\prod_{i\in I}F_i
\end{equation}
to mean that the map
\begin{equation}
\label{eq:mckenzie-factor-map}
a/H
\mapsto
(a/F_i)_{i\in I}
\end{equation}
defines an isomorphism \(\mathcal M/H\cong\prod_{i\in I}\mathcal M/F_i\). McKenzie proves~\cite[p.~63, Definition~1.1]{mckenzie1971cardinal} that Equation~\eqref{eq:mckenzie-product-equivalence} holds if and only if the following conditions hold:
\begin{enumerate}
\item
One has \(H=\bigcap_{i\in I}F_i\).

\item
For each \((a_i)_{i\in I}\in A^I\) such that \(a_iF_ja_j\) for \(i,j\in I\), there exists \(a\in A\) such that \(aF_ia_i\) for \(i\in I\).

\item
For every fundamental relation \(R_t\) of rank \(n\), if \(\mathbf a=(a_1,\dots,a_n)\) and \(\mathbf b=(b_1,\dots,b_n)\) satisfy \(a_kF_ib_k\) for \(i\in I\) and \(1\le k\le n\), then
\begin{equation}
\label{eq:mckenzie-factor-invariance}
\mathbf a\in R_t
\Longleftrightarrow
\mathbf b\in R_t.
\end{equation}
\end{enumerate}

McKenzie calls each \(F_i\) in Equation~\eqref{eq:mckenzie-product-equivalence} a factor relation of \(H\) over \(\mathcal M\). He writes \(\mathrm{FR}(\mathcal M,H)\) for the set of all factor relations of \(H\) over \(\mathcal M\). He also writes \(\mathrm{FR}(\mathcal M)=\mathrm{FR}(\mathcal M,\mathrm{id}_A)\).

When \(\mathrm{id}_A=F\times F'\), the factor relations determine quotient maps induced by inclusion of equivalence relations. The quotient structures form the following commutative diagram:
\[
\begin{tikzcd}[row sep=large,column sep=large]
&
\mathcal M/(A\times A)
&
\\
\mathcal M/F
\arrow[ur]
&
&
\mathcal M/F'
\arrow[ul]
\\
&
\mathcal M/\mathrm{id}_A
\arrow[ul]
\arrow[ur]
&
\end{tikzcd}
\]
We identify \(\mathcal M/\mathrm{id}_A\) with \(\mathcal M\), while the middle quotients are the two coordinate factors determined by the factor relations. The decomposition condition \(\mathrm{id}_A=F\times F'\) states that the map \(a/\mathrm{id}_A\mapsto(a/F,a/F')\) defines an isomorphism \(\mathcal M/\mathrm{id}_A\cong\mathcal M/F\times\mathcal M/F'\).

Suppose that \(\mathrm{id}_A=F\times F'\) in the sense of Equation~\eqref{eq:mckenzie-product-equivalence}. McKenzie defines the associated decomposition function as the unique function \(f:A^2\to A\) such that
\begin{equation}
\label{eq:mckenzie-decomposition-function}
xFf(x,y)
\qquad\text{and}\qquad
f(x,y)F'y
\end{equation}
for all \(x,y\in A\). McKenzie writes \(\mathrm{DF}(\mathcal M)\) for the set of all decomposition functions of \(\mathcal M\). He also defines \(f_x(y)=f(y,x)\) and \(f^x(y)=f(x,y)\).

Suppose that \(\mathcal M\cong\prod_{i\in I}\mathcal B_i\cong\prod_{j\in J}\mathcal C_j\). McKenzie says that these two decompositions admit a common refinement if there exist structures \(\mathcal D_{i,j}\) such that
\begin{equation}
\label{eq:mckenzie-common-refinement}
\mathcal B_i
\cong
\prod_{j\in J}\mathcal D_{i,j}
\qquad\text{and}\qquad
\mathcal C_j
\cong
\prod_{i\in I}\mathcal D_{i,j}
\end{equation}
for all \(i\in I\) and \(j\in J\). He says that \(\mathcal M\) has the refinement property if every pair of direct decompositions of \(\mathcal M\) admits a common refinement.

Let \(R,S\subseteq A^2\) be binary relations. McKenzie defines the relative product of binary relations by
\begin{equation}
\label{eq:mckenzie-relative-product}
x(RS)z
\Longleftrightarrow
\text{there exists }y\in A\text{ such that }xRy\text{ and }ySz.
\end{equation}
He also defines the converse relation by \(xR^{-1}y\Longleftrightarrow yRx\).

McKenzie defines the strict refinement property as follows~\cite[p.~64]{mckenzie1971cardinal}. A structure \(\mathcal M\) has the strict refinement property if Equation~\eqref{eq:mckenzie-strict-refinement} implies Equation~\eqref{eq:mckenzie-strict-refinement-components}, where
\begin{equation}
\label{eq:mckenzie-strict-refinement}
\mathrm{id}_A
=
\prod_{i\in I}F_i
=
\prod_{j\in J}G_j
\end{equation}
and
\begin{equation}
\label{eq:mckenzie-strict-refinement-components}
F_i
=
\prod_{j\in J}(F_iG_j)
\qquad\text{and}\qquad
G_j
=
\prod_{i\in I}(F_iG_j).
\end{equation}
Here the second equation holds for all \(i\in I\) and \(j\in J\), and juxtaposition denotes the relative product from Equation~\eqref{eq:mckenzie-relative-product}.

McKenzie proves~\cite[p.~64, Theorem~1.4]{mckenzie1971cardinal} that a structure \(\mathcal M\) has the strict refinement property if and only if
\begin{equation}
\label{eq:mckenzie-srp-functional}
f_x(g_y(z))
=
g_y(f_x(z))
\end{equation}
for all \(f,g\in\mathrm{DF}(\mathcal M)\) and all \(x,y,z\in A\).

Here \(\#(\mathcal M)\) denotes the cardinality of the fundamental set of \(\mathcal M\). McKenzie says that a structure \(\mathcal M\) is directly indecomposable if \(\#(\mathcal M)>1\) and every direct decomposition \(\mathcal M\cong\mathcal B\times\mathcal C\) satisfies \(\#(\mathcal B)=1\) or \(\#(\mathcal C)=1\).

McKenzie says that a binary relation \(R\subseteq A^2\) is reflexive over \(A\) if \(\mathrm{id}_A\subseteq R\). He says that \(R\) is connected over \(A\) if, for every distinct \(x,y\in A\), there exists a finite sequence \(x=x_0,x_1,\dots,x_n=y\) such that \(x_i(R\cup R^{-1})x_{i+1}\) for \(0\le i<n\).

McKenzie defines four classes of binary structures~\cite[p.~71, Definition~2.1]{mckenzie1971cardinal}:
\begin{align}
\mathcal R&=\{\langle B,S\rangle:S\ \text{is reflexive over}\ B\},\\
\mathcal R^\circ&=\{\langle B,S\rangle:S\ \text{is connected and reflexive over}\ B\},\\
\mathcal U&=\{\langle B,S\rangle:S=B^2\},\\
\mathcal Q&=\{\langle B,S\rangle:SS^{-1}\ \text{and}\ S^{-1}S\ \text{are connected over}\ B\}.
\end{align}

For a binary structure \(\mathcal B=\langle B,S\rangle\), McKenzie defines quasi-ordering relations by
\begin{align}
x\leq_\ell y
&\Longleftrightarrow
xSz\Rightarrow ySz\text{ for every }z\in B,
\\
x\leq_r y
&\Longleftrightarrow
zSx\Rightarrow zSy\text{ for every }z\in B.
\end{align}
He then defines
\begin{equation}
\label{eq:mckenzie-skeleton-equivalence}
x\sim y
\Longleftrightarrow
x\leq_\ell y,\ y\leq_\ell x,\ x\leq_r y,\ y\leq_r x.
\end{equation}
He writes \(\operatorname{Sk}(\mathcal B)=\mathcal B/{\sim}\) and calls this quotient the skeleton of \(\mathcal B\). He says that \(\mathcal B\) is thin if \(\sim=\mathrm{id}_B\).

\begin{lemma}
\label{lem:mckenzie-connected-reflexive-in-q}
If \(S\) is connected and reflexive over \(B\), then \(\langle B,S\rangle\in\mathcal Q\).
\end{lemma}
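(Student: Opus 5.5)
The plan is to show directly that \(SS^{-1}\) and \(S^{-1}S\) are connected over \(B\). Both follow from two facts: each relation contains \(S\), and it contains \(S^{-1}\). Once we have these, any chain witnessing connectedness of \(S\) is also a chain for \(SS^{-1}\) and for \(S^{-1}S\).

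\emph{Step 1.} First I show \(S\subseteq SS^{-1}\). Suppose \(xSy\). Reflexivity gives \(ySy\), which is the same as \(yS^{-1}y\). From \(xSy\) and \(yS^{-1}y\), Equation~\eqref{eq:mckenzie-relative-product} gives \(x(SS^{-1})y\).

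Next I show \(S^{-1}\subseteq SS^{-1}\). Suppose \(xS^{-1}y\). Reflexivity gives \(xSx\). From \(xSx\) and \(xS^{-1}y\) we get \(x(SS^{-1})y\).

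The same two arguments, with the roles of \(S\) and \(S^{-1}\) exchanged, give \(S\subseteq S^{-1}S\) and \(S^{-1}\subseteq S^{-1}S\). For the first, use the intermediate point \(x\) with \(xS^{-1}x\). For the second, use the intermediate point \(y\) with \(ySy\).

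\emph{Step 2.} Let \(T\) denote either \(SS^{-1}\) or \(S^{-1}S\). By Step 1, \(S\cup S^{-1}\subseteq T\). Hence \(S\cup S^{-1}\subseteq T\cup T^{-1}\).

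Now take distinct \(x,y\in B\). Connectedness of \(S\) supplies a sequence \(x=x_0,\dots,x_n=y\) with \(x_i(S\cup S^{-1})x_{i+1}\) for each \(i\). The same sequence then satisfies \(x_i(T\cup T^{-1})x_{i+1}\), so \(T\) is connected over \(B\). This holds for both choices of \(T\), so \(\langle B,S\rangle\in\mathcal Q\).

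There is no real obstacle here. The only point needing care is to orient the converse correctly in the relative products, which Step 1 handles. In fact \(T\) is symmetric, so one could note \(T=T^{-1}\), but this is not needed.
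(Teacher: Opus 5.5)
Your proof is correct and follows the paper's argument: reflexivity gives \(S\subseteq SS^{-1}\) and \(S\subseteq S^{-1}S\), so every chain witnessing connectedness of \(S\) also witnesses connectedness of both relative products. Your extra inclusions \(S^{-1}\subseteq SS^{-1}\) and \(S^{-1}\subseteq S^{-1}S\) are valid but not needed, since \(S\subseteq T\) already yields \(S^{-1}\subseteq T^{-1}\) and hence \(S\cup S^{-1}\subseteq T\cup T^{-1}\).
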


\begin{proof}
Let \(\mathcal B=\langle B,S\rangle\). Since \(S\) is reflexive,
\[
S
\subseteq
SS^{-1}
\qquad\text{and}\qquad
S
\subseteq
S^{-1}S.
\]
Equation~\eqref{eq:mckenzie-relative-product} gives these inclusions. Every sequence witnessing connectedness of \(S\) also witnesses connectedness of \(SS^{-1}\) and \(S^{-1}S\). Hence \(\mathcal B\in\mathcal Q\).
\end{proof}

McKenzie also defines the set \(A(\mathcal M)\) associated to a structure \(\mathcal M=\langle A,(R_t)_{t\in T}\rangle\). He defines \(A(\mathcal M)\) to be the set of all finitary relations \(R\) on \(A\) such that every decomposition function of \(\mathcal M\) is also a decomposition function of \(\langle A,R\rangle\)~\cite[p.~98]{mckenzie1971cardinal}.

McKenzie proves the following theorem~\cite[p.~98, Theorem~9.2]{mckenzie1971cardinal}.

\begin{theorem}[McKenzie]
\label{thm:mckenzie-aq}
Suppose that there exists a binary relation \(R\in A(\mathcal M)\) such that \(\langle A,R\rangle\) is thin and belongs to \(\mathcal Q\). Then \(\mathcal M\) has the strict refinement property.
\end{theorem}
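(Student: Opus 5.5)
The statement is McKenzie's Theorem~9.2, and the paper uses it as a black box. If I were to reprove it, I would reduce it to the functional criterion for the strict refinement property, Equation~\eqref{eq:mckenzie-srp-functional} (McKenzie's Theorem~1.4). So it suffices to show
\[
f_x(g_y(z))=g_y(f_x(z))
\]
for all \(f,g\in\mathrm{DF}(\mathcal M)\) and all \(x,y,z\in A\). Since \(R\in A(\mathcal M)\), every decomposition function of \(\mathcal M\) is also a decomposition function of the binary structure \(\mathcal B=\langle A,R\rangle\), so \(\mathrm{DF}(\mathcal M)\subseteq\mathrm{DF}(\mathcal B)\). It therefore suffices to prove the commutation identity for all \(f,g\in\mathrm{DF}(\mathcal B)\). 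This reduces the theorem to a single binary relation \(R\) that is thin and lies in \(\mathcal Q\).

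Next I would record how the quasi-orders \(\leq_\ell\) and \(\leq_r\) behave under a direct decomposition \(\mathcal B\cong\mathcal B_1\times\mathcal B_2\).

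\begin{itemize}
\item The relations \(RR^{-1}\) and \(R^{-1}R\) are connected, so every element has both an \(R\)-successor and an \(R\)-predecessor. The same then holds in each factor.
\item Using this and the product rule~\eqref{eq:mckenzie-product-relation}, I would check that \(a\leq_\ell b\) in the product holds if and only if \(a(1)\leq_\ell b(1)\) and \(a(2)\leq_\ell b(2)\), and similarly for \(\leq_r\). The nonemptiness of successor and predecessor sets is exactly what prevents one coordinate from being vacuous.
\end{itemize}

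Consequently, the equivalence \(\sim\) of Equation~\eqref{eq:mckenzie-skeleton-equivalence} is computed coordinatewise. Thinness of \(\mathcal B\) forces both factors to be thin, and it also forces the factors to lie in \(\mathcal Q\).

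For the core step, fix decomposition functions \(f,g\) with factor pairs \((F,F')\) and \((G,G')\). Set
\[
w=f_x(g_y(z)),\qquad w'=g_y(f_x(z)).
\]
I would aim to show \(w\leq_\ell w'\), \(w'\leq_\ell w\), \(w\leq_r w'\) and \(w'\leq_r w\). Thinness then gives \(w=w'\). For \(w\leq_\ell w'\), I would take \(u\) with \(wRu\) and try to produce \(w'Ru\). I would do this by using that \(f\) and \(g\) preserve \(R\) in each argument, which holds because each is a homomorphism \(\mathcal B^2\to\mathcal B\) when viewed through its factor decomposition. The intended combination is to apply \(f\) and \(g\) to suitable pairs of \(R\)-edges and so move the \(F\)- and \(G\)-coordinates of \(w\) onto those of \(w'\).

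The main obstacle is that this direct manipulation only handles pairs of points that are already joined by an edge of \(RR^{-1}\) or \(R^{-1}R\). I would therefore propagate along chains, using connectedness of \(RR^{-1}\) and \(R^{-1}R\). Concretely, I would first prove the identity when \(x\) and \(y\) are related through a single step of \(RR^{-1}\) or \(R^{-1}R\), and then induct on the length of a connecting sequence, using the composition rules \(f_x f_{x'}=f_x\) and \(f^{x} f^{x'}=f^{x}\). I expect this induction, which follows McKenzie's argument on pp.~96--98, to be the delicate part of the proof.
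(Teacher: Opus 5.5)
The paper gives no proof of this statement. It quotes it from McKenzie and uses it as a black box. Your first reduction is sound, and is presumably how McKenzie derives Theorem~9.2 from his result on thin structures in $\mathcal Q$. Since $R\in A(\mathcal M)$, you have $\mathrm{DF}(\mathcal M)\subseteq\mathrm{DF}(\langle A,R\rangle)$, so any identity in decomposition functions that characterizes strict refinement passes from $\langle A,R\rangle$ to $\mathcal M$.

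\textbf{The target identity is false.} Read literally, with independent $x$ and $y$, Equation~\eqref{eq:mckenzie-srp-functional} fails in every structure with at least two points. The decomposition $\mathrm{id}_A=A^2\times\mathrm{id}_A$ always exists, and its decomposition function is $f(u,v)=v$, so $f_x$ is the constant map with value $x$. Taking $g=f$ gives $f_x(g_y(z))=x$ but $g_y(f_x(z))=y$. More generally, for any $f=g$ the two sides lie in the $F'$-classes of $x$ and of $y$, so they differ whenever $x$ and $y$ are not $F'$-related.

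This has direct consequences for your plan. Your $w$ and $w'$ are in general distinct, so proving $w\sim w'$ and then applying thinness would prove $x=y$. In particular the base case of your chain induction ($x\,RR^{-1}\,y$ with $x\neq y$) is already false. A correct criterion must tie the base points together, for example $f_x(g_x(z))=g_x(f_x(z))$. That identity holds in the model case of a common four-factor refinement, where $f_x$ and $g_x$ simply overwrite coordinates with those of the same point $x$. Your argument has to be rebuilt around an identity of that kind.

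\textbf{The core step is not supplied.} Even with a corrected target, the substantive claim is left open. The phrases ``I would aim to show'' and ``try to produce,'' together with the deferral to McKenzie's pp.~96--98, leave unproved that for thin $R\in\mathcal Q$ the two composites have the same $R$-successor and $R$-predecessor sets. You give no mechanism that turns an edge $w\,R\,u$ into $w'\,R\,u$.

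\textbf{Joint versus separate preservation of $R$.} Be careful with ``$f$ and $g$ preserve $R$ in each argument.'' A decomposition function is a homomorphism $\langle A,R\rangle^2\to\langle A,R\rangle$, so $u\,R\,u'$ and $v\,R\,v'$ give $f(u,v)\,R\,f(u',v')$. But $f(u,v)\,R\,f(u',v)$ would need an $R$-loop in the $F'$-factor, and $\mathcal Q$ does not provide loops. For example, the loopless triangle $K_3$ and its direct powers are thin and lie in $\mathcal Q$. Any edge-moving step must therefore change both coordinates at once. That is exactly where the successor and predecessor sets and the connectivity of $RR^{-1}$ and $R^{-1}R$ have to be used.

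Your preliminary facts are correct: every element has a successor and a predecessor when $|A|>1$, and $\leq_\ell$, $\leq_r$ and $\sim$ are computed coordinatewise. As written, however, they are not connected to the core step.
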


\section{The Classification of Square Metric Spaces} \label{sec:Classification:Square}

When we only know the metric space \((X,d)\), a possible product representation does not provide its coordinates. We use equivalence relations on \(X\) as candidates for this missing coordinate information. Their quotient spaces serve as candidate coordinate factors, and their classes serve as candidate coordinate fibers. Any reconstruction must recover the factor spaces, the points of the product, and the metric. Our goal is to solve these reconstruction problems for finite \(\ell^\infty\)-products and finite \(\ell^p\)-products with \(1\leq p<\infty\).

Theorem~\ref{thm:two-factor-product-classification} characterizes product presentations through equivalence relations \(E_1,\dots,E_k\) on \(X\), where \(k\geq2\). The quotient distance induced by each \(E_j\) must define a metric on \(X/E_j\), giving the candidate coordinate factors. Each choice of one equivalence class from every quotient \(X/E_j\) must have intersection consisting of exactly one point, which reconstructs points through quotient coordinates. The final requirement reconstructs the metric by requiring \(d(x,y)\) to equal the maximum of the corresponding quotient distances in the \(\ell^\infty\)-product case, or the \(\ell^p\)-product formula determined by the quotient distances when \(1\leq p<\infty\). Every product presentation yields such equivalence relations. Conversely, such equivalence relations make the quotient-coordinate map a bijective isometry onto the corresponding product.

Metric-product uniqueness and cancellation results compare the factors arising from product decompositions and product isometries \cite{moszynska1992uniqueness,herburt1994there,tardif1992prefibers,avgustinovich2000cartesian}. We ask the prior recognition question of whether the metric space \((X,d)\) itself contains enough information to reconstruct product coordinates. We use equivalence relations and quotient metrics as intrinsic replacements for the missing product data.

We reduce product recognition for \((X,d)\) to intrinsic conditions on equivalence relations and quotient metrics on \(X\), without assuming a product representation at the outset. We extend this reconstruction to powers by requiring the quotient factors to share a common metric model, and to self-powers by requiring the common model to be isometric to \((X,d)\). The same quotient reconstruction framework handles products, powers, and self-powers.


All metric spaces in this section are nonempty.

Let $E$ be an equivalence relation on a metric space $(X,d)$. For classes $[x]_E,[y]_E\in X/E$, define
\begin{equation}\label{eq:quotient-distance}
\bar d_E([x]_E,[y]_E)
:=
\inf\{d(x',y'):x' E x,\ y' E y\}.
\end{equation}
The right side depends only on the classes $[x]_E$ and $[y]_E$, since replacing $x$ by any representative of $[x]_E$ and replacing $y$ by any representative of $[y]_E$ leaves the admissible set unchanged. When $\bar d_E$ is a metric on $X/E$, we call it the quotient metric induced by $E$.

\begin{lemma}\label{lem:quotient-transfer}
Let $(X,d_X)$ and $(Y,d_Y)$ be metric spaces. Let $E$ and $F$ be equivalence relations on $X$ and $Y$. Let $\Phi:X\to Y$ be a bijective isometry, and assume that $x E y$ if and only if $\Phi(x) F \Phi(y)$. Define $\Theta:X/E\to Y/F$ by $\Theta([x]_E):=[\Phi(x)]_F$. Then $\Theta$ is a bijection, and for all $x,y\in X$,
\[
\bar d_E([x]_E,[y]_E)
=
\bar d_F(\Theta([x]_E),\Theta([y]_E)).
\]
Consequently, $\Theta$ is an isometry whenever $\bar d_E$ and $\bar d_F$ are metrics.
\end{lemma}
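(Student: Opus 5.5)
The plan is to verify three things in order: that \(\Theta\) is well defined, that it is bijective, and that it transports the admissible sets defining the quotient distance.

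For well-definedness and injectivity we use the hypothesis \(xEy\iff\Phi(x)F\Phi(y)\). The forward implication shows that \([\Phi(x)]_F\) depends only on \([x]_E\), so \(\Theta\) is well defined. The reverse implication gives injectivity: \(\Theta([x]_E)=\Theta([y]_E)\) means \(\Phi(x)F\Phi(y)\), hence \(xEy\). Surjectivity follows from surjectivity of \(\Phi\), since every class \([w]_F\) with \(w\in Y\) equals \([\Phi(x)]_F\) for \(x=\Phi^{-1}(w)\).

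For the distance identity, fix \(x,y\in X\) and compare the two admissible sets in Equation~\eqref{eq:quotient-distance}. We claim that
\[
\{d_X(x',y'):x'Ex,\ y'Ey\}
=
\{d_Y(u,v):uF\Phi(x),\ vF\Phi(y)\}.
\]
Given \(x'Ex\) and \(y'Ey\), set \(u=\Phi(x')\) and \(v=\Phi(y')\). The hypothesis gives \(uF\Phi(x)\) and \(vF\Phi(y)\), and \(d_Y(u,v)=d_X(x',y')\) because \(\Phi\) is an isometry. Conversely, given \(uF\Phi(x)\) and \(vF\Phi(y)\), bijectivity lets us write \(u=\Phi(x')\) and \(v=\Phi(y')\). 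The reverse direction of the hypothesis then yields \(x'Ex\) and \(y'Ey\), and \(\Phi\) again preserves the distance. Since the two sets coincide, their infima agree, which is exactly \(\bar d_E([x]_E,[y]_E)=\bar d_F(\Theta([x]_E),\Theta([y]_E))\).

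The last claim is then immediate. When both quotient distances are metrics, \(\Theta\) is a distance-preserving bijection, hence an isometry. The only point needing care is the converse inclusion of admissible sets. There surjectivity of \(\Phi\) and the reverse implication of the hypothesis are both essential; an isometric embedding that is not onto would only give one inequality between the infima.
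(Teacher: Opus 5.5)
Your proposal is correct and follows essentially the same route as the paper: well-definedness and injectivity come from the two directions of \(xEy\iff\Phi(x)F\Phi(y)\), surjectivity comes from that of \(\Phi\), and the distance identity comes from identifying the admissible sets in Equation~\eqref{eq:quotient-distance}. You also spell out the set equality that the paper's chain of infima uses without comment when it passes from \(\{d_Y(\Phi(x'),\Phi(y')):x'Ex,\ y'Ey\}\) to \(\{d_Y(u,v):uF\Phi(x),\ vF\Phi(y)\}\), and that step genuinely requires both bijectivity of \(\Phi\) and the reverse implication, as you note.
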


\begin{proof}
The equivalence $x E y$ if and only if $\Phi(x) F \Phi(y)$ shows that $\Theta$ is well-defined and injective. Let $A$ be an $F$-class. Since $\Phi$ is surjective, we choose $x\in X$ such that $\Phi(x)\in A$. Then $\Theta([x]_E)=A$. Hence $\Theta$ is surjective.

For all $x,y\in X$,
\[
\begin{aligned}
\bar d_E([x]_E,[y]_E)
&=
\inf\{d_X(x',y'):x' E x,\ y' E y\} \\
&=
\inf\{d_Y(\Phi(x'),\Phi(y')):x' E x,\ y' E y\} \\
&=
\inf\{d_Y(u,v):u F \Phi(x),\ v F \Phi(y)\} \\
&=
\bar d_F([\Phi(x)]_F,[\Phi(y)]_F).
\end{aligned}
\]
The definition of $\Theta$ gives $\Theta([x]_E)=[\Phi(x)]_F$ and $\Theta([y]_E)=[\Phi(y)]_F$. Therefore
$\bar d_E([x]_E,[y]_E)
=
\bar d_F(\Theta([x]_E),\Theta([y]_E))$.
If $\bar d_E$ and $\bar d_F$ are metrics, then the previous identity shows that $\Theta$ preserves distances.
\end{proof}

\begin{lemma}\label{lem:singleton-reconstruction}
Let $(X,d)$ be a metric space, let $k\geq 2$, and let $E_1,\dots,E_k$ be equivalence relations on $X$. Assume that, for every choice of classes $C_j\in X/E_j$, where $1\leq j\leq k$, the intersection $\bigcap_{j=1}^k C_j$ consists of exactly one point. Define $\Psi:X\to\prod_{j=1}^k X/E_j$ by $\Psi(x):=([x]_{E_1},\dots,[x]_{E_k})$. Then $\Psi$ is bijective.
\end{lemma}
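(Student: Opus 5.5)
The plan is to verify surjectivity and injectivity of \(\Psi\) separately, each coming directly from one half of the hypothesis that every intersection \(\bigcap_{j=1}^k C_j\) has \emph{exactly} one point. No metric structure is used; the statement is purely set-theoretic about the equivalence relations \(E_1,\dots,E_k\).

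For surjectivity, take an arbitrary element \((C_1,\dots,C_k)\in\prod_{j=1}^k X/E_j\). By hypothesis the intersection \(\bigcap_{j=1}^k C_j\) is nonempty, so we choose a point \(x\) in it. Since \(x\in C_j\) and \(C_j\) is an \(E_j\)-class, we have \([x]_{E_j}=C_j\) for each \(1\leq j\leq k\). Hence \(\Psi(x)=(C_1,\dots,C_k)\).

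For injectivity, suppose \(\Psi(x)=\Psi(y)\). Then \([x]_{E_j}=[y]_{E_j}\) for every \(j\). Set \(C_j:=[x]_{E_j}\); both \(x\) and \(y\) lie in \(\bigcap_{j=1}^k C_j\). This intersection consists of exactly one point by hypothesis, so \(x=y\).

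There is no real obstacle. The only points needing care are that a point lies in an equivalence class exactly when that class is its own class, and that the uniqueness half of the hypothesis is applied to the particular tuple of classes \(\Psi(x)\). Nonemptiness of \(X\), assumed throughout the section, guarantees that the quotients \(X/E_j\) and the product are nonempty, so there are no degenerate cases to treat.
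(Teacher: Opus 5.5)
Your proof is correct and matches the paper's argument: injectivity comes from the uniqueness half of the singleton-intersection hypothesis applied to the classes $[x]_{E_j}=[y]_{E_j}$, and surjectivity comes from picking the point of $\bigcap_{j=1}^k C_j$ and noting that $[x]_{E_j}=C_j$. Your explicit justification of that last identity is a detail the paper leaves implicit, and your remark that no metric structure is used is accurate.
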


\begin{proof}
Assume that $\Psi(x)=\Psi(y)$. Then $[x]_{E_j}=[y]_{E_j}$ for every $j\in\{1,\dots,k\}$. Hence both $x$ and $y$ lie in $\bigcap_{j=1}^k [x]_{E_j}$. The singleton-intersection hypothesis therefore gives $x=y$. Hence $\Psi$ is injective.

Let $(C_1,\dots,C_k)\in\prod_{j=1}^k X/E_j$. Let $x$ denote the unique point in $\bigcap_{j=1}^k C_j$. Then $\Psi(x)=(C_1,\dots,C_k)$. Hence $\Psi$ is surjective.
\end{proof}

\subsection{The Two-Factor Case}

We begin with a given square product presentation \(P\times P\). We show that its coordinate fibers define equivalence relations whose classes recover points by intersection, whose quotient spaces recover the factors, and whose quotient distances recover the product metric.

\begin{lemma}\label{lem:product-intersection}
Let $(P,d_P)$ be a metric space. Define equivalence relations $E_0,E_1$ on $P\times P$ by declaring $(p,q) E_0 (p',q')$ exactly when $p=p'$, and $(p,q) E_1 (p',q')$ exactly when $q=q'$. For every class $C_0$ of $E_0$ and every class $C_1$ of $E_1$, the intersection $C_0\cap C_1$ consists of exactly one point.
\end{lemma}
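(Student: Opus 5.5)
The plan is to describe the classes of $E_0$ and $E_1$ explicitly and then compute their intersection directly. First, $E_0$ and $E_1$ are equivalence relations, because each is the kernel of a coordinate map: $(p,q)E_0(p',q')$ holds exactly when $\pi_0(p,q)=\pi_0(p',q')$, where $\pi_0(p,q)=p$, and $E_1$ is the kernel of $\pi_1(p,q)=q$ in the same way. Reflexivity, symmetry and transitivity are therefore inherited from equality in $P$.

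Next I will identify the classes. Every class of $E_0$ has the form $C_0=\{a\}\times P$ for some $a\in P$. If $C_0=[(p,q)]_{E_0}$, then $C_0$ consists of the pairs $(p',q')$ with $p'=p$, so we may take $a=p$. In the same way, every class of $E_1$ has the form $C_1=P\times\{b\}$ for some $b\in P$.

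The last step is the intersection. We have
\[
C_0\cap C_1=(\{a\}\times P)\cap(P\times\{b\}).
\]
A pair $(x,y)$ lies in this set exactly when $x=a$ and $y=b$. Hence $C_0\cap C_1=\{(a,b)\}$, which is nonempty and contains exactly one point.

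No step is a real obstacle. The only point that needs care is stating precisely that an arbitrary class is a full fiber $\{a\}\times P$ or $P\times\{b\}$. Once that is recorded, nonemptiness of the intersection gives surjectivity in the later application of Lemma~\ref{lem:singleton-reconstruction}, and uniqueness of the point gives injectivity.
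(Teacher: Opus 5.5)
Your proof is correct and follows the paper's argument: both identify the classes as the full fibers $\{a\}\times P$ and $P\times\{b\}$ and read off $C_0\cap C_1=\{(a,b)\}$. Your extra check that $E_0,E_1$ are equivalence relations, as kernels of the coordinate projections, is harmless and makes explicit what the paper takes for granted.
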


\begin{proof}
Each class of $E_0$ has the form $\{p\}\times P$ for some $p\in P$, and each class of $E_1$ has the form $P\times\{q\}$ for some $q\in P$. Therefore $C_0\cap C_1=\{(p,q)\}$.
\end{proof}

\begin{lemma}\label{lem:product-structure}
Let $(P,d_P)$ be a metric space, and equip $P\times P$ with the metric
\begin{equation}\label{eq:max-product-metric}
d_{P\times P}\bigl((p_0,p_1),(q_0,q_1)\bigr)
=
\max\{d_P(p_0,q_0),d_P(p_1,q_1)\}.
\end{equation}
Let $E_0,E_1$ be the equivalence relations from Lemma~\ref{lem:product-intersection}. Then the quotient distances $\bar d_{E_0}$ and $\bar d_{E_1}$ are metrics on $(P\times P)/E_0$ and $(P\times P)/E_1$. Moreover, the maps $[(p,q)]_{E_0}\mapsto p$ and $[(p,q)]_{E_1}\mapsto q$ define bijective isometries onto $(P,d_P)$, and
\begin{equation}\label{eq:product-quotient-reconstruction}
d_{P\times P}\bigl((p,q),(p',q')\bigr)
=
\max\bigl\{
\bar d_{E_0}([(p,q)]_{E_0},[(p',q')]_{E_0}),
\bar d_{E_1}([(p,q)]_{E_1},[(p',q')]_{E_1})
\bigr\}.
\end{equation}
\end{lemma}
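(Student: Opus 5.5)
The plan is to compute the quotient distance \(\bar d_{E_0}\) explicitly from Equation~\eqref{eq:quotient-distance} and identify it with \(d_P\) on the first coordinate; the case of \(E_1\) is symmetric. Every \(E_0\)-class has the form \(\{p\}\times P\), as in the proof of Lemma~\ref{lem:product-intersection}. So
\[
\bar d_{E_0}([(p,q)]_{E_0},[(p',q')]_{E_0})
=\inf\{\max\{d_P(p,p'),d_P(s,t)\}:s,t\in P\}.
\]
Each term is at least \(d_P(p,p')\). Taking \(s=t\) (possible since \(P\) is nonempty) gives the value \(d_P(p,p')\) exactly. Hence the infimum is attained and equals \(d_P(p,p')\).

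Next I check that the map \([(p,q)]_{E_0}\mapsto p\) is a bijection onto \(P\). It is well defined and injective because \(E_0\)-equivalence means equality of first coordinates. It is surjective because \((p,q)\) exists for any \(p\), using any \(q\in P\). By the displayed identity, \(\bar d_{E_0}\) is the pullback of the metric \(d_P\) along this bijection. It therefore inherits all metric axioms; in particular, definiteness holds because distinct classes have distinct first coordinates \(p\neq p'\), so \(d_P(p,p')>0\). Thus \(\bar d_{E_0}\) is a metric and the map is a bijective isometry onto \((P,d_P)\). The same argument with coordinates swapped handles \(E_1\) and the map \([(p,q)]_{E_1}\mapsto q\).

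Finally, Equation~\eqref{eq:product-quotient-reconstruction} follows by substituting \(\bar d_{E_0}=d_P(p,p')\) and \(\bar d_{E_1}=d_P(q,q')\) into the max formula \eqref{eq:max-product-metric}. The only mildly delicate step is evaluating the infimum. For that we need the lower bound \(\max\{a,b\}\geq a\) together with the diagonal choice \(s=t\) to attain it, and this choice is where nonemptiness of \(P\) is used.
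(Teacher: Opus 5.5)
Your proposal is correct and follows essentially the same route as the paper: identify the $E_0$-classes as $\{p\}\times P$, bound the infimum below by $d_P(p,p')$ and attain it with a diagonal choice $u=v$, and check that the coordinate map is a well-defined bijection using nonemptiness of $P$. The $E_1$ case is handled symmetrically, and the reconstruction identity follows by substituting into the max formula.
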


\begin{proof}
The map $[(p,q)]_{E_0}\mapsto p$ is well-defined because all representatives of $[(p,q)]_{E_0}$ have first coordinate $p$. The definition of $E_0$ shows that the map is injective. Since $P$ is nonempty, every $p\in P$ occurs as the first coordinate of some point of $P\times P$. Hence the map is surjective.

Every representative of $[(p,q)]_{E_0}$ has the form $(p,u)$, and every representative of $[(p',q')]_{E_0}$ has the form $(p',v)$. Therefore Equation~\eqref{eq:max-product-metric} gives
\[
\bar d_{E_0}([(p,q)]_{E_0},[(p',q')]_{E_0})
=
\inf_{u,v\in P}
\max\{d_P(p,p'),d_P(u,v)\}.
\]
The right side is at least $d_P(p,p')$. If we choose $u=v$, then $\max\{d_P(p,p'),d_P(u,v)\}=d_P(p,p')$. Hence $\bar d_{E_0}([(p,q)]_{E_0},[(p',q')]_{E_0})=d_P(p,p')$.

The same well-definedness and bijectivity argument applied to second coordinates gives $\bar d_{E_1}([(p,q)]_{E_1},[(p',q')]_{E_1})=d_P(q,q')$. These identities show that the displayed maps are bijective isometries and that $\bar d_{E_0}$ and $\bar d_{E_1}$ are metrics. Equation~\eqref{eq:max-product-metric} and the quotient-distance identities give Equation~\eqref{eq:product-quotient-reconstruction}.
\end{proof}

\begin{definition}\label{def:distance-decomposition}
Let $(X,d)$ be a metric space, and let $E_0,E_1$ be equivalence relations on $X$. We say that $(E_0,E_1)$ satisfies the distance decomposition identity if, for all $x,y\in X$,
\begin{equation}\label{eq:distance-decomposition-identity}
d(x,y)
=
\max\{
\bar d_{E_0}([x]_{E_0},[y]_{E_0}),
\bar d_{E_1}([x]_{E_1},[y]_{E_1})
\}.
\end{equation}
\end{definition}

\begin{figure}[ht]
\centering
\includegraphics[width=0.72\textwidth]{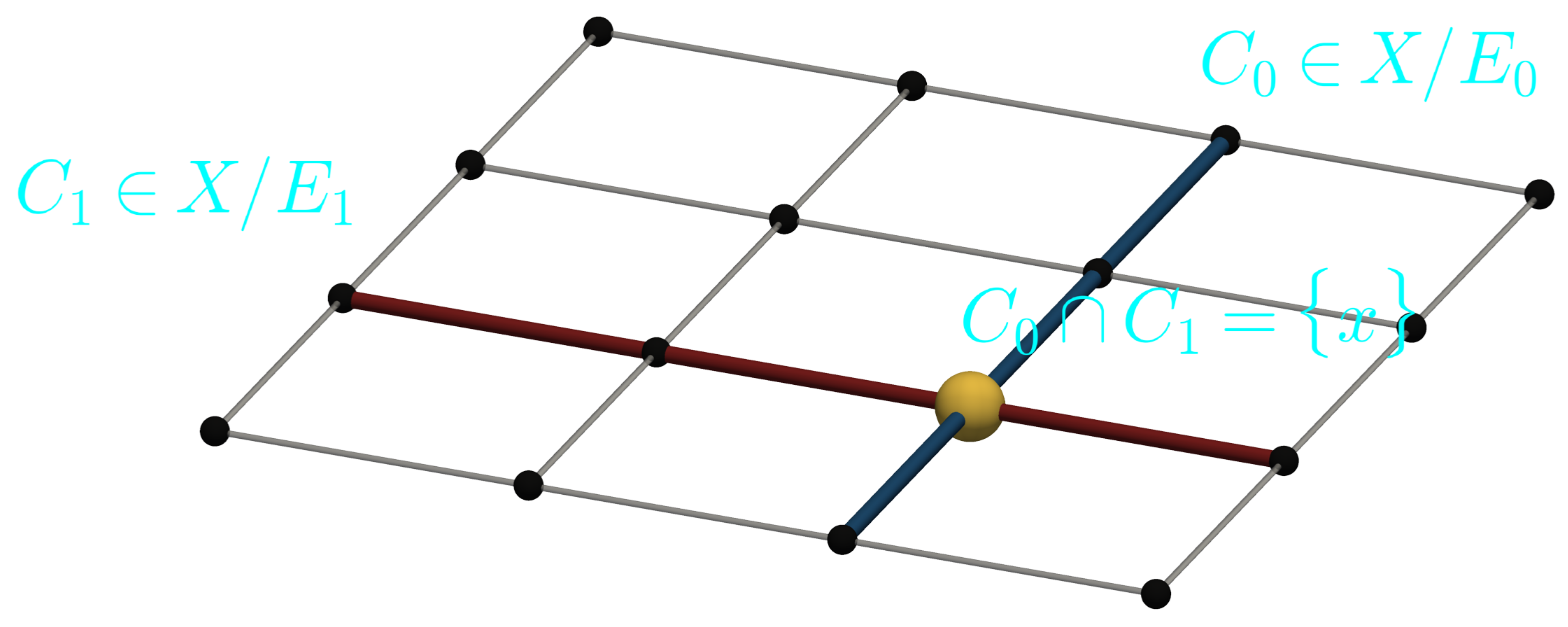}
\caption{
A two-factor product recovered from quotient fibers. The coordinates of the product space are not assumed in advance. The equivalence classes \(C_0\in X/E_0\) and \(C_1\in X/E_1\) determine the hidden coordinate directions, and their singleton intersection \(C_0\cap C_1=\{x\}\) reconstructs the point \(x\in X\).
}
\label{fig:square-coordinate-recovery}
\end{figure}

Figure~\ref{fig:square-coordinate-recovery} shows how the equivalence classes satisfying these conditions replace the missing coordinate fibers of a product presentation.

\begin{lemma}\label{lem:relations-to-product}
Let $(X,d)$ be a metric space, and let $E_0,E_1$ be equivalence relations on $X$. Assume that the quotient distances $\bar d_{E_0}$ and $\bar d_{E_1}$ are metrics on $X/E_0$ and $X/E_1$. Assume also that every class of $E_0$ meets every class of $E_1$ in exactly one point and that $(E_0,E_1)$ satisfies the distance decomposition identity. Equip $(X/E_0)\times(X/E_1)$ with the metric
\begin{equation}\label{eq:quotient-product-metric}
d_\times((C_0,C_1),(D_0,D_1))
=
\max\{
\bar d_{E_0}(C_0,D_0),
\bar d_{E_1}(C_1,D_1)
\}.
\end{equation}
Define $\Psi:X\to(X/E_0)\times(X/E_1)$ by $\Psi(x):=([x]_{E_0},[x]_{E_1})$. Then $\Psi$ is a bijective isometry from $(X,d)$ onto $\bigl((X/E_0)\times(X/E_1),d_\times\bigr)$.
\end{lemma}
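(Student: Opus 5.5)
The plan is to establish bijectivity and distance preservation separately. Both follow almost directly from the hypotheses and from Lemma~\ref{lem:singleton-reconstruction}.

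For bijectivity, apply Lemma~\ref{lem:singleton-reconstruction} with \(k=2\) to the pair \(E_0,E_1\). The hypothesis that every class of \(E_0\) meets every class of \(E_1\) in exactly one point is exactly the singleton-intersection condition of that lemma for two relations. The map \(\Psi\) defined in the statement is the quotient-coordinate map of that lemma, up to indexing from \(0\). Hence \(\Psi\) is a bijection from \(X\) onto \((X/E_0)\times(X/E_1)\).

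For the isometry property, first note that \(d_\times\) is a metric on \((X/E_0)\times(X/E_1)\). This holds because \(\bar d_{E_0}\) and \(\bar d_{E_1}\) are metrics by hypothesis, and the maximum of metrics on the two factors is a metric on the product: positivity, symmetry and the triangle inequality pass through the coordinatewise maximum. Then, for \(x,y\in X\), unwinding definitions gives
\[
d_\times(\Psi(x),\Psi(y))
=
\max\{\bar d_{E_0}([x]_{E_0},[y]_{E_0}),\bar d_{E_1}([x]_{E_1},[y]_{E_1})\}.
\]
By the distance decomposition identity of Definition~\ref{def:distance-decomposition}, the right side equals \(d(x,y)\). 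So \(\Psi\) preserves distances, and together with bijectivity it is a bijective isometry.

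There is no real obstacle. The only point needing care is checking that the target is a genuine metric space, namely that \(d_\times\) is a metric, and this is the routine verification above.
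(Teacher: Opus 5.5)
Your proposal is correct and follows the same route as the paper: bijectivity comes from Lemma~\ref{lem:singleton-reconstruction} with \(k=2\), and distance preservation comes from combining Equation~\eqref{eq:quotient-product-metric} with the distance decomposition identity~\eqref{eq:distance-decomposition-identity}. Your extra check that \(d_\times\) is a metric is harmless, since the paper simply takes this as given in the statement.
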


\begin{proof}
Lemma~\ref{lem:singleton-reconstruction}, applied with $k=2$, shows that $\Psi$ is bijective. Equations~\eqref{eq:quotient-product-metric} and \eqref{eq:distance-decomposition-identity} give
\[
d_\times(\Psi(x),\Psi(y))
=
d(x,y).
\]
Therefore $\Psi$ is a bijective isometry.
\end{proof}

\begin{theorem}\label{thm:two-factor-product-classification}
Let $(X,d)$ be a metric space. The following are equivalent.
\begin{enumerate}
\item\label{item:two-factor-product}
There exists a metric space $(P,d_P)$ such that $(X,d)$ is isometric to $P\times P$ with the metric from Equation~\eqref{eq:max-product-metric}.

\item\label{item:two-factor-relations}
There exist equivalence relations $E_0,E_1$ on $X$ such that:
\begin{enumerate}
\item\label{item:two-factor-metrics}
the quotient distances $\bar d_{E_0}$ and $\bar d_{E_1}$ are metrics on $X/E_0$ and $X/E_1$;

\item\label{item:two-factor-intersection}
every class of $E_0$ meets every class of $E_1$ in exactly one point;

\item\label{item:two-factor-decomposition}
the pair $(E_0,E_1)$ satisfies the distance decomposition identity;

\item\label{item:two-factor-isometric}
the quotient metric spaces $(X/E_0,\bar d_{E_0})$ and $(X/E_1,\bar d_{E_1})$ are isometric.
\end{enumerate}
\end{enumerate}
\end{theorem}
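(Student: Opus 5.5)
We prove both implications, using the lemmas already established. For (1)\(\Rightarrow\)(2), fix a bijective isometry \(\Phi:(X,d)\to(P\times P,d_{P\times P})\), with \(d_{P\times P}\) the metric of Equation~\eqref{eq:max-product-metric}. Let \(F_0,F_1\) be the equivalence relations on \(P\times P\) from Lemma~\ref{lem:product-intersection}. Pull them back to \(X\) by declaring \(x E_i y\) exactly when \(\Phi(x) F_i \Phi(y)\). Then \(x E_i y\) holds if and only if \(\Phi(x)F_i\Phi(y)\), so Lemma~\ref{lem:quotient-transfer} applies.

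Lemma~\ref{lem:quotient-transfer} gives bijections \(\Theta_i:X/E_i\to(P\times P)/F_i\) that preserve quotient distances. By Lemma~\ref{lem:product-structure}, \(\bar d_{F_i}\) is a metric, so \(\bar d_{E_i}\) is also a metric; this is (a). Composing \(\Theta_i\) with the isometries \((P\times P)/F_i\to P\) from Lemma~\ref{lem:product-structure} gives isometries \(X/E_i\cong P\). Hence the two quotients are isometric to each other, which is (d).

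For (b), the classes of \(E_i\) are the \(\Phi\)-preimages of the classes of \(F_i\). Since \(\Phi\) is a bijection, the singleton-intersection property of Lemma~\ref{lem:product-intersection} transfers to \(X\). For (c), take \(x,y\in X\) and combine three facts:
\begin{itemize}
\item \(d(x,y)=d_{P\times P}(\Phi(x),\Phi(y))\), since \(\Phi\) is an isometry;
\item Equation~\eqref{eq:product-quotient-reconstruction} expresses \(d_{P\times P}(\Phi(x),\Phi(y))\) as the maximum of the two \(\bar d_{F_i}\) quotient distances;
\item the distance-preservation identity of Lemma~\ref{lem:quotient-transfer} converts each \(\bar d_{F_i}\) term into the corresponding \(\bar d_{E_i}\) term.
\end{itemize}
This yields Equation~\eqref{eq:distance-decomposition-identity}.

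For (2)\(\Rightarrow\)(1), conditions (a)--(c) are exactly the hypotheses of Lemma~\ref{lem:relations-to-product}. Hence \(\Psi(x)=([x]_{E_0},[x]_{E_1})\) is a bijective isometry onto \(\bigl((X/E_0)\times(X/E_1),d_\times\bigr)\). Set \(P=X/E_0\) with metric \(\bar d_{E_0}\), and use (d) to fix an isometry \(\tau:X/E_1\to X/E_0\). The map \((C_0,C_1)\mapsto(C_0,\tau(C_1))\) is a bijection. It is an isometry for the max metrics because it acts coordinatewise by isometries. Composing it with \(\Psi\) gives the required isometry \(X\cong P\times P\).

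No step is a genuine obstacle. The only point needing care is in (1)\(\Rightarrow\)(2): we must verify the hypothesis of Lemma~\ref{lem:quotient-transfer}, which holds by construction of \(E_i\). We must also check that surjectivity of \(\Phi\) is what makes every \(E_0\)-class meet every \(E_1\)-class, rather than merely injectivity.
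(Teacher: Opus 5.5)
Your proposal is correct and follows essentially the same route as the paper. You pull \(F_0,F_1\) back along \(\Phi\) and transfer the metric, singleton-intersection, and distance-identity properties via Lemma~\ref{lem:quotient-transfer} and Lemma~\ref{lem:product-structure}; for the converse you compose \(\Psi\) from Lemma~\ref{lem:relations-to-product} with the coordinatewise isometry \((C_0,C_1)\mapsto(C_0,\tau(C_1))\). Deriving the metric property of \(\bar d_{E_i}\) from the distance-preservation identity together with bijectivity of \(\Theta_i\) is slightly more careful than the paper's wording. The paper instead cites the ``isometry'' clause of Lemma~\ref{lem:quotient-transfer}, which presupposes that both quotient distances are already metrics.
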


\begin{proof}
Assume condition~\ref{item:two-factor-product}. Let $\Phi:X\to P\times P$ be a bijective isometry, and let $F_0,F_1$ denote the equivalence relations from Lemma~\ref{lem:product-intersection}. Let $\pi_0,\pi_1:P\times P\to P$ denote the coordinate projections. Define equivalence relations $E_0,E_1$ on $X$ by declaring $x E_0 y$ exactly when $\pi_0(\Phi(x))=\pi_0(\Phi(y))$, and $x E_1 y$ exactly when $\pi_1(\Phi(x))=\pi_1(\Phi(y))$.

For each $i\in\{0,1\}$, define $\Theta_i([x]_{E_i}):=[\Phi(x)]_{F_i}$. Lemma~\ref{lem:quotient-transfer} shows that $\Theta_i$ is a bijective isometry from $(X/E_i,\bar d_{E_i})$ onto $((P\times P)/F_i,\bar d_{F_i})$. Lemma~\ref{lem:product-structure} gives condition~\ref{item:two-factor-metrics}. The same lemma shows that both quotient metric spaces are isometric to $(P,d_P)$. Hence condition~\ref{item:two-factor-isometric} holds.

Let $C_i\in X/E_i$ for each $i\in\{0,1\}$. The definition of $\Theta_i$ gives $\Phi(C_0\cap C_1)=\Theta_0(C_0)\cap\Theta_1(C_1)$. Lemma~\ref{lem:product-intersection} shows that the right side consists of exactly one point. Since $\Phi$ is bijective, the same holds for $C_0\cap C_1$. Hence condition~\ref{item:two-factor-intersection} holds.

Equation~\eqref{eq:product-quotient-reconstruction} and Lemma~\ref{lem:quotient-transfer} give, for all $x,y\in X$,
\[
\begin{aligned}
d(x,y)
&=
d_{P\times P}(\Phi(x),\Phi(y)) \\
&=
\max\{
\bar d_{F_0}([\Phi(x)]_{F_0},[\Phi(y)]_{F_0}),
\bar d_{F_1}([\Phi(x)]_{F_1},[\Phi(y)]_{F_1})
\} \\
&=
\max\{
\bar d_{E_0}([x]_{E_0},[y]_{E_0}),
\bar d_{E_1}([x]_{E_1},[y]_{E_1})
\}.
\end{aligned}
\]
Hence condition~\ref{item:two-factor-decomposition} holds.

Assume condition~\ref{item:two-factor-relations}. Lemma~\ref{lem:relations-to-product} shows that $\Psi(x):=([x]_{E_0},[x]_{E_1})$ defines a bijective isometry from $(X,d)$ onto $(X/E_0)\times(X/E_1)$ equipped with the metric from Equation~\eqref{eq:quotient-product-metric}. Define $(P,d_P):=(X/E_0,\bar d_{E_0})$. Condition~\ref{item:two-factor-isometric} gives a bijective isometry $\Theta:(X/E_1,\bar d_{E_1})\to(P,d_P)$.

Define $\Xi(C_0,C_1):=(C_0,\Theta(C_1))$. For all $(C_0,C_1),(D_0,D_1)\in(X/E_0)\times(X/E_1)$,
\[
\begin{aligned}
&d_{P\times P}(\Xi(C_0,C_1),\Xi(D_0,D_1)) \\
&=
\max\{
\bar d_{E_0}(C_0,D_0),
d_P(\Theta(C_1),\Theta(D_1))
\} \\
&=
\max\{
\bar d_{E_0}(C_0,D_0),
\bar d_{E_1}(C_1,D_1)
\} \\
&=
d_\times((C_0,C_1),(D_0,D_1)).
\end{aligned}
\]
Since $\Theta$ is bijective, $\Xi$ is a bijective isometry from $(X/E_0)\times(X/E_1)$, with the metric from Equation~\eqref{eq:quotient-product-metric}, onto $P\times P$ with the metric from Equation~\eqref{eq:max-product-metric}. Therefore the composition $\Xi\circ\Psi$ is a bijective isometry from $(X,d)$ onto $P\times P$ with the metric from Equation~\eqref{eq:max-product-metric}.
\end{proof}

We now specialize to the case in which both quotient metric spaces are isometric to the original metric space.

\begin{corollary}\label{cor:self-product-classification}
Let $(X,d)$ be a metric space. The following are equivalent.
\begin{enumerate}
\item\label{item:self-product-product}
The metric space $(X,d)$ is isometric to $X\times X$, where $X\times X$ carries the metric
\begin{equation}\label{eq:self-product-metric}
d_{X\times X}\bigl((x_0,x_1),(y_0,y_1)\bigr)
=
\max\{d(x_0,y_0),d(x_1,y_1)\}.
\end{equation}

\item\label{item:self-product-relations}
There exist equivalence relations $E_0,E_1$ on $X$ such that:
\begin{enumerate}
\item\label{item:self-product-metrics}
the quotient distances $\bar d_{E_0}$ and $\bar d_{E_1}$ are metrics on $X/E_0$ and $X/E_1$;

\item\label{item:self-product-intersection}
every class of $E_0$ meets every class of $E_1$ in exactly one point;

\item\label{item:self-product-decomposition}
the pair $(E_0,E_1)$ satisfies the distance decomposition identity;

\item\label{item:self-product-isometric}
each quotient metric space $(X/E_0,\bar d_{E_0})$ and $(X/E_1,\bar d_{E_1})$ is isometric to $(X,d)$.
\end{enumerate}
\end{enumerate}
\end{corollary}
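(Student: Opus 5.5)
The plan is to derive Corollary~\ref{cor:self-product-classification} directly from Theorem~\ref{thm:two-factor-product-classification} and Lemma~\ref{lem:product-structure}, taking care of the one extra feature: the common factor must be isometric to \((X,d)\) itself.

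For the forward implication, assume condition~\ref{item:self-product-product}. Then condition~\ref{item:two-factor-product} of Theorem~\ref{thm:two-factor-product-classification} holds with \((P,d_P):=(X,d)\), since the metric in Equation~\eqref{eq:self-product-metric} is Equation~\eqref{eq:max-product-metric} for this choice of \(P\). I will reuse the relations constructed in the proof of that theorem: fix a bijective isometry \(\Phi:X\to X\times X\) and let \(E_i\) be the pullback under \(\Phi\) of the coordinate relation \(F_i\). The proof of Theorem~\ref{thm:two-factor-product-classification} already gives conditions~\ref{item:self-product-metrics}, \ref{item:self-product-intersection}, and \ref{item:self-product-decomposition}. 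For condition~\ref{item:self-product-isometric}, I will compose two maps. By Lemma~\ref{lem:quotient-transfer}, \(\Theta_i\) is a bijective isometry from \((X/E_i,\bar d_{E_i})\) onto \(((X\times X)/F_i,\bar d_{F_i})\). By Lemma~\ref{lem:product-structure} with \(P=X\), the coordinate map \([(p,q)]_{F_i}\mapsto p\) (respectively \(q\)) is a bijective isometry onto \((X,d)\). The composite shows that each quotient is isometric to \((X,d)\).

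For the converse, assume condition~\ref{item:self-product-relations}. Conditions~\ref{item:self-product-metrics}--\ref{item:self-product-decomposition} together with Lemma~\ref{lem:relations-to-product} give the bijective isometry \(\Psi:X\to(X/E_0)\times(X/E_1)\) with the metric \(d_\times\). By condition~\ref{item:self-product-isometric}, we can choose bijective isometries \(\alpha_i:(X/E_i,\bar d_{E_i})\to(X,d)\). Define \(\Xi(C_0,C_1):=(\alpha_0(C_0),\alpha_1(C_1))\). The same computation as in the proof of Theorem~\ref{thm:two-factor-product-classification} shows that \(\Xi\) preserves the maximum metric, and \(\Xi\) is bijective because each \(\alpha_i\) is bijective. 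Hence \(\Xi\circ\Psi\) is a bijective isometry from \((X,d)\) onto \(X\times X\) with the metric of Equation~\eqref{eq:self-product-metric}.

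There is no real obstacle here. The only point needing care is in the forward direction: we must identify the quotients with \(X\) itself, not merely with some abstract \(P\). This holds because \(P=X\) is specified in advance, so Lemma~\ref{lem:product-structure} identifies each quotient with \(X\) without any further choice.
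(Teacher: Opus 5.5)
Your proof is correct and follows the paper's route. The forward direction combines the construction in the proof of Theorem~\ref{thm:two-factor-product-classification} with Lemma~\ref{lem:product-structure} for \(P=X\), and the converse goes through Lemma~\ref{lem:relations-to-product}. Your explicit map \(\Xi(C_0,C_1)=(\alpha_0(C_0),\alpha_1(C_1))\) in the converse is slightly more careful than the paper's bare citation of Theorem~\ref{thm:two-factor-product-classification}. That theorem's statement only produces some factor \(P\), and condition~\ref{item:self-product-isometric} is still needed to identify \(P\times P\) with \(X\times X\).
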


\begin{proof}
Assume condition~\ref{item:self-product-product}. The forward implication in Theorem~\ref{thm:two-factor-product-classification}, together with Lemma~\ref{lem:product-structure}, gives condition~\ref{item:self-product-relations}.

Assume condition~\ref{item:self-product-relations}. The hypotheses give condition~\ref{item:two-factor-relations}. Therefore Theorem~\ref{thm:two-factor-product-classification} shows that $(X,d)$ is isometric to $X\times X$ with the metric from Equation~\eqref{eq:self-product-metric}.
\end{proof}

\subsection{The Multi-Factor Case}

We now treat finite products with the max product metric.

\begin{lemma}\label{lem:k-product-intersection}
Let $k\geq 2$, let $(P_j,d_j)$ be metric spaces for $1\leq j\leq k$, and let $Y:=P_1\times\cdots\times P_k$. For each $j\in\{1,\dots,k\}$, define an equivalence relation $F_j$ on $Y$ by
\[
(p_1,\dots,p_k) F_j (q_1,\dots,q_k)
\quad\text{exactly when}\quad
p_j=q_j.
\]
For every choice of classes $C_j\in Y/F_j$, where $1\leq j\leq k$, the intersection $\bigcap_{j=1}^k C_j$ consists of exactly one point.
\end{lemma}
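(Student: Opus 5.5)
The plan is to describe the classes of each $F_j$ explicitly and then read off the intersection, exactly as in the proof of Lemma~\ref{lem:product-intersection}. I will first note that every class of $F_j$ has the form
\[
C_j=\{(q_1,\dots,q_k)\in Y: q_j=a_j\}
\]
for a unique $a_j\in P_j$. To see this, take any representative $(p_1,\dots,p_k)$ of $C_j$ and set $a_j:=p_j$. The definition of $F_j$ then shows that $C_j$ is precisely the set of tuples with $j$-th coordinate $a_j$. Conversely, every $a_j\in P_j$ arises in this way, because all the $P_i$ are nonempty.

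Next, given classes $C_j\in Y/F_j$ with associated values $a_j\in P_j$, I will compute
\[
\bigcap_{j=1}^k C_j=\{(q_1,\dots,q_k)\in Y: q_j=a_j\text{ for all }j\}.
\]
A tuple lies in this intersection exactly when each of its coordinates is prescribed. So the intersection is the singleton $\{(a_1,\dots,a_k)\}$. This set is nonempty because $(a_1,\dots,a_k)$ is an element of $Y$, and it contains at most one point because the coordinates are fixed.

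There is no real obstacle here. The only point needing care is well-definedness of $a_j$: a class determines its $j$-th coordinate value independently of the chosen representative, by the very definition of $F_j$. I will state this explicitly so that the choice of $a_j$ is legitimate.
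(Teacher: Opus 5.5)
Your proposal is correct and matches the paper's proof essentially line for line: each $F_j$-class is the set of tuples with a fixed $j$-th coordinate $a_j$, so the intersection is the singleton $\{(a_1,\dots,a_k)\}$. The remark that every $a_j\in P_j$ arises from some class is not needed here, since the classes $C_j$ are given, but it is harmless.
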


\begin{proof}
Each class $C_j$ has the form $\{p\in Y:p_j=a_j\}$ for a unique $a_j\in P_j$. Hence $\bigcap_{j=1}^k C_j=\{(a_1,\dots,a_k)\}$.
\end{proof}

\begin{lemma}\label{lem:k-product-structure}
Let $k\geq 2$, let $(P_j,d_j)$ be metric spaces for $1\leq j\leq k$, and let $Y:=P_1\times\cdots\times P_k$. Equip $Y$ with the metric
\begin{equation}\label{eq:k-product-metric}
d_Y(p,q)
=
\max_{1\leq j\leq k} d_j(p_j,q_j),
\end{equation}
where $p=(p_1,\dots,p_k)$ and $q=(q_1,\dots,q_k)$. Let $F_1,\dots,F_k$ be the equivalence relations from Lemma~\ref{lem:k-product-intersection}. Then, for each $j\in\{1,\dots,k\}$, the quotient distance $\bar d_{F_j}$ is a metric on $Y/F_j$, the map $[p]_{F_j}\mapsto p_j$ defines a bijective isometry from $Y/F_j$ onto $(P_j,d_j)$, and
\begin{equation}\label{eq:k-product-quotient-reconstruction}
d_Y(p,q)
=
\max_{1\leq j\leq k}
\bar d_{F_j}([p]_{F_j},[q]_{F_j}).
\end{equation}
\end{lemma}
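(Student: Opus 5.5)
The plan mirrors the proof of Lemma~\ref{lem:product-structure}, carried out one coordinate at a time. Fix \(j\in\{1,\dots,k\}\). First, the map \([p]_{F_j}\mapsto p_j\) is well-defined and injective, because \(p\,F_j\,q\) holds exactly when \(p_j=q_j\). It is surjective because every \(P_i\) is nonempty: given \(a\in P_j\), fix any \(b_i\in P_i\) for \(i\neq j\) and place \(a\) in the \(j\)-th slot to obtain a point of \(Y\) with \(j\)-th coordinate \(a\).

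Next, compute the quotient distance explicitly. The representatives of \([p]_{F_j}\) are exactly the points \(p'\in Y\) with \(p'_j=p_j\), and similarly for \([q]_{F_j}\). Equation~\eqref{eq:k-product-metric} then gives
\[
\bar d_{F_j}([p]_{F_j},[q]_{F_j})
=
\inf\Bigl\{\max\bigl(d_j(p_j,q_j),\max_{i\neq j}d_i(p'_i,q'_i)\bigr)\Bigr\},
\]
where the infimum is over all choices of \(p'_i,q'_i\in P_i\) for \(i\neq j\). Every term is at least \(d_j(p_j,q_j)\). Choosing \(p'_i=q'_i\) for all \(i\neq j\) makes every other coordinate contribute \(0\), so the infimum is attained and equals \(d_j(p_j,q_j)\). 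This is the only step needing care, and it is where the max metric is used: the free coordinates can be matched without affecting coordinate \(j\).

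Combining the two steps, \(\bar d_{F_j}\) is the pullback of the metric \(d_j\) along the bijection \([p]_{F_j}\mapsto p_j\). Hence \(\bar d_{F_j}\) is a metric on \(Y/F_j\), and the map is a bijective isometry onto \((P_j,d_j)\). Finally, substitute \(\bar d_{F_j}([p]_{F_j},[q]_{F_j})=d_j(p_j,q_j)\) into Equation~\eqref{eq:k-product-metric}; this yields Equation~\eqref{eq:k-product-quotient-reconstruction} directly.
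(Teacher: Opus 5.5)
Your proposal is correct and follows essentially the same route as the paper's proof. Both arguments establish bijectivity of $[p]_{F_j}\mapsto p_j$, bound the distance between representatives below by $d_j(p_j,q_j)$, attain that bound with representatives agreeing off the $j$-th coordinate, and then substitute the resulting identity into the max formula.
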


\begin{proof}
Fix $j\in\{1,\dots,k\}$. The same argument as in Lemma~\ref{lem:product-structure} shows that $[p]_{F_j}\mapsto p_j$ defines a bijection from $Y/F_j$ onto $P_j$.

Let $p=(p_1,\dots,p_k)$ and $q=(q_1,\dots,q_k)$. If $p' F_j p$ and $q' F_j q$, then the $j$-th coordinates of $p'$ and $q'$ are $p_j$ and $q_j$. Equation~\eqref{eq:k-product-metric} gives $d_Y(p',q')\geq d_j(p_j,q_j)$, so $\bar d_{F_j}([p]_{F_j},[q]_{F_j})\geq d_j(p_j,q_j)$.

Conversely, choose representatives of $[p]_{F_j}$ and $[q]_{F_j}$ that agree in every coordinate except the $j$-th coordinate. For such representatives, their distance in Equation~\eqref{eq:k-product-metric} equals $d_j(p_j,q_j)$. Therefore $\bar d_{F_j}([p]_{F_j},[q]_{F_j})=d_j(p_j,q_j)$.

Hence $\bar d_{F_j}$ is a metric, and the coordinate map is a bijective isometry. Substituting the quotient-distance identities into Equation~\eqref{eq:k-product-metric} gives Equation~\eqref{eq:k-product-quotient-reconstruction}.
\end{proof}

\begin{definition}\label{def:k-factor-distance-decomposition}
Let $(X,d)$ be a metric space, let $k\geq 2$, and let $E_1,\dots,E_k$ be equivalence relations on $X$. We say that $(E_1,\dots,E_k)$ satisfies the $k$-factor distance decomposition identity if, for all $x,y\in X$,
\begin{equation}\label{eq:k-factor-distance-decomposition-identity}
d(x,y)
=
\max_{1\leq j\leq k}
\bar d_{E_j}([x]_{E_j},[y]_{E_j}).
\end{equation}
\end{definition}

\begin{figure}[ht]
\centering
\includegraphics[width=0.78\textwidth]{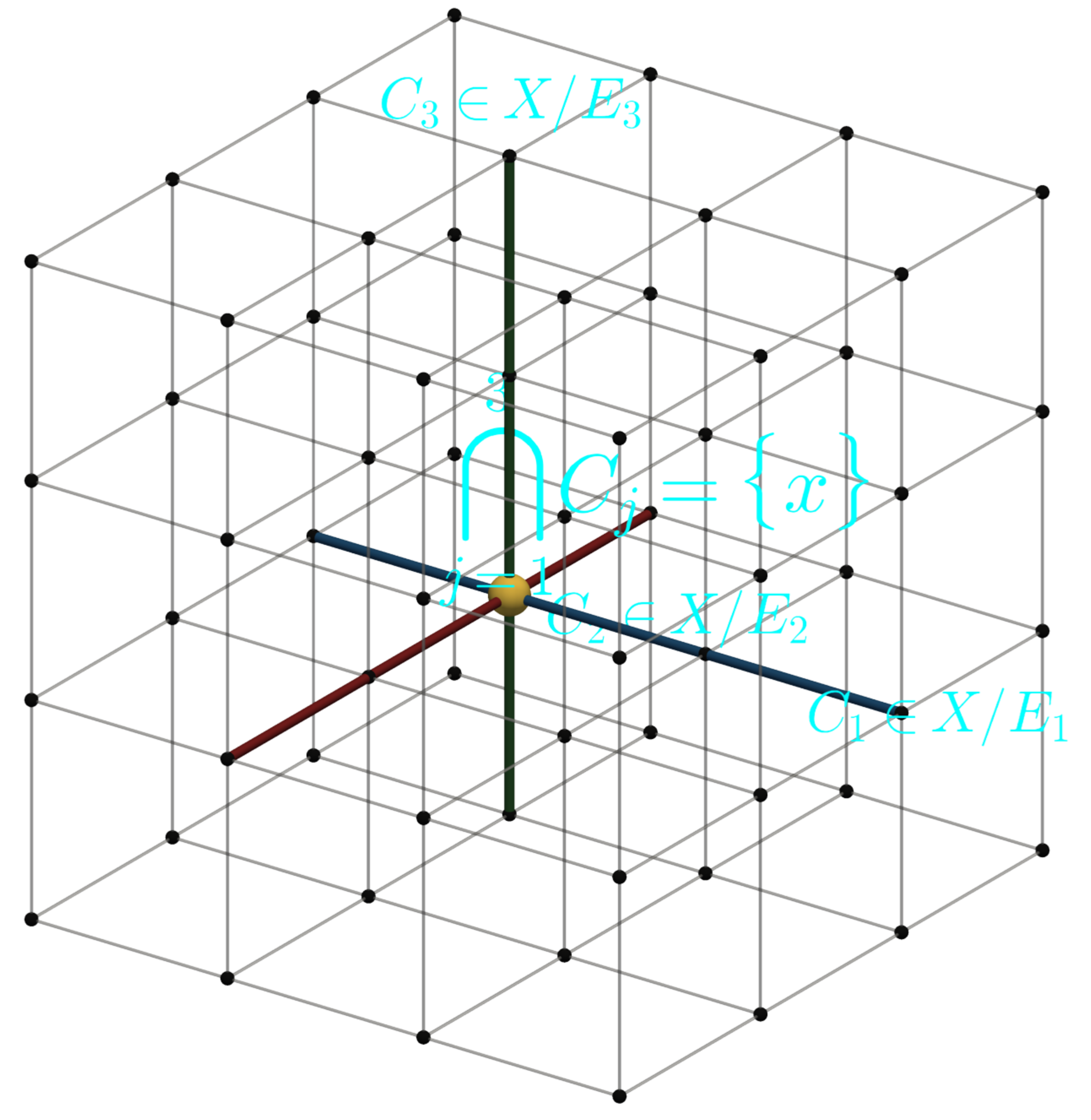}
\caption{
A three-factor instance of coordinate recovery through quotient fibers. The classes \(C_1\in X/E_1\), \(C_2\in X/E_2\), and \(C_3\in X/E_3\) recover the three independent product directions. Their intersection consists of one point, giving the finite-factor analogue of coordinate reconstruction.
}
\label{fig:cube-coordinate-recovery}
\end{figure}

Figure~\ref{fig:cube-coordinate-recovery} shows the multi-factor quotient-coordinate reconstruction from the equivalence classes of \(E_1,\dots,E_k\).

\begin{lemma}\label{lem:k-relations-to-product}
Let $(X,d)$ be a metric space, let $k\geq 2$, and let $E_1,\dots,E_k$ be equivalence relations on $X$. Assume that:
\begin{enumerate}
\item
the quotient distance $\bar d_{E_j}$ is a metric on $X/E_j$ for each $j$;

\item
for every choice of classes $C_j\in X/E_j$, where $1\leq j\leq k$, the intersection $\bigcap_{j=1}^k C_j$ consists of exactly one point;

\item
the family $(E_1,\dots,E_k)$ satisfies the $k$-factor distance decomposition identity.
\end{enumerate}
Equip $\prod_{j=1}^k X/E_j$ with the metric
\begin{equation}\label{eq:k-quotient-product-metric}
d_\times((C_1,\dots,C_k),(D_1,\dots,D_k))
=
\max_{1\leq j\leq k}
\bar d_{E_j}(C_j,D_j).
\end{equation}
Define $\Psi:X\to\prod_{j=1}^k X/E_j$ by $\Psi(x):=([x]_{E_1},\dots,[x]_{E_k})$. Then $\Psi$ is a bijective isometry.
\end{lemma}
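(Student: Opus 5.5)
The plan follows the proof of Lemma~\ref{lem:relations-to-product}, with the two-factor case replaced by the $k$-factor one. There are two things to verify: that $\Psi$ is a bijection, and that it preserves distances for the metric $d_\times$ of Equation~\eqref{eq:k-quotient-product-metric}.

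First, we observe that $d_\times$ is a genuine metric on $\prod_{j=1}^k X/E_j$. Each $\bar d_{E_j}$ is a metric by hypothesis~(1), and a coordinatewise maximum of finitely many metrics is again a metric. Symmetry and vanishing exactly on the diagonal are immediate. The triangle inequality holds because in each coordinate $\bar d_{E_j}(C_j,F_j)\leq\bar d_{E_j}(C_j,D_j)+\bar d_{E_j}(D_j,F_j)\leq d_\times(C,D)+d_\times(D,F)$, and we then take the maximum over $j$. This step is only needed so that the phrase ``bijective isometry'' refers to a metric target.

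Second, bijectivity follows directly from Lemma~\ref{lem:singleton-reconstruction}. Hypothesis~(2) is exactly the singleton-intersection assumption of that lemma, and $\Psi$ is the same map $x\mapsto([x]_{E_1},\dots,[x]_{E_k})$.

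Third, for the isometry property, fix $x,y\in X$. By Equation~\eqref{eq:k-quotient-product-metric},
\[
d_\times(\Psi(x),\Psi(y))=\max_{1\leq j\leq k}\bar d_{E_j}([x]_{E_j},[y]_{E_j}).
\]
By hypothesis~(3), that is, Equation~\eqref{eq:k-factor-distance-decomposition-identity}, the right side equals $d(x,y)$.

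There is no real obstacle here. The only point requiring care is checking that the hypotheses match the cited lemma verbatim and that the target metric is well defined, which the first step handles.
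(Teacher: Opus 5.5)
Your proposal is correct and follows the paper's proof: bijectivity comes from Lemma~\ref{lem:singleton-reconstruction}, and distance preservation comes from Equation~\eqref{eq:k-quotient-product-metric} combined with the $k$-factor distance decomposition identity~\eqref{eq:k-factor-distance-decomposition-identity}. Your extra verification that $d_\times$ is a metric is valid and harmless; the paper simply takes this for granted when it says to equip the product with that metric.
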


\begin{proof}
Lemma~\ref{lem:singleton-reconstruction} shows that $\Psi$ is bijective.

For all $x,y\in X$, Equations~\eqref{eq:k-quotient-product-metric} and \eqref{eq:k-factor-distance-decomposition-identity} give
$d_\times(\Psi(x),\Psi(y))
=
\max_{1\leq j\leq k}
\bar d_{E_j}([x]_{E_j},[y]_{E_j})
=
d(x,y)$.
Therefore $\Psi$ is an isometry.
\end{proof}

\begin{theorem}\label{thm:k-fold-product-classification}
Let $(X,d)$ be a metric space, and let $k\geq 2$. The following are equivalent.
\begin{enumerate}
\item\label{item:k-factor-product}
There exist metric spaces $(P_j,d_j)$ for $1\leq j\leq k$ such that $(X,d)$ is isometric to $P_1\times\cdots\times P_k$ with the max product metric from Equation~\eqref{eq:k-product-metric}.

\item\label{item:k-factor-relations}
There exist equivalence relations $E_1,\dots,E_k$ on $X$ such that:
\begin{enumerate}
\item\label{item:k-factor-metrics}
the quotient distance $\bar d_{E_j}$ is a metric on $X/E_j$ for each $j$;

\item\label{item:k-factor-intersection}
for every choice of classes $C_j\in X/E_j$, where $1\leq j\leq k$, the intersection $\bigcap_{j=1}^k C_j$ consists of exactly one point;

\item\label{item:k-factor-decomposition}
the family $(E_1,\dots,E_k)$ satisfies the $k$-factor distance decomposition identity.
\end{enumerate}
\end{enumerate}
\end{theorem}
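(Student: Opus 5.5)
The plan mirrors the proof of Theorem~\ref{thm:two-factor-product-classification}, using the $k$-factor lemmas in place of the two-factor ones and dropping the isometric-factors condition.

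\emph{Forward direction.} Assume condition~\ref{item:k-factor-product}, and let $\Phi:X\to Y:=P_1\times\cdots\times P_k$ be a bijective isometry, where $Y$ carries the metric from Equation~\eqref{eq:k-product-metric]}. Let $F_1,\dots,F_k$ be the coordinate relations from Lemma~\ref{lem:k-product-intersection}. We pull them back by declaring $x E_j y$ exactly when $\Phi(x) F_j \Phi(y)$, that is, when $\pi_j(\Phi(x))=\pi_j(\Phi(y))$. By construction $\Phi$ satisfies the hypothesis of Lemma~\ref{lem:quotient-transfer} for each $j$. That lemma gives bijections $\Theta_j([x]_{E_j}):=[\Phi(x)]_{F_j}$ satisfying $\bar d_{E_j}=\bar d_{F_j}\circ(\Theta_j\times\Theta_j)$.

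Lemma~\ref{lem:k-product-structure} shows that $\bar d_{F_j}$ is a metric. Since $\Theta_j$ is a bijection that preserves the quotient distances, $\bar d_{E_j}$ is also a metric, which gives condition~\ref{item:k-factor-metrics}.

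For condition~\ref{item:k-factor-intersection}, note that $\Phi(C)=\Theta_j(C)$ for every $E_j$-class $C$. This holds because $\Phi$ is a bijection carrying $E_j$-classes onto $F_j$-classes. Hence $\Phi\bigl(\bigcap_j C_j\bigr)=\bigcap_j\Theta_j(C_j)$, and the right side is a singleton by Lemma~\ref{lem:k-product-intersection}. Injectivity of $\Phi$ then shows that $\bigcap_j C_j$ is a singleton.

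For condition~\ref{item:k-factor-decomposition}, we chain the identities
\[
d(x,y)=d_Y(\Phi(x),\Phi(y))=\max_{j}\bar d_{F_j}([\Phi(x)]_{F_j},[\Phi(y)]_{F_j})=\max_j\bar d_{E_j}([x]_{E_j},[y]_{E_j}).
\]
The second equality is Equation~\eqref{eq:k-product-quotient-reconstruction}, and the third is Lemma~\ref{lem:quotient-transfer}.

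\emph{Reverse direction.} Assume condition~\ref{item:k-factor-relations}. Set $(P_j,d_j):=(X/E_j,\bar d_{E_j})$, which are metric spaces by condition~\ref{item:k-factor-metrics}. Lemma~\ref{lem:k-relations-to-product} then states directly that $\Psi(x)=([x]_{E_1},\dots,[x]_{E_k})$ is a bijective isometry onto $\prod_j P_j$ with the max metric of Equation~\eqref{eq:k-quotient-product-metric}. That metric is exactly Equation~\eqref{eq:k-product-metric} for these factors, so condition~\ref{item:k-factor-product} follows.

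No step is a genuine obstacle. The only point requiring care is the intersection transfer in the forward direction, namely checking that $\Phi$ maps each $E_j$-class exactly onto the corresponding $F_j$-class, so that images of intersections are intersections of images. This follows from the bijectivity of $\Phi$ together with the defining biconditional of $E_j$.
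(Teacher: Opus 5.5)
Your proposal is correct and follows the paper's proof essentially step for step. In the forward direction you pull back the coordinate relations $F_j$ along $\Phi$ and combine Lemma~\ref{lem:quotient-transfer} with Lemmas~\ref{lem:k-product-intersection} and~\ref{lem:k-product-structure}, and in the converse you invoke Lemma~\ref{lem:k-relations-to-product} with $P_j=X/E_j$. Your remark that $\bar d_{E_j}$ is a metric because it is the pullback of the metric $\bar d_{F_j}$ under the bijection $\Theta_j$ is slightly more careful than the paper, whose appeal to Lemma~\ref{lem:quotient-transfer} for a bijective isometry tacitly presupposes that both quotient distances are already metrics.
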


\begin{proof}
Assume condition~\ref{item:k-factor-product}. Let $Y:=P_1\times\cdots\times P_k$ carry the metric from Equation~\eqref{eq:k-product-metric}, and let $\Phi:X\to Y$ be a bijective isometry. For each $j\in\{1,\dots,k\}$, let $F_j$ denote the equivalence relation from Lemma~\ref{lem:k-product-intersection}. Define an equivalence relation $E_j$ on $X$ by declaring $x E_j y$ exactly when $\Phi(x) F_j \Phi(y)$.

For each $j\in\{1,\dots,k\}$, define $\Theta_j:X/E_j\to Y/F_j$ by $\Theta_j([x]_{E_j}):=[\Phi(x)]_{F_j}$. Lemma~\ref{lem:quotient-transfer} gives a bijective isometry
\[
\Theta_j:(X/E_j,\bar d_{E_j})
\to
(Y/F_j,\bar d_{F_j}).
\]
Lemma~\ref{lem:k-product-structure} therefore gives condition~\ref{item:k-factor-metrics}.

Let $C_j\in X/E_j$ for $1\leq j\leq k$. The definition of $E_j$ gives $\Phi\left(\bigcap_{j=1}^k C_j\right)=\bigcap_{j=1}^k \Theta_j(C_j)$. Lemma~\ref{lem:k-product-intersection} shows that the right side consists of exactly one point. Since $\Phi$ is bijective, the same holds for $\bigcap_{j=1}^k C_j$. Thus condition~\ref{item:k-factor-intersection} holds.

Equation~\eqref{eq:k-product-quotient-reconstruction} and Lemma~\ref{lem:quotient-transfer} give
$d(x,y)
=
\max_{1\leq j\leq k}
\bar d_{E_j}([x]_{E_j},[y]_{E_j})$
for all $x,y\in X$. Therefore condition~\ref{item:k-factor-decomposition} holds.

Assume condition~\ref{item:k-factor-relations}. Define $\Psi:X\to\prod_{j=1}^k X/E_j$ by $\Psi(x):=([x]_{E_1},\dots,[x]_{E_k})$. Lemma~\ref{lem:k-relations-to-product} shows that $\Psi$ is a bijective isometry from $(X,d)$ onto $\prod_{j=1}^k X/E_j$ equipped with the metric from Equation~\eqref{eq:k-quotient-product-metric}. Define $(P_j,d_j):=(X/E_j,\bar d_{E_j})$ for each $j$. Then Equation~\eqref{eq:k-quotient-product-metric} is the max product metric from Equation~\eqref{eq:k-product-metric}. Therefore condition~\ref{item:k-factor-product} holds.
\end{proof}

\begin{corollary}\label{cor:k-fold-power-classification}
Let $(X,d)$ be a metric space, and let $k\geq 2$. The following are equivalent.
\begin{enumerate}
\item\label{item:k-power-product}
There exists a metric space $(P,d_P)$ such that $(X,d)$ is isometric to $P^k$, where
\begin{equation}\label{eq:k-power-metric}
d_{P^k}(p,q)
=
\max_{1\leq j\leq k} d_P(p_j,q_j)
\end{equation}
for all $p=(p_1,\dots,p_k)$ and $q=(q_1,\dots,q_k)$.

\item\label{item:k-power-relations}
There exist equivalence relations $E_1,\dots,E_k$ on $X$ such that:
\begin{enumerate}
\item\label{item:k-power-metrics}
the quotient distance $\bar d_{E_j}$ is a metric on $X/E_j$ for each $j$;

\item\label{item:k-power-intersection}
for every choice of classes $C_j\in X/E_j$, where $1\leq j\leq k$, the intersection $\bigcap_{j=1}^k C_j$ consists of exactly one point;

\item\label{item:k-power-decomposition}
the family $(E_1,\dots,E_k)$ satisfies the $k$-factor distance decomposition identity;

\item\label{item:k-power-common-model}
there exists a metric space $(P,d_P)$ such that each $(X/E_j,\bar d_{E_j})$ is isometric to $(P,d_P)$.
\end{enumerate}
\end{enumerate}
\end{corollary}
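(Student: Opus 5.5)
The plan is to derive Corollary~\ref{cor:k-fold-power-classification} from Theorem~\ref{thm:k-fold-product-classification}, following the same two-step pattern as the proof of Theorem~\ref{thm:two-factor-product-classification}. The only new ingredients are bookkeeping for the common model in each direction and one coordinatewise isometry.

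For the forward implication, assume condition~\ref{item:k-power-product}. Then \((X,d)\) is isometric to \(P_1\times\cdots\times P_k\) with \(P_j=P\) for every \(j\), carrying the metric of Equation~\eqref{eq:k-product-metric}.
\begin{enumerate}
\item Theorem~\ref{thm:k-fold-product-classification} gives equivalence relations \(E_j\), defined by \(xE_jy\) exactly when \(\Phi(x)F_j\Phi(y)\), satisfying conditions~\ref{item:k-power-metrics}, \ref{item:k-power-intersection}, and \ref{item:k-power-decomposition}.
\item For condition~\ref{item:k-power-common-model}, we use the isometries produced in that proof. Lemma~\ref{lem:quotient-transfer} gives \(\Theta_j:(X/E_j,\bar d_{E_j})\to(Y/F_j,\bar d_{F_j})\).
\item Lemma~\ref{lem:k-product-structure} gives \((Y/F_j,\bar d_{F_j})\cong(P_j,d_j)=(P,d_P)\).
\item Composing these two isometries shows that each quotient is isometric to \((P,d_P)\), which is condition~\ref{item:k-power-common-model}.
\end{enumerate}

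For the converse, assume condition~\ref{item:k-power-relations}.
\begin{enumerate}
\item Lemma~\ref{lem:k-relations-to-product} gives a bijective isometry \(\Psi:X\to\prod_j X/E_j\) with the metric \(d_\times\) of Equation~\eqref{eq:k-quotient-product-metric}.
\item Condition~\ref{item:k-power-common-model} supplies bijective isometries \(\Theta_j:(X/E_j,\bar d_{E_j})\to(P,d_P)\).
\item Define \(\Xi(C_1,\dots,C_k):=(\Theta_1(C_1),\dots,\Theta_k(C_k))\). Then \(\Xi\) is bijective because each \(\Theta_j\) is.
\item Substituting \(d_P(\Theta_j(C_j),\Theta_j(D_j))=\bar d_{E_j}(C_j,D_j)\) into Equation~\eqref{eq:k-power-metric} shows that \(d_{P^k}(\Xi(C),\Xi(D))=d_\times(C,D)\).
\item Hence \(\Xi\circ\Psi\) is a bijective isometry from \((X,d)\) onto \(P^k\), which is condition~\ref{item:k-power-product}.
\end{enumerate}

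I do not expect a real obstacle. The one point to watch is that condition~\ref{item:k-power-common-model} is existential in \((P,d_P)\), so the converse must use the given \(P\) rather than \(X/E_1\). The direct construction of \(\Xi\) handles this uniformly.
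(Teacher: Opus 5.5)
Your proposal is correct and follows essentially the same route as the paper. The forward direction applies Theorem~\ref{thm:k-fold-product-classification} with $P_j=P$ and identifies each quotient with $P$ via Lemma~\ref{lem:k-product-structure}, where you also make explicit the composition with the transfer isometries $\Theta_j$ from Lemma~\ref{lem:quotient-transfer} that the paper leaves implicit; the converse composes the quotient-coordinate isometry $\Psi$ (which you cite directly from Lemma~\ref{lem:k-relations-to-product}, slightly more cleanly than the paper's appeal to the theorem) with the coordinatewise product of chosen isometries $X/E_j\to P$.
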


\begin{proof}
Assume condition~\ref{item:k-power-product}. The forward implication in Theorem~\ref{thm:k-fold-product-classification}, applied with $P_j=P$ for every $j$, gives conditions~\ref{item:k-power-metrics}, \ref{item:k-power-intersection}, and \ref{item:k-power-decomposition}. Lemma~\ref{lem:k-product-structure} shows that each $(X/E_j,\bar d_{E_j})$ is isometric to $(P,d_P)$. Hence condition~\ref{item:k-power-common-model} holds.

Assume condition~\ref{item:k-power-relations}. By condition~\ref{item:k-power-common-model}, choose a metric space $(P,d_P)$ such that each $(X/E_j,\bar d_{E_j})$ is isometric to $(P,d_P)$. Theorem~\ref{thm:k-fold-product-classification} gives a bijective isometry from $(X,d)$ onto $\prod_{j=1}^k (X/E_j,\bar d_{E_j})$ equipped with the metric from Equation~\eqref{eq:k-quotient-product-metric}. For each $j\in\{1,\dots,k\}$, choose a bijective isometry $\theta_j:(X/E_j,\bar d_{E_j})\to(P,d_P)$. Define $\Theta(C_1,\dots,C_k):=(\theta_1(C_1),\dots,\theta_k(C_k))$.

For all $C=(C_1,\dots,C_k)$ and $D=(D_1,\dots,D_k)$ in $\prod_{j=1}^k X/E_j$, we have
$d_{P^k}(\Theta(C),\Theta(D))
=
\max_{1\leq j\leq k} \bar d_{E_j}(C_j,D_j)
=
d_\times(C,D)$.
Since each $\theta_j$ is bijective, the map $\Theta$ is a bijective isometry. Hence condition~\ref{item:k-power-product} holds.
\end{proof}

\begin{corollary}\label{cor:k-fold-self-power-classification}
Let $(X,d)$ be a metric space, and let $k\geq 2$. The following are equivalent.
\begin{enumerate}
\item\label{item:k-self-power-product}
The metric space $(X,d)$ is isometric to $X^k$, where
\begin{equation}\label{eq:k-self-power-metric}
d_{X^k}(x,y)
=
\max_{1\leq j\leq k} d(x_j,y_j)
\end{equation}
for all $x=(x_1,\dots,x_k)$ and $y=(y_1,\dots,y_k)$.

\item\label{item:k-self-power-relations}
There exist equivalence relations $E_1,\dots,E_k$ on $X$ such that:
\begin{enumerate}
\item\label{item:k-self-power-metrics}
the quotient distance $\bar d_{E_j}$ is a metric on $X/E_j$ for each $j$;

\item\label{item:k-self-power-intersection}
for every choice of classes $C_j\in X/E_j$, where $1\leq j\leq k$, the intersection $\bigcap_{j=1}^k C_j$ consists of exactly one point;

\item\label{item:k-self-power-decomposition}
the family $(E_1,\dots,E_k)$ satisfies the $k$-factor distance decomposition identity;

\item\label{item:k-self-power-isometric}
for each $j$, the metric space $(X/E_j,\bar d_{E_j})$ is isometric to $(X,d)$.
\end{enumerate}
\end{enumerate}
\end{corollary}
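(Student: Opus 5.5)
This corollary follows directly from Corollary~\ref{cor:k-fold-power-classification} once we specialize the common model to $(P,d_P)=(X,d)$. Both directions reduce to that corollary together with Lemma~\ref{lem:k-product-structure}. The argument mirrors the way Corollary~\ref{cor:self-product-classification} was obtained from Theorem~\ref{thm:two-factor-product-classification}.

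For the forward implication, assume condition~\ref{item:k-self-power-product}, and let $\Phi:X\to X^k$ be a bijective isometry, where $X^k$ carries the metric from Equation~\eqref{eq:k-self-power-metric}. This is exactly condition~\ref{item:k-power-product} of Corollary~\ref{cor:k-fold-power-classification} with $(P,d_P)=(X,d)$. The forward direction of Theorem~\ref{thm:k-fold-product-classification} then supplies equivalence relations $E_j$, defined by $x E_j y$ exactly when $\Phi(x)F_j\Phi(y)$, which satisfy conditions~\ref{item:k-self-power-metrics}, \ref{item:k-self-power-intersection}, and~\ref{item:k-self-power-decomposition}.

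Condition~\ref{item:k-self-power-isometric} requires one more step. I would compose two isometries:
\begin{itemize}
\item Lemma~\ref{lem:quotient-transfer} gives a bijective isometry $\Theta_j:(X/E_j,\bar d_{E_j})\to(X^k/F_j,\bar d_{F_j})$;
\item Lemma~\ref{lem:k-product-structure} gives a bijective isometry $[p]_{F_j}\mapsto p_j$ from $X^k/F_j$ onto $(X,d)$.
\end{itemize}
Their composite shows that each quotient $(X/E_j,\bar d_{E_j})$ is isometric to $(X,d)$.

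For the converse, assume condition~\ref{item:k-self-power-relations}. Condition~\ref{item:k-self-power-isometric} gives condition~\ref{item:k-power-common-model} of Corollary~\ref{cor:k-fold-power-classification} with the common model $(P,d_P)=(X,d)$. The remaining hypotheses coincide word for word. The proof of the reverse implication in Corollary~\ref{cor:k-fold-power-classification} then builds the map $\Theta\circ\Psi$, which is a bijective isometry from $(X,d)$ onto $X^k$ with the metric from Equation~\eqref{eq:k-power-metric}. For $P=X$ this metric is exactly Equation~\eqref{eq:k-self-power-metric}.

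There is no real obstacle here. The only point needing care is that Corollary~\ref{cor:k-fold-power-classification} asserts isometry with \emph{some} common model $P$. We therefore rely on its proof, or equivalently on Lemma~\ref{lem:k-product-structure}, to identify that model with the specific space $(X,d)$ in both directions.
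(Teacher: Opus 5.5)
Your proposal is correct and follows the paper's route: both directions come from specializing Corollary~\ref{cor:k-fold-power-classification} to the common model $(P,d_P)=(X,d)$. In the forward direction you derive condition~\ref{item:k-self-power-isometric} explicitly by composing the isometries from Lemma~\ref{lem:quotient-transfer} and Lemma~\ref{lem:k-product-structure}, which is more careful than the paper's proof. The paper only invokes the existential common-model condition, and that condition does not by itself pin the model down to $(X,d)$.
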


\begin{proof}
Assume condition~\ref{item:k-self-power-product}. Apply the forward implication in Corollary~\ref{cor:k-fold-power-classification} with $(P,d_P)=(X,d)$. This gives conditions~\ref{item:k-self-power-metrics}, \ref{item:k-self-power-intersection}, and \ref{item:k-self-power-decomposition}. Condition~\ref{item:k-self-power-isometric} gives condition~\ref{item:k-power-common-model} with common model $(X,d)$. Hence condition~\ref{item:k-self-power-relations} holds.

Assume condition~\ref{item:k-self-power-relations}. Conditions~\ref{item:k-self-power-metrics}, \ref{item:k-self-power-intersection}, and \ref{item:k-self-power-decomposition} give conditions~\ref{item:k-power-metrics}, \ref{item:k-power-intersection}, and \ref{item:k-power-decomposition}. Condition~\ref{item:k-self-power-isometric} gives condition~\ref{item:k-power-common-model} with common model $(X,d)$. Therefore Corollary~\ref{cor:k-fold-power-classification} gives condition~\ref{item:k-self-power-product}.
\end{proof}

\subsection{The \texorpdfstring{$\ell^p$}{ell-p} Product Case}\label{sec:lp-product-case}

Let $1\leq p<\infty$. We now consider the $\ell^p$ product metric.

\begin{lemma}\label{lem:kp-product-structure}
Let $k\geq 2$, let $(P_j,d_j)$ be metric spaces for $1\leq j\leq k$, and let $Y:=P_1\times\cdots\times P_k$. Equip $Y$ with the metric
\begin{equation}\label{eq:kp-product-metric}
d_{Y,p}(u,v)
=
\left(
\sum_{j=1}^k d_j(u_j,v_j)^p
\right)^{1/p},
\end{equation}
where $u=(u_1,\dots,u_k)$ and $v=(v_1,\dots,v_k)$. For each $j\in\{1,\dots,k\}$, let $F_j$ denote the equivalence relation from Lemma~\ref{lem:k-product-intersection}. Then, for each $j\in\{1,\dots,k\}$, the quotient distance $\bar d_{F_j}$ is a metric on $Y/F_j$, the map $[u]_{F_j}\mapsto u_j$ defines a bijective isometry from $Y/F_j$ onto $(P_j,d_j)$, and
\begin{equation}\label{eq:kp-product-quotient-reconstruction}
d_{Y,p}(u,v)
=
\left(
\sum_{j=1}^k
\bar d_{F_j}([u]_{F_j},[v]_{F_j})^p
\right)^{1/p}.
\end{equation}
\end{lemma}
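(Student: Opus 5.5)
The proof follows the template of Lemma~\ref{lem:k-product-structure}, with the maximum replaced by the $\ell^p$-sum. Fix $j\in\{1,\dots,k\}$. The map $[u]_{F_j}\mapsto u_j$ is well defined and injective, since $u F_j v$ holds exactly when $u_j=v_j$. It is surjective because every $P_i$ is nonempty, so each $a\in P_j$ is the $j$-th coordinate of some point of $Y$. This bijectivity argument is the same as in Lemma~\ref{lem:product-structure}.

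The key step is to prove $\bar d_{F_j}([u]_{F_j},[v]_{F_j})=d_j(u_j,v_j)$ by two inequalities. For the lower bound, let $u' F_j u$ and $v' F_j v$. Then $u'_j=u_j$ and $v'_j=v_j$. Since every term in the sum of Equation~\eqref{eq:kp-product-metric} is nonnegative and $t\mapsto t^{1/p}$ is monotone, we get
\[
d_{Y,p}(u',v')\geq\bigl(d_j(u_j,v_j)^p\bigr)^{1/p}=d_j(u_j,v_j).
\]
Taking the infimum over all such representatives gives $\bar d_{F_j}([u]_{F_j},[v]_{F_j})\geq d_j(u_j,v_j)$. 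For the upper bound, take $u'=u$, and let $v'$ be the point with $v'_j=v_j$ and $v'_i=u_i$ for $i\neq j$. Then $v' F_j v$, and all terms with $i\neq j$ vanish, so $d_{Y,p}(u,v')=d_j(u_j,v_j)$. The infimum is therefore attained and equals $d_j(u_j,v_j)$.

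This identity shows that $\bar d_{F_j}$ is the pullback of $d_j$ under the bijection $[u]_{F_j}\mapsto u_j$. Hence $\bar d_{F_j}$ is a metric on $Y/F_j$, and the coordinate map is a bijective isometry onto $(P_j,d_j)$. Finally, substituting $\bar d_{F_j}([u]_{F_j},[v]_{F_j})=d_j(u_j,v_j)$ for each $j$ into Equation~\eqref{eq:kp-product-metric} gives Equation~\eqref{eq:kp-product-quotient-reconstruction}.

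There is no real obstacle. The only step that uses anything specific to $\ell^p$ is the lower bound, which needs the $p$-th power sum to dominate each single term; this holds because every summand is nonnegative. The upper bound needs a representative pair that differs only in coordinate $j$, and the construction of $v'$ provides one.
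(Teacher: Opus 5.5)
Your proposal is correct and matches the paper's proof step for step: bijectivity of the coordinate map comes from the definition of $F_j$, the lower bound from the $j$-th summand, and the upper bound from representatives that differ only in coordinate $j$. Your surjectivity justification, which relies on the nonemptiness of the other factors $P_i$ with $i\neq j$, is slightly more precise than the paper's wording, which cites only the nonemptiness of $P_j$.
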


\begin{proof}
Fix $j\in\{1,\dots,k\}$. The map $[u]_{F_j}\mapsto u_j$ is well-defined and injective by the definition of $F_j$. Since $P_j$ is nonempty, the map is surjective.

If $u' F_j u$ and $v' F_j v$, then Equation~\eqref{eq:kp-product-metric} gives $d_{Y,p}(u',v')\geq d_j(u_j,v_j)$. Hence $\bar d_{F_j}([u]_{F_j},[v]_{F_j})\geq d_j(u_j,v_j)$.

Conversely, choose representatives of $[u]_{F_j}$ and $[v]_{F_j}$ that agree in every coordinate except the $j$-th coordinate. For these representatives, Equation~\eqref{eq:kp-product-metric} gives $d_{Y,p}(u',v')=d_j(u_j,v_j)$. Therefore $\bar d_{F_j}([u]_{F_j},[v]_{F_j})=d_j(u_j,v_j)$.

Hence $\bar d_{F_j}$ is a metric, and the map $[u]_{F_j}\mapsto u_j$ is a bijective isometry. Equation~\eqref{eq:kp-product-quotient-reconstruction} follows from this quotient-distance identity and Equation~\eqref{eq:kp-product-metric}.
\end{proof}

\begin{definition}\label{def:p-distance-decomposition}
Let $(X,d)$ be a metric space, let $k\geq 2$, and let $E_1,\dots,E_k$ be equivalence relations on $X$. We say that $(E_1,\dots,E_k)$ satisfies the $p$-distance decomposition identity if, for all $x,y\in X$,
\begin{equation}\label{eq:p-distance-decomposition-identity}
d(x,y)
=
\left(
\sum_{j=1}^k
\bar d_{E_j}([x]_{E_j},[y]_{E_j})^p
\right)^{1/p}.
\end{equation}
\end{definition}

\begin{figure}[ht]
\centering
\includegraphics[width=0.86\textwidth]{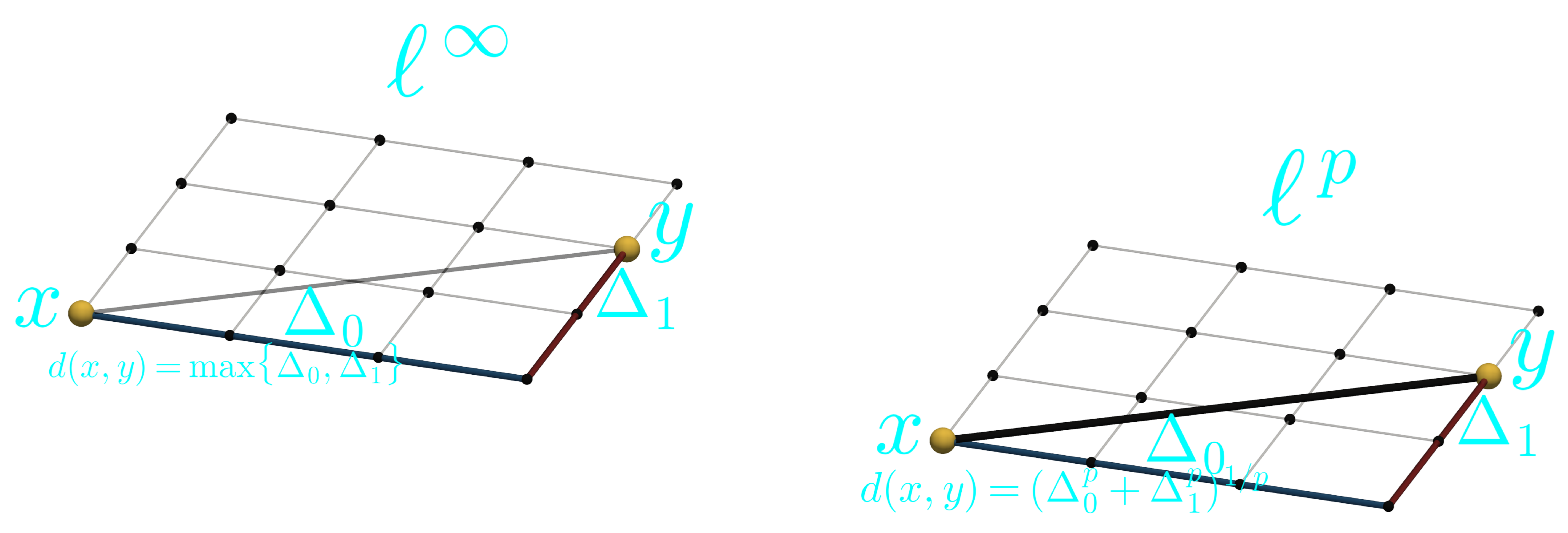}
\caption{
Comparison of the max product metric and the \(\ell^p\) product metric on the same recovered quotient coordinates. Both settings recover the coordinate distances from quotient spaces, while the max product metric combines these distances by a maximum and the \(\ell^p\) product metric combines them through the \(\ell^p\) expression.
}
\label{fig:ell-infty-ell-p-comparison}
\end{figure}

Figure~\ref{fig:ell-infty-ell-p-comparison} shows the difference between the max and \(\ell^p\) product metrics in how they combine the recovered quotient distances.

\begin{lemma}\label{lem:p-relations-to-product}
Let $(X,d)$ be a metric space, let $k\geq 2$, and let $E_1,\dots,E_k$ be equivalence relations on $X$. Assume that:
\begin{enumerate}
\item
the quotient distance $\bar d_{E_j}$ is a metric on $X/E_j$ for each $j$;

\item
for every choice of classes $C_j\in X/E_j$, where $1\leq j\leq k$, the intersection $\bigcap_{j=1}^k C_j$ consists of exactly one point;

\item
the family $(E_1,\dots,E_k)$ satisfies the $p$-distance decomposition identity.
\end{enumerate}
Equip $\prod_{j=1}^k X/E_j$ with the metric
\begin{equation}\label{eq:p-quotient-product-metric}
d_{\times,p}((C_1,\dots,C_k),(D_1,\dots,D_k))
=
\left(
\sum_{j=1}^k
\bar d_{E_j}(C_j,D_j)^p
\right)^{1/p}.
\end{equation}
Define $\Psi:X\to\prod_{j=1}^k X/E_j$ by $\Psi(x):=([x]_{E_1},\dots,[x]_{E_k})$. Then $\Psi$ is a bijective isometry.
\end{lemma}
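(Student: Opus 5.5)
The plan follows the proof of Lemma~\ref{lem:k-relations-to-product} verbatim, with the maximum replaced by the \(\ell^p\) expression. There are two things to verify: that \(\Psi\) is a bijection, and that it preserves distances when \(\prod_{j=1}^k X/E_j\) carries the metric \(d_{\times,p}\) from Equation~\eqref{eq:p-quotient-product-metric}.

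For bijectivity, I will apply Lemma~\ref{lem:singleton-reconstruction} directly. The second hypothesis is exactly its singleton-intersection assumption, so \(\Psi\) is bijective with no further work.

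For the isometry property, I will fix \(x,y\in X\) and expand \(d_{\times,p}(\Psi(x),\Psi(y))\). By the definition of \(\Psi\), the \(j\)-th coordinates are \([x]_{E_j}\) and \([y]_{E_j}\). Equation~\eqref{eq:p-quotient-product-metric} therefore gives
\[
d_{\times,p}(\Psi(x),\Psi(y))
=
\left(\sum_{j=1}^k \bar d_{E_j}([x]_{E_j},[y]_{E_j})^p\right)^{1/p}.
\]
By the \(p\)-distance decomposition identity, Equation~\eqref{eq:p-distance-decomposition-identity}, this equals \(d(x,y)\).

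I expect the only real point of care to be confirming that \(d_{\times,p}\) is actually a metric, so that the word ``isometry'' is meaningful. By the first hypothesis each \(\bar d_{E_j}\) is a metric. The \(\ell^p\) combination of finitely many metrics is a metric: symmetry and definiteness are immediate, and the triangle inequality follows from the triangle inequality in each factor together with Minkowski's inequality in \(\mathbb R^k\). The same fact is already implicit in Equation~\eqref{eq:kp-product-metric}, so I will only need to cite it. Combining bijectivity with the distance identity shows that \(\Psi\) is a bijective isometry.
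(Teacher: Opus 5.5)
Your proposal is correct and is the paper's proof: bijectivity comes from Lemma~\ref{lem:singleton-reconstruction}, and distance preservation comes from Equations~\eqref{eq:p-quotient-product-metric} and~\eqref{eq:p-distance-decomposition-identity}. Your Minkowski check that $d_{\times,p}$ is a metric is valid but not needed, because the bijection $\Psi$ satisfies $d_{\times,p}(\Psi(x),\Psi(y))=d(x,y)$ and so transports the metric $d$ onto $d_{\times,p}$.
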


\begin{proof}
Lemma~\ref{lem:singleton-reconstruction} shows that $\Psi$ is bijective. Equations~\eqref{eq:p-quotient-product-metric} and~\eqref{eq:p-distance-decomposition-identity} give $d_{\times,p}(\Psi(x),\Psi(y))=d(x,y)$ for all $x,y\in X$. Hence $\Psi$ is a bijective isometry.
\end{proof}

\begin{theorem}\label{thm:p-product-classification}
Let $(X,d)$ be a metric space, let $k\geq 2$, and let $1\leq p<\infty$. The following are equivalent.
\begin{enumerate}
\item\label{item:p-factor-product}
There exist metric spaces $(P_j,d_j)$ for $1\leq j\leq k$ such that $(X,d)$ is isometric to $P_1\times\cdots\times P_k$ with the metric from Equation~\eqref{eq:kp-product-metric}.

\item\label{item:p-factor-relations}
There exist equivalence relations $E_1,\dots,E_k$ on $X$ such that:
\begin{enumerate}
\item\label{item:p-factor-metrics}
the quotient distance $\bar d_{E_j}$ is a metric on $X/E_j$ for each $j$;

\item\label{item:p-factor-intersection}
for every choice of classes $C_j\in X/E_j$, where $1\leq j\leq k$, the intersection $\bigcap_{j=1}^k C_j$ consists of exactly one point;

\item\label{item:p-factor-decomposition}
the family $(E_1,\dots,E_k)$ satisfies the $p$-distance decomposition identity.
\end{enumerate}
\end{enumerate}
\end{theorem}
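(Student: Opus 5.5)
The plan is to follow the proof of Theorem~\ref{thm:k-fold-product-classification} step by step, replacing the max product metric by the $\ell^p$-product metric and using Lemmas~\ref{lem:kp-product-structure} and \ref{lem:p-relations-to-product} in place of their $\ell^\infty$ analogues.

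\emph{Forward direction.} Assume condition~\ref{item:p-factor-product}. Let $Y:=P_1\times\cdots\times P_k$ carry the metric $d_{Y,p}$ from Equation~\eqref{eq:kp-product-metric}, and let $\Phi:X\to Y$ be a bijective isometry. With $F_j$ the coordinate relations from Lemma~\ref{lem:k-product-intersection}, define $x E_j y$ exactly when $\Phi(x)F_j\Phi(y)$. Then $\Phi$ and the pair $(E_j,F_j)$ satisfy the hypothesis of Lemma~\ref{lem:quotient-transfer}. That lemma gives two things:
\begin{enumerate}
\item the map $\Theta_j([x]_{E_j}):=[\Phi(x)]_{F_j}$ is a bijection;
\item $\bar d_{E_j}([x]_{E_j},[y]_{E_j})=\bar d_{F_j}([\Phi(x)]_{F_j},[\Phi(y)]_{F_j})$.
\end{enumerate}
Lemma~\ref{lem:kp-product-structure} says each $\bar d_{F_j}$ is a metric. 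Pulling it back along the bijection $\Theta_j$ shows that each $\bar d_{E_j}$ is a metric, which gives condition~\ref{item:p-factor-metrics}.

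For the intersection condition, take classes $C_j\in X/E_j$. Since $\Phi$ is a bijection compatible with the relations, $\Phi\bigl(\bigcap_j C_j\bigr)=\bigcap_j\Theta_j(C_j)$. By Lemma~\ref{lem:k-product-intersection} the right side is a single point, so $\bigcap_j C_j$ is a single point. This gives condition~\ref{item:p-factor-intersection}.

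For the distance identity, write
\[
d(x,y)=d_{Y,p}(\Phi(x),\Phi(y)).
\]
Expand the right side using Equation~\eqref{eq:kp-product-quotient-reconstruction}, then replace each $\bar d_{F_j}$ term by the corresponding $\bar d_{E_j}$ term using the transfer identity. This yields the $p$-distance decomposition identity, which is condition~\ref{item:p-factor-decomposition}.

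\emph{Converse direction.} Assume condition~\ref{item:p-factor-relations}. Lemma~\ref{lem:p-relations-to-product} shows that $\Psi(x):=([x]_{E_1},\dots,[x]_{E_k})$ is a bijective isometry onto $\prod_j X/E_j$ with the metric $d_{\times,p}$. Set $(P_j,d_j):=(X/E_j,\bar d_{E_j})$. These are metric spaces by condition~\ref{item:p-factor-metrics}, and $d_{\times,p}$ is exactly the $\ell^p$-product metric of Equation~\eqref{eq:kp-product-metric} on them. So $\Psi$ itself realizes condition~\ref{item:p-factor-product}.

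The only step needing care is the forward distance identity. One must check that the transfer in Lemma~\ref{lem:quotient-transfer} holds at the level of the infimum defining each quotient distance. It must not be applied only after the quotient distances are known to be metrics, which is why it is used before taking $p$-th powers and summing. Since that lemma gives the equality of quotient distances unconditionally, this step should go through without new work. The rest is a direct substitution into the existing lemmas.
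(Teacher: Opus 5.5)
Your proposal is correct and follows the paper's proof essentially line for line. You pull the coordinate relations $F_j$ back along $\Phi$ and use Lemma~\ref{lem:quotient-transfer} with Lemmas~\ref{lem:kp-product-structure} and \ref{lem:k-product-intersection} for the forward direction, then apply Lemma~\ref{lem:p-relations-to-product} with $P_j:=X/E_j$ for the converse, and your derivation of the metric property of $\bar d_{E_j}$ by pulling back $\bar d_{F_j}$ along the bijection $\Theta_j$ is, if anything, slightly more careful than the paper's appeal to the conditional isometry clause of Lemma~\ref{lem:quotient-transfer}.
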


\begin{proof}
Assume condition~\ref{item:p-factor-product}. Let $Y:=P_1\times\cdots\times P_k$ carry the metric from Equation~\eqref{eq:kp-product-metric}, and let $\Phi:X\to Y$ be a bijective isometry. For each $j\in\{1,\dots,k\}$, let $F_j$ denote the equivalence relation from Lemma~\ref{lem:k-product-intersection}. Define an equivalence relation $E_j$ on $X$ by declaring $x E_j y$ exactly when $\Phi(x) F_j \Phi(y)$.

For each $j\in\{1,\dots,k\}$, define $\Theta_j:X/E_j\to Y/F_j$ by $\Theta_j([x]_{E_j}):=[\Phi(x)]_{F_j}$. Lemma~\ref{lem:quotient-transfer} shows that $\Theta_j$ is a bijective isometry from $(X/E_j,\bar d_{E_j})$ onto $(Y/F_j,\bar d_{F_j})$. Lemma~\ref{lem:kp-product-structure} therefore gives condition~\ref{item:p-factor-metrics}.

Let $C_j\in X/E_j$ for $1\leq j\leq k$. The definition of $E_j$ gives $\Phi\left(\bigcap_{j=1}^k C_j\right)=\bigcap_{j=1}^k \Theta_j(C_j)$. Lemma~\ref{lem:k-product-intersection} shows that the right side consists of exactly one point. Since $\Phi$ is bijective, the same holds for $\bigcap_{j=1}^k C_j$. Hence condition~\ref{item:p-factor-intersection} holds.

Equation~\eqref{eq:kp-product-quotient-reconstruction} and Lemma~\ref{lem:quotient-transfer} give $d(x,y)=\left(\sum_{j=1}^k \bar d_{E_j}([x]_{E_j},[y]_{E_j})^p\right)^{1/p}$. Hence condition~\ref{item:p-factor-decomposition} holds.

Assume condition~\ref{item:p-factor-relations}. Define $\Psi:X\to\prod_{j=1}^k X/E_j$ by $\Psi(x):=([x]_{E_1},\dots,[x]_{E_k})$. Lemma~\ref{lem:p-relations-to-product} shows that $\Psi$ is a bijective isometry from $(X,d)$ onto $\prod_{j=1}^k X/E_j$ equipped with the metric from Equation~\eqref{eq:p-quotient-product-metric}. Define $(P_j,d_j):=(X/E_j,\bar d_{E_j})$ for each $j$. The metric from Equation~\eqref{eq:p-quotient-product-metric} coincides with the metric from Equation~\eqref{eq:kp-product-metric} for this choice of factors. Therefore condition~\ref{item:p-factor-product} holds.
\end{proof}

\begin{corollary}\label{cor:p-power-classification}
Let $(X,d)$ be a metric space, let $k\geq 2$, and let $1\leq p<\infty$. The following are equivalent.
\begin{enumerate}
\item\label{item:p-power-product}
There exists a metric space $(P,d_P)$ such that $(X,d)$ is isometric to $P^k$, where
\begin{equation}\label{eq:p-power-metric}
d_{P^k,p}(u,v)
=
\left(
\sum_{j=1}^k
d_P(u_j,v_j)^p
\right)^{1/p}
\end{equation}
for all $u=(u_1,\dots,u_k)$ and $v=(v_1,\dots,v_k)$.

\item\label{item:p-power-relations}
There exist equivalence relations $E_1,\dots,E_k$ on $X$ such that:
\begin{enumerate}
\item\label{item:p-power-metrics}
the quotient distance $\bar d_{E_j}$ is a metric on $X/E_j$ for each $j$;

\item\label{item:p-power-intersection}
for every choice of classes $C_j\in X/E_j$, where $1\leq j\leq k$, the intersection $\bigcap_{j=1}^k C_j$ consists of exactly one point;

\item\label{item:p-power-decomposition}
the family $(E_1,\dots,E_k)$ satisfies the $p$-distance decomposition identity;

\item\label{item:p-power-common-model}
there exists a metric space $(P,d_P)$ such that each metric space $(X/E_j,\bar d_{E_j})$ is isometric to $(P,d_P)$.
\end{enumerate}
\end{enumerate}
\end{corollary}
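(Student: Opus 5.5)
The plan is to copy the proof of Corollary~\ref{cor:k-fold-power-classification}, using Theorem~\ref{thm:p-product-classification} in place of Theorem~\ref{thm:k-fold-product-classification}.

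\emph{Forward direction.} Assume condition~\ref{item:p-power-product}. We apply the forward implication of Theorem~\ref{thm:p-product-classification} with $P_j=P$ and $d_j=d_P$ for every $j$. Its proof uses a bijective isometry $\Phi:X\to P^k$ and defines $E_j$ by pulling back the coordinate relations $F_j$ of Lemma~\ref{lem:k-product-intersection}. This gives conditions~\ref{item:p-power-metrics}, \ref{item:p-power-intersection}, and~\ref{item:p-power-decomposition}. For condition~\ref{item:p-power-common-model}, Lemma~\ref{lem:quotient-transfer} gives a bijective isometry $(X/E_j,\bar d_{E_j})\to(Y/F_j,\bar d_{F_j})$. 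Lemma~\ref{lem:kp-product-structure} identifies $(Y/F_j,\bar d_{F_j})$ isometrically with $(P,d_P)$ via $[u]_{F_j}\mapsto u_j$. Composing these two isometries shows that each quotient is isometric to $(P,d_P)$.

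\emph{Reverse direction.} Assume condition~\ref{item:p-power-relations}. Lemma~\ref{lem:p-relations-to-product}, which uses conditions~\ref{item:p-power-metrics}--\ref{item:p-power-decomposition}, shows that
\[
\Psi(x)=([x]_{E_1},\dots,[x]_{E_k})
\]
is a bijective isometry onto $\prod_j X/E_j$ with the metric $d_{\times,p}$ of Equation~\eqref{eq:p-quotient-product-metric}. By condition~\ref{item:p-power-common-model}, choose bijective isometries $\theta_j:(X/E_j,\bar d_{E_j})\to(P,d_P)$. Define
\[
\Theta(C_1,\dots,C_k)=(\theta_1(C_1),\dots,\theta_k(C_k)).
\]
Since each $\theta_j$ preserves distances, for all $C,D$ we have
\[
d_{P^k,p}(\Theta(C),\Theta(D))=\Bigl(\sum_j \bar d_{E_j}(C_j,D_j)^p\Bigr)^{1/p}=d_{\times,p}(C,D).
\]
Because each $\theta_j$ is bijective, $\Theta$ is bijective. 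Hence $\Theta\circ\Psi$ is a bijective isometry from $(X,d)$ onto $P^k$ with the metric of Equation~\eqref{eq:p-power-metric}.

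There is no real obstacle. The one point needing care is in the forward direction: the common model must be shown explicitly. This is done by composing Lemma~\ref{lem:quotient-transfer} with the coordinate isometry from Lemma~\ref{lem:kp-product-structure}, rather than relying only on the statement of Theorem~\ref{thm:p-product-classification}, which does not itself record the isometry types of the quotients.
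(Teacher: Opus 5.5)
Your proposal is correct and follows the paper's proof essentially step for step. In the forward direction you apply Theorem~\ref{thm:p-product-classification} with $P_j=P$ and get the common model from Lemma~\ref{lem:kp-product-structure}; in the reverse direction you compose $\Psi$ with the coordinatewise map $\Theta$ built from the isometries $\theta_j$. Your explicit use of Lemma~\ref{lem:quotient-transfer}, and your remark that the common model comes from the specific pulled-back relations rather than from the bare statement of the theorem, only spell out what the paper's proof leaves implicit.
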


\begin{proof}
Assume condition~\ref{item:p-power-product}. Apply the forward implication in Theorem~\ref{thm:p-product-classification} with $P_j=P$ for every $j$. Lemma~\ref{lem:kp-product-structure} shows that each $(X/E_j,\bar d_{E_j})$ is isometric to $(P,d_P)$. Hence condition~\ref{item:p-power-common-model} holds.

Assume condition~\ref{item:p-power-relations}. By condition~\ref{item:p-power-common-model}, choose a metric space $(P,d_P)$ such that each $(X/E_j,\bar d_{E_j})$ is isometric to $(P,d_P)$. Theorem~\ref{thm:p-product-classification} gives a bijective isometry from $(X,d)$ onto $\prod_{j=1}^k (X/E_j,\bar d_{E_j})$ equipped with the metric from Equation~\eqref{eq:p-quotient-product-metric}. For each $j\in\{1,\dots,k\}$, choose a bijective isometry $\theta_j:(X/E_j,\bar d_{E_j})\to(P,d_P)$. Define $\Theta(C_1,\dots,C_k):=(\theta_1(C_1),\dots,\theta_k(C_k))$.

For all $C,D\in\prod_{j=1}^k X/E_j$, we have $d_{P^k,p}(\Theta(C),\Theta(D))=d_{\times,p}(C,D)$. Therefore $\Theta$ is a bijective isometry, so condition~\ref{item:p-power-product} holds.
\end{proof}

\begin{corollary}\label{cor:p-self-power-classification}
Let $(X,d)$ be a metric space, let $k\geq 2$, and let $1\leq p<\infty$. The following are equivalent.
\begin{enumerate}
\item\label{item:p-self-power-product}
The metric space $(X,d)$ is isometric to $X^k$, where
\begin{equation}\label{eq:p-self-power-metric}
d_{X^k,p}(u,v)
=
\left(
\sum_{j=1}^k
d(u_j,v_j)^p
\right)^{1/p}
\end{equation}
for all $u=(u_1,\dots,u_k)$ and $v=(v_1,\dots,v_k)$.

\item\label{item:p-self-power-relations}
There exist equivalence relations $E_1,\dots,E_k$ on $X$ such that:
\begin{enumerate}
\item\label{item:p-self-power-metrics}
the quotient distance $\bar d_{E_j}$ is a metric on $X/E_j$ for each $j$;

\item\label{item:p-self-power-intersection}
for every choice of classes $C_j\in X/E_j$, where $1\leq j\leq k$, the intersection $\bigcap_{j=1}^k C_j$ consists of exactly one point;

\item\label{item:p-self-power-decomposition}
the family $(E_1,\dots,E_k)$ satisfies the $p$-distance decomposition identity;

\item\label{item:p-self-power-isometric}
for each $j$, the metric space $(X/E_j,\bar d_{E_j})$ is isometric to $(X,d)$.
\end{enumerate}
\end{enumerate}
\end{corollary}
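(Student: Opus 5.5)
This corollary is the \(\ell^p\) analogue of Corollary~\ref{cor:k-fold-self-power-classification}. I would prove it by specializing Corollary~\ref{cor:p-power-classification} to the common model \((P,d_P)=(X,d)\), exactly as the max-metric self-power case was obtained from Corollary~\ref{cor:k-fold-power-classification}.

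For the forward implication, assume condition~\ref{item:p-self-power-product}. The metric in Equation~\eqref{eq:p-self-power-metric} is the metric of Equation~\eqref{eq:p-power-metric} with \((P,d_P)=(X,d)\). Hence condition~\ref{item:p-power-product} of Corollary~\ref{cor:p-power-classification} holds with this choice of \(P\). I would not quote only the existential conclusion of that corollary. Its proof builds the relations explicitly: for a fixed bijective isometry \(\Phi:X\to X^k\), set \(x E_j y\) exactly when the \(j\)-th coordinates of \(\Phi(x)\) and \(\Phi(y)\) agree. Lemma~\ref{lem:quotient-transfer} and Lemma~\ref{lem:kp-product-structure} show that each \((X/E_j,\bar d_{E_j})\) is isometric to the \(j\)-th factor, which here is \((X,d)\) itself. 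This gives conditions~\ref{item:p-self-power-metrics}, \ref{item:p-self-power-intersection}, \ref{item:p-self-power-decomposition}, and \ref{item:p-self-power-isometric}.

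For the reverse implication, conditions~\ref{item:p-self-power-metrics}--\ref{item:p-self-power-decomposition} are literally conditions~\ref{item:p-power-metrics}--\ref{item:p-power-decomposition}. Condition~\ref{item:p-self-power-isometric} gives condition~\ref{item:p-power-common-model} with common model \((X,d)\). The proof of Corollary~\ref{cor:p-power-classification} then yields a bijective isometry from \((X,d)\) onto the \(\ell^p\)-power of that chosen model. Since the model is \((X,d)\), the target is \(X^k\) with the metric of Equation~\eqref{eq:p-self-power-metric}, which is condition~\ref{item:p-self-power-product}.

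The only delicate point is a bookkeeping issue. The statement of Corollary~\ref{cor:p-power-classification} is existential in \(P\), so each direction must track that the witness is \((X,d)\). In the forward direction this is settled by the explicit construction of the relations from a fixed isometry. In the reverse direction it is settled by feeding \(P=(X,d)\) into the construction. Beyond this, the argument needs no new estimates.
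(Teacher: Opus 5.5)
Your proposal is correct and follows the paper's proof, which likewise specializes Corollary~\ref{cor:p-power-classification} to the common model $(P,d_P)=(X,d)$ in both directions. You are right to track the witness through the constructions rather than the existential statements. The paper's terse forward-direction sentence relies on the same point implicitly: the existential conclusion alone would only give quotients isometric to some $P$ with $P^k\cong X$, not necessarily isometric to $X$.
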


\begin{proof}
Assume condition~\ref{item:p-self-power-product}. Apply the forward implication in Corollary~\ref{cor:p-power-classification} with $(P,d_P)=(X,d)$. Condition~\ref{item:p-self-power-isometric} gives condition~\ref{item:p-power-common-model} with common model $(X,d)$. Hence condition~\ref{item:p-self-power-relations} holds.

Assume condition~\ref{item:p-self-power-relations}. The hypotheses give condition~\ref{item:p-power-relations} with common model $(X,d)$. Therefore Corollary~\ref{cor:p-power-classification} gives condition~\ref{item:p-self-power-product}.
\end{proof}

\section{The Classification of Square Structures}\label{sec:Classification-Square-Structures}

A presentation \(P^k\cong X\) contains coordinate data not captured by the root isometry type alone. We encode that coordinate data by quotient-coordinate structures on \(X\). We classify square, ordered \(k\)-power, and ordered \(\ell^p\) \(k\)-power presentations through these structures.

After classifying presentations, we ask when \(X\) has a root and when that root has a unique isometry type. We use prime decompositions to turn this root question into a multiplicity problem. For self-square spaces, we study the metric data obtained by iterating a chosen coordinate splitting.

The categorical classifications form the main results of this section. Theorems~\ref{thm:categorical-classification}, \ref{thm:k-categorical-classification}, and \ref{thm:p-categorical-classification} prove equivalences between the corresponding presentation categories and intrinsic structure categories. Each presentation determines equivalence relations, quotient metrics, singleton-intersection data, distance identities, and quotient-factor isometries. Conversely, each such structure reconstructs the corresponding presentation. The same theorems, together with Lemma~\ref{lem:presentation-thinness}, show that the presentation categories are thin and that every presentation has trivial automorphism group.

The root-classification results first determine the root isometry type and then classify the ordered structures realizing it. We decide \(k\)-th root existence and the root isometry type from the prime multiplicities. After fixing a root presentation, right cosets of the subgroup induced by coordinatewise root isometries classify the ordered structures.

We also classify self-square structures through repeated coordinate splitting. From a chosen self-square isometry, we construct branch-indexed pseudometrics whose supremum recovers \(d\). In the finite-spectrum setting with cardinal multiplicity uniqueness, \(X\cong X\times_\infty X\) holds exactly when every prime multiplicity is infinite. The proof uses the cardinal identity \(2\kappa=\kappa\).

Previous work on metric product uniqueness, cancellation, and factorization studies product factors or decompositions after a product setting has been specified \cite{moszynska1992uniqueness,herburt1991metric,herburt1994there,tardif1992prefibers}. Our comparison point is the ordered presentation itself, not only the factor isometry type. We use the quotient-coordinate results from the preceding product-recognition section to replace unavailable coordinate maps in the finite-coordinate case.

For finite-distance \(\ell^\infty\)-spaces, we use threshold relations to translate metric products into relational direct products. We apply McKenzie's strict refinement theorem to obtain uniqueness of prime multiplicities under the stated hypotheses \cite[p.~98, Theorem~9.2]{mckenzie1971cardinal}.

The finite quotient-coordinate classification treats products with a fixed number of factors, while our self-square analysis must control indefinitely repeated coordinate splitting. We iterate a self-square isometry to obtain coordinate systems indexed by finite binary words. The infinite branches replace the finite list of quotient coordinates used in the product-recognition setting.

Square and power questions depend not only on the factor but also on how its copies appear as coordinates of \(X\). We can therefore compare square and power presentations intrinsically, rather than only comparing their factors.

A root and a presentation contain different information. The root determines the isometry type of the repeated factor, while the presentation determines how copies of that factor appear as coordinates of \(X\). A uniqueness theorem for roots does not by itself classify ordered presentations.

The self-square case requires additional structure because iterating a chosen splitting yields infinitely many compatible coordinate systems. We use the distinction between finite and infinite multiplicities to explain when repeated factors can duplicate without changing the isometry type. Branch pseudometrics let us distinguish multiplicity-based self-similarity from splitting-dependent metric data.


\subsection{Classification of Power Presentations}

Throughout this subsection, every isometry is bijective and every metric space is nonempty. We let \(\mathbf{Met}_{\mathrm{iso}}\) denote the category of metric spaces and isometries. We fix a metric space \((X,d)\). For every metric space \(P\), every integer \(k\geq 2\), and every \(1\leq j\leq k\), we write \(\pi_j:P^k\to P\) for the \(j\)-th coordinate projection. For every map \(f:P\to Q\) and every integer \(k\geq 2\), we write \(f^{\times k}:P^k\to Q^k\) for the coordinatewise product map.

\begin{lemma}\label{lem:presentation-thinness}
Let \(k\geq 2\) be an integer. Let \(\mathbf{Pres}\) be a category whose objects are pairs \((P,\phi)\), where \(P\) is a metric space, \(P^k\) is a metric space, and \(\phi:P^k\to(X,d)\) is a bijective isometry. Assume that every morphism \(f:(P,\phi)\to(Q,\psi)\) in \(\mathbf{Pres}\) is a map \(f:P\to Q\) such that \(\psi\circ f^{\times k}=\phi\). Then \(\mathbf{Pres}\) is thin, and every object of \(\mathbf{Pres}\) has trivial automorphism group.

\[
\begin{tikzcd}[column sep=large,row sep=large]
P^k
\arrow[r,"f^{\times k}"]
\arrow[dr,"\phi"']
&
Q^k
\arrow[d,"\psi"]
\\
&
X
\end{tikzcd}
\]
\end{lemma}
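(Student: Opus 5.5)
Thinness means that any two parallel morphisms are equal. So the plan is to take morphisms \(f,g:(P,\phi)\to(Q,\psi)\) and show \(f=g\) directly from the defining identity \(\psi\circ f^{\times k}=\phi=\psi\circ g^{\times k}\). Since \(\psi\) is a bijective isometry, it is in particular injective. Cancelling \(\psi\) on the left gives \(f^{\times k}=g^{\times k}\) as maps \(P^k\to Q^k\).

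The key step is to recover \(f\) from \(f^{\times k}\). Fix \(a\in P\) and consider the diagonal point \((a,\dots,a)\in P^k\). Applying \(\pi_1\) to \(f^{\times k}(a,\dots,a)=g^{\times k}(a,\dots,a)\) gives \(f(a)=g(a)\). Since \(a\) is arbitrary, \(f=g\). This uses only that \(f^{\times k}\) is the coordinatewise product map and that \(P\) is a set whose points give diagonal tuples. Nonemptiness of \(P\) is not even needed for this direction. Hence the hom-sets of \(\mathbf{Pres}\) have at most one element, so \(\mathbf{Pres}\) is thin.

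For the automorphism statement, I will apply thinness to the hom-set \(\mathrm{Hom}((P,\phi),(P,\phi))\). It contains \(\mathrm{id}_P\), because \(\mathrm{id}_P^{\times k}=\mathrm{id}_{P^k}\) and so \(\phi\circ\mathrm{id}_{P^k}=\phi\). This uses only the category axioms, since identities must exist. The hom-set therefore has exactly one element, and the automorphism group is trivial. I should note that the identity morphism in \(\mathbf{Pres}\) is indeed \(\mathrm{id}_P\): it is the unique endomorphism, so whatever the category's identity is, it coincides with \(\mathrm{id}_P\).

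There is no real obstacle here. The only care point is the left-cancellation of \(\psi\), which requires injectivity and is guaranteed by the bijectivity assumption on presentation maps.
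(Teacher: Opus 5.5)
Your proposal is correct and matches the paper's proof essentially line for line: you cancel the injective \(\psi\) to get \(f^{\times k}=g^{\times k}\), evaluate at a diagonal point \((a,\dots,a)\) to get \(f(a)=g(a)\), and deduce trivial automorphism groups from thinness plus the existence of identity morphisms. One small point: the hypotheses say only that every morphism is a map satisfying \(\psi\circ f^{\times k}=\phi\), not that every such map (in particular \(\mathrm{id}_P\)) is a morphism, so the sound justification is the one you also give — the category axioms supply an identity endomorphism, and the same diagonal argument forces it to equal \(\mathrm{id}_P\).
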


\begin{proof}
We consider morphisms \(f,g:(P,\phi)\to(Q,\psi)\) in \(\mathbf{Pres}\). The morphism condition gives \(\psi\circ f^{\times k}=\phi\) and \(\psi\circ g^{\times k}=\phi\). Hence \(\psi\circ f^{\times k}=\psi\circ g^{\times k}\). Since \(\psi\) is injective, \(f^{\times k}=g^{\times k}\). We fix \(p\in P\). Evaluating \(f^{\times k}=g^{\times k}\) at \((p,\dots,p)\in P^k\) shows that \((f(p),\dots,f(p))=(g(p),\dots,g(p))\). Hence \(f(p)=g(p)\). Thus any two morphisms in \(\mathbf{Pres}\) with the same source and target agree. Therefore \(\mathbf{Pres}\) is thin. Since every object has an identity endomorphism and a thin category has at most one endomorphism of each object, every automorphism group is trivial.
\end{proof}

We now treat square presentations, ordered presentations with the max product metric, and ordered presentations with the \(\ell^p\) product metric.

\subsubsection{Square Presentations and Square Structures}

We identify \(P\times P\) with \(P^2\) when using the coordinate projections \(\pi_1,\pi_2\) and the product maps \(f^{\times 2}\). We define the square functor \(\Delta_2:\mathbf{Met}_{\mathrm{iso}}\to\mathbf{Met}_{\mathrm{iso}}\) by \(\Delta_2(P,d_P):=(P\times P,d_{P\times P})\), where Equation~\eqref{eq:max-product-metric} defines \(d_{P\times P}\), and by \(\Delta_2(f):=f^{\times 2}\) on isometries. Equation~\eqref{eq:max-product-metric} shows that \(\Delta_2(f)\) is a bijective isometry, and the identity and composition laws are immediate. We write \(\Delta_2(P)\) when the metric on \(P\) is clear.

\begin{definition}\label{def:sqpres}
We define the category \(\mathbf{SqPres}(X)\) as follows. An object is a pair \((P,\phi)\), where \((P,d_P)\) is a metric space and \(\phi:\Delta_2(P)\to(X,d)\) is an isometry. A morphism \(f:(P,\phi)\to(Q,\psi)\) is an isometry \(f:(P,d_P)\to(Q,d_Q)\) such that
\begin{equation}\label{eq:sqpres-morphism-condition}
\psi\circ f^{\times 2}=\phi.
\end{equation}
\end{definition}

\begin{definition}\label{def:sqstr}
A square structure on \((X,d)\) is a triple \(\mathfrak s=(E_1,E_2,\tau)\), where \(E_1\) and \(E_2\) are equivalence relations on \(X\) and \(\tau:X/E_2\to X/E_1\) is a map such that:
\begin{enumerate}
\item\label{item:sqstr-quotient-metrics}
for each \(i\in\{1,2\}\), the quotient distance \(\bar d_{E_i}\) is a metric on \(X/E_i\);

\item\label{item:sqstr-intersections}
for every \(C_1\in X/E_1\) and every \(C_2\in X/E_2\), the intersection \(C_1\cap C_2\) consists of exactly one point;

\item\label{item:sqstr-decomposition}
the pair \((E_1,E_2)\) satisfies the distance decomposition identity of Definition~\ref{def:distance-decomposition}, with \(E_1,E_2\) in place of \(E_0,E_1\);

\item\label{item:sqstr-tau-isometry}
the map \(\tau:(X/E_2,\bar d_{E_2})\to(X/E_1,\bar d_{E_1})\) is an isometry.
\end{enumerate}
\end{definition}

We let \(\mathbf{SqStr}(X)\) denote the discrete category of square structures on \((X,d)\).

Let \((P,\phi)\) be an object of \(\mathbf{SqPres}(X)\). For each \(i\in\{1,2\}\), we define \(q_i^\phi:=\pi_i\circ\phi^{-1}:X\to P\). For \(i\in\{1,2\}\), define \(E_i^\phi\) by declaring that \(xE_i^\phi y\) if and only if \(q_i^\phi(x)=q_i^\phi(y)\).

\begin{lemma}\label{lem:quotient-identification-cat}
For each \(i\in\{1,2\}\), the map \(\bar q_i^\phi:X/E_i^\phi\to P\), defined by \(\bar q_i^\phi([x]_{E_i^\phi})=q_i^\phi(x)\), is a bijective isometry.
\end{lemma}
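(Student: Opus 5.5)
The plan is to transport the problem along \(\phi\) to the model product \(\Delta_2(P)=P\times P\), where the needed quotient computations are already done by Lemma~\ref{lem:product-structure}, and then compose bijective isometries.

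First, I will check that \(\bar q_i^\phi\) is well defined and injective. The relation \(E_i^\phi\) is, by definition, the kernel of \(q_i^\phi\), so \(q_i^\phi\) is constant on each class and distinct classes have distinct values. For surjectivity, I fix \(p\in P\). Since \(P\) is nonempty, I choose any \(p'\in P\) and set \(x:=\phi(p,p')\) when \(i=1\), or \(x:=\phi(p',p)\) when \(i=2\). Then \(q_i^\phi(x)=p\).

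For the metric statement, I will not compute infima directly. Let \(F_1,F_2\) be the coordinate relations on \(P\times P\) from Lemma~\ref{lem:product-intersection}, relabelled so that \(F_i\) identifies points with equal \(i\)-th coordinate. I apply Lemma~\ref{lem:quotient-transfer} to the bijective isometry \(\Phi:=\phi^{-1}:X\to P\times P\). Its hypothesis \(xE_i^\phi y\iff\Phi(x)F_i\Phi(y)\) holds by the definition \(q_i^\phi=\pi_i\circ\phi^{-1}\). The lemma then gives a bijection \(\Theta_i:X/E_i^\phi\to(P\times P)/F_i\) satisfying
\[
\bar d_{E_i^\phi}([x],[y])=\bar d_{F_i}(\Theta_i[x],\Theta_i[y]).
\]
By Lemma~\ref{lem:product-structure}, \(\bar d_{F_i}\) is a metric, and the map \([(p_1,p_2)]_{F_i}\mapsto p_i\) is a bijective isometry onto \((P,d_P)\). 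It follows that \(\bar d_{E_i^\phi}\) is also a metric.

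Composing these two maps gives exactly \(\bar q_i^\phi\), since \([x]\mapsto[\phi^{-1}(x)]_{F_i}\mapsto\pi_i(\phi^{-1}(x))=q_i^\phi(x)\). As a composite of bijective isometries, \(\bar q_i^\phi\) is a bijective isometry.

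The only real obstacle is bookkeeping. Lemma~\ref{lem:product-structure} indexes its relations as \(E_0,E_1\), while here the indices are \(1,2\), so I must match each index to the correct coordinate projection. I also need that \(\bar d_{E_i^\phi}\) is a genuine metric before calling \(\bar q_i^\phi\) an isometry, and that comes from the distance identity in Lemma~\ref{lem:quotient-transfer} together with Lemma~\ref{lem:product-structure}.
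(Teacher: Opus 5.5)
Your proposal is correct and follows essentially the same route as the paper. Both arguments transport along \(\phi^{-1}\) to \(P\times P\) with the relabeled coordinate relations \(F_1,F_2\), and then use Lemma~\ref{lem:product-structure} to identify \(\bar d_{F_i}\) with \(d_P\) on the \(i\)-th coordinate; the only difference is that the paper writes out the infimum transfer inline, while you cite Lemma~\ref{lem:quotient-transfer}, which packages exactly that computation.
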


\begin{proof}
The definition of \(E_i^\phi\) shows that \(\bar q_i^\phi\) is well-defined and injective.

We fix \(r\in P\). For every \(p\in P\), we have \(q_1^\phi(\phi(p,r))=p\) and \(q_2^\phi(\phi(r,p))=p\). Hence \(q_i^\phi\) and \(\bar q_i^\phi\) are surjective for \(i\in\{1,2\}\).

Let \(F_1\) and \(F_2\) denote the first- and second-coordinate equivalence relations on \(P\times P\). For each \(i\in\{1,2\}\), \(xE_i^\phi y\) if and only if \(\phi^{-1}(x)F_i\phi^{-1}(y)\). Hence \(\phi^{-1}\) sends \(E_i^\phi\)-classes bijectively onto \(F_i\)-classes. Equation~\eqref{eq:quotient-distance} and the fact that \(\phi^{-1}\) is an isometry give
\begin{equation}\label{eq:quotient-distance-transfer-presentation}
\begin{aligned}
\bar d_{E_i^\phi}([x]_{E_i^\phi},[y]_{E_i^\phi})
&=
\inf\{d(x',y'):x'E_i^\phi x,\ y'E_i^\phi y\} \\
&=
\inf\{
d_{P\times P}(\phi^{-1}(x'),\phi^{-1}(y')):
x'E_i^\phi x,\ y'E_i^\phi y
\} \\
&=
\inf\{
d_{P\times P}(u,v):
uF_i\phi^{-1}(x),\ vF_i\phi^{-1}(y)
\} \\
&=
\bar d_{F_i}([\phi^{-1}(x)]_{F_i},[\phi^{-1}(y)]_{F_i}).
\end{aligned}
\end{equation}
Equation~\eqref{eq:quotient-distance-transfer-presentation} and Lemma~\ref{lem:product-structure}, applied to the relabeled coordinate pair \((F_1,F_2)\), give
\begin{equation}\label{eq:sqpres-quotient-distance}
\bar d_{E_i^\phi}([x]_{E_i^\phi},[y]_{E_i^\phi})
=
d_P(q_i^\phi(x),q_i^\phi(y)).
\end{equation}
Hence \(\bar q_i^\phi\) is a bijective isometry.
\end{proof}

We define \(\tau^\phi:=(\bar q_1^\phi)^{-1}\circ\bar q_2^\phi\).

\begin{lemma}\label{lem:structure-from-presentation}
The triple \(\mathfrak s(\phi)=(E_1^\phi,E_2^\phi,\tau^\phi)\) is a square structure on \((X,d)\).
\end{lemma}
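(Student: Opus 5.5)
We check the four conditions of Definition~\ref{def:sqstr} for \(\mathfrak s(\phi)\) in turn. The plan is to reduce everything to the product \(P\times P\) via \(\phi\), using Lemma~\ref{lem:quotient-identification-cat} together with the forward direction of Theorem~\ref{thm:two-factor-product-classification}.

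\emph{Conditions~\ref{item:sqstr-quotient-metrics} and~\ref{item:sqstr-tau-isometry}.} Condition~\ref{item:sqstr-quotient-metrics} follows at once from Equation~\eqref{eq:sqpres-quotient-distance}. That equation identifies \(\bar d_{E_i^\phi}\) with the pullback of \(d_P\) along the bijection \(\bar q_i^\phi\), so it is a metric. For condition~\ref{item:sqstr-tau-isometry}, Lemma~\ref{lem:quotient-identification-cat} says that \(\bar q_1^\phi\) and \(\bar q_2^\phi\) are bijective isometries onto \(P\). Hence \(\tau^\phi=(\bar q_1^\phi)^{-1}\circ\bar q_2^\phi\) is a composite of bijective isometries, and it maps \((X/E_2^\phi,\bar d_{E_2^\phi})\) to \((X/E_1^\phi,\bar d_{E_1^\phi})\) as required.

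\emph{Condition~\ref{item:sqstr-intersections}.} Take classes \(C_1\in X/E_1^\phi\) and \(C_2\in X/E_2^\phi\), and set \(a=\bar q_1^\phi(C_1)\) and \(b=\bar q_2^\phi(C_2)\). A point \(x\) lies in \(C_1\cap C_2\) exactly when \(\pi_1(\phi^{-1}(x))=a\) and \(\pi_2(\phi^{-1}(x))=b\), that is, when \(\phi^{-1}(x)=(a,b)\). So \(C_1\cap C_2=\{\phi(a,b)\}\). Equivalently, one can transport Lemma~\ref{lem:product-intersection} through the bijection \(\phi\).

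\emph{Condition~\ref{item:sqstr-decomposition}.} For \(x,y\in X\), we use that \(\phi\) is an isometry, then Equation~\eqref{eq:max-product-metric}, then Equation~\eqref{eq:sqpres-quotient-distance}:
\[
d(x,y)=d_{P\times P}(\phi^{-1}(x),\phi^{-1}(y))=\max_{i\in\{1,2\}} d_P(q_i^\phi(x),q_i^\phi(y))=\max_{i\in\{1,2\}}\bar d_{E_i^\phi}([x]_{E_i^\phi},[y]_{E_i^\phi}).
\]
This is exactly the distance decomposition identity.

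No step is a real obstacle, because all the quotient-distance work is already contained in Lemma~\ref{lem:quotient-identification-cat}. The only point needing care is the index bookkeeping. The paper's Definition~\ref{def:distance-decomposition} uses the labels \(E_0,E_1\), while the square structure uses \(E_1,E_2\). Also, \(\tau\) must run from \(X/E_2\) to \(X/E_1\), so it is \((\bar q_1^\phi)^{-1}\circ\bar q_2^\phi\) and not the reverse composite.
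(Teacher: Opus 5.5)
Your proposal is correct and follows essentially the same route as the paper: the metric property and the isometry property of \(\tau^\phi\) come from Lemma~\ref{lem:quotient-identification-cat}, the one-point intersection comes from transporting coordinate fibers through \(\phi\), and the distance identity comes from the max product metric together with Equation~\eqref{eq:sqpres-quotient-distance}. Your explicit computation \(C_1\cap C_2=\{\phi(a,b)\}\) is just an unwound form of the paper's appeal to Lemma~\ref{lem:product-intersection}, and the forward direction of Theorem~\ref{thm:two-factor-product-classification} mentioned in your plan is never actually needed (the paper does not use it either).
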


\begin{proof}
Lemma~\ref{lem:quotient-identification-cat} shows that each quotient distance \(\bar d_{E_i^\phi}\) is a metric, and the definition of \(\tau^\phi\) shows that \(\tau^\phi\) is an isometry.

Let \(F_1\) and \(F_2\) denote the first- and second-coordinate equivalence relations on \(P\times P\). For each \(i\in\{1,2\}\), \(xE_i^\phi y\) if and only if \(\phi^{-1}(x)F_i\phi^{-1}(y)\). Hence \(\phi^{-1}\) maps intersections of \(E_1^\phi\)- and \(E_2^\phi\)-classes bijectively onto intersections of \(F_1\)- and \(F_2\)-classes. Lemma~\ref{lem:product-intersection}, applied to the relabeled coordinate pair \((F_1,F_2)\), gives the required one-point intersection property.

For every \(x,y\in X\), Equation~\eqref{eq:max-product-metric} and the fact that \(\phi^{-1}\) is an isometry from \((X,d)\) onto \(\Delta_2(P)\) give
\[
d(x,y)
=
\max\{
d_P(q_1^\phi(x),q_1^\phi(y)),
d_P(q_2^\phi(x),q_2^\phi(y))
\}.
\]
Equation~\eqref{eq:sqpres-quotient-distance} identifies these terms with \(\bar d_{E_1^\phi}([x]_{E_1^\phi},[y]_{E_1^\phi})\) and \(\bar d_{E_2^\phi}([x]_{E_2^\phi},[y]_{E_2^\phi})\), respectively. Hence the distance decomposition identity follows. Therefore \(\mathfrak s(\phi)\) is a square structure.
\end{proof}

\begin{lemma}\label{lem:functoriality}
Let \(f:(P,\phi)\to(Q,\psi)\) be a morphism in \(\mathbf{SqPres}(X)\). Then \(\mathfrak s(\phi)=\mathfrak s(\psi)\). Moreover, for each \(i\in\{1,2\}\),
\begin{equation}\label{eq:quotient-map-functoriality-general}
\bar q_i^\psi=f\circ\bar q_i^\phi.
\end{equation}
\end{lemma}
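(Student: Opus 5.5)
The plan is to derive everything from the morphism identity \(\psi\circ f^{\times 2}=\phi\). Inverting it, and using that \(f\) is a bijective isometry so that \(f^{\times 2}\) is invertible with inverse \((f^{-1})^{\times 2}\), gives
\[
\psi^{-1}=f^{\times 2}\circ\phi^{-1}.
\]
Composing with \(\pi_i\) and using \(\pi_i\circ f^{\times 2}=f\circ\pi_i\) yields, for each \(i\in\{1,2\}\),
\[
q_i^\psi=\pi_i\circ\psi^{-1}=f\circ\pi_i\circ\phi^{-1}=f\circ q_i^\phi .
\]
This single identity is the engine for every claim in the lemma.

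First we show that the equivalence relations agree. Since \(f\) is injective,
\[
q_i^\psi(x)=q_i^\psi(y)\iff f(q_i^\phi(x))=f(q_i^\phi(y))\iff q_i^\phi(x)=q_i^\phi(y).
\]
Hence \(E_i^\psi=E_i^\phi\) for \(i\in\{1,2\}\), and the quotient sets \(X/E_i^\psi\) and \(X/E_i^\phi\) are literally the same sets with the same classes.

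Next we establish Equation~\eqref{eq:quotient-map-functoriality-general}. For a class \([x]\) in this common quotient, the definition in Lemma~\ref{lem:quotient-identification-cat} and the identity above give
\[
\bar q_i^\psi([x])=q_i^\psi(x)=f(q_i^\phi(x))=f(\bar q_i^\phi([x])),
\]
which is exactly \(\bar q_i^\psi=f\circ\bar q_i^\phi\).

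Finally we compare the comparison maps. We compute
\[
\tau^\psi=(\bar q_1^\psi)^{-1}\circ\bar q_2^\psi=(\bar q_1^\phi)^{-1}\circ f^{-1}\circ f\circ\bar q_2^\phi=\tau^\phi,
\]
which is legitimate because \(f\) is bijective. Together with the equality of the relations, this gives \(\mathfrak s(\phi)=\mathfrak s(\psi)\) as triples.

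There is no real obstacle here. The only point needing care is the bookkeeping: the equality \(E_i^\psi=E_i^\phi\) must be established first, so that \(\bar q_i^\psi\) and \(\bar q_i^\phi\) have the same domain and the composites are meaningful. It also matters that invertibility of \(f\) is part of the definition of morphisms, since we assume throughout the subsection that isometries are bijective.
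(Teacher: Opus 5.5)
Your proposal is correct and matches the paper's proof step for step. The paper likewise extracts \(q_i^\psi=f\circ q_i^\phi\) from the morphism condition via \(\psi^{-1}(\phi(z))=f^{\times 2}(z)\), uses injectivity of \(f\) to get \(E_i^\phi=E_i^\psi\), reads off \(\bar q_i^\psi=f\circ\bar q_i^\phi\), and cancels \(f^{-1}\circ f\) in the formula for \(\tau\). One small difference: your first identity does not need invertibility of \(f^{\times 2}\), since composing \(\psi\circ f^{\times 2}=\phi\) with \(\psi^{-1}\) on the left and \(\phi^{-1}\) on the right already gives it.
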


\begin{proof}
Equation~\eqref{eq:sqpres-morphism-condition} gives \(\psi^{-1}(\phi(z))=f^{\times 2}(z)\) for \(z\in P\times P\). Taking \(z=\phi^{-1}(x)\) and applying \(\pi_i\) gives \(q_i^\psi(x)=f(q_i^\phi(x))\) for \(x\in X\) and \(i\in\{1,2\}\).

For \(x,y\in X\) and \(i\in\{1,2\}\), since \(q_i^\psi=f\circ q_i^\phi\) and \(f\) is injective, \(xE_i^\phi y\) if and only if \(xE_i^\psi y\). Hence \(E_i^\phi=E_i^\psi\). The identity \(q_i^\psi=f\circ q_i^\phi\) gives Equation~\eqref{eq:quotient-map-functoriality-general}.

Since \(\bar q_i^\psi=f\circ\bar q_i^\phi\) for \(i\in\{1,2\}\) and \(f\) is bijective, \(\tau^\psi=(\bar q_1^\psi)^{-1}\circ\bar q_2^\psi=(\bar q_1^\phi)^{-1}\circ f^{-1}\circ f\circ\bar q_2^\phi=\tau^\phi\). Therefore \(\mathfrak s(\phi)=\mathfrak s(\psi)\).
\end{proof}

We define \(\mathcal F:\mathbf{SqPres}(X)\to\mathbf{SqStr}(X)\) on objects by \(\mathcal F(P,\phi):=\mathfrak s(\phi)\). For every morphism \(f:(P,\phi)\to(Q,\psi)\), Lemma~\ref{lem:functoriality} gives \(\mathfrak s(\phi)=\mathfrak s(\psi)\), and we define \(\mathcal F(f)\) to be the identity morphism of this square structure.

Let \(\mathfrak s=(E_1,E_2,\tau)\) be a square structure on \((X,d)\). Set \(P_{\mathfrak s}:=X/E_1\), and equip \(P_{\mathfrak s}\) with the metric \(\bar d_{E_1}\). Define
\begin{equation}\label{eq:psi-from-structure}
\Psi_{\mathfrak s}(x)
=
([x]_{E_1},\tau([x]_{E_2})).
\end{equation}

\begin{lemma}\label{lem:presentation-from-structure}
The map \(\Psi_{\mathfrak s}\) is a bijective isometry from \((X,d)\) onto \(\Delta_2(P_{\mathfrak s})\). Consequently, if \(\phi_{\mathfrak s}:=\Psi_{\mathfrak s}^{-1}\), then \((P_{\mathfrak s},\phi_{\mathfrak s})\) is an object of \(\mathbf{SqPres}(X)\).
\end{lemma}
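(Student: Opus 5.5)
The plan is to factor \(\Psi_{\mathfrak s}\) as the quotient-coordinate map followed by a coordinatewise isometry, exactly as in the converse direction of Theorem~\ref{thm:two-factor-product-classification}. Define \(\Psi(x):=([x]_{E_1},[x]_{E_2})\) from \(X\) to \((X/E_1)\times(X/E_2)\), equipped with the metric \(d_\times\) of Equation~\eqref{eq:quotient-product-metric} with \(E_1,E_2\) in place of \(E_0,E_1\). Conditions~\ref{item:sqstr-quotient-metrics}, \ref{item:sqstr-intersections}, and \ref{item:sqstr-decomposition} of Definition~\ref{def:sqstr} are precisely the hypotheses of Lemma~\ref{lem:relations-to-product}. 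That lemma therefore shows that \(\Psi\) is a bijective isometry onto \(\bigl((X/E_1)\times(X/E_2),d_\times\bigr)\).

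Next, define \(\Xi(C_1,C_2):=(C_1,\tau(C_2))\) from \((X/E_1)\times(X/E_2)\) to \(P_{\mathfrak s}\times P_{\mathfrak s}\). By condition~\ref{item:sqstr-tau-isometry}, \(\tau\) is a bijective isometry from \((X/E_2,\bar d_{E_2})\) onto \((X/E_1,\bar d_{E_1})=(P_{\mathfrak s},\bar d_{E_1})\). Hence \(\Xi\) is bijective. Equation~\eqref{eq:max-product-metric} then gives
\[
d_{P_{\mathfrak s}\times P_{\mathfrak s}}(\Xi(C),\Xi(D))=\max\{\bar d_{E_1}(C_1,D_1),\bar d_{E_2}(C_2,D_2)\}=d_\times(C,D),
\]
so \(\Xi\) is an isometry. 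Since \(\Psi_{\mathfrak s}=\Xi\circ\Psi\) by Equation~\eqref{eq:psi-from-structure}, \(\Psi_{\mathfrak s}\) is a bijective isometry onto \(\Delta_2(P_{\mathfrak s})\).

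The consequence follows at once. The inverse \(\phi_{\mathfrak s}=\Psi_{\mathfrak s}^{-1}\) of a bijective isometry is a bijective isometry \(\Delta_2(P_{\mathfrak s})\to(X,d)\), and \(P_{\mathfrak s}\) is a nonempty metric space because \(X\) is nonempty and \(\bar d_{E_1}\) is a metric. So \((P_{\mathfrak s},\phi_{\mathfrak s})\) satisfies Definition~\ref{def:sqpres}.

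There is no real obstacle. The only point requiring care is bookkeeping: the direction of \(\tau\) is \(X/E_2\to X/E_1\), so the second coordinate must be transported into \(P_{\mathfrak s}=X/E_1\), and the relabeling \((E_0,E_1)\mapsto(E_1,E_2)\) must be applied consistently when invoking Lemma~\ref{lem:relations-to-product}.
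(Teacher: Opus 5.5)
Your proposal is correct and takes the same route as the paper: apply Lemma~\ref{lem:relations-to-product}, with $E_1,E_2$ in place of $E_0,E_1$, to get the quotient-coordinate isometry, compose with the coordinatewise isometry $(C_1,C_2)\mapsto(C_1,\tau(C_2))$, and invert to obtain $\phi_{\mathfrak s}$. Your remark that $P_{\mathfrak s}$ is nonempty is harmless, since the paper already assumes in this subsection that all metric spaces are nonempty and all isometries are bijective.
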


\begin{proof}
Conditions~\ref{item:sqstr-quotient-metrics}, \ref{item:sqstr-intersections}, and \ref{item:sqstr-decomposition} in Definition~\ref{def:sqstr} are the hypotheses of Lemma~\ref{lem:relations-to-product}, with \(E_1,E_2\) in place of \(E_0,E_1\). Hence the map \(x\mapsto([x]_{E_1},[x]_{E_2})\) is a bijective isometry from \((X,d)\) onto \((X/E_1)\times(X/E_2)\) with the metric from Equation~\eqref{eq:quotient-product-metric}.

Condition~\ref{item:sqstr-tau-isometry} states that \(\tau\) is an isometry, so \((C_1,C_2)\mapsto(C_1,\tau(C_2))\) is a bijective isometry from \((X/E_1)\times(X/E_2)\) onto \(P_{\mathfrak s}^2\). Equation~\eqref{eq:psi-from-structure} shows that the composition of these two isometries equals \(\Psi_{\mathfrak s}\). Hence \(\Psi_{\mathfrak s}\) is a bijective isometry.

Therefore \(\phi_{\mathfrak s}=\Psi_{\mathfrak s}^{-1}\) is an isometry \(\Delta_2(P_{\mathfrak s})\to(X,d)\). Thus \((P_{\mathfrak s},\phi_{\mathfrak s})\) is an object of \(\mathbf{SqPres}(X)\).
\end{proof}

We define \(\mathcal G:\mathbf{SqStr}(X)\to\mathbf{SqPres}(X)\) by \(\mathcal G(\mathfrak s):=(P_{\mathfrak s},\phi_{\mathfrak s})\) on objects and by sending each identity morphism to the identity morphism of its image.

\begin{lemma}\label{lem:roundtrip-structure-cat}
For every square structure \(\mathfrak s\) on \((X,d)\), we have \(\mathcal F\mathcal G(\mathfrak s)=\mathfrak s\).
\end{lemma}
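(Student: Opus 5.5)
We need to show \(\mathfrak s(\phi_{\mathfrak s})=(E_1,E_2,\tau)\), where \(\phi_{\mathfrak s}=\Psi_{\mathfrak s}^{-1}\) and \(P_{\mathfrak s}=X/E_1\). The plan is to compute the coordinate maps explicitly. Since \(\phi_{\mathfrak s}^{-1}=\Psi_{\mathfrak s}\), Equation~\eqref{eq:psi-from-structure} gives
\[
q_1^{\phi_{\mathfrak s}}(x)=\pi_1(\Psi_{\mathfrak s}(x))=[x]_{E_1},
\qquad
q_2^{\phi_{\mathfrak s}}(x)=\pi_2(\Psi_{\mathfrak s}(x))=\tau([x]_{E_2})
\]
for every \(x\in X\). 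Everything else follows from these two formulas.

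First we identify the equivalence relations. By definition, \(xE_1^{\phi_{\mathfrak s}}y\) holds if and only if \([x]_{E_1}=[y]_{E_1}\), that is, if and only if \(xE_1y\). Next, \(xE_2^{\phi_{\mathfrak s}}y\) holds if and only if \(\tau([x]_{E_2})=\tau([y]_{E_2})\). Since \(\tau\) is a bijective isometry, and in particular injective, this is equivalent to \([x]_{E_2}=[y]_{E_2}\), that is, to \(xE_2y\). Hence \(E_i^{\phi_{\mathfrak s}}=E_i\) for \(i\in\{1,2\}\). This also gives \(X/E_i^{\phi_{\mathfrak s}}=X/E_i\) as sets of classes, so the two triples have the same domain and codomain for their third components.

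Then we identify the comparison map. The induced maps are \(\bar q_1^{\phi_{\mathfrak s}}([x]_{E_1})=[x]_{E_1}\), so \(\bar q_1^{\phi_{\mathfrak s}}\) is the identity of \(X/E_1\), and \(\bar q_2^{\phi_{\mathfrak s}}([x]_{E_2})=\tau([x]_{E_2})\), so \(\bar q_2^{\phi_{\mathfrak s}}=\tau\). Therefore
\[
\tau^{\phi_{\mathfrak s}}=(\bar q_1^{\phi_{\mathfrak s}})^{-1}\circ\bar q_2^{\phi_{\mathfrak s}}=\tau.
\]
Combining the two steps gives \(\mathcal F\mathcal G(\mathfrak s)=\mathfrak s\).

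The only point that needs care is the second equivalence relation, because it requires the injectivity of \(\tau\). That injectivity comes from condition~\ref{item:sqstr-tau-isometry} in Definition~\ref{def:sqstr}, under the standing convention that isometries are bijective; even without that convention, a distance-preserving map between metric spaces is injective. Lemma~\ref{lem:presentation-from-structure} supplies the fact that \((P_{\mathfrak s},\phi_{\mathfrak s})\) is an object of \(\mathbf{SqPres}(X)\), so \(\mathcal F\) can be applied to it. Since \(\mathbf{SqStr}(X)\) is discrete, the statement is already an equality of objects, and no separate check on morphisms is needed.
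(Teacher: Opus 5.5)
Your proposal is correct and follows essentially the same argument as the paper. Like the paper, you read off \(q_1^{\phi_{\mathfrak s}}(x)=[x]_{E_1}\) and \(q_2^{\phi_{\mathfrak s}}(x)=\tau([x]_{E_2})\) from \(\Psi_{\mathfrak s}\), use injectivity of \(\tau\) to get \(E_i^{\phi_{\mathfrak s}}=E_i\), and then conclude \(\bar q_1^{\phi_{\mathfrak s}}=\mathrm{id}_{X/E_1}\) and \(\bar q_2^{\phi_{\mathfrak s}}=\tau\), so that \(\tau^{\phi_{\mathfrak s}}=\tau\).
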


\begin{proof}
We let \(\mathfrak s=(E_1,E_2,\tau)\). Since \(\phi_{\mathfrak s}^{-1}=\Psi_{\mathfrak s}\), Equation~\eqref{eq:psi-from-structure} gives \(q_1^{\phi_{\mathfrak s}}(x)=[x]_{E_1}\) and \(q_2^{\phi_{\mathfrak s}}(x)=\tau([x]_{E_2})\). For \(x,y\in X\), these identities give \(xE_1^{\phi_{\mathfrak s}}y\) if and only if \(xE_1y\) and, since \(\tau\) is injective, \(xE_2^{\phi_{\mathfrak s}}y\) if and only if \(xE_2y\). Thus \(E_1^{\phi_{\mathfrak s}}=E_1\) and \(E_2^{\phi_{\mathfrak s}}=E_2\). Under these identifications, \(\bar q_1^{\phi_{\mathfrak s}}=\mathrm{id}_{X/E_1}\) and \(\bar q_2^{\phi_{\mathfrak s}}=\tau\), so \(\tau^{\phi_{\mathfrak s}}=\tau\). Therefore \(\mathcal F\mathcal G(\mathfrak s)=\mathfrak s\).
\end{proof}

\begin{lemma}\label{lem:roundtrip-presentation-cat}
For every object \((P,\phi)\) of \(\mathbf{SqPres}(X)\), the map \(\eta_{(P,\phi)}:=\bar q_1^\phi:P_{\mathfrak s(\phi)}=X/E_1^\phi\to P\) is a morphism \(\mathcal G\mathcal F(P,\phi)\to(P,\phi)\) in \(\mathbf{SqPres}(X)\). Moreover,
\begin{equation}\label{eq:eta-compatibility}
\phi\circ\eta_{(P,\phi)}^{\times 2}
=
\phi_{\mathfrak s(\phi)}.
\end{equation}
\end{lemma}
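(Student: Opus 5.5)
The morphism claim and Equation~\eqref{eq:eta-compatibility} are really the same statement, so I will verify the identity on points and then read off the morphism property. Write \(\mathfrak s=\mathfrak s(\phi)=(E_1^\phi,E_2^\phi,\tau^\phi)\). Then \(\mathcal G\mathcal F(P,\phi)=(P_{\mathfrak s},\phi_{\mathfrak s})\), where \(P_{\mathfrak s}=X/E_1^\phi\) carries \(\bar d_{E_1^\phi}\) and \(\phi_{\mathfrak s}=\Psi_{\mathfrak s}^{-1}\). By Definition~\ref{def:sqpres}, a morphism \(\mathcal G\mathcal F(P,\phi)\to(P,\phi)\) is an isometry \(f:P_{\mathfrak s}\to P\) with \(\phi\circ f^{\times2}=\phi_{\mathfrak s}\). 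Lemma~\ref{lem:quotient-identification-cat} already shows that \(\eta_{(P,\phi)}=\bar q_1^\phi\) is a bijective isometry, so only Equation~\eqref{eq:eta-compatibility} remains.

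Since \(\phi\) and \(\phi_{\mathfrak s}\) are bijections, Equation~\eqref{eq:eta-compatibility} is equivalent to
\[
(\bar q_1^\phi)^{\times2}\circ\Psi_{\mathfrak s}=\phi^{-1}.
\]
I will check this pointwise. For \(x\in X\), Equation~\eqref{eq:psi-from-structure} gives \(\Psi_{\mathfrak s}(x)=([x]_{E_1^\phi},\tau^\phi([x]_{E_2^\phi}))\). The first coordinate is sent by \(\bar q_1^\phi\) to \(q_1^\phi(x)=\pi_1(\phi^{-1}(x))\). For the second coordinate, the definition \(\tau^\phi=(\bar q_1^\phi)^{-1}\circ\bar q_2^\phi\) gives \(\bar q_1^\phi(\tau^\phi([x]_{E_2^\phi}))=\bar q_2^\phi([x]_{E_2^\phi})=q_2^\phi(x)=\pi_2(\phi^{-1}(x))\). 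The left side therefore equals \((\pi_1\phi^{-1}(x),\pi_2\phi^{-1}(x))=\phi^{-1}(x)\). Applying \(\phi\) and precomposing with \(\phi_{\mathfrak s}=\Psi_{\mathfrak s}^{-1}\) then yields Equation~\eqref{eq:eta-compatibility}.

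No step here is a real obstacle. The only point needing care is that the metric on \(P_{\mathfrak s}\) is \(\bar d_{E_1^\phi}\), which is exactly the metric for which Lemma~\ref{lem:quotient-identification-cat} makes \(\bar q_1^\phi\) an isometry. With that in place, \(\eta_{(P,\phi)}\) is a genuine morphism in \(\mathbf{SqPres}(X)\).
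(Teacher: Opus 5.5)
Your proposal is correct and follows the paper's proof essentially step for step. Both proofs use Lemma~\ref{lem:quotient-identification-cat} for the isometry property, reduce Equation~\eqref{eq:eta-compatibility} to \((\bar q_1^\phi)^{\times2}\circ\Psi_{\mathfrak s(\phi)}=\phi^{-1}\), and verify this coordinatewise via \(\tau^\phi=(\bar q_1^\phi)^{-1}\circ\bar q_2^\phi\).
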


\begin{proof}
Lemma~\ref{lem:quotient-identification-cat} shows that \(\eta_{(P,\phi)}=\bar q_1^\phi\) is an isometry. Since \(\phi_{\mathfrak s(\phi)}=\Psi_{\mathfrak s(\phi)}^{-1}\), it suffices to prove
\begin{equation}\label{eq:eta-inverse-compatibility}
\eta_{(P,\phi)}^{\times 2}\circ\Psi_{\mathfrak s(\phi)}
=
\phi^{-1}.
\end{equation}
For \(x\in X\), Equation~\eqref{eq:psi-from-structure} shows that the first coordinate of \(\eta_{(P,\phi)}^{\times 2}\Psi_{\mathfrak s(\phi)}(x)\) is \(\bar q_1^\phi([x]_{E_1^\phi})=q_1^\phi(x)\). Since \(\tau^\phi=(\bar q_1^\phi)^{-1}\circ\bar q_2^\phi\), we have \(\bar q_1^\phi(\tau^\phi([x]_{E_2^\phi}))=\bar q_2^\phi([x]_{E_2^\phi})\). Hence the second coordinate is \(q_2^\phi(x)\). Hence \(\eta_{(P,\phi)}^{\times 2}\Psi_{\mathfrak s(\phi)}(x)=\phi^{-1}(x)\), so Equation~\eqref{eq:eta-inverse-compatibility} holds. Therefore Equation~\eqref{eq:eta-compatibility} holds, so \(\eta_{(P,\phi)}\) is a morphism in \(\mathbf{SqPres}(X)\).
\end{proof}

\begin{lemma}\label{lem:eta-natural-cat}
The morphisms \(\eta_{(P,\phi)}:\mathcal G\mathcal F(P,\phi)\to(P,\phi)\) form a natural isomorphism \(\eta:\mathcal G\mathcal F\Rightarrow\mathrm{id}_{\mathbf{SqPres}(X)}\).
\end{lemma}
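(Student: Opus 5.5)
To prove that \(\eta\) is a natural isomorphism, I verify three things: each component is a morphism, each component is invertible in \(\mathbf{SqPres}(X)\), and the naturality squares commute.

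\textbf{Components are morphisms.} Lemma~\ref{lem:roundtrip-presentation-cat} already shows that each \(\eta_{(P,\phi)}=\bar q_1^\phi\) is a morphism \(\mathcal G\mathcal F(P,\phi)\to(P,\phi)\), so only invertibility and naturality remain.

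\textbf{Invertibility.} By Lemma~\ref{lem:quotient-identification-cat}, \(\bar q_1^\phi\) is a bijective isometry, so its inverse \((\bar q_1^\phi)^{-1}:P\to X/E_1^\phi\) is an isometry. It remains to check that this inverse satisfies the morphism condition \eqref{eq:sqpres-morphism-condition}, namely
\[
\phi_{\mathfrak s(\phi)}\circ\bigl((\bar q_1^\phi)^{-1}\bigr)^{\times 2}=\phi .
\]
Precompose Equation~\eqref{eq:eta-compatibility} with \(\bigl((\bar q_1^\phi)^{-1}\bigr)^{\times 2}\). Since \((f^{-1})^{\times 2}=(f^{\times 2})^{-1}\), this yields the displayed identity directly. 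Hence \(\eta_{(P,\phi)}\) is an isomorphism in \(\mathbf{SqPres}(X)\).

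\textbf{Naturality.} Let \(f:(P,\phi)\to(Q,\psi)\) be a morphism. Lemma~\ref{lem:functoriality} gives \(\mathfrak s(\phi)=\mathfrak s(\psi)\), and \(\mathcal F(f)\) is an identity, so \(\mathcal G\mathcal F(f)\) is the identity of \((P_{\mathfrak s(\phi)},\phi_{\mathfrak s(\phi)})\). The naturality square therefore reduces to
\[
f\circ\eta_{(P,\phi)}=\eta_{(Q,\psi)}, \qquad\text{that is,}\qquad f\circ\bar q_1^\phi=\bar q_1^\psi .
\]
This is exactly Equation~\eqref{eq:quotient-map-functoriality-general} with \(i=1\).

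Alternatively, naturality follows from thinness: both composites in the square are morphisms \(\mathcal G\mathcal F(P,\phi)\to(Q,\psi)\), and Lemma~\ref{lem:presentation-thinness} makes \(\mathbf{SqPres}(X)\) thin, so they agree automatically.

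No step is a genuine obstacle. The only care needed is the bookkeeping that \(\mathcal G\mathcal F(f)\) really is the identity morphism, which relies on the equality of structures in Lemma~\ref{lem:functoriality} rather than on a mere isomorphism between them.
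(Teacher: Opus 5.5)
Your proposal is correct and follows the paper's proof essentially step for step. The components are morphisms by Lemma~\ref{lem:roundtrip-presentation-cat}. Invertibility comes from composing Equation~\eqref{eq:eta-compatibility} on the right with \((\eta_{(P,\phi)}^{-1})^{\times 2}\). Naturality reduces to Equation~\eqref{eq:quotient-map-functoriality-general} with \(i=1\) because \(\mathcal G\mathcal F(f)\) is an identity. Your alternative naturality argument via thinness (Lemma~\ref{lem:presentation-thinness}) is also valid.
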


\begin{proof}
By Lemma~\ref{lem:roundtrip-presentation-cat}, each \(\eta_{(P,\phi)}\) is a morphism in \(\mathbf{SqPres}(X)\).

Since \(\eta_{(P,\phi)}\) is a bijective isometry, its inverse is a bijective isometry. Composing Equation~\eqref{eq:eta-compatibility} on the right with \((\eta_{(P,\phi)}^{-1})^{\times 2}\) gives
\begin{equation}\label{eq:eta-inverse-morphism-compatibility}
\phi_{\mathfrak s(\phi)}
\circ
(\eta_{(P,\phi)}^{-1})^{\times 2}
=
\phi.
\end{equation}
Equation~\eqref{eq:eta-inverse-morphism-compatibility} shows that \(\eta_{(P,\phi)}^{-1}\) is a morphism from \((P,\phi)\) to \(\mathcal G\mathcal F(P,\phi)\). Therefore \(\eta_{(P,\phi)}\) is an isomorphism in \(\mathbf{SqPres}(X)\).

We let \(f:(P,\phi)\to(Q,\psi)\) be a morphism in \(\mathbf{SqPres}(X)\). Lemma~\ref{lem:functoriality} gives \(\mathfrak s(\phi)=\mathfrak s(\psi)\), so \(\mathcal G\mathcal F(f)\) is the identity morphism of \(\mathcal G\mathcal F(P,\phi)=\mathcal G\mathcal F(Q,\psi)\). Equation~\eqref{eq:quotient-map-functoriality-general} with \(i=1\) gives \(f\circ\eta_{(P,\phi)}=f\circ\bar q_1^\phi=\bar q_1^\psi=\eta_{(Q,\psi)}\). Therefore \(\eta\) is a natural isomorphism.
\end{proof}

\begin{theorem}\label{thm:categorical-classification}
The functors \(\mathcal F:\mathbf{SqPres}(X)\to\mathbf{SqStr}(X)\) and \(\mathcal G:\mathbf{SqStr}(X)\to\mathbf{SqPres}(X)\) define an equivalence of categories. Moreover, \(\mathcal F\mathcal G=\mathrm{id}_{\mathbf{SqStr}(X)}\) and \(\mathcal G\mathcal F\cong\mathrm{id}_{\mathbf{SqPres}(X)}\) through the natural isomorphism \(\eta\).

Consequently, the isomorphism classes of square presentations of \((X,d)\) correspond bijectively to square structures on \((X,d)\). The category \(\mathbf{SqPres}(X)\) is thin, and every square presentation has trivial automorphism group.
\end{theorem}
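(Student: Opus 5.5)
The proof will assemble the lemmas just established; no new construction is needed. The first step is to check that \(\mathcal F\) and \(\mathcal G\) are functors. For \(\mathcal F\), well-definedness on morphisms is exactly Lemma~\ref{lem:functoriality}: every morphism \(f:(P,\phi)\to(Q,\psi)\) satisfies \(\mathfrak s(\phi)=\mathfrak s(\psi)\), so sending \(f\) to the identity of this structure is legitimate. Because the target category \(\mathbf{SqStr}(X)\) is discrete, preservation of identities and compositions is then automatic. For \(\mathcal G\), the source is discrete, so it suffices that \(\mathcal G\) is well defined on objects. Lemma~\ref{lem:presentation-from-structure} gives this.

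Next I will establish the two round-trip statements. Lemma~\ref{lem:roundtrip-structure-cat} gives \(\mathcal F\mathcal G(\mathfrak s)=\mathfrak s\) on objects. On morphisms, both sides send identities to identities, so \(\mathcal F\mathcal G=\mathrm{id}_{\mathbf{SqStr}(X)}\) holds strictly. Lemma~\ref{lem:eta-natural-cat} supplies the natural isomorphism \(\eta:\mathcal G\mathcal F\Rightarrow\mathrm{id}_{\mathbf{SqPres}(X)}\). I will also record that naturality there only needs checking against the identity morphisms \(\mathcal G\mathcal F(f)\), which is exactly the identity \(f\circ\eta_{(P,\phi)}=\eta_{(Q,\psi)}\) proved in that lemma. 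With a natural isomorphism \(\mathcal G\mathcal F\cong\mathrm{id}\) and an equality \(\mathcal F\mathcal G=\mathrm{id}\), the pair \((\mathcal F,\mathcal G)\) is an equivalence of categories by definition.

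For the consequences, I will use that an equivalence induces a bijection on isomorphism classes of objects. In \(\mathbf{SqStr}(X)\) the category is discrete, so isomorphism classes are single structures. Hence \([(P,\phi)]\mapsto\mathfrak s(\phi)\) is a bijection. For concreteness I will check injectivity directly. If \(\mathfrak s(\phi)=\mathfrak s(\psi)\), then \(\eta_{(Q,\psi)}\circ\eta_{(P,\phi)}^{-1}\) is an isomorphism \((P,\phi)\to(Q,\psi)\), since both \(\eta\)'s have the common source \(\mathcal G\mathfrak s(\phi)\). Surjectivity comes from \(\mathcal F\mathcal G=\mathrm{id}\).

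Thinness and trivial automorphism groups follow by applying Lemma~\ref{lem:presentation-thinness} with \(k=2\) and \(\mathbf{Pres}=\mathbf{SqPres}(X)\), whose morphisms satisfy Equation~\eqref{eq:sqpres-morphism-condition}. Alternatively, thinness follows because \(\mathcal F\) is faithful and lands in a discrete category.

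The only point needing care is ensuring that the definition of \(\mathcal G\) on morphisms matches the identity morphism \(\mathcal G\mathcal F(f)\) in the naturality square. Beyond that, the argument is bookkeeping.
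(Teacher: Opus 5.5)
Your proposal is correct and matches the paper's proof essentially step for step. Both use Lemma~\ref{lem:functoriality} and Lemma~\ref{lem:presentation-from-structure} for functoriality, Lemmas~\ref{lem:roundtrip-structure-cat} and~\ref{lem:eta-natural-cat} for the two round trips, discreteness of \(\mathbf{SqStr}(X)\) for the bijection on isomorphism classes, and Lemma~\ref{lem:presentation-thinness} with \(k=2\) for thinness. Your direct injectivity check via \(\eta_{(Q,\psi)}\circ\eta_{(P,\phi)}^{-1}\) and your alternative thinness argument via faithfulness of \(\mathcal F\) are valid details that the paper leaves implicit.
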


\begin{proof}
Lemma~\ref{lem:functoriality} shows that \(\mathcal F\) is well-defined on morphisms. Since \(\mathbf{SqStr}(X)\) is discrete, \(\mathcal F\) preserves identities and composition.

Lemma~\ref{lem:presentation-from-structure} shows that \(\mathcal G\) assigns an object of \(\mathbf{SqPres}(X)\) to each object of \(\mathbf{SqStr}(X)\). Since \(\mathbf{SqStr}(X)\) is discrete and \(\mathcal G\) sends identity morphisms to identity morphisms, \(\mathcal G\) preserves identities and composition.

Lemma~\ref{lem:roundtrip-structure-cat} proves that \(\mathcal F\mathcal G=\mathrm{id}_{\mathbf{SqStr}(X)}\). Lemma~\ref{lem:eta-natural-cat} gives a natural isomorphism \(\mathcal G\mathcal F\cong\mathrm{id}_{\mathbf{SqPres}(X)}\). Therefore \(\mathcal F\) and \(\mathcal G\) define an equivalence of categories.

Since \(\mathbf{SqStr}(X)\) is discrete and \(\mathcal F\mathcal G=\mathrm{id}_{\mathbf{SqStr}(X)}\), the equivalence identifies isomorphism classes of square presentations with square structures on \((X,d)\).

Lemma~\ref{lem:presentation-thinness} with \(k=2\) proves that \(\mathbf{SqPres}(X)\) is thin and that every square presentation has trivial automorphism group.
\end{proof}

\subsubsection{Ordered Power Presentations and Structures}

This subsubsection extends Theorem~\ref{thm:categorical-classification} to ordered \(k\)-fold max-product presentations and structures. We fix an integer \(k\geq 2\). The order of the coordinate factors is part of the presentation data.

We define the functor \(\Delta_k:\mathbf{Met}_{\mathrm{iso}}\to\mathbf{Met}_{\mathrm{iso}}\) on objects by \(\Delta_k(P,d_P):=(P^k,d_{P^k})\), where, for \(p=(p_1,\dots,p_k)\) and \(q=(q_1,\dots,q_k)\),
\begin{equation}\label{eq:ordered-power-metric}
d_{P^k}(p,q)
=
\max_{1\leq j\leq k}d_P(p_j,q_j).
\end{equation}
We define \(\Delta_k(f):=f^{\times k}\) on isometries. Equation~\eqref{eq:ordered-power-metric} shows that \(\Delta_k(f)\) is a bijective isometry, and the identity and composition laws for \(\Delta_k\) are immediate. We write \(\Delta_k(P)\) when the metric on \(P\) is clear.

\begin{definition}\label{def:ordered-power-presentation}
We define \(\mathbf{PowPres}_k(X)\) as follows. An object is a pair \((P,\phi)\), where \((P,d_P)\) is a metric space and \(\phi:\Delta_k(P)\to(X,d)\) is an isometry. A morphism \(f:(P,\phi)\to(Q,\psi)\) is an isometry \(f:(P,d_P)\to(Q,d_Q)\) such that
\begin{equation}\label{eq:powpres-morphism-compatibility}
\psi\circ f^{\times k}=\phi.
\end{equation}
\end{definition}

\begin{definition}\label{def:ordered-power-structure}
An ordered \(k\)-power structure on \((X,d)\) is a tuple \(\mathfrak s=(E_1,\dots,E_k,\tau_2,\dots,\tau_k)\), where \(E_1,\dots,E_k\) are equivalence relations on \(X\) and, for each \(2\leq j\leq k\), \(\tau_j:X/E_j\to X/E_1\) is a map such that:
\begin{enumerate}
\item\label{item:k-powstr-quotient-metrics}
for each \(1\leq j\leq k\), the quotient distance \(\bar d_{E_j}\) is a metric on \(X/E_j\);

\item\label{item:k-powstr-intersections}
for every choice of classes \(C_j\in X/E_j\), where \(1\leq j\leq k\), the intersection \(\bigcap_{j=1}^k C_j\) consists of exactly one point;

\item\label{item:k-powstr-decomposition}
the family \((E_1,\dots,E_k)\) satisfies the \(k\)-factor distance decomposition identity from Equation~\eqref{eq:k-factor-distance-decomposition-identity};

\item\label{item:k-powstr-tau-isometries}
for each \(2\leq j\leq k\), the map \(\tau_j:(X/E_j,\bar d_{E_j})\to(X/E_1,\bar d_{E_1})\) is an isometry.
\end{enumerate}
\end{definition}

Let \(\mathbf{PowStr}_k(X)\) denote the discrete category of ordered \(k\)-power structures on \((X,d)\).

Let \((P,\phi)\) be an object of \(\mathbf{PowPres}_k(X)\). For \(1\leq j\leq k\), define \(q_j^\phi:=\pi_j\circ\phi^{-1}:X\to P\). For \(1\leq j\leq k\), define \(E_j^\phi\) by declaring that \(xE_j^\phi y\) if and only if \(q_j^\phi(x)=q_j^\phi(y)\).

\begin{lemma}\label{lem:k-quotient-identification}
For each \(1\leq j\leq k\), the map \(\bar q_j^\phi:X/E_j^\phi\to P\), defined by \(\bar q_j^\phi([x]_{E_j^\phi})=q_j^\phi(x)\), is a bijective isometry.
\end{lemma}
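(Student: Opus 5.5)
The proof follows the square case, Lemma~\ref{lem:quotient-identification-cat}, replacing the index set \(\{1,2\}\) by \(\{1,\dots,k\}\) and Lemma~\ref{lem:product-structure} by Lemma~\ref{lem:k-product-structure}. Fix \(1\leq j\leq k\).

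\emph{Well-definedness and injectivity.} These are immediate from the definition of \(E_j^\phi\): \([x]_{E_j^\phi}=[y]_{E_j^\phi}\) holds exactly when \(q_j^\phi(x)=q_j^\phi(y)\).

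\emph{Surjectivity.} Since \(P\) is nonempty, fix \(r\in P\). For each \(p\in P\), let \(u\in P^k\) have \(j\)-th coordinate \(p\) and all other coordinates \(r\). Then \(q_j^\phi(\phi(u))=\pi_j(u)=p\). Hence \(q_j^\phi\) is surjective, and so is \(\bar q_j^\phi\).

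\emph{Distance preservation.} Let \(F_j\) be the \(j\)-th coordinate equivalence relation on \(Y=P^k\) from Lemma~\ref{lem:k-product-intersection}. By construction, \(xE_j^\phi y\) holds if and only if \(\phi^{-1}(x)F_j\phi^{-1}(y)\). Since \(\phi^{-1}\) is a bijective isometry from \((X,d)\) onto \(\Delta_k(P)\), Lemma~\ref{lem:quotient-transfer} applies with \(\Phi=\phi^{-1}\). Alternatively, one can repeat the infimum computation of Equation~\eqref{eq:quotient-distance-transfer-presentation}. Either way,
\[
\bar d_{E_j^\phi}([x]_{E_j^\phi},[y]_{E_j^\phi})=\bar d_{F_j}([\phi^{-1}(x)]_{F_j},[\phi^{-1}(y)]_{F_j}).
\]
The metric on \(\Delta_k(P)\) is exactly Equation~\eqref{eq:k-product-metric} with every \(P_j=P\). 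So Lemma~\ref{lem:k-product-structure} identifies the right side with \(d_P(\pi_j\phi^{-1}(x),\pi_j\phi^{-1}(y))=d_P(q_j^\phi(x),q_j^\phi(y))\). In particular, \(\bar d_{E_j^\phi}\) is a metric, and \(\bar q_j^\phi\) preserves distances.

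There is no real obstacle here. The only point needing care is confirming that Lemma~\ref{lem:k-product-structure} applies verbatim to \(\Delta_k(P)\): it requires the metric of Equation~\eqref{eq:ordered-power-metric} to coincide with Equation~\eqref{eq:k-product-metric}, which it does.
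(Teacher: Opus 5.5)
Your proof is correct and is essentially the paper's argument. The paper establishes the same two facts inline in $X$: the lower bound $d(x',y')\geq d_P(q_j^\phi(x),q_j^\phi(y))$ from the max formula, and attainment via representatives $\phi(a),\phi(b)$ whose coordinates off $j$ all equal a fixed $r\in P$; you instead transport along $\phi^{-1}$ with Lemma~\ref{lem:quotient-transfer} and quote Lemma~\ref{lem:k-product-structure}, exactly as the paper does in the square case (Lemma~\ref{lem:quotient-identification-cat}), and your surjectivity argument and pullback-of-$d_P$ argument for $\bar d_{E_j^\phi}$ being a metric are both fine.
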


\begin{proof}
The definition of \(E_j^\phi\) shows that \(\bar q_j^\phi\) is well-defined and injective. Since \(P\) is nonempty and \(\phi^{-1}:X\to P^k\) is surjective, \(q_j^\phi\) and \(\bar q_j^\phi\) are surjective.

We fix \(x,y\in X\). If \(x'E_j^\phi x\) and \(y'E_j^\phi y\), then \(q_j^\phi(x')=q_j^\phi(x)\) and \(q_j^\phi(y')=q_j^\phi(y)\). Equation~\eqref{eq:ordered-power-metric} and the fact that \(\phi\) is an isometry give
\[
d(x',y')
=
\max_{1\leq i\leq k}d_P(q_i^\phi(x'),q_i^\phi(y'))
\geq
d_P(q_j^\phi(x),q_j^\phi(y)).
\]
Taking the infimum over all such \(x'\) and \(y'\) gives \(\bar d_{E_j^\phi}([x]_{E_j^\phi},[y]_{E_j^\phi})\geq d_P(q_j^\phi(x),q_j^\phi(y))\).

We choose \(r\in P\). We let \(a,b\in P^k\) satisfy \(a_j=q_j^\phi(x)\), \(b_j=q_j^\phi(y)\), and \(a_i=b_i=r\) for every \(i\neq j\). Set \(u=\phi(a)\) and \(v=\phi(b)\). Then \(uE_j^\phi x\) and \(vE_j^\phi y\). Since \(a_i=b_i\) for every \(i\neq j\), Equation~\eqref{eq:ordered-power-metric} gives \(d(u,v)=d_P(q_j^\phi(x),q_j^\phi(y))\). Therefore
\begin{equation}\label{eq:k-presentation-quotient-distance}
\bar d_{E_j^\phi}([x]_{E_j^\phi},[y]_{E_j^\phi})
=
d_P(q_j^\phi(x),q_j^\phi(y)).
\end{equation}
Hence \(\bar q_j^\phi\) is a bijective isometry.
\end{proof}

For each \(2\leq j\leq k\), define
\begin{equation}\label{eq:k-tau-from-presentation}
\tau_j^\phi
=
(\bar q_1^\phi)^{-1}\circ\bar q_j^\phi
:
X/E_j^\phi\to X/E_1^\phi.
\end{equation}

\begin{lemma}\label{lem:k-presentation-to-structure}
The tuple \(\mathfrak s(\phi)=(E_1^\phi,\dots,E_k^\phi,\tau_2^\phi,\dots,\tau_k^\phi)\) is an ordered \(k\)-power structure on \((X,d)\).
\end{lemma}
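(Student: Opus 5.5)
We verify the four conditions of Definition~\ref{def:ordered-power-structure} for \(\mathfrak s(\phi)\) in order, following the pattern of Lemma~\ref{lem:structure-from-presentation} but with \(k\) coordinates.

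For condition~\ref{item:k-powstr-quotient-metrics}, Lemma~\ref{lem:k-quotient-identification} shows that \(\bar q_j^\phi\) is a bijective isometry onto \((P,d_P)\), through Equation~\eqref{eq:k-presentation-quotient-distance}. The pullback of a metric under a bijection is a metric, so each \(\bar d_{E_j^\phi}\) is a metric on \(X/E_j^\phi\). For condition~\ref{item:k-powstr-tau-isometries}, each \(\tau_j^\phi\) in Equation~\eqref{eq:k-tau-from-presentation} is a composite of the bijective isometry \(\bar q_j^\phi\) with the inverse of the bijective isometry \(\bar q_1^\phi\). Hence \(\tau_j^\phi\) is an isometry.

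For condition~\ref{item:k-powstr-intersections}, let \(F_1,\dots,F_k\) be the coordinate relations on \(P^k\) from Lemma~\ref{lem:k-product-intersection}. By definition, \(xE_j^\phi y\) holds if and only if \(\phi^{-1}(x)F_j\phi^{-1}(y)\). Therefore \(\phi^{-1}\) maps each \(E_j^\phi\)-class bijectively onto an \(F_j\)-class. Moreover, for classes \(C_j\) the map \(\phi^{-1}\) carries \(\bigcap_j C_j\) bijectively onto the intersection of the corresponding \(F_j\)-classes. Lemma~\ref{lem:k-product-intersection} says this latter intersection is a single point, and the bijectivity of \(\phi\) transfers the singleton property back to \(X\). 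Equivalently, \(C_j=(q_j^\phi)^{-1}(a_j)\) for a unique \(a_j\in P\), and the intersection is \(\{\phi(a_1,\dots,a_k)\}\).

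For condition~\ref{item:k-powstr-decomposition}, we use that \(\phi^{-1}\) is an isometry onto \(\Delta_k(P)\). Equation~\eqref{eq:ordered-power-metric} gives
\[
d(x,y)=\max_{1\leq j\leq k}d_P(q_j^\phi(x),q_j^\phi(y)).
\]
By Equation~\eqref{eq:k-presentation-quotient-distance}, each term on the right equals \(\bar d_{E_j^\phi}([x]_{E_j^\phi},[y]_{E_j^\phi})\), which is exactly Equation~\eqref{eq:k-factor-distance-decomposition-identity}.

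No step is a real obstacle, since all the analytic content already sits in Lemma~\ref{lem:k-quotient-identification}. The only care needed is in the intersection step: we must check that \(\phi^{-1}\) transports classes to classes rather than merely into them. This follows because the relation \(E_j^\phi\) is defined as the pullback of \(F_j\) under \(\phi^{-1}\).
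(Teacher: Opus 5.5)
Your proposal is correct and follows the paper's proof essentially step for step: Lemma~\ref{lem:k-quotient-identification} for the quotient metrics, composition of bijective isometries for the \(\tau_j^\phi\), transport of the coordinate relations \(F_j\) through \(\phi^{-1}\) with Lemma~\ref{lem:k-product-intersection} for the one-point intersections, and Equations~\eqref{eq:ordered-power-metric} and~\eqref{eq:k-presentation-quotient-distance} for the distance decomposition identity. Your explicit description of the intersection as \(\{\phi(a_1,\dots,a_k)\}\) is a harmless addition that the paper leaves implicit.
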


\begin{proof}
Lemma~\ref{lem:k-quotient-identification} shows that each quotient distance \(\bar d_{E_j^\phi}\) is a metric, and Equation~\eqref{eq:k-tau-from-presentation} shows that each \(\tau_j^\phi\) is an isometry.

Let \(F_j\) denote equality of the \(j\)-th coordinate on \(P^k\). For each \(j\), \(xE_j^\phi y\) if and only if \(\phi^{-1}(x)F_j\phi^{-1}(y)\). Hence \(\phi^{-1}\) maps intersections of \(E_1^\phi,\dots,E_k^\phi\)-classes bijectively onto intersections of \(F_1,\dots,F_k\)-classes. Lemma~\ref{lem:k-product-intersection} gives the one-point intersection property.

For \(x,y\in X\), Equation~\eqref{eq:k-presentation-quotient-distance} identifies each quotient distance \(\bar d_{E_j^\phi}([x]_{E_j^\phi},[y]_{E_j^\phi})\) with \(d_P(q_j^\phi(x),q_j^\phi(y))\). Since \(\phi^{-1}\) is an isometry from \((X,d)\) to \(\Delta_k(P)\), Equation~\eqref{eq:ordered-power-metric} gives
\[
d(x,y)=\max_{1\leq j\leq k}d_P(q_j^\phi(x),q_j^\phi(y)).
\]
Thus the displayed terms are the corresponding quotient distances, so the \(k\)-factor distance decomposition identity follows. Therefore \(\mathfrak s(\phi)\) is an ordered \(k\)-power structure.
\end{proof}

\begin{lemma}\label{lem:k-functoriality}
Let \(f:(P,\phi)\to(Q,\psi)\) be a morphism in \(\mathbf{PowPres}_k(X)\). Then \(\mathfrak s(\phi)=\mathfrak s(\psi)\). Moreover, for each \(1\leq j\leq k\),
\begin{equation}\label{eq:k-quotient-map-functoriality}
\bar q_j^\psi=f\circ\bar q_j^\phi.
\end{equation}
\end{lemma}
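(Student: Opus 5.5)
The plan follows the proof of Lemma~\ref{lem:functoriality}, replacing the two coordinates by \(k\) coordinates. The first step is to turn the morphism condition into a coordinatewise identity. Equation~\eqref{eq:powpres-morphism-compatibility} gives \(\psi\circ f^{\times k}=\phi\). Since \(\psi\) is a bijection, this rearranges to \(\psi^{-1}\circ\phi=f^{\times k}\) on \(P^k\). For \(x\in X\), we substitute \(z=\phi^{-1}(x)\) and apply \(\pi_j\), using \(\pi_j\circ f^{\times k}=f\circ\pi_j\). This yields
\[
q_j^\psi(x)=\pi_j(\psi^{-1}(x))=\pi_j(f^{\times k}(\phi^{-1}(x)))=f(q_j^\phi(x))
\]
for every \(1\leq j\leq k\) and every \(x\in X\).

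Next we show that the equivalence relations agree. We have \(q_j^\psi=f\circ q_j^\phi\), and \(f\) is injective because it is a bijective isometry. Hence \(q_j^\psi(x)=q_j^\psi(y)\) holds if and only if \(q_j^\phi(x)=q_j^\phi(y)\). So \(E_j^\psi=E_j^\phi\) for each \(j\), and the quotient sets \(X/E_j^\psi\) and \(X/E_j^\phi\) are literally the same set. With this identification, the identity \(q_j^\psi=f\circ q_j^\phi\) passes to the quotients as \(\bar q_j^\psi([x])=f(\bar q_j^\phi([x]))\). This is Equation~\eqref{eq:k-quotient-map-functoriality}.

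Finally we show that the comparison maps agree. Fix \(2\leq j\leq k\). By Equation~\eqref{eq:k-tau-from-presentation} and the previous step,
\[
\tau_j^\psi=(\bar q_1^\psi)^{-1}\circ\bar q_j^\psi=(\bar q_1^\phi)^{-1}\circ f^{-1}\circ f\circ\bar q_j^\phi=\tau_j^\phi.
\]
Here \(f^{-1}\) exists because \(f\) is bijective, and \((f\circ\bar q_1^\phi)^{-1}=(\bar q_1^\phi)^{-1}\circ f^{-1}\) by Lemma~\ref{lem:k-quotient-identification}. All components of the two tuples therefore coincide, so \(\mathfrak s(\phi)=\mathfrak s(\psi)\).

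No step is a real obstacle. The only point needing care is that equality \(\mathfrak s(\phi)=\mathfrak s(\psi)\) is literal equality of tuples. The domain and codomain of each \(\tau_j\) must be the same sets on both sides, and this is why \(E_j^\phi=E_j^\psi\) has to be established before comparing the \(\tau_j\).
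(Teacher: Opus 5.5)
Your proposal is correct and follows the paper's proof essentially step for step. You derive \(q_j^\psi=f\circ q_j^\phi\) from the morphism condition, use injectivity of \(f\) to get \(E_j^\phi=E_j^\psi\) and Equation~\eqref{eq:k-quotient-map-functoriality}, and then cancel \(f^{-1}\circ f\) in the formula for \(\tau_j^\psi\), noting correctly that the relations must be shown equal before the \(\tau_j\) can be compared.
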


\begin{proof}
Equation~\eqref{eq:powpres-morphism-compatibility} gives \(\psi^{-1}(\phi(z))=f^{\times k}(z)\) for \(z\in P^k\). Taking \(z=\phi^{-1}(x)\) and applying \(\pi_j\) gives \(q_j^\psi(x)=f(q_j^\phi(x))\) for \(x\in X\) and \(1\leq j\leq k\).

For \(x,y\in X\) and \(1\leq j\leq k\), since \(q_j^\psi=f\circ q_j^\phi\) and \(f\) is injective, \(xE_j^\phi y\) if and only if \(xE_j^\psi y\). Hence \(E_j^\phi=E_j^\psi\). Since \(E_j^\phi=E_j^\psi\), the quotient sets agree, and the identity \(q_j^\psi=f\circ q_j^\phi\) gives Equation~\eqref{eq:k-quotient-map-functoriality}.

For \(2\leq j\leq k\), Equation~\eqref{eq:k-quotient-map-functoriality} gives \(\tau_j^\psi=(\bar q_1^\psi)^{-1}\circ\bar q_j^\psi=(\bar q_1^\phi)^{-1}\circ f^{-1}\circ f\circ\bar q_j^\phi\). Hence \(\tau_j^\psi=\tau_j^\phi\). Therefore \(\mathfrak s(\phi)=\mathfrak s(\psi)\).
\end{proof}

We define \(\mathcal F_k:\mathbf{PowPres}_k(X)\to\mathbf{PowStr}_k(X)\) on objects by \(\mathcal F_k(P,\phi):=\mathfrak s(\phi)\). For every morphism \(f:(P,\phi)\to(Q,\psi)\), Lemma~\ref{lem:k-functoriality} gives \(\mathfrak s(\phi)=\mathfrak s(\psi)\), and we define \(\mathcal F_k(f)\) to be the identity morphism of this ordered \(k\)-power structure.

Let \(\mathfrak s=(E_1,\dots,E_k,\tau_2,\dots,\tau_k)\) be an ordered \(k\)-power structure on \((X,d)\). Set \(P_{\mathfrak s}:=X/E_1\), and equip \(P_{\mathfrak s}\) with the metric \(\bar d_{E_1}\). Define \(\Psi_{\mathfrak s}:X\to P_{\mathfrak s}^k\) by
\begin{equation}\label{eq:k-psi-from-structure}
\Psi_{\mathfrak s}(x)
=
([x]_{E_1},\tau_2([x]_{E_2}),\dots,\tau_k([x]_{E_k})).
\end{equation}

\begin{lemma}\label{lem:k-presentation-from-structure}
The map \(\Psi_{\mathfrak s}\) is a bijective isometry from \((X,d)\) onto \(\Delta_k(P_{\mathfrak s})\). Consequently, if \(\phi_{\mathfrak s}:=\Psi_{\mathfrak s}^{-1}\), then \((P_{\mathfrak s},\phi_{\mathfrak s})\) is an object of \(\mathbf{PowPres}_k(X)\).
\end{lemma}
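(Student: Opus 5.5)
The plan mirrors the proof of Lemma~\ref{lem:presentation-from-structure}, with $k$ coordinates in place of two. I will factor $\Psi_{\mathfrak s}$ as a composition of two bijective isometries.

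The first map is the quotient-coordinate map $\Psi:X\to\prod_{j=1}^k X/E_j$, given by $\Psi(x)=([x]_{E_1},\dots,[x]_{E_k})$, with the product carrying the max metric $d_\times$ of Equation~\eqref{eq:k-quotient-product-metric}. Conditions~\ref{item:k-powstr-quotient-metrics}, \ref{item:k-powstr-intersections}, and \ref{item:k-powstr-decomposition} of Definition~\ref{def:ordered-power-structure} are exactly the three hypotheses of Lemma~\ref{lem:k-relations-to-product}. That lemma therefore says $\Psi$ is a bijective isometry onto $\bigl(\prod_j X/E_j,d_\times\bigr)$.

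The second map is $T(C_1,\dots,C_k):=(C_1,\tau_2(C_2),\dots,\tau_k(C_k))$, taking values in $P_{\mathfrak s}^k=(X/E_1)^k$. By condition~\ref{item:k-powstr-tau-isometries}, each $\tau_j$ is a bijective isometry, and we set $\tau_1:=\mathrm{id}$ on the first coordinate. Then $T$ is a bijection with inverse $(C_j)\mapsto(\tau_j^{-1}(C_j))$. Applying Equation~\eqref{eq:ordered-power-metric} gives
\[
d_{P_{\mathfrak s}^k}(T(C),T(D))=\max_{j}\bar d_{E_1}(\tau_j(C_j),\tau_j(D_j))=\max_j\bar d_{E_j}(C_j,D_j)=d_\times(C,D).
\]
Hence $T$ is a bijective isometry onto $\Delta_k(P_{\mathfrak s})$.

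Comparing with Equation~\eqref{eq:k-psi-from-structure} shows $\Psi_{\mathfrak s}=T\circ\Psi$, so $\Psi_{\mathfrak s}$ is a bijective isometry. Its inverse $\phi_{\mathfrak s}$ is then a bijective isometry $\Delta_k(P_{\mathfrak s})\to(X,d)$. By Definition~\ref{def:ordered-power-presentation}, the pair $(P_{\mathfrak s},\phi_{\mathfrak s})$ is an object of $\mathbf{PowPres}_k(X)$.

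There is no real obstacle here. The only care needed is bookkeeping: the first coordinate gets no $\tau$, and the isometry check must use the max metric on $P_{\mathfrak s}^k$ built from $\bar d_{E_1}$.
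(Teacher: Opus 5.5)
Your proposal is correct and matches the paper's proof: the paper likewise composes the quotient-coordinate bijective isometry from Lemma~\ref{lem:k-relations-to-product} with the coordinatewise map \(A(C_1,\dots,C_k)=(C_1,\tau_2(C_2),\dots,\tau_k(C_k))\). It then uses that the \(\tau_j\) are bijective isometries (isometries are bijective by convention in that subsection) to check that \(A\) is a bijective isometry onto \(\Delta_k(P_{\mathfrak s})\).
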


\begin{proof}
Lemma~\ref{lem:k-relations-to-product} shows that the map \(x\mapsto([x]_{E_1},\dots,[x]_{E_k})\) is a bijective isometry from \((X,d)\) onto \(\prod_{j=1}^k X/E_j\) with the metric from Equation~\eqref{eq:k-quotient-product-metric}.

Define \(A:\prod_{j=1}^k X/E_j\to P_{\mathfrak s}^k\) by \(A(C_1,\dots,C_k)=(C_1,\tau_2(C_2),\dots,\tau_k(C_k))\). Since each \(\tau_j\) is bijective, \(A\) is bijective. For all \(C=(C_1,\dots,C_k)\) and \(D=(D_1,\dots,D_k)\), Equation~\eqref{eq:ordered-power-metric} gives
\[
\begin{aligned}
d_{P_{\mathfrak s}^k}(A(C),A(D))
&=
\max\{
\bar d_{E_1}(C_1,D_1),
\bar d_{E_1}(\tau_2(C_2),\tau_2(D_2)),
\dots,
\bar d_{E_1}(\tau_k(C_k),\tau_k(D_k))
\} \\
&=
\max_{1\leq j\leq k}\bar d_{E_j}(C_j,D_j).
\end{aligned}
\]
Thus \(A\) is a bijective isometry.

The composition of the map from Lemma~\ref{lem:k-relations-to-product} with \(A\) equals \(\Psi_{\mathfrak s}\) by the definitions of \(A\) and \(\Psi_{\mathfrak s}\). Hence \(\Psi_{\mathfrak s}\) is a bijective isometry.

Therefore \(\phi_{\mathfrak s}=\Psi_{\mathfrak s}^{-1}\) is an isometry \(\Delta_k(P_{\mathfrak s})\to(X,d)\). Thus \((P_{\mathfrak s},\phi_{\mathfrak s})\) is an object of \(\mathbf{PowPres}_k(X)\).
\end{proof}

We define \(\mathcal G_k:\mathbf{PowStr}_k(X)\to\mathbf{PowPres}_k(X)\) by \(\mathcal G_k(\mathfrak s):=(P_{\mathfrak s},\phi_{\mathfrak s})\) on objects and by sending each identity morphism to the identity morphism of its image.

\begin{lemma}\label{lem:k-roundtrip-structure}
For every ordered \(k\)-power structure \(\mathfrak s\) on \((X,d)\), one has \(\mathcal F_k\mathcal G_k(\mathfrak s)=\mathfrak s\).
\end{lemma}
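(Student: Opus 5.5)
The plan is to mirror the proof of Lemma~\ref{lem:roundtrip-structure-cat}, unwinding the definitions of \(\mathcal G_k\) and \(\mathcal F_k\) coordinate by coordinate. Write \(\mathfrak s=(E_1,\dots,E_k,\tau_2,\dots,\tau_k)\) and \(\mathcal G_k(\mathfrak s)=(P_{\mathfrak s},\phi_{\mathfrak s})\) with \(\phi_{\mathfrak s}=\Psi_{\mathfrak s}^{-1}\), which Lemma~\ref{lem:k-presentation-from-structure} justifies. Then \(\phi_{\mathfrak s}^{-1}=\Psi_{\mathfrak s}\), and Equation~\eqref{eq:k-psi-from-structure} gives the coordinate maps directly:
\[
q_1^{\phi_{\mathfrak s}}(x)=[x]_{E_1},
\qquad
q_j^{\phi_{\mathfrak s}}(x)=\tau_j([x]_{E_j})\quad(2\leq j\leq k).
\]

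First I will show that \(E_j^{\phi_{\mathfrak s}}=E_j\) for every \(j\). For \(j=1\) this holds because \(q_1^{\phi_{\mathfrak s}}(x)=q_1^{\phi_{\mathfrak s}}(y)\) exactly when \([x]_{E_1}=[y]_{E_1}\). For \(j\geq 2\), each \(\tau_j\) is an isometry between metric spaces, and hence injective, by condition~\ref{item:k-powstr-tau-isometries} of Definition~\ref{def:ordered-power-structure}. So \(\tau_j([x]_{E_j})=\tau_j([y]_{E_j})\) holds if and only if \(xE_jy\). This identifies the quotient sets \(X/E_j^{\phi_{\mathfrak s}}=X/E_j\) literally, with the same classes.

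Next I will identify the induced maps under these equalities. Here \(\bar q_1^{\phi_{\mathfrak s}}=\mathrm{id}_{X/E_1}\) and \(\bar q_j^{\phi_{\mathfrak s}}=\tau_j\) for \(j\geq2\). Equation~\eqref{eq:k-tau-from-presentation} then gives
\[
\tau_j^{\phi_{\mathfrak s}}=(\bar q_1^{\phi_{\mathfrak s}})^{-1}\circ\bar q_j^{\phi_{\mathfrak s}}=\mathrm{id}\circ\tau_j=\tau_j.
\]
So every component of \(\mathcal F_k\mathcal G_k(\mathfrak s)=\mathfrak s(\phi_{\mathfrak s})\) matches the corresponding component of \(\mathfrak s\).

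There is no substantive obstacle; the argument is bookkeeping. The one point needing care is that the equalities are literal and not merely up to isomorphism, because \(\mathbf{PowStr}_k(X)\) is discrete. This holds because \(P_{\mathfrak s}\) is defined to be \(X/E_1\) itself, so \(\bar q_1^{\phi_{\mathfrak s}}\) is exactly the identity. It also relies on injectivity of the \(\tau_j\) to recover \(E_j\) for \(j\geq2\).
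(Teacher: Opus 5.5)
Your proposal is correct and follows the paper's proof essentially line by line. You read off the coordinate maps $q_1^{\phi_{\mathfrak s}}(x)=[x]_{E_1}$ and $q_j^{\phi_{\mathfrak s}}(x)=\tau_j([x]_{E_j})$ from $\Psi_{\mathfrak s}$, use injectivity of each $\tau_j$ to get $E_j^{\phi_{\mathfrak s}}=E_j$, and identify $\bar q_1^{\phi_{\mathfrak s}}=\mathrm{id}_{X/E_1}$ and $\bar q_j^{\phi_{\mathfrak s}}=\tau_j$ to conclude $\tau_j^{\phi_{\mathfrak s}}=\tau_j$; your observation that $P_{\mathfrak s}=X/E_1$ makes these equalities literal is exactly what the discrete category $\mathbf{PowStr}_k(X)$ requires.
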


\begin{proof}
We let \(\mathfrak s=(E_1,\dots,E_k,\tau_2,\dots,\tau_k)\). Since \(\phi_{\mathfrak s}^{-1}=\Psi_{\mathfrak s}\), Equation~\eqref{eq:k-psi-from-structure} gives \(q_1^{\phi_{\mathfrak s}}(x)=[x]_{E_1}\) and \(q_j^{\phi_{\mathfrak s}}(x)=\tau_j([x]_{E_j})\) for \(2\leq j\leq k\). For \(x,y\in X\), the first identity gives \(xE_1^{\phi_{\mathfrak s}}y\) if and only if \(xE_1y\). For \(2\leq j\leq k\), the second identity and the injectivity of \(\tau_j\) give \(xE_j^{\phi_{\mathfrak s}}y\) if and only if \(xE_jy\). Hence \(E_j^{\phi_{\mathfrak s}}=E_j\) for \(1\leq j\leq k\). Under these identifications, \(\bar q_1^{\phi_{\mathfrak s}}=\mathrm{id}_{X/E_1}\) and \(\bar q_j^{\phi_{\mathfrak s}}=\tau_j\) for \(2\leq j\leq k\). Hence \(\tau_j^{\phi_{\mathfrak s}}=\tau_j\) for \(2\leq j\leq k\), so \(\mathcal F_k\mathcal G_k(\mathfrak s)=\mathfrak s\).
\end{proof}

\begin{lemma}\label{lem:k-roundtrip-presentation}
For every object \((P,\phi)\) of \(\mathbf{PowPres}_k(X)\), the map \(\eta_{(P,\phi)}=\bar q_1^\phi:P_{\mathfrak s(\phi)}=X/E_1^\phi\to P\) is a morphism \(\mathcal G_k\mathcal F_k(P,\phi)\to(P,\phi)\) in \(\mathbf{PowPres}_k(X)\). Moreover,
\begin{equation}\label{eq:k-eta-compatibility}
\phi\circ\eta_{(P,\phi)}^{\times k}
=
\phi_{\mathfrak s(\phi)}.
\end{equation}
\end{lemma}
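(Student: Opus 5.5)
The argument will mirror the proof of Lemma~\ref{lem:roundtrip-presentation-cat}, with $k$ coordinates in place of two. First, $\eta_{(P,\phi)}=\bar q_1^\phi$ is a bijective isometry from $(X/E_1^\phi,\bar d_{E_1^\phi})=P_{\mathfrak s(\phi)}$ onto $(P,d_P)$ by Lemma~\ref{lem:k-quotient-identification}. Its source is correct because $\mathcal G_k\mathcal F_k(P,\phi)=\mathcal G_k(\mathfrak s(\phi))=(P_{\mathfrak s(\phi)},\phi_{\mathfrak s(\phi)})$, where $\mathfrak s(\phi)$ is an ordered $k$-power structure by Lemma~\ref{lem:k-presentation-to-structure}. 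So the only substantive point is the compatibility identity \eqref{eq:k-eta-compatibility}. Once it holds, it is exactly the morphism condition \eqref{eq:powpres-morphism-compatibility} with $\psi=\phi$, $f=\eta_{(P,\phi)}$, and source presentation $\phi_{\mathfrak s(\phi)}$.

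Since $\phi_{\mathfrak s(\phi)}=\Psi_{\mathfrak s(\phi)}^{-1}$ and $\phi$ is bijective, \eqref{eq:k-eta-compatibility} is equivalent to
\[
\eta_{(P,\phi)}^{\times k}\circ\Psi_{\mathfrak s(\phi)}=\phi^{-1}.
\]
I will verify this identity pointwise and coordinate by coordinate. Fix $x\in X$. By \eqref{eq:k-psi-from-structure}, the first coordinate of the left side is $\bar q_1^\phi([x]_{E_1^\phi})=q_1^\phi(x)=\pi_1(\phi^{-1}(x))$.

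For $2\le j\le k$, the $j$-th coordinate is $\bar q_1^\phi(\tau_j^\phi([x]_{E_j^\phi}))$. By the definition \eqref{eq:k-tau-from-presentation}, $\tau_j^\phi=(\bar q_1^\phi)^{-1}\circ\bar q_j^\phi$, so this coordinate equals $\bar q_j^\phi([x]_{E_j^\phi})=q_j^\phi(x)=\pi_j(\phi^{-1}(x))$. Since an element of $P^k$ is determined by its coordinates, the two sides agree at $x$. Composing on the left with $\phi$ and on the right with $\phi_{\mathfrak s(\phi)}=\Psi_{\mathfrak s(\phi)}^{-1}$ then gives \eqref{eq:k-eta-compatibility}.

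I do not expect a real obstacle. The one point needing care is bookkeeping: $P_{\mathfrak s(\phi)}$ is literally the set $X/E_1^\phi$ with metric $\bar d_{E_1^\phi}$. This is what makes $\eta_{(P,\phi)}^{\times k}$ composable with $\Psi_{\mathfrak s(\phi)}$, and what makes $\eta_{(P,\phi)}$ an isometry for the right metrics.
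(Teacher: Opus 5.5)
Your proposal is correct and follows essentially the same route as the paper. Both reduce \eqref{eq:k-eta-compatibility} to the identity \(\eta_{(P,\phi)}^{\times k}\circ\Psi_{\mathfrak s(\phi)}=\phi^{-1}\) and check it coordinatewise, using \(\tau_j^\phi=(\bar q_1^\phi)^{-1}\circ\bar q_j^\phi\) for \(2\le j\le k\), with Lemma~\ref{lem:k-quotient-identification} supplying that \(\eta_{(P,\phi)}\) is an isometry.
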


\begin{proof}
Lemma~\ref{lem:k-quotient-identification} shows that \(\eta_{(P,\phi)}=\bar q_1^\phi\) is an isometry. Since \(\phi_{\mathfrak s(\phi)}=\Psi_{\mathfrak s(\phi)}^{-1}\), it suffices to prove
\begin{equation}\label{eq:k-eta-inverse-compatibility}
\eta_{(P,\phi)}^{\times k}
\circ
\Psi_{\mathfrak s(\phi)}
=
\phi^{-1}.
\end{equation}
For \(x\in X\), Equation~\eqref{eq:k-psi-from-structure} shows that the first coordinate of \(\eta_{(P,\phi)}^{\times k}\Psi_{\mathfrak s(\phi)}(x)\) is \(q_1^\phi(x)\). For \(2\leq j\leq k\), Equation~\eqref{eq:k-tau-from-presentation} gives \(\bar q_1^\phi(\tau_j^\phi([x]_{E_j^\phi}))=\bar q_j^\phi([x]_{E_j^\phi})\), so the \(j\)-th coordinate is \(q_j^\phi(x)\). Hence \(\eta_{(P,\phi)}^{\times k}\Psi_{\mathfrak s(\phi)}(x)=\phi^{-1}(x)\), so Equation~\eqref{eq:k-eta-inverse-compatibility} holds. Therefore Equation~\eqref{eq:k-eta-compatibility} holds, so \(\eta_{(P,\phi)}\) is a morphism in \(\mathbf{PowPres}_k(X)\).
\end{proof}

\begin{lemma}\label{lem:k-naturality}
The morphisms \(\eta_{(P,\phi)}:\mathcal G_k\mathcal F_k(P,\phi)\to(P,\phi)\) form a natural isomorphism \(\eta:\mathcal G_k\mathcal F_k\Rightarrow\mathrm{id}_{\mathbf{PowPres}_k(X)}\).
\end{lemma}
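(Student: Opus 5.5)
The plan follows the proof of Lemma~\ref{lem:eta-natural-cat} in the square case, with \(k\)-fold products in place of squares. There are two things to show. First, each component \(\eta_{(P,\phi)}\) is an isomorphism in \(\mathbf{PowPres}_k(X)\). Second, the family \(\eta\) is natural with respect to every morphism \(f:(P,\phi)\to(Q,\psi)\).

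For the isomorphism property, Lemma~\ref{lem:k-roundtrip-presentation} already gives that \(\eta_{(P,\phi)}=\bar q_1^\phi\) is a morphism \(\mathcal G_k\mathcal F_k(P,\phi)\to(P,\phi)\) satisfying Equation~\eqref{eq:k-eta-compatibility}. By Lemma~\ref{lem:k-quotient-identification}, \(\bar q_1^\phi\) is a bijective isometry, so its inverse is also an isometry. I would compose Equation~\eqref{eq:k-eta-compatibility} on the right with \((\eta_{(P,\phi)}^{-1})^{\times k}\). Using \((g\circ h)^{\times k}=g^{\times k}\circ h^{\times k}\), this gives
\[
\phi_{\mathfrak s(\phi)}\circ(\eta_{(P,\phi)}^{-1})^{\times k}=\phi .
\]
By Equation~\eqref{eq:powpres-morphism-compatibility}, this says \(\eta_{(P,\phi)}^{-1}\) is a morphism \((P,\phi)\to\mathcal G_k\mathcal F_k(P,\phi)\). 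The two composites are identity maps, so \(\eta_{(P,\phi)}\) is an isomorphism.

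For naturality, let \(f:(P,\phi)\to(Q,\psi)\) be a morphism. Lemma~\ref{lem:k-functoriality} gives \(\mathfrak s(\phi)=\mathfrak s(\psi)\), so \(\mathcal F_k(f)\) is an identity morphism. Hence \(\mathcal G_k\mathcal F_k(f)\) is the identity of \(\mathcal G_k\mathcal F_k(P,\phi)=\mathcal G_k\mathcal F_k(Q,\psi)\). The naturality square therefore reduces to \(f\circ\eta_{(P,\phi)}=\eta_{(Q,\psi)}\). This is exactly Equation~\eqref{eq:k-quotient-map-functoriality} with \(j=1\), namely \(\bar q_1^\psi=f\circ\bar q_1^\phi\), read on the common quotient set \(X/E_1^\phi=X/E_1^\psi\).

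No step is a real obstacle. The only point needing care is that the domains of \(\eta_{(P,\phi)}\) and \(\eta_{(Q,\psi)}\) agree as objects, not merely up to isomorphism. This follows because \(\mathcal G_k\) depends only on the structure \(\mathfrak s(\phi)=\mathfrak s(\psi)\), and both the underlying quotient metric space and the map \(\phi_{\mathfrak s}\) are determined by \(\mathfrak s\). With that observed, the argument is purely formal.
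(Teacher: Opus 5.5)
Your proposal is correct and matches the paper's proof step for step. You compose Equation~\eqref{eq:k-eta-compatibility} with \((\eta_{(P,\phi)}^{-1})^{\times k}\) to show \(\eta_{(P,\phi)}^{-1}\) is a morphism, and you reduce naturality to \(f\circ\eta_{(P,\phi)}=\eta_{(Q,\psi)}\) using \(\mathcal G_k\mathcal F_k(f)=\mathrm{id}\) and Equation~\eqref{eq:k-quotient-map-functoriality} with \(j=1\). Your remark that the two domains agree as objects because \(\mathcal G_k\) depends only on \(\mathfrak s(\phi)=\mathfrak s(\psi)\) is also valid.
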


\begin{proof}
By Lemma~\ref{lem:k-roundtrip-presentation}, each \(\eta_{(P,\phi)}\) is a morphism in \(\mathbf{PowPres}_k(X)\). Since \(\eta_{(P,\phi)}\) is a bijective isometry, its inverse is a bijective isometry. Composing Equation~\eqref{eq:k-eta-compatibility} on the right with \((\eta_{(P,\phi)}^{-1})^{\times k}\) gives
\begin{equation}\label{eq:k-eta-inverse-morphism-compatibility}
\phi_{\mathfrak s(\phi)}
\circ
(\eta_{(P,\phi)}^{-1})^{\times k}
=
\phi.
\end{equation}
Hence \(\eta_{(P,\phi)}^{-1}\) satisfies the morphism condition from \((P,\phi)\) to \(\mathcal G_k\mathcal F_k(P,\phi)\). Therefore \(\eta_{(P,\phi)}\) is an isomorphism in \(\mathbf{PowPres}_k(X)\).

We let \(f:(P,\phi)\to(Q,\psi)\) be a morphism in \(\mathbf{PowPres}_k(X)\). Lemma~\ref{lem:k-functoriality} gives \(\mathfrak s(\phi)=\mathfrak s(\psi)\), so \(\mathcal G_k\mathcal F_k(f)\) is the identity morphism of \(\mathcal G_k\mathcal F_k(P,\phi)=\mathcal G_k\mathcal F_k(Q,\psi)\). Equation~\eqref{eq:k-quotient-map-functoriality} with \(j=1\) gives \(f\circ\eta_{(P,\phi)}=f\circ\bar q_1^\phi=\bar q_1^\psi=\eta_{(Q,\psi)}\). Therefore \(\eta\) is a natural isomorphism.
\end{proof}

\begin{theorem}\label{thm:k-categorical-classification}
The functors \(\mathcal F_k:\mathbf{PowPres}_k(X)\to\mathbf{PowStr}_k(X)\) and \(\mathcal G_k:\mathbf{PowStr}_k(X)\to\mathbf{PowPres}_k(X)\) define an equivalence of categories. Moreover, \(\mathcal F_k\mathcal G_k=\mathrm{id}_{\mathbf{PowStr}_k(X)}\) and \(\mathcal G_k\mathcal F_k\cong\mathrm{id}_{\mathbf{PowPres}_k(X)}\) through the natural isomorphism \(\eta\).

Consequently, the isomorphism classes of ordered \(k\)-power presentations of \((X,d)\) correspond bijectively to ordered \(k\)-power structures on \((X,d)\). The category \(\mathbf{PowPres}_k(X)\) is thin, and every ordered \(k\)-power presentation has trivial automorphism group.

\[
\begin{tikzcd}[column sep=huge]
\mathbf{PowPres}_k(X)
\arrow[r,shift left=0.7ex,"\mathcal F_k"]
&
\mathbf{PowStr}_k(X)
\arrow[l,shift left=0.7ex,"\mathcal G_k"]
\end{tikzcd}
\]
\end{theorem}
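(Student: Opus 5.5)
The proof follows the template of Theorem~\ref{thm:categorical-classification}, with the lemmas of this subsubsection in place of their square analogues. The plan is to verify functoriality of both functors, prove the two round-trip identities, and then read off the consequences.

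First, \(\mathcal F_k\) is well defined on objects by Lemma~\ref{lem:k-presentation-to-structure}. It is well defined on morphisms by Lemma~\ref{lem:k-functoriality}, which shows \(\mathfrak s(\phi)=\mathfrak s(\psi)\) whenever a morphism \((P,\phi)\to(Q,\psi)\) exists, so every morphism can be sent to an identity. Since \(\mathbf{PowStr}_k(X)\) is discrete, preservation of identities and composition is automatic. Likewise, \(\mathcal G_k\) is well defined on objects by Lemma~\ref{lem:k-presentation-from-structure}, and on the identity morphisms of the discrete source it is trivially functorial.

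Next come the round trips. Lemma~\ref{lem:k-roundtrip-structure} gives \(\mathcal F_k\mathcal G_k=\mathrm{id}_{\mathbf{PowStr}_k(X)}\) on objects, and hence on morphisms, since all morphisms there are identities. Lemma~\ref{lem:k-naturality} supplies the natural isomorphism \(\eta:\mathcal G_k\mathcal F_k\Rightarrow\mathrm{id}_{\mathbf{PowPres}_k(X)}\). Together these give an equivalence of categories.

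For the consequences, any equivalence induces a bijection on isomorphism classes of objects. In the discrete category \(\mathbf{PowStr}_k(X)\), isomorphism classes are just objects, so isomorphism classes of ordered \(k\)-power presentations correspond bijectively to ordered \(k\)-power structures. Explicitly, \([(P,\phi)]\mapsto\mathfrak s(\phi)\) is well defined by Lemma~\ref{lem:k-functoriality}, surjective by Lemma~\ref{lem:k-roundtrip-structure}, and injective because \(\mathfrak s(\phi)=\mathfrak s(\psi)\) gives \((P,\phi)\cong\mathcal G_k(\mathfrak s(\phi))\cong(Q,\psi)\) via \(\eta\). Finally, thinness and triviality of automorphism groups follow from Lemma~\ref{lem:presentation-thinness} applied to \(\mathbf{Pres}=\mathbf{PowPres}_k(X)\). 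Its hypotheses hold because morphisms are maps \(f\) with \(\psi\circ f^{\times k}=\phi\) by Equation~\eqref{eq:powpres-morphism-compatibility}.

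No step is a real obstacle, since all the substantive work sits in the preceding lemmas. The only point needing care is the injectivity part of the bijection on isomorphism classes: it uses that \(\eta_{(P,\phi)}\) is an isomorphism, whose inverse is a morphism by Equation~\eqref{eq:k-eta-inverse-morphism-compatibility}, and not merely a morphism.
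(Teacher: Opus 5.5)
Your proposal is correct and follows essentially the same route as the paper: functoriality from Lemma~\ref{lem:k-functoriality} and discreteness, the round trips from Lemmas~\ref{lem:k-roundtrip-structure} and~\ref{lem:k-naturality}, and thinness from Lemma~\ref{lem:presentation-thinness}. Your explicit injectivity argument for the bijection on isomorphism classes, via \(\eta\), just spells out what the paper leaves implicit when it cites discreteness and \(\mathcal F_k\mathcal G_k=\mathrm{id}_{\mathbf{PowStr}_k(X)}\).
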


\begin{proof}
Lemma~\ref{lem:k-functoriality} shows that \(\mathcal F_k\) is well-defined on morphisms. Since \(\mathbf{PowStr}_k(X)\) is discrete, \(\mathcal F_k\) preserves identities and composition.

Lemma~\ref{lem:k-presentation-from-structure} shows that \(\mathcal G_k\) is well-defined on objects. Since \(\mathbf{PowStr}_k(X)\) is discrete and \(\mathcal G_k\) sends identity morphisms to identity morphisms, \(\mathcal G_k\) preserves identities and composition.

Lemma~\ref{lem:k-roundtrip-structure} proves that \(\mathcal F_k\mathcal G_k=\mathrm{id}_{\mathbf{PowStr}_k(X)}\). Lemma~\ref{lem:k-naturality} gives a natural isomorphism \(\mathcal G_k\mathcal F_k\cong\mathrm{id}_{\mathbf{PowPres}_k(X)}\). Therefore \(\mathcal F_k\) and \(\mathcal G_k\) define an equivalence of categories.

Since \(\mathbf{PowStr}_k(X)\) is discrete and \(\mathcal F_k\mathcal G_k=\mathrm{id}_{\mathbf{PowStr}_k(X)}\), the equivalence identifies isomorphism classes of ordered \(k\)-power presentations with ordered \(k\)-power structures on \((X,d)\).

Lemma~\ref{lem:presentation-thinness} proves that \(\mathbf{PowPres}_k(X)\) is thin and that every ordered \(k\)-power presentation has trivial automorphism group.
\end{proof}

\subsubsection{\texorpdfstring{\(\ell^p\)}{ell-p} Ordered Power Presentations and Structures}

This subsubsection gives the ordered \(\ell^p\) analogue of Theorem~\ref{thm:k-categorical-classification}. We fix an integer \(k\geq 2\) and a real number \(1\leq p<\infty\).

We define \(\Delta_{k,p}:\mathbf{Met}_{\mathrm{iso}}\to\mathbf{Met}_{\mathrm{iso}}\) on objects by \(\Delta_{k,p}(P,d_P):=(P^k,d_{P^k,p})\), where, for \(u=(u_1,\dots,u_k)\) and \(v=(v_1,\dots,v_k)\),
\begin{equation}\label{eq:ordered-p-power-metric-cat}
d_{P^k,p}(u,v)
=
\left(
\sum_{j=1}^k d_P(u_j,v_j)^p
\right)^{1/p}.
\end{equation}
We define \(\Delta_{k,p}(f):=f^{\times k}\) on isometries. Equation~\eqref{eq:ordered-p-power-metric-cat} shows that \(\Delta_{k,p}(f)\) is a bijective isometry, and the identity and composition laws for \(\Delta_{k,p}\) are immediate. We write \(\Delta_{k,p}(P)\) when the metric on \(P\) is clear.

\begin{definition}\label{def:p-power-presentation}
We define the category \(\mathbf{PowPres}_{k,p}(X)\) as follows. An object is a pair \((P,\phi)\), where \((P,d_P)\) is a metric space and \(\phi:\Delta_{k,p}(P)\to(X,d)\) is an isometry. A morphism \(f:(P,\phi)\to(Q,\psi)\) is an isometry \(f:(P,d_P)\to(Q,d_Q)\) such that
\begin{equation}\label{eq:p-powpres-morphism-condition}
\psi\circ f^{\times k}
=
\phi.
\end{equation}
\end{definition}

\begin{definition}\label{def:p-power-structure}
An ordered \(\ell^p\) \(k\)-power structure on \((X,d)\) is a tuple \(\mathfrak s=(E_1,\dots,E_k,\tau_2,\dots,\tau_k)\), where \(E_1,\dots,E_k\) are equivalence relations on \(X\) and, for each \(2\leq j\leq k\), \(\tau_j:X/E_j\to X/E_1\) is a map such that:
\begin{enumerate}
\item\label{item:p-powstr-quotient-metrics}
for each \(1\leq j\leq k\), the quotient distance \(\bar d_{E_j}\) is a metric on \(X/E_j\);

\item\label{item:p-powstr-intersections}
for every choice of classes \(C_j\in X/E_j\), where \(1\leq j\leq k\), the intersection \(\bigcap_{j=1}^k C_j\) consists of exactly one point;

\item\label{item:p-powstr-decomposition}
the family \((E_1,\dots,E_k)\) satisfies the \(p\)-distance decomposition identity from Definition~\ref{def:p-distance-decomposition};

\item\label{item:p-powstr-tau-isometries}
for each \(2\leq j\leq k\), the map \(\tau_j:(X/E_j,\bar d_{E_j})\to(X/E_1,\bar d_{E_1})\) is an isometry.
\end{enumerate}
\end{definition}

Let \(\mathbf{PowStr}_{k,p}(X)\) denote the discrete category of ordered \(\ell^p\) \(k\)-power structures on \((X,d)\).

Let \((P,\phi)\) be an object of \(\mathbf{PowPres}_{k,p}(X)\). For \(1\leq j\leq k\), define \(q_j^\phi:=\pi_j\circ\phi^{-1}:X\to P\). For \(1\leq j\leq k\), define \(E_j^\phi\) by declaring that \(xE_j^\phi y\) if and only if \(q_j^\phi(x)=q_j^\phi(y)\).

\begin{lemma}\label{lem:p-quotient-identification}
For each \(1\leq j\leq k\), the map \(\bar q_j^\phi:X/E_j^\phi\to P\), defined by \(\bar q_j^\phi([x]_{E_j^\phi})=q_j^\phi(x)\), is a bijective isometry.
\end{lemma}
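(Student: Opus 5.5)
The plan is to follow the proof of Lemma~\ref{lem:k-quotient-identification} almost verbatim, replacing the max product metric of Equation~\eqref{eq:ordered-power-metric} by the \(\ell^p\) metric of Equation~\eqref{eq:ordered-p-power-metric-cat}. First I establish that \(\bar q_j^\phi\) is a bijection. By the definition of \(E_j^\phi\), the map \(\bar q_j^\phi\) is well-defined and injective. For surjectivity, note that \(P\) is nonempty and \(\phi^{-1}:X\to P^k\) is surjective, so every \(p\in P\) is the \(j\)-th coordinate of \(\phi^{-1}(x)\) for some \(x\in X\).

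Next I prove the lower bound on the quotient distance. Fix \(x,y\in X\), and suppose \(x'E_j^\phi x\) and \(y'E_j^\phi y\). Since \(\phi\) is an isometry from \(\Delta_{k,p}(P)\),
\[
d(x',y')=\left(\sum_{i=1}^k d_P(q_i^\phi(x'),q_i^\phi(y'))^p\right)^{1/p}\geq d_P(q_j^\phi(x),q_j^\phi(y)).
\]
The inequality holds because every summand is nonnegative and \(t\mapsto t^{1/p}\) is monotone; this is the only point where the argument departs from the \(\ell^\infty\) case. Taking the infimum over all such \(x'\) and \(y'\) gives \(\bar d_{E_j^\phi}([x]_{E_j^\phi},[y]_{E_j^\phi})\geq d_P(q_j^\phi(x),q_j^\phi(y))\).

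For the matching upper bound I fix \(r\in P\). I let \(a,b\in P^k\) satisfy \(a_j=q_j^\phi(x)\), \(b_j=q_j^\phi(y)\), and \(a_i=b_i=r\) for every \(i\neq j\). Then \(u=\phi(a)\) satisfies \(uE_j^\phi x\), and \(v=\phi(b)\) satisfies \(vE_j^\phi y\). Every summand with \(i\neq j\) vanishes, so
\[
d(u,v)=(d_P(q_j^\phi(x),q_j^\phi(y))^p)^{1/p}=d_P(q_j^\phi(x),q_j^\phi(y)).
\]
Combining the two bounds shows that \(\bar d_{E_j^\phi}([x]_{E_j^\phi},[y]_{E_j^\phi})=d_P(q_j^\phi(x),q_j^\phi(y))\). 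Since \(d_P\) is a metric, \(\bar d_{E_j^\phi}\) is a metric, and \(\bar q_j^\phi\) is a bijective isometry.

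Alternatively, the same identity follows by transferring along \(\phi^{-1}\), as in Equation~\eqref{eq:quotient-distance-transfer-presentation}, and then invoking Lemma~\ref{lem:kp-product-structure} with \(P_j=P\). I expect no genuine obstacle here. The only step that needs care is checking that the \(\ell^p\) sum dominates each coordinate term and collapses to a single term when the other coordinates agree, and both facts are immediate from Equation~\eqref{eq:ordered-p-power-metric-cat}.
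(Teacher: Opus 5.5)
Your proposal is correct and follows essentially the same argument as the paper. You establish bijectivity from the definition of \(E_j^\phi\) and surjectivity of \(\phi^{-1}\). You bound the quotient distance below by the \(j\)-th summand of the \(\ell^p\) sum, and you attain the bound with representatives \(\phi(a),\phi(b)\) that agree off the \(j\)-th coordinate.
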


\begin{proof}
The definition of \(E_j^\phi\) shows that \(\bar q_j^\phi\) is well-defined and injective. Since \(P\) is nonempty and \(\phi^{-1}:X\to P^k\) is surjective, \(q_j^\phi\) and \(\bar q_j^\phi\) are surjective.

We fix \(x,y\in X\). If \(x'E_j^\phi x\) and \(y'E_j^\phi y\), then \(q_j^\phi(x')=q_j^\phi(x)\) and \(q_j^\phi(y')=q_j^\phi(y)\). The \(j\)-th summand in Equation~\eqref{eq:ordered-p-power-metric-cat} gives \(d(x',y')^p\geq d_P(q_j^\phi(x),q_j^\phi(y))^p\), and hence \(d(x',y')\geq d_P(q_j^\phi(x),q_j^\phi(y))\). Taking the infimum over all such \(x'\) and \(y'\) gives \(\bar d_{E_j^\phi}([x]_{E_j^\phi},[y]_{E_j^\phi})\geq d_P(q_j^\phi(x),q_j^\phi(y))\).

We choose \(r\in P\). We let \(a,b\in P^k\) satisfy \(a_j=q_j^\phi(x)\), \(b_j=q_j^\phi(y)\), and \(a_i=b_i=r\) for every \(i\neq j\). We set \(u=\phi(a)\) and \(v=\phi(b)\). Then \(uE_j^\phi x\) and \(vE_j^\phi y\). Since \(a_i=b_i\) for every \(i\neq j\), Equation~\eqref{eq:ordered-p-power-metric-cat} gives \(d(u,v)=d_P(q_j^\phi(x),q_j^\phi(y))\). Therefore
\begin{equation}\label{eq:p-presentation-quotient-distance}
\bar d_{E_j^\phi}([x]_{E_j^\phi},[y]_{E_j^\phi})
=
d_P(q_j^\phi(x),q_j^\phi(y)).
\end{equation}
Hence \(\bar q_j^\phi\) is a bijective isometry.
\end{proof}

For each \(2\leq j\leq k\), define
\begin{equation}\label{eq:p-tau-from-presentation}
\tau_j^\phi
=
(\bar q_1^\phi)^{-1}\circ\bar q_j^\phi
:
X/E_j^\phi\to X/E_1^\phi.
\end{equation}

\begin{lemma}\label{lem:p-presentation-to-structure}
The tuple \(\mathfrak s(\phi)=(E_1^\phi,\dots,E_k^\phi,\tau_2^\phi,\dots,\tau_k^\phi)\) is an ordered \(\ell^p\) \(k\)-power structure on \((X,d)\).
\end{lemma}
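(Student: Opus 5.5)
The argument follows the proof of Lemma~\ref{lem:k-presentation-to-structure} verbatim, with the maximum replaced by the \(\ell^p\) expression from Equation~\eqref{eq:ordered-p-power-metric-cat}. I check the four conditions of Definition~\ref{def:p-power-structure} in order.

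For condition~\ref{item:p-powstr-quotient-metrics}, Lemma~\ref{lem:p-quotient-identification} shows that each \(\bar q_j^\phi\) is a bijective isometry onto \((P,d_P)\). By Equation~\eqref{eq:p-presentation-quotient-distance}, \(\bar d_{E_j^\phi}\) is the pullback of \(d_P\) along the bijection \(\bar q_j^\phi\), so it is a metric on \(X/E_j^\phi\). For condition~\ref{item:p-powstr-tau-isometries}, Equation~\eqref{eq:p-tau-from-presentation} writes \(\tau_j^\phi\) as the composite of the bijective isometry \(\bar q_j^\phi\) with the inverse of the bijective isometry \(\bar q_1^\phi\). Hence \(\tau_j^\phi\) is an isometry from \((X/E_j^\phi,\bar d_{E_j^\phi})\) onto \((X/E_1^\phi,\bar d_{E_1^\phi})\).

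For condition~\ref{item:p-powstr-intersections}, let \(F_j\) denote equality of the \(j\)-th coordinate on \(P^k\). The definition of \(E_j^\phi\) gives \(xE_j^\phi y\) if and only if \(\phi^{-1}(x)F_j\phi^{-1}(y)\). The bijection \(\phi^{-1}\) therefore carries each \(E_j^\phi\)-class onto an \(F_j\)-class, and every \(F_j\)-class arises this way. Consequently \(\phi^{-1}\) carries \(\bigcap_{j}C_j\) bijectively onto an intersection of one \(F_j\)-class for each \(j\). Lemma~\ref{lem:k-product-intersection} says such an intersection is a single point, since that lemma concerns only the underlying sets and not the metric.

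For condition~\ref{item:p-powstr-decomposition}, fix \(x,y\in X\). Since \(\phi^{-1}\) is an isometry onto \(\Delta_{k,p}(P)\), Equation~\eqref{eq:ordered-p-power-metric-cat} gives \(d(x,y)=\bigl(\sum_{j=1}^k d_P(q_j^\phi(x),q_j^\phi(y))^p\bigr)^{1/p}\). Substituting Equation~\eqref{eq:p-presentation-quotient-distance} for each summand yields the \(p\)-distance decomposition identity of Definition~\ref{def:p-distance-decomposition}.

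No step is a real obstacle, because all the analytic work was already done in Lemma~\ref{lem:p-quotient-identification}. The one point to watch is the intersection property: one should confirm that the transfer of classes under \(\phi^{-1}\) is surjective onto the \(F_j\)-classes, so that every choice of classes \(C_j\) is covered. Surjectivity of \(\phi^{-1}\) gives this.
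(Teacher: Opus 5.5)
Your proposal is correct and follows the paper's proof step for step. Both arguments get the metric and \(\tau_j^\phi\)-isometry conditions from Lemma~\ref{lem:p-quotient-identification} and Equation~\eqref{eq:p-tau-from-presentation}, the one-point intersection property by transporting classes along \(\phi^{-1}\) to the coordinate relations \(F_j\) and applying Lemma~\ref{lem:k-product-intersection}, and the \(p\)-distance decomposition identity by substituting Equation~\eqref{eq:p-presentation-quotient-distance} into the \(\ell^p\) formula. Your remarks on surjectivity onto the \(F_j\)-classes and on the purely set-theoretic content of Lemma~\ref{lem:k-product-intersection} just make explicit what the paper leaves implicit.
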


\begin{proof}
Lemma~\ref{lem:p-quotient-identification} shows that each quotient distance \(\bar d_{E_j^\phi}\) is a metric, and Equation~\eqref{eq:p-tau-from-presentation} shows that each \(\tau_j^\phi\) is an isometry.

Let \(F_j\) denote equality of the \(j\)-th coordinate on \(P^k\). For each \(j\), \(xE_j^\phi y\) if and only if \(\phi^{-1}(x)F_j\phi^{-1}(y)\). Hence \(\phi^{-1}\) maps intersections of chosen \(E_1^\phi,\dots,E_k^\phi\)-classes bijectively onto the corresponding intersections of \(F_1,\dots,F_k\)-classes. Lemma~\ref{lem:k-product-intersection} gives the one-point intersection property.

For \(x,y\in X\), Equation~\eqref{eq:p-presentation-quotient-distance} identifies each quotient distance \(\bar d_{E_j^\phi}([x]_{E_j^\phi},[y]_{E_j^\phi})\) with \(d_P(q_j^\phi(x),q_j^\phi(y))\). Since \(\phi^{-1}\) is an isometry from \((X,d)\) to \(\Delta_{k,p}(P)\), Equation~\eqref{eq:ordered-p-power-metric-cat} gives
\[
d(x,y)
=
\left(
\sum_{j=1}^k d_P(q_j^\phi(x),q_j^\phi(y))^p
\right)^{1/p}.
\]
Thus the displayed terms are the corresponding quotient distances, so the \(p\)-distance decomposition identity follows. Therefore \(\mathfrak s(\phi)\) is an ordered \(\ell^p\) \(k\)-power structure.
\end{proof}

\begin{lemma}\label{lem:p-functoriality}
Let \(f:(P,\phi)\to(Q,\psi)\) be a morphism in \(\mathbf{PowPres}_{k,p}(X)\). Then \(\mathfrak s(\phi)=\mathfrak s(\psi)\). Moreover, for each \(1\leq j\leq k\),
\begin{equation}\label{eq:p-quotient-map-functoriality}
\bar q_j^\psi=f\circ\bar q_j^\phi.
\end{equation}
\end{lemma}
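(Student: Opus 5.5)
The argument is the same as for Lemma~\ref{lem:k-functoriality}. Only the morphism condition \eqref{eq:p-powpres-morphism-condition} and the injectivity and bijectivity of \(f\) are needed. The specific product metric \(d_{P^k,p}\) plays no role beyond making \(\phi\) and \(\psi\) bijective.

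First, we compare the coordinate maps. Since \(\psi\) is bijective, \eqref{eq:p-powpres-morphism-condition} rearranges to \(\psi^{-1}\circ\phi=f^{\times k}\) on \(P^k\). Fix \(x\in X\), substitute \(z=\phi^{-1}(x)\), and apply \(\pi_j\). Because \(\pi_j\circ f^{\times k}=f\circ\pi_j\), this gives \(q_j^\psi(x)=f(q_j^\phi(x))\) for every \(1\leq j\leq k\).

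Second, we show that the equivalence relations agree. Since \(f\) is injective, \(q_j^\psi(x)=q_j^\psi(y)\) holds if and only if \(q_j^\phi(x)=q_j^\phi(y)\). Hence \(E_j^\psi=E_j^\phi\) for each \(j\), and the quotient sets \(X/E_j^\psi\) and \(X/E_j^\phi\) coincide. Evaluating on a class \([x]_{E_j^\phi}\), the identity \(q_j^\psi=f\circ q_j^\phi\) gives \(\bar q_j^\psi([x])=f(\bar q_j^\phi([x]))\). This is Equation~\eqref{eq:p-quotient-map-functoriality}.

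Third, we compare the comparison isometries using Equation~\eqref{eq:p-tau-from-presentation}. For \(2\leq j\leq k\),
\[
\tau_j^\psi=(\bar q_1^\psi)^{-1}\circ\bar q_j^\psi=(\bar q_1^\phi)^{-1}\circ f^{-1}\circ f\circ\bar q_j^\phi=\tau_j^\phi.
\]
Here \(f^{-1}\) exists because morphisms are bijective isometries. Together with the equality of the \(E_j\), this gives \(\mathfrak s(\phi)=\mathfrak s(\psi)\).

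No step is a real obstacle. The only point needing care is that equality of the quotient sets, which follows from \(E_j^\phi=E_j^\psi\), must be established before the maps \(\bar q_j^\phi\) and \(\bar q_j^\psi\) can be compared or composed.
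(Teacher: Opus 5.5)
Your proposal is correct and matches the paper's proof essentially step for step. You derive \(q_j^\psi=f\circ q_j^\phi\) from the morphism condition, use injectivity of \(f\) to get \(E_j^\phi=E_j^\psi\) and hence Equation~\eqref{eq:p-quotient-map-functoriality}, and then cancel \(f^{-1}\circ f\) in the formula for \(\tau_j^\psi\), exactly as the paper does in repeating the argument of Lemma~\ref{lem:k-functoriality}.
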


\begin{proof}
Equation~\eqref{eq:p-powpres-morphism-condition} gives \(\psi^{-1}(\phi(z))=f^{\times k}(z)\) for \(z\in P^k\). Taking \(z=\phi^{-1}(x)\) and applying \(\pi_j\) gives \(q_j^\psi(x)=f(q_j^\phi(x))\) for \(x\in X\) and \(1\leq j\leq k\).

For \(x,y\in X\) and \(1\leq j\leq k\), since \(q_j^\psi=f\circ q_j^\phi\) and \(f\) is injective, \(xE_j^\phi y\) if and only if \(xE_j^\psi y\). Hence \(E_j^\phi=E_j^\psi\). Since \(E_j^\phi=E_j^\psi\), the quotient sets agree, and the identity \(q_j^\psi=f\circ q_j^\phi\) gives Equation~\eqref{eq:p-quotient-map-functoriality}.

For \(2\leq j\leq k\), Equation~\eqref{eq:p-quotient-map-functoriality} gives \(\tau_j^\psi=(\bar q_1^\psi)^{-1}\circ\bar q_j^\psi=(\bar q_1^\phi)^{-1}\circ f^{-1}\circ f\circ\bar q_j^\phi\). Hence \(\tau_j^\psi=\tau_j^\phi\). Therefore \(\mathfrak s(\phi)=\mathfrak s(\psi)\).
\end{proof}

We define \(\mathcal F_{k,p}:\mathbf{PowPres}_{k,p}(X)\to\mathbf{PowStr}_{k,p}(X)\) on objects by \(\mathcal F_{k,p}(P,\phi):=\mathfrak s(\phi)\). For every morphism \(f:(P,\phi)\to(Q,\psi)\), Lemma~\ref{lem:p-functoriality} gives \(\mathfrak s(\phi)=\mathfrak s(\psi)\), and we define \(\mathcal F_{k,p}(f)\) to be the identity morphism of this ordered \(\ell^p\) \(k\)-power structure.

Let \(\mathfrak s=(E_1,\dots,E_k,\tau_2,\dots,\tau_k)\) be an ordered \(\ell^p\) \(k\)-power structure on \((X,d)\). We set \(P_{\mathfrak s}:=X/E_1\), and equip \(P_{\mathfrak s}\) with the metric \(\bar d_{E_1}\). Define \(\Psi_{\mathfrak s}:X\to P_{\mathfrak s}^k\) by
\begin{equation}\label{eq:p-categorical-psi}
\Psi_{\mathfrak s}(x)
=
([x]_{E_1},\tau_2([x]_{E_2}),\dots,\tau_k([x]_{E_k})).
\end{equation}

\begin{lemma}\label{lem:p-presentation-from-structure}
The map \(\Psi_{\mathfrak s}\) is a bijective isometry from \((X,d)\) onto \(\Delta_{k,p}(P_{\mathfrak s})\). Consequently, if \(\phi_{\mathfrak s}:=\Psi_{\mathfrak s}^{-1}\), then \((P_{\mathfrak s},\phi_{\mathfrak s})\) is an object of \(\mathbf{PowPres}_{k,p}(X)\).
\end{lemma}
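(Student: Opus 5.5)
The plan follows the proof of Lemma~\ref{lem:k-presentation-from-structure}, replacing the maximum by the \(\ell^p\) expression. We factor \(\Psi_{\mathfrak s}\) as a composite of two bijective isometries. First, conditions~\ref{item:p-powstr-quotient-metrics}, \ref{item:p-powstr-intersections}, and \ref{item:p-powstr-decomposition} of Definition~\ref{def:p-power-structure} are exactly the hypotheses of Lemma~\ref{lem:p-relations-to-product}. That lemma shows that \(x\mapsto([x]_{E_1},\dots,[x]_{E_k})\) is a bijective isometry from \((X,d)\) onto \(\prod_{j=1}^k X/E_j\), equipped with the metric \(d_{\times,p}\) of Equation~\eqref{eq:p-quotient-product-metric}.

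Second, we define
\[
A:\prod_{j=1}^k X/E_j\to P_{\mathfrak s}^k,
\qquad
A(C_1,\dots,C_k)=(C_1,\tau_2(C_2),\dots,\tau_k(C_k)).
\]
Each \(\tau_j\) is a bijective isometry by condition~\ref{item:p-powstr-tau-isometries}, so \(A\) is bijective. For \(C,D\) in the product, Equation~\eqref{eq:ordered-p-power-metric-cat} gives
\[
d_{P_{\mathfrak s}^k,p}(A(C),A(D))^p
=
\bar d_{E_1}(C_1,D_1)^p+\sum_{j=2}^k\bar d_{E_1}(\tau_j(C_j),\tau_j(D_j))^p.
\]
Since each \(\tau_j\) preserves distances, the right side equals \(\sum_{j=1}^k\bar d_{E_j}(C_j,D_j)^p=d_{\times,p}(C,D)^p\). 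Taking \(p\)-th roots, which is legitimate because all terms are nonnegative, shows that \(A\) is an isometry.

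By Equation~\eqref{eq:p-categorical-psi}, \(\Psi_{\mathfrak s}\) is the composite of \(A\) with the map from Lemma~\ref{lem:p-relations-to-product}. A composite of bijective isometries is a bijective isometry, so \(\Psi_{\mathfrak s}:(X,d)\to\Delta_{k,p}(P_{\mathfrak s})\) is one. Its inverse \(\phi_{\mathfrak s}\) is then an isometry \(\Delta_{k,p}(P_{\mathfrak s})\to(X,d)\), so \((P_{\mathfrak s},\phi_{\mathfrak s})\) is an object of \(\mathbf{PowPres}_{k,p}(X)\) by Definition~\ref{def:p-power-presentation}.

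There is no real obstacle. The only point needing care is that \(A\) preserves the \(\ell^p\) combination, and this holds because each \(\tau_j\) preserves every summand individually.
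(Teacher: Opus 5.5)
Your proposal is correct and matches the paper's proof step for step. You invoke Lemma~\ref{lem:p-relations-to-product} for the quotient-coordinate map, show that $A$ is a bijective isometry because each $\tau_j$ preserves its summand, and identify $\Psi_{\mathfrak s}$ as the composite; the only cosmetic difference is that you compare $p$-th powers before taking roots.
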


\begin{proof}
Conditions~\ref{item:p-powstr-quotient-metrics}, \ref{item:p-powstr-intersections}, and \ref{item:p-powstr-decomposition} in Definition~\ref{def:p-power-structure} are the hypotheses of Lemma~\ref{lem:p-relations-to-product}. Hence the map \(x\mapsto([x]_{E_1},\dots,[x]_{E_k})\) is a bijective isometry from \((X,d)\) onto \(\prod_{j=1}^k X/E_j\) with the metric from Equation~\eqref{eq:p-quotient-product-metric}.

Define \(A:\prod_{j=1}^k X/E_j\to P_{\mathfrak s}^k\) by \(A(C_1,\dots,C_k)=(C_1,\tau_2(C_2),\dots,\tau_k(C_k))\). Since each \(\tau_j\) is bijective, \(A\) is bijective. For all \(C=(C_1,\dots,C_k)\) and \(D=(D_1,\dots,D_k)\), Equation~\eqref{eq:ordered-p-power-metric-cat} gives
\[
\begin{aligned}
d_{P_{\mathfrak s}^k,p}(A(C),A(D))
&=
\left(
\bar d_{E_1}(C_1,D_1)^p
+
\sum_{j=2}^k
\bar d_{E_1}(\tau_j(C_j),\tau_j(D_j))^p
\right)^{1/p} \\
&=
\left(
\sum_{j=1}^k
\bar d_{E_j}(C_j,D_j)^p
\right)^{1/p}.
\end{aligned}
\]
Thus \(A\) is a bijective isometry.

The composition of the map from Lemma~\ref{lem:p-relations-to-product} with \(A\) equals \(\Psi_{\mathfrak s}\) by the definitions of \(A\) and \(\Psi_{\mathfrak s}\). Hence \(\Psi_{\mathfrak s}\) is a bijective isometry.

Therefore \(\phi_{\mathfrak s}=\Psi_{\mathfrak s}^{-1}\) is an isometry \(\Delta_{k,p}(P_{\mathfrak s})\to(X,d)\). Thus \((P_{\mathfrak s},\phi_{\mathfrak s})\) is an object of \(\mathbf{PowPres}_{k,p}(X)\).
\end{proof}

We define \(\mathcal G_{k,p}:\mathbf{PowStr}_{k,p}(X)\to\mathbf{PowPres}_{k,p}(X)\) by \(\mathcal G_{k,p}(\mathfrak s):=(P_{\mathfrak s},\phi_{\mathfrak s})\) on objects and by sending each identity morphism to the identity morphism of its image.

\begin{lemma}\label{lem:p-roundtrip-structure}
For every ordered \(\ell^p\) \(k\)-power structure \(\mathfrak s\) on \((X,d)\), one has \(\mathcal F_{k,p}\mathcal G_{k,p}(\mathfrak s)=\mathfrak s\).
\end{lemma}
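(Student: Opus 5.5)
The plan mirrors the proof of Lemma~\ref{lem:k-roundtrip-structure}, since the definitions of \(\mathcal F_{k,p}\) and \(\mathcal G_{k,p}\) only differ from the max case in the metric placed on \(P_{\mathfrak s}^k\). That metric never enters the construction of \(\mathfrak s(\phi)\) from the coordinate maps.

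Write \(\mathfrak s=(E_1,\dots,E_k,\tau_2,\dots,\tau_k)\). By Lemma~\ref{lem:p-presentation-from-structure}, \(\phi_{\mathfrak s}=\Psi_{\mathfrak s}^{-1}\), so \(\phi_{\mathfrak s}^{-1}=\Psi_{\mathfrak s}\). Composing Equation~\eqref{eq:p-categorical-psi} with the projections gives the coordinate maps
\[
q_1^{\phi_{\mathfrak s}}(x)=[x]_{E_1},
\qquad
q_j^{\phi_{\mathfrak s}}(x)=\tau_j([x]_{E_j})
\quad(2\leq j\leq k).
\]

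I will compare the equivalence relations next. Equality of first coordinates is exactly \(xE_1y\). For \(j\geq 2\), the map \(\tau_j\) is an isometry, hence injective, so \(\tau_j([x]_{E_j})=\tau_j([y]_{E_j})\) holds if and only if \(xE_jy\). Thus \(E_j^{\phi_{\mathfrak s}}=E_j\) for every \(j\), and the quotient sets coincide.

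Under this identification, \(\bar q_1^{\phi_{\mathfrak s}}=\mathrm{id}_{X/E_1}\) and \(\bar q_j^{\phi_{\mathfrak s}}=\tau_j\). Equation~\eqref{eq:p-tau-from-presentation} then gives \(\tau_j^{\phi_{\mathfrak s}}=\mathrm{id}^{-1}\circ\tau_j=\tau_j\). Hence \(\mathcal F_{k,p}\mathcal G_{k,p}(\mathfrak s)=\mathfrak s\).

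There is no real obstacle here. The only point needing care is that equality of structures is literal equality of tuples, so the identification of the quotient sets \(X/E_j^{\phi_{\mathfrak s}}=X/E_j\) must be an actual equality of sets of classes. It is, because the two relations are equal.
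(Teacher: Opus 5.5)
Your proposal is correct and matches the paper's proof essentially step for step. You read off the coordinate maps of \(\phi_{\mathfrak s}^{-1}=\Psi_{\mathfrak s}\), use injectivity of each \(\tau_j\) to get \(E_j^{\phi_{\mathfrak s}}=E_j\), and then identify \(\bar q_1^{\phi_{\mathfrak s}}=\mathrm{id}_{X/E_1}\) and \(\bar q_j^{\phi_{\mathfrak s}}=\tau_j\) to conclude \(\tau_j^{\phi_{\mathfrak s}}=\tau_j\).
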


\begin{proof}
We let \(\mathfrak s=(E_1,\dots,E_k,\tau_2,\dots,\tau_k)\). Since \(\phi_{\mathfrak s}^{-1}=\Psi_{\mathfrak s}\), Equation~\eqref{eq:p-categorical-psi} gives \(q_1^{\phi_{\mathfrak s}}(x)=[x]_{E_1}\) and \(q_j^{\phi_{\mathfrak s}}(x)=\tau_j([x]_{E_j})\) for \(2\leq j\leq k\). For \(x,y\in X\), the first identity gives \(xE_1^{\phi_{\mathfrak s}}y\) if and only if \(xE_1y\). For \(2\leq j\leq k\), the second identity and the injectivity of \(\tau_j\) give \(xE_j^{\phi_{\mathfrak s}}y\) if and only if \(xE_jy\). Hence \(E_j^{\phi_{\mathfrak s}}=E_j\) for \(1\leq j\leq k\). Under these identifications, \(\bar q_1^{\phi_{\mathfrak s}}=\mathrm{id}_{X/E_1}\) and \(\bar q_j^{\phi_{\mathfrak s}}=\tau_j\) for \(2\leq j\leq k\). Hence \(\tau_j^{\phi_{\mathfrak s}}=\tau_j\) for \(2\leq j\leq k\), so \(\mathcal F_{k,p}\mathcal G_{k,p}(\mathfrak s)=\mathfrak s\).
\end{proof}

\begin{lemma}\label{lem:p-roundtrip-presentation}
For every object \((P,\phi)\) of \(\mathbf{PowPres}_{k,p}(X)\), the map \(\eta_{(P,\phi)}=\bar q_1^\phi:P_{\mathfrak s(\phi)}=X/E_1^\phi\to P\) is a morphism \(\mathcal G_{k,p}\mathcal F_{k,p}(P,\phi)\to(P,\phi)\) in \(\mathbf{PowPres}_{k,p}(X)\). Moreover,
\begin{equation}\label{eq:p-eta-compatibility}
\phi\circ\eta_{(P,\phi)}^{\times k}
=
\phi_{\mathfrak s(\phi)}.
\end{equation}
\end{lemma}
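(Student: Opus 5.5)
The plan is to mirror the proof of Lemma~\ref{lem:k-roundtrip-presentation} verbatim, since nothing in that argument used the particular product metric beyond the fact that each \(\bar q_j^\phi\) is a bijective isometry. First, Lemma~\ref{lem:p-quotient-identification} shows that \(\eta_{(P,\phi)}=\bar q_1^\phi\) is a bijective isometry from \((X/E_1^\phi,\bar d_{E_1^\phi})=P_{\mathfrak s(\phi)}\) onto \((P,d_P)\). Hence it is an admissible candidate for a morphism in \(\mathbf{PowPres}_{k,p}(X)\), and it remains only to verify the compatibility condition~\eqref{eq:p-powpres-morphism-condition}. That condition is exactly Equation~\eqref{eq:p-eta-compatibility}, because \(\mathcal G_{k,p}\mathcal F_{k,p}(P,\phi)=(P_{\mathfrak s(\phi)},\phi_{\mathfrak s(\phi)})\).

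Since \(\phi_{\mathfrak s(\phi)}=\Psi_{\mathfrak s(\phi)}^{-1}\) by Lemma~\ref{lem:p-presentation-from-structure}, and \(\phi\) is bijective, Equation~\eqref{eq:p-eta-compatibility} is equivalent to
\[
\eta_{(P,\phi)}^{\times k}\circ\Psi_{\mathfrak s(\phi)}=\phi^{-1}.
\]
I will check this identity coordinatewise for fixed \(x\in X\). By Equation~\eqref{eq:p-categorical-psi}, the first coordinate of the left side is \(\bar q_1^\phi([x]_{E_1^\phi})=q_1^\phi(x)=\pi_1(\phi^{-1}(x))\). For \(2\leq j\leq k\), the \(j\)-th coordinate is \(\bar q_1^\phi(\tau_j^\phi([x]_{E_j^\phi}))\). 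By the definition~\eqref{eq:p-tau-from-presentation}, this equals \(\bar q_j^\phi([x]_{E_j^\phi})=q_j^\phi(x)=\pi_j(\phi^{-1}(x))\). All coordinates agree, so the displayed identity holds.

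Finally, composing that identity on the left with \(\phi\) and on the right with \(\phi_{\mathfrak s(\phi)}\) gives Equation~\eqref{eq:p-eta-compatibility}, and this completes the proof. I do not expect a genuine obstacle. The only point needing care is bookkeeping: confirming that the underlying metric space of \(\mathcal G_{k,p}\mathcal F_{k,p}(P,\phi)\) is \(X/E_1^\phi\) with \(\bar d_{E_1^\phi}\), so that \(\eta_{(P,\phi)}\) has the correct domain. Notably, the \(\ell^p\) metric never enters this step except through Lemma~\ref{lem:p-quotient-identification}.
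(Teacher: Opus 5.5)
Your proposal is correct and matches the paper's proof: you get that \(\eta_{(P,\phi)}=\bar q_1^\phi\) is an isometry from the quotient-identification lemma, reduce to \(\eta_{(P,\phi)}^{\times k}\circ\Psi_{\mathfrak s(\phi)}=\phi^{-1}\), and check this coordinatewise using \(\tau_j^\phi=(\bar q_1^\phi)^{-1}\circ\bar q_j^\phi\). Your final recomposition with \(\phi\) and \(\phi_{\mathfrak s(\phi)}=\Psi_{\mathfrak s(\phi)}^{-1}\) correctly recovers Equation~\eqref{eq:p-eta-compatibility}.
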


\begin{proof}
Lemma~\ref{lem:p-quotient-identification} shows that \(\eta_{(P,\phi)}=\bar q_1^\phi\) is an isometry. Since \(\phi_{\mathfrak s(\phi)}=\Psi_{\mathfrak s(\phi)}^{-1}\), it suffices to prove
\begin{equation}\label{eq:p-eta-inverse-compatibility}
\eta_{(P,\phi)}^{\times k}
\circ
\Psi_{\mathfrak s(\phi)}
=
\phi^{-1}.
\end{equation}
For \(x\in X\), Equation~\eqref{eq:p-categorical-psi} shows that the first coordinate of \(\eta_{(P,\phi)}^{\times k}\Psi_{\mathfrak s(\phi)}(x)\) is \(q_1^\phi(x)\). For \(2\leq j\leq k\), Equation~\eqref{eq:p-tau-from-presentation} gives \(\bar q_1^\phi(\tau_j^\phi([x]_{E_j^\phi}))=\bar q_j^\phi([x]_{E_j^\phi})\), so the \(j\)-th coordinate is \(q_j^\phi(x)\). Hence \(\eta_{(P,\phi)}^{\times k}\Psi_{\mathfrak s(\phi)}(x)=\phi^{-1}(x)\), so Equation~\eqref{eq:p-eta-inverse-compatibility} holds. Therefore Equation~\eqref{eq:p-eta-compatibility} holds, so \(\eta_{(P,\phi)}\) is a morphism in \(\mathbf{PowPres}_{k,p}(X)\).
\end{proof}

\begin{lemma}\label{lem:p-naturality}
The morphisms \(\eta_{(P,\phi)}:\mathcal G_{k,p}\mathcal F_{k,p}(P,\phi)\to(P,\phi)\) form a natural isomorphism \(\eta:\mathcal G_{k,p}\mathcal F_{k,p}\Rightarrow\mathrm{id}_{\mathbf{PowPres}_{k,p}(X)}\).
\end{lemma}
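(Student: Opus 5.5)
The plan is to mirror the proof of Lemma~\ref{lem:k-naturality} verbatim in the \(\ell^p\) setting, since the only metric input needed is already packaged in Lemmas~\ref{lem:p-quotient-identification}, \ref{lem:p-functoriality}, and \ref{lem:p-roundtrip-presentation}. There are two things to check: each component \(\eta_{(P,\phi)}\) is an isomorphism in \(\mathbf{PowPres}_{k,p}(X)\), and the family is natural.

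For the isomorphism claim, Lemma~\ref{lem:p-roundtrip-presentation} gives that \(\eta_{(P,\phi)}=\bar q_1^\phi\) is a morphism \(\mathcal G_{k,p}\mathcal F_{k,p}(P,\phi)\to(P,\phi)\) satisfying Equation~\eqref{eq:p-eta-compatibility}. By Lemma~\ref{lem:p-quotient-identification}, \(\bar q_1^\phi\) is a bijective isometry, so its inverse is a bijective isometry \(P\to X/E_1^\phi\). Composing Equation~\eqref{eq:p-eta-compatibility} on the right with \((\eta_{(P,\phi)}^{-1})^{\times k}\), and using \(f^{\times k}\circ g^{\times k}=(f\circ g)^{\times k}\), gives \(\phi_{\mathfrak s(\phi)}\circ(\eta_{(P,\phi)}^{-1})^{\times k}=\phi\). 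This is exactly the morphism condition~\eqref{eq:p-powpres-morphism-condition} for \(\eta_{(P,\phi)}^{-1}:(P,\phi)\to\mathcal G_{k,p}\mathcal F_{k,p}(P,\phi)\). Composition in \(\mathbf{PowPres}_{k,p}(X)\) is composition of maps, so \(\eta_{(P,\phi)}\) and \(\eta_{(P,\phi)}^{-1}\) are mutually inverse morphisms. One could also shortcut this using thinness from Lemma~\ref{lem:presentation-thinness}, but the explicit inverse is cleaner.

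For naturality, fix a morphism \(f:(P,\phi)\to(Q,\psi)\). Lemma~\ref{lem:p-functoriality} gives \(\mathfrak s(\phi)=\mathfrak s(\psi)\). So \(\mathcal F_{k,p}(f)\) is an identity in the discrete category, hence \(\mathcal G_{k,p}\mathcal F_{k,p}(f)\) is the identity of the common object \(\mathcal G_{k,p}\mathcal F_{k,p}(P,\phi)=\mathcal G_{k,p}\mathcal F_{k,p}(Q,\psi)\). The naturality square therefore reduces to \(f\circ\eta_{(P,\phi)}=\eta_{(Q,\psi)}\), which is Equation~\eqref{eq:p-quotient-map-functoriality} with \(j=1\). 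Alternatively, naturality is automatic from thinness, since both sides are morphisms with the same source and target.

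I expect no real obstacle. The only point needing care is the bookkeeping that \(\mathcal G_{k,p}\mathcal F_{k,p}(P,\phi)\) and \(\mathcal G_{k,p}\mathcal F_{k,p}(Q,\psi)\) are literally the same object. That requires the equality \(\mathfrak s(\phi)=\mathfrak s(\psi)\) to hold on the nose, including the \(\tau_j\), and not merely up to isomorphism; Lemma~\ref{lem:p-functoriality} provides exactly this.
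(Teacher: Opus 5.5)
Your proposal is correct and matches the paper's proof step for step. You get the inverse morphism by composing Equation~\eqref{eq:p-eta-compatibility} on the right with \((\eta_{(P,\phi)}^{-1})^{\times k}\), and you reduce naturality to \(f\circ\eta_{(P,\phi)}=\eta_{(Q,\psi)}\) via Lemma~\ref{lem:p-functoriality} and Equation~\eqref{eq:p-quotient-map-functoriality} with \(j=1\), both exactly as the paper does; the thinness shortcuts you mention are valid but unnecessary.
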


\begin{proof}
By Lemma~\ref{lem:p-roundtrip-presentation}, each \(\eta_{(P,\phi)}\) is a morphism in \(\mathbf{PowPres}_{k,p}(X)\). Since \(\eta_{(P,\phi)}\) is a bijective isometry, its inverse is a bijective isometry. Composing Equation~\eqref{eq:p-eta-compatibility} on the right with \((\eta_{(P,\phi)}^{-1})^{\times k}\) gives
\begin{equation}\label{eq:p-eta-inverse-morphism-compatibility}
\phi_{\mathfrak s(\phi)}
\circ
(\eta_{(P,\phi)}^{-1})^{\times k}
=
\phi.
\end{equation}
Hence \(\eta_{(P,\phi)}^{-1}\) satisfies the morphism condition from \((P,\phi)\) to \(\mathcal G_{k,p}\mathcal F_{k,p}(P,\phi)\). Therefore \(\eta_{(P,\phi)}\) is an isomorphism in \(\mathbf{PowPres}_{k,p}(X)\).

We let \(f:(P,\phi)\to(Q,\psi)\) be a morphism in \(\mathbf{PowPres}_{k,p}(X)\). Lemma~\ref{lem:p-functoriality} gives \(\mathfrak s(\phi)=\mathfrak s(\psi)\), so \(\mathcal G_{k,p}\mathcal F_{k,p}(f)\) is the identity morphism of \(\mathcal G_{k,p}\mathcal F_{k,p}(P,\phi)=\mathcal G_{k,p}\mathcal F_{k,p}(Q,\psi)\). Equation~\eqref{eq:p-quotient-map-functoriality} with \(j=1\) gives \(f\circ\eta_{(P,\phi)}=f\circ\bar q_1^\phi=\bar q_1^\psi=\eta_{(Q,\psi)}\). Therefore \(\eta\) is a natural isomorphism.
\end{proof}

\begin{theorem}\label{thm:p-categorical-classification}
The functors \(\mathcal F_{k,p}:\mathbf{PowPres}_{k,p}(X)\to\mathbf{PowStr}_{k,p}(X)\) and \(\mathcal G_{k,p}:\mathbf{PowStr}_{k,p}(X)\to\mathbf{PowPres}_{k,p}(X)\) define an equivalence of categories. Moreover, \(\mathcal F_{k,p}\mathcal G_{k,p}=\mathrm{id}_{\mathbf{PowStr}_{k,p}(X)}\) and \(\mathcal G_{k,p}\mathcal F_{k,p}\cong\mathrm{id}_{\mathbf{PowPres}_{k,p}(X)}\) through the natural isomorphism \(\eta\).

Consequently, the isomorphism classes of ordered \(\ell^p\) \(k\)-power presentations of \((X,d)\) correspond bijectively to ordered \(\ell^p\) \(k\)-power structures on \((X,d)\). The category \(\mathbf{PowPres}_{k,p}(X)\) is thin, and every ordered \(\ell^p\) \(k\)-power presentation has trivial automorphism group.
\end{theorem}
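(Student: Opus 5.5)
The plan is to assemble the theorem from the lemmas of this subsubsection, following the same pattern as the proof of Theorem~\ref{thm:k-categorical-classification}. First, functoriality. Lemma~\ref{lem:p-functoriality} shows that any morphism \(f:(P,\phi)\to(Q,\psi)\) satisfies \(\mathfrak s(\phi)=\mathfrak s(\psi)\). Hence assigning to \(f\) the identity morphism of \(\mathfrak s(\phi)\) is well defined, and \(\mathcal F_{k,p}\) is a functor because \(\mathbf{PowStr}_{k,p}(X)\) is discrete. Lemma~\ref{lem:p-presentation-to-structure} guarantees that \(\mathcal F_{k,p}\) lands in ordered \(\ell^p\) \(k\)-power structures. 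Symmetrically, Lemma~\ref{lem:p-presentation-from-structure} shows that \(\mathcal G_{k,p}(\mathfrak s)=(P_{\mathfrak s},\phi_{\mathfrak s})\) is an object of \(\mathbf{PowPres}_{k,p}(X)\). Since the only morphisms in the discrete source are identities, \(\mathcal G_{k,p}\) preserves identities and composition trivially.

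Next come the two round trips. Lemma~\ref{lem:p-roundtrip-structure} gives \(\mathcal F_{k,p}\mathcal G_{k,p}(\mathfrak s)=\mathfrak s\) on objects, and the two composites agree on morphisms because both are identities. So \(\mathcal F_{k,p}\mathcal G_{k,p}=\mathrm{id}_{\mathbf{PowStr}_{k,p}(X)}\) strictly. Lemma~\ref{lem:p-naturality}, which rests on Lemma~\ref{lem:p-roundtrip-presentation}, supplies the natural isomorphism \(\eta:\mathcal G_{k,p}\mathcal F_{k,p}\Rightarrow\mathrm{id}\) with components \(\eta_{(P,\phi)}=\bar q_1^\phi\). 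A pair of functors with a strict identity on one side and a natural isomorphism on the other is an equivalence of categories. This gives the first two assertions.

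For the consequences, I will argue as follows. Two presentations are isomorphic if and only if they have the same image under \(\mathcal F_{k,p}\). One direction is Lemma~\ref{lem:p-functoriality}. For the other, if \(\mathfrak s(\phi)=\mathfrak s(\psi)\), then the composite \(\eta_{(Q,\psi)}\circ\eta_{(P,\phi)}^{-1}\) is an isomorphism \((P,\phi)\to(Q,\psi)\). Together with essential surjectivity from \(\mathcal F_{k,p}\mathcal G_{k,p}=\mathrm{id}\), this gives the bijection between isomorphism classes and structures. Finally, Lemma~\ref{lem:presentation-thinness} applies verbatim to \(\mathbf{PowPres}_{k,p}(X)\). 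Its hypotheses only concern the morphism condition \(\psi\circ f^{\times k}=\phi\) and the bijectivity of \(\psi\), not the choice of product metric. So the category is thin and every automorphism group is trivial.

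No step should be a genuine obstacle, since the substantive content is in the preceding lemmas. The only point requiring care is the isomorphism-class bijection, specifically checking that equal structures yield isomorphic presentations via the \(\eta\) components. That follows immediately from naturality and invertibility of \(\eta\).
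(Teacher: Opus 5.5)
Your proposal is correct and matches the paper's proof essentially step for step. It derives functoriality from Lemma~\ref{lem:p-functoriality} and discreteness, \(\mathcal F_{k,p}\mathcal G_{k,p}=\mathrm{id}\) from Lemma~\ref{lem:p-roundtrip-structure}, \(\eta\) from Lemma~\ref{lem:p-naturality}, and thinness from Lemma~\ref{lem:presentation-thinness}. Your explicit isomorphism \(\eta_{(Q,\psi)}\circ\eta_{(P,\phi)}^{-1}\) for presentations with equal structures only spells out what the paper leaves implicit when it says the equivalence identifies isomorphism classes with structures.
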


\begin{proof}
Lemma~\ref{lem:p-functoriality} shows that \(\mathcal F_{k,p}\) is well-defined on morphisms. Since \(\mathbf{PowStr}_{k,p}(X)\) is discrete, \(\mathcal F_{k,p}\) preserves identities and composition.

Lemma~\ref{lem:p-presentation-from-structure} shows that \(\mathcal G_{k,p}\) is well-defined on objects. Since \(\mathbf{PowStr}_{k,p}(X)\) is discrete and \(\mathcal G_{k,p}\) sends identity morphisms to identity morphisms, \(\mathcal G_{k,p}\) preserves identities and composition.

Lemma~\ref{lem:p-roundtrip-structure} proves that \(\mathcal F_{k,p}\mathcal G_{k,p}=\mathrm{id}_{\mathbf{PowStr}_{k,p}(X)}\). Lemma~\ref{lem:p-naturality} gives a natural isomorphism \(\mathcal G_{k,p}\mathcal F_{k,p}\cong\mathrm{id}_{\mathbf{PowPres}_{k,p}(X)}\). Therefore \(\mathcal F_{k,p}\) and \(\mathcal G_{k,p}\) define an equivalence of categories.

Since \(\mathbf{PowStr}_{k,p}(X)\) is discrete and \(\mathcal F_{k,p}\mathcal G_{k,p}=\mathrm{id}_{\mathbf{PowStr}_{k,p}(X)}\), the equivalence identifies isomorphism classes of ordered \(\ell^p\) \(k\)-power presentations with ordered \(\ell^p\) \(k\)-power structures on \((X,d)\).

Lemma~\ref{lem:presentation-thinness} proves that \(\mathbf{PowPres}_{k,p}(X)\) is thin and that every ordered \(\ell^p\) \(k\)-power presentation has trivial automorphism group.
\end{proof}

\subsection{Prime Factorizations of Metric Spaces}

Throughout this subsection, all metric spaces are nonempty, every isometry is bijective, and every metric-space product carries the max product metric from Equation~\eqref{eq:k-product-metric}. We call these products \(\ell^\infty\)-products, and for metric spaces \(P_1,\dots,P_m\), we write \(P_1\times_\infty\cdots\times_\infty P_m\) for their product with this metric.

For a metric space \((X,d_X)\) and \(r\geq 0\), we define the threshold relation \(R_r^X\) by
\begin{equation}\label{eq:threshold-relation}
R_r^X(x,y)
\Longleftrightarrow
d_X(x,y)\leq r.
\end{equation}

\begin{definition}\label{def:threshold-structure}
Let \(D\subseteq[0,\infty)\) be finite and satisfy \(0\in D\). We say that a metric space \((X,d_X)\) is \(D\)-valued if \(d_X(X\times X)\subseteq D\).

We write \(\mathcal L_D\) for the relational language with one binary relation symbol \(R_r\) for each \(r\in D\). If \((X,d_X)\) is \(D\)-valued, then we define \(\mathcal T_D(X,d_X)=\langle X,(R_r^X)_{r\in D}\rangle\). When no ambiguity arises, we write \(\mathcal T_D(X)\).
\end{definition}

If \((X,d_X)\) is \(D\)-valued, then each relation \(R_r^X\) is reflexive and symmetric, \(R_0^X=\mathrm{id}_X\), and \(R_r^X\subseteq R_s^X\) whenever \(r,s\in D\) satisfy \(r\leq s\).

We now record the basic dictionary between \(D\)-valued metric spaces and their threshold structures.

\[
\begin{tikzcd}[column sep=huge,row sep=large]
(P_1,\dots,P_m)
\arrow[r,"\times_\infty"]
\arrow[d,"\mathcal T_D"']
&
P_1\times_\infty\cdots\times_\infty P_m
\arrow[d,"\mathcal T_D"]
\\
(\mathcal T_D(P_1),\dots,\mathcal T_D(P_m))
\arrow[r,"\prod"]
&
\prod_{j=1}^m\mathcal T_D(P_j)
\end{tikzcd}
\]

\begin{lemma}\label{lem:threshold-recovery}
Let \(D\subseteq[0,\infty)\) be finite and satisfy \(0\in D\). Let \((X,d_X)\) and \((Y,d_Y)\) be \(D\)-valued metric spaces. Then, for all \(x,x'\in X\),
\begin{equation}\label{eq:threshold-distance-recovery}
d_X(x,x')
=
\min\{r\in D:R_r^X(x,x')\}.
\end{equation}
Consequently, every isomorphism \(\mathcal T_D(X)\cong\mathcal T_D(Y)\) induces an isometry \((X,d_X)\cong(Y,d_Y)\).
\end{lemma}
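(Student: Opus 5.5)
The plan is to prove the distance-recovery formula first and then deduce the isometry statement from it.

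Fix \(x,x'\in X\) and set \(s=d_X(x,x')\). Because \((X,d_X)\) is \(D\)-valued, \(s\in D\). By Equation~\eqref{eq:threshold-relation}, \(R_s^X(x,x')\) holds, so \(s\) belongs to the set \(\{r\in D:R_r^X(x,x')\}\). That set is therefore a nonempty subset of the finite set \(D\), and its minimum exists. If \(r\in D\) satisfies \(R_r^X(x,x')\), then \(d_X(x,x')\leq r\), so \(s\) is a lower bound for the set. Since \(s\) is also a member of the set, it is the minimum. This proves Equation~\eqref{eq:threshold-distance-recovery}.

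For the consequence, let \(h:X\to Y\) be an isomorphism \(\mathcal T_D(X)\cong\mathcal T_D(Y)\). In McKenzie's sense this means \(h\) is a bijection and, for every \(r\in D\), \(R_r^X(x,x')\) holds exactly when \(R_r^Y(h(x),h(x'))\) holds. So \(h\) maps the index set \(\{r\in D:R_r^X(x,x')\}\) onto the identical set \(\{r\in D:R_r^Y(h(x),h(x'))\}\). Applying Equation~\eqref{eq:threshold-distance-recovery} in \(X\) and in \(Y\) gives \(d_X(x,x')=d_Y(h(x),h(x'))\). Hence \(h\) is a bijective isometry.

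No step here is a genuine obstacle. The only points to be careful about are that the minimum exists, which uses finiteness of \(D\) and \(d_X(x,x')\in D\), and that an isomorphism of relational structures includes bijectivity and preservation of each relation in both directions. The bijectivity is what makes \(h\) onto \(Y\).
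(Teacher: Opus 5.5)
Your proof is correct and follows the paper's argument. Both identify \(\{r\in D:R_r^X(x,x')\}\) with \(\{r\in D:d_X(x,x')\leq r\}\), use \(d_X(x,x')\in D\) to obtain the minimum, and transfer distances through the bijective, relation-preserving isomorphism. One small wording point: \(h\) does not ``map'' the index set; the two index sets are simply equal.
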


\begin{proof}
We fix \(x,x'\in X\). Equation~\eqref{eq:threshold-relation} shows that \(\{r\in D:R_r^X(x,x')\}=\{r\in D:d_X(x,x')\leq r\}\). Since \(d_X(x,x')\in D\), the minimum of this set equals \(d_X(x,x')\). This proves Equation~\eqref{eq:threshold-distance-recovery}.

Let \(\varphi:\mathcal T_D(X)\to\mathcal T_D(Y)\) be an isomorphism. Since \(\varphi\) preserves each relation \(R_r\), Equation~\eqref{eq:threshold-distance-recovery} gives \(d_Y(\varphi(x),\varphi(x'))=d_X(x,x')\) for all \(x,x'\in X\). Since an isomorphism of structures is bijective, \(\varphi\) is an isometry.
\end{proof}

\begin{lemma}\label{lem:threshold-factor-valued}
Let \((X,d_X)\) be a metric space with finite distance set \(D=d_X(X\times X)\). If \((X,d_X)\cong P_1\times_\infty\cdots\times_\infty P_m\), then each factor \(P_j\) is \(D\)-valued.
\end{lemma}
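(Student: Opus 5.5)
The plan is to show that every distance realized in a factor \(P_j\) is already realized in \(X\). Fix a bijective isometry \(\Phi:P_1\times_\infty\cdots\times_\infty P_m\to(X,d_X)\); the factors are nonempty by the standing convention of this subsection. Let \(a,b\in P_j\) be arbitrary. We must show \(d_j(a,b)\in D\), and since \(0=d_X(x,x)\in D\) for any \(x\in X\), the condition \(0\in D\) required by Definition~\ref{def:threshold-structure} is automatic. Finiteness of \(D\) holds by hypothesis.

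The key construction follows the proof of Lemma~\ref{lem:k-quotient-identification}. For each \(i\neq j\), choose a point \(r_i\in P_i\). Define \(u,v\) in the product by \(u_j=a\), \(v_j=b\), and \(u_i=v_i=r_i\) for every \(i\neq j\). Equation~\eqref{eq:k-product-metric} then gives
\[
d_Y(u,v)=\max\{d_j(a,b),0,\dots,0\}=d_j(a,b).
\]
Because \(\Phi\) is an isometry,
\[
d_X(\Phi(u),\Phi(v))=d_j(a,b),
\]
so \(d_j(a,b)\in d_X(X\times X)=D\). This holds for all \(a,b\in P_j\), so \(d_j(P_j\times P_j)\subseteq D\), which means \(P_j\) is \(D\)-valued.

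There is no real obstacle. The only point needing care is that the padding coordinates \(r_i\) exist, and this uses the nonemptiness of every factor, which the subsection assumes throughout. The argument does not need any of the threshold-structure machinery; it is simply the coordinate-slice embedding of \(P_j\) into the product.
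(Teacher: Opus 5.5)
Your proposal is correct and matches the paper's proof essentially verbatim: pad the \(j\)-th coordinate with fixed points \(r_i\in P_i\) in the other factors, so that the max product metric returns \(d_j(a,b)\), and transfer this distance to \(X\) through the isometry. Your explicit check that \(0\in D\) (needed for Definition~\ref{def:threshold-structure}) is a small detail the paper leaves implicit.
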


\begin{proof}
Let \(P:=P_1\times_\infty\cdots\times_\infty P_m\), and let \(d_P\) denote the max product metric on \(P\). We fix \(j\in\{1,\dots,m\}\), and let \(p,p'\in P_j\). For each \(i\neq j\), choose \(a_i\in P_i\). Let \(u=(a_1,\dots,a_{j-1},p,a_{j+1},\dots,a_m)\) and \(v=(a_1,\dots,a_{j-1},p',a_{j+1},\dots,a_m)\). Equation~\eqref{eq:k-product-metric} gives \(d_P(u,v)=d_j(p,p')\). Since \((X,d_X)\) is isometric to \(P\), the value \(d_j(p,p')\) belongs to \(D\). Therefore \(P_j\) is \(D\)-valued.
\end{proof}

\begin{lemma}\label{lem:threshold-product}
Let \(D\subseteq[0,\infty)\) be finite and satisfy \(0\in D\). If \((P_j,d_j)\), where \(1\leq j\leq m\), are \(D\)-valued metric spaces, then \(\mathcal T_D(P_1\times_\infty\cdots\times_\infty P_m)\cong\prod_{j=1}^m\mathcal T_D(P_j)\) as \(\mathcal L_D\)-structures.
\end{lemma}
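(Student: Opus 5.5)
The plan is to take the identity map on the underlying set \(P_1\times\cdots\times P_m\) as the isomorphism. First I note that the product metric space \(P:=P_1\times_\infty\cdots\times_\infty P_m\) is itself \(D\)-valued. By Equation~\eqref{eq:k-product-metric}, \(d_P(u,v)=\max_j d_j(u_j,v_j)\) is a maximum of finitely many elements of \(D\), so it lies in \(D\). Hence \(\mathcal T_D(P)\) is defined. Both \(\mathcal T_D(P)\) and \(\prod_{j=1}^m\mathcal T_D(P_j)\) have fundamental set \(\prod_{j=1}^m P_j\), since McKenzie's direct product has fundamental set the Cartesian product of the fundamental sets. They are also structures of the same similarity type \(\mathcal L_D\).

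It then suffices to check, for each \(r\in D\) and all \(u,v\in P\), that
\[
R_r^{P}(u,v)\Longleftrightarrow R_r^{P_j}(u_j,v_j)\ \text{for every }1\leq j\leq m.
\]
Equation~\eqref{eq:mckenzie-product-relation} says that the right side is exactly the product relation \(R_r\) in \(\prod_j\mathcal T_D(P_j)\). By Equation~\eqref{eq:threshold-relation}, the left side reads \(\max_j d_j(u_j,v_j)\leq r\), and the right side reads \(d_j(u_j,v_j)\leq r\) for all \(j\). These two conditions are equivalent because a maximum of finitely many reals is at most \(r\) if and only if each of them is. Thus the identity map is a bijection that preserves and reflects every fundamental relation, which makes it an isomorphism of \(\mathcal L_D\)-structures.

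There is no serious obstacle. The only points needing care are bookkeeping. One is confirming that the product space is \(D\)-valued, so that \(\mathcal T_D\) applies to it. The other is matching the rank-2 instance of McKenzie's product definition with componentwise thresholds.
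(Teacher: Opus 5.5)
Your proposal is correct and follows the paper's proof essentially verbatim: the paper also first notes that the max of finitely many values in \(D\) lies in \(D\), so the product is \(D\)-valued, and then runs the same chain of equivalences \(R_r^P(u,v)\Leftrightarrow\max_j d_j(u_j,v_j)\leq r\Leftrightarrow R_r^{P_j}(u_j,v_j)\) for all \(j\), which matches the defining condition of McKenzie's direct product. Your explicit remark that the identity on \(\prod_j P_j\) is the isomorphism just makes precise what the paper leaves implicit.
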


\begin{proof}
Let \(P:=P_1\times_\infty\cdots\times_\infty P_m\), and let \(d_P\) denote its max product metric. Since each \(d_j\) takes values in \(D\), Equation~\eqref{eq:k-product-metric} shows that \(P\) is \(D\)-valued.

Let \(u=(u_1,\dots,u_m)\) and \(v=(v_1,\dots,v_m)\), and let \(r\in D\). Then
\begin{align}
R_r^{P}(u,v)
&\overset{\eqref{eq:threshold-relation}}{\Longleftrightarrow}
d_P(u,v)\leq r \notag\\
&\overset{\eqref{eq:k-product-metric}}{\Longleftrightarrow}
\max_{1\leq j\leq m}d_j(u_j,v_j)\leq r \notag\\
&\Longleftrightarrow
d_j(u_j,v_j)\leq r
\quad\text{for all }j \notag\\
&\overset{\eqref{eq:threshold-relation}}{\Longleftrightarrow}
R_r^{P_j}(u_j,v_j)
\quad\text{for all }j. \label{eq:threshold-product-proof}
\end{align}
Equation~\eqref{eq:threshold-product-proof} is exactly the defining condition for the direct product of relational structures from Equation~\eqref{eq:mckenzie-product-relation}. Thus we obtain the desired isomorphism.
\end{proof}

We next prove the converse: direct-product decompositions of threshold structures recover \(\ell^\infty\)-product decompositions of metric spaces.

\begin{lemma}\label{lem:threshold-factor-recovery}
Let \(D\subseteq[0,\infty)\) be finite and satisfy \(0\in D\). Let \((X,d_X)\) be a \(D\)-valued metric space. Let \(\mathcal A=\langle A,(R_r^{\mathcal A})_{r\in D}\rangle\) and \(\mathcal B=\langle B,(R_r^{\mathcal B})_{r\in D}\rangle\) be \(\mathcal L_D\)-structures. If \(\theta:\mathcal T_D(X)\to\mathcal A\times\mathcal B\) is an isomorphism, then there exist metrics \(d_A\) on \(A\) and \(d_B\) on \(B\) such that \((A,d_A)\) and \((B,d_B)\) are \(D\)-valued metric spaces, \(\mathcal A=\mathcal T_D(A,d_A)\), \(\mathcal B=\mathcal T_D(B,d_B)\), and \((X,d_X)\cong A\times_\infty B\).
\end{lemma}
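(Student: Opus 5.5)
Pull the metric back through \(\theta\) to \(A\times B\) and read off factor metrics.
Write \(\theta(x)=(\theta_A(x),\theta_B(x))\). Since \(\theta\) is bijective, every \(a\in A\) and every \(b\in B\) occurs as a coordinate; in particular \(A\) and \(B\) are nonempty.

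For \(a,a'\in A\), define
\[
d_A(a,a'):=\min\{r\in D:(a,a')\in R_r^{\mathcal A}\},
\]
and define \(d_B\) on \(B\) in the same way. Showing that these minima exist and that \(d_A\) and \(d_B\) are metrics is the main obstacle. We must extract relational properties of \(R_r^{\mathcal A}\) from those of \(R_r^X\), which are reflexive, symmetric, nested, satisfy \(R_0^X=\mathrm{id}_X\), and satisfy \(R_{\max D}^X=X\times X\).

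The tool is the product condition from Equation~\eqref{eq:mckenzie-product-relation}. Fix \(b\in B\) and \(a,a'\in A\), and set \(x=\theta^{-1}(a,b)\) and \(x'=\theta^{-1}(a',b)\). The pair \((x,x')\) lies in \(R_r^X\) if and only if \((a,a')\in R_r^{\mathcal A}\) and \((b,b)\in R_r^{\mathcal B}\).

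First, take \(x=x'\). Reflexivity of \(R_r^X\) then gives \((a,a)\in R_r^{\mathcal A}\) and \((b,b)\in R_r^{\mathcal B}\) for every \(r\in D\), so both relations are reflexive. With this, the equivalence above becomes
\[
(a,a')\in R_r^{\mathcal A}
\Longleftrightarrow
R_r^X\bigl(\theta^{-1}(a,b),\theta^{-1}(a',b)\bigr).
\]
Consequently \(R_r^{\mathcal A}\) inherits symmetry, the nesting \(R_r\subseteq R_s\) for \(r\leq s\), \(R_0^{\mathcal A}=\mathrm{id}_A\) (by injectivity of \(\theta\)), and \(R_{\max D}^{\mathcal A}=A\times A\). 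In fact the display shows
\[
d_A(a,a')=d_X\bigl(\theta^{-1}(a,b),\theta^{-1}(a',b)\bigr)
\]
for any fixed \(b\). Hence \(d_A\) is a metric: the triangle inequality transfers from \(d_X\) by using the same \(b\) for all three points. Moreover, \((A,d_A)\) is \(D\)-valued.

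Because of the nesting, \(R_r^{\mathcal A}=\{(a,a'):d_A(a,a')\leq r\}\). This gives \(\mathcal A=\mathcal T_D(A,d_A)\), and symmetrically \(\mathcal B=\mathcal T_D(B,d_B)\).

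Finally, Lemma~\ref{lem:threshold-product} gives
\[
\mathcal T_D(A\times_\infty B)\cong\mathcal T_D(A,d_A)\times\mathcal T_D(B,d_B)=\mathcal A\times\mathcal B,
\]
and the isomorphism is the identity on underlying sets. Composing with \(\theta\) yields an isomorphism \(\mathcal T_D(X)\cong\mathcal T_D(A\times_\infty B)\), both structures being \(D\)-valued. Lemma~\ref{lem:threshold-recovery} then converts this into the isometry \((X,d_X)\cong A\times_\infty B\), which completes the argument.
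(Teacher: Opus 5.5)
Your proof is correct and follows essentially the paper's argument. You derive reflexivity of the factor relations from reflexivity of the threshold relations, then fix \(b\in B\) to identify \(d_A\) with the restriction of the transported metric to the fiber \(A\times\{b\}\), which gives the metric axioms, the nesting, and \(\mathcal A=\mathcal T_D(A,d_A)\). The only difference is the last step: the paper computes directly that the transported metric equals \(\max\{d_A,d_B\}\), while you reach the same conclusion by combining Lemma~\ref{lem:threshold-product} (whose isomorphism is the identity on underlying sets) with Lemma~\ref{lem:threshold-recovery}.
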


\begin{proof}
We recover threshold systems on the factors, reconstruct metrics on those factors, and then identify the transported metric with the max product metric.

Transport \(d_X\) along \(\theta\) by defining \(d^\theta\) on \(A\times B\) by \(d^\theta(u,v):=d_X(\theta^{-1}(u),\theta^{-1}(v))\). Then \(d^\theta\) is a \(D\)-valued metric, and \(\theta\) is an isometry from \((X,d_X)\) onto \((A\times B,d^\theta)\). For \(r\in D\), let \(R_r^\theta\) denote the threshold relation on \((A\times B,d^\theta)\). Since \(\theta\) is an isomorphism, Equation~\eqref{eq:mckenzie-product-relation} gives
\begin{equation}\label{eq:threshold-factor-product}
R_r^\theta((a,b),(a',b'))
\Longleftrightarrow
R_r^{\mathcal A}(a,a')
\text{ and }
R_r^{\mathcal B}(b,b')
\end{equation}
for all \(a,a'\in A\), all \(b,b'\in B\), and all \(r\in D\).

Since each \(R_r^\theta\) is reflexive, Equation~\eqref{eq:threshold-factor-product} makes each \(R_r^{\mathcal A}\) and each \(R_r^{\mathcal B}\) reflexive.

Let \(r,s\in D\) satisfy \(r\leq s\), and suppose that \(R_r^{\mathcal A}(a,a')\) holds. Choose \(b\in B\). Reflexivity gives \(R_r^{\mathcal B}(b,b)\), so Equation~\eqref{eq:threshold-factor-product} gives \(R_r^\theta((a,b),(a',b))\). Since \(R_r^\theta\subseteq R_s^\theta\), Equation~\eqref{eq:threshold-factor-product} gives \(R_s^{\mathcal A}(a,a')\). Hence \(R_r^{\mathcal A}\subseteq R_s^{\mathcal A}\). The same argument gives \(R_r^{\mathcal B}\subseteq R_s^{\mathcal B}\).

Since \(D\) is finite and \(d^\theta\) takes values in \(D\), the relation \(R_{\max D}^\theta\) equals \((A\times B)^2\). Let \(a,a'\in A\), and choose \(b\in B\). Equation~\eqref{eq:threshold-factor-product} gives \(R_{\max D}^{\mathcal A}(a,a')\). Hence \(R_{\max D}^{\mathcal A}=A^2\). The same argument gives \(R_{\max D}^{\mathcal B}=B^2\).

Define \(d_A(a,a'):=\min\{r\in D:R_r^{\mathcal A}(a,a')\}\) for \(a,a'\in A\), and define \(d_B(b,b'):=\min\{r\in D:R_r^{\mathcal B}(b,b')\}\) for \(b,b'\in B\). The argument above proves that these minima exist, and both functions take values in \(D\).

Fix \(b\in B\). Since every relation \(R_r^{\mathcal B}\) is reflexive, Equation~\eqref{eq:threshold-factor-product} gives \(R_r^\theta((a,b),(a',b))\) if and only if \(R_r^{\mathcal A}(a,a')\) for every \(r\in D\). Lemma~\ref{lem:threshold-recovery}, applied to \((A\times B,d^\theta)\), gives \(d^\theta((a,b),(a',b))=d_A(a,a')\). Thus \(d_A\) equals the restriction of \(d^\theta\) to \(A\times\{b\}\), so \(d_A\) is a metric. The same argument proves that \(d_B\) is a metric.

Let \(r\in D\). If \(R_r^{\mathcal A}(a,a')\) holds, then the definition of \(d_A\) gives \(d_A(a,a')\leq r\). Conversely, suppose that \(d_A(a,a')\leq r\). The definition of \(d_A\) gives \(R_{d_A(a,a')}^{\mathcal A}(a,a')\), and monotonicity gives \(R_r^{\mathcal A}(a,a')\). Hence \(R_r^{\mathcal A}(a,a')\) holds if and only if \(d_A(a,a')\leq r\). Therefore \(\mathcal A=\mathcal T_D(A,d_A)\). The same argument gives \(\mathcal B=\mathcal T_D(B,d_B)\).

Finally, let \((a,b),(a',b')\in A\times B\). Lemma~\ref{lem:threshold-recovery}, applied to \((A\times B,d^\theta)\), gives
\begin{align}
d^\theta((a,b),(a',b'))
&=
\min\{r\in D:R_r^\theta((a,b),(a',b'))\} \notag\\
&\overset{\eqref{eq:threshold-factor-product}}{=}
\min\{r\in D:R_r^{\mathcal A}(a,a')
\text{ and }
R_r^{\mathcal B}(b,b')\}. \label{eq:factor-recovery-distance}
\end{align}
Since \(\mathcal A=\mathcal T_D(A,d_A)\) and \(\mathcal B=\mathcal T_D(B,d_B)\), the set in Equation~\eqref{eq:factor-recovery-distance} equals \(\{r\in D:d_A(a,a')\leq r\text{ and }d_B(b,b')\leq r\}\). Since \(d_A(a,a')\) and \(d_B(b,b')\) belong to \(D\), the minimum of this set equals \(\max\{d_A(a,a'),d_B(b,b')\}\). Hence \(d^\theta\) is the max product metric induced by \(d_A\) and \(d_B\). Since \(\theta\) is an isometry from \((X,d_X)\) onto \((A\times B,d^\theta)\), we obtain \((X,d_X)\cong A\times_\infty B\).
\end{proof}

We have shown that threshold structures determine the underlying metric and that \(\ell^\infty\)-product decompositions of \(D\)-valued metric spaces correspond exactly to direct-product decompositions of threshold structures. We use this correspondence throughout the remainder of the subsection.

\subsubsection{Prime Factorizations of Finite Metric Spaces}

\begin{definition}\label{def:linfty-prime}
A metric space \((X,d_X)\) is \(\ell^\infty\)-prime if \(|X|>1\) and every decomposition \((X,d_X)\cong Y\times_\infty Z\) satisfies \(|Y|=1\) or \(|Z|=1\).
\end{definition}

The one-point metric space is the identity object for \(\times_\infty\), so Definition~\ref{def:linfty-prime} excludes it.

\begin{lemma}\label{lem:finite-prime-existence}
Every finite metric space with more than one point admits a finite \(\ell^\infty\)-product factorization into finite \(\ell^\infty\)-prime metric spaces.
\end{lemma}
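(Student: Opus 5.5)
I will argue by strong induction on \(|X|\), using the fact that a nontrivial \(\ell^\infty\)-decomposition strictly lowers cardinality. Let \((X,d_X)\) be finite with \(|X|\geq2\). If \(X\) is \(\ell^\infty\)-prime in the sense of Definition~\ref{def:linfty-prime}, then the one-factor product \(X\) itself is the required factorization. Otherwise there is an isometry \((X,d_X)\cong Y\times_\infty Z\) with \(|Y|>1\) and \(|Z|>1\).

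Because the isometry is a bijection, \(|X|=|Y|\cdot|Z|\). This gives \(2\leq|Y|<|X|\) and \(2\leq|Z|<|X|\), and in particular both factors are finite. The induction hypothesis therefore applies to \(Y\) and to \(Z\). It yields isometries \(Y\cong A_1\times_\infty\cdots\times_\infty A_s\) and \(Z\cong B_1\times_\infty\cdots\times_\infty B_t\), where every \(A_i\) and \(B_j\) is a finite \(\ell^\infty\)-prime metric space.

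It remains to concatenate these two factorizations. The max product metric from Equation~\eqref{eq:k-product-metric} is associative up to the canonical rebracketing bijection: the maximum over all coordinates equals the maximum of the two block maxima. Products of isometries are isometries for \(\times_\infty\), as for \(\Delta_k\). Hence
\[
X\cong Y\times_\infty Z\cong A_1\times_\infty\cdots\times_\infty A_s\times_\infty B_1\times_\infty\cdots\times_\infty B_t,
\]
which is a finite factorization into finite \(\ell^\infty\)-prime spaces. The base case is contained in the inductive step: if \(|X|\) is prime, or more generally if \(X\) has no nontrivial decomposition, then \(X\) is prime.

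The only point needing care is the well-foundedness, namely that nontriviality of both factors forces each factor to be strictly smaller than \(X\). This follows from the cardinality identity \(|X|=|Y||Z|\) together with \(|Y|,|Z|\geq2\). I do not expect a real obstacle here; the threshold and McKenzie machinery is not needed for existence and only enters for uniqueness.
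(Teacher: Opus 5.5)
Your proposal is correct and follows essentially the same route as the paper: strong induction on \(|X|\), with prime spaces handled directly and a nontrivial decomposition \(X\cong Y\times_\infty Z\) with \(|Y|,|Z|>1\), then the two factorizations are concatenated by associativity of \(\times_\infty\). You also make explicit, via \(|X|=|Y|\cdot|Z|\), why both factors are finite and strictly smaller than \(X\), a point the paper simply asserts.
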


\begin{proof}
We argue by strong induction on the integer \(|X|\), where \(|X|>1\). If \((X,d_X)\) is \(\ell^\infty\)-prime, then the conclusion holds. Otherwise,
\[
(X,d_X)\cong Y\times_\infty Z,
\]
where \(1<|Y|<|X|\) and \(1<|Z|<|X|\). Since \(X\) is finite, both \(Y\) and \(Z\) are finite. The induction hypothesis gives finite \(\ell^\infty\)-prime factorizations of \(Y\) and \(Z\). Associativity of \(\times_\infty\) then gives a finite \(\ell^\infty\)-prime factorization of \(X\).
\end{proof}

\begin{definition}\label{def:separating-threshold}
Let \((X,d_X)\) be a metric space, and let \(r\geq0\). We say that \(R_r^X\) is separating over \(X\) if, for all distinct \(x,y\in X\), there exists \(z\in X\) such that exactly one of \(R_r^X(z,x)\) and \(R_r^X(z,y)\) holds.
\end{definition}

\begin{lemma}\label{lem:separating-implies-thin}
Let \((X,d_X)\) be a metric space, and let \(r\geq0\). If \(R_r^X\) is separating over \(X\), then \(\langle X,R_r^X\rangle\) is thin.
\end{lemma}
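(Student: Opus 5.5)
We argue by contraposition on the skeleton equivalence from Equation~\eqref{eq:mckenzie-skeleton-equivalence}. Fix the binary structure \(\mathcal B=\langle X,R_r^X\rangle\) and let \(\sim\) be McKenzie's skeleton equivalence for \(\mathcal B\). Thinness means \(\sim=\mathrm{id}_X\). The inclusion \(\mathrm{id}_X\subseteq{\sim}\) is automatic, since \(\leq_\ell\) and \(\leq_r\) are reflexive. So it suffices to show that \(x\sim y\) forces \(x=y\).

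Suppose \(x\sim y\) with \(x\neq y\). In particular \(x\leq_r y\) and \(y\leq_r x\). Unwinding the definition of \(\leq_r\), this says that for every \(z\in X\),
\[
R_r^X(z,x)\Longrightarrow R_r^X(z,y)
\qquad\text{and}\qquad
R_r^X(z,y)\Longrightarrow R_r^X(z,x).
\]
Hence \(R_r^X(z,x)\) and \(R_r^X(z,y)\) hold or fail together for every \(z\). By Definition~\ref{def:separating-threshold}, however, there is some \(z\in X\) for which exactly one of them holds. This contradiction gives \(x=y\), so \(\sim=\mathrm{id}_X\) and \(\mathcal B\) is thin.

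Only the right quasi-order \(\leq_r\) is needed. One could equally use \(\leq_\ell\), since symmetry of \(d_X\) makes \(R_r^X\) symmetric, but the separating condition is already phrased in the \(R_r^X(z,\cdot)\) form that matches \(\leq_r\). There is no real obstacle here. The only care required is matching the quantifier direction in the definition of \(\leq_r\) with the form of the separating condition, which is immediate.
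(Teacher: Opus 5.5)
Your proof is correct and follows the paper's argument: from \(x\sim y\) you extract \(R_r^X(z,x)\Leftrightarrow R_r^X(z,y)\) for every \(z\), and this contradicts the separating condition unless \(x=y\). The paper obtains that equivalence by appealing to symmetry of \(R_r^X\), whereas you correctly note that the \(\leq_r\) half of the skeleton equivalence gives it directly without symmetry; this is a cosmetic difference, as is the fact that your argument is by contradiction rather than contraposition.
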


\begin{proof}
Suppose that \(x\sim y\) in the sense of Equation~\eqref{eq:mckenzie-skeleton-equivalence}. Since \(R_r^X\) is symmetric, Equation~\eqref{eq:mckenzie-skeleton-equivalence} gives \(R_r^X(z,x)\) if and only if \(R_r^X(z,y)\) for every \(z\in X\). Thus no point separates \(x\) and \(y\). Since \(R_r^X\) is separating over \(X\), we obtain \(x=y\). Therefore \(\langle X,R_r^X\rangle\) is thin.
\end{proof}

\begin{lemma}\label{lem:threshold-structure-refinement}
Let \((X,d_X)\) be a finite metric space, and define \(D:=d_X(X\times X)\). Suppose that there exists \(r\in D\) such that \(R_r^X\) is connected over \(X\) in the sense preceding Lemma~\ref{lem:mckenzie-connected-reflexive-in-q} and separating over \(X\). Then \(\mathcal T_D(X)\) has the strict refinement property.
\end{lemma}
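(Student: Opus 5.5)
The plan is to apply Theorem~\ref{thm:mckenzie-aq} with the binary relation \(R:=R_r^X\), where \(r\in D\) is the given connected and separating threshold. Three things must be verified: that \(R_r^X\in A(\mathcal T_D(X))\), that \(\langle X,R_r^X\rangle\) is thin, and that \(\langle X,R_r^X\rangle\in\mathcal Q\).

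The last two conditions are immediate from earlier results. Because \(R_r^X\) is separating, Lemma~\ref{lem:separating-implies-thin} gives thinness. Because \(X\) is \(D\)-valued and \(0\leq r\), the relation \(R_r^X\) is reflexive. It is connected by hypothesis, so Lemma~\ref{lem:mckenzie-connected-reflexive-in-q} gives \(\langle X,R_r^X\rangle\in\mathcal Q\).

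The main step is membership in \(A(\mathcal T_D(X))\). By McKenzie's definition, this requires showing that every decomposition function \(f\in\mathrm{DF}(\mathcal T_D(X))\) is also a decomposition function of the reduct \(\langle X,R_r^X\rangle\).

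\emph{Step 1: set up the decomposition.} Let \(f\) arise from \(\mathrm{id}_X=F\times F'\) over \(\mathcal T_D(X)\). McKenzie's characterization of \(\mathrm{id}_X=F\times F'\) consists of three conditions:
\begin{enumerate}
\item \(F\cap F'=\mathrm{id}_X\);
\item the existence condition for compatible families;
\item invariance of each fundamental relation under replacing tuples by \(F\)- and \(F'\)-related tuples, as in Equation~\eqref{eq:mckenzie-factor-invariance}.
\end{enumerate}

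\emph{Step 2: pass to the reduct.} Conditions (1) and (2) involve only \(X\), \(F\) and \(F'\), not the relations. Condition (3) holds for every \(R_s\) with \(s\in D\), and in particular for \(R_r\). So the same three conditions hold over the reduct \(\langle X,R_r^X\rangle\), giving \(\mathrm{id}_X=F\times F'\) there as well.

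\emph{Step 3: transfer the decomposition function.} The decomposition function is defined purely by \(xFf(x,y)\) and \(f(x,y)F'y\). It is therefore the same function over the reduct, so \(f\in\mathrm{DF}(\langle X,R_r^X\rangle)\).

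I expect this step to be the only real obstacle. The point needing care is to check, against McKenzie's convention that \(\mathcal M/F=\pi_F(\mathcal M)\), that the quotient of the reduct is the reduct of the quotient. Once that is confirmed, the isomorphism \(\mathcal M/\mathrm{id}_X\cong\mathcal M/F\times\mathcal M/F'\) restricts to the \(R_r\)-component.

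With all three hypotheses in place, Theorem~\ref{thm:mckenzie-aq} yields the strict refinement property for \(\mathcal T_D(X)\). Finiteness of \(X\) plays no role here beyond making \(D\) finite, so that \(\mathcal T_D(X)\) is a well-defined \(\mathcal L_D\)-structure.
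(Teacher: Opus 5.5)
Your proof is correct and follows the paper's route: apply Theorem~\ref{thm:mckenzie-aq} to \(R_r^X\), with thinness from Lemma~\ref{lem:separating-implies-thin} and membership in \(\mathcal Q\) from reflexivity together with Lemma~\ref{lem:mckenzie-connected-reflexive-in-q}. The only difference is that you spell out, through McKenzie's three-condition characterization, why a fundamental relation lies in \(A(\mathcal T_D(X))\), which the paper takes as immediate; that verification is sound, and your worry about quotients of reducts is unnecessary once you use the three conditions.
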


\begin{proof}
Since \(r\in D\), the relation \(R_r^X\) is fundamental in \(\mathcal T_D(X)\), so \(R_r^X\in A(\mathcal T_D(X))\). Lemma~\ref{lem:separating-implies-thin} shows that \(\langle X,R_r^X\rangle\) is thin. Since \(R_r^X\) is reflexive, Lemma~\ref{lem:mckenzie-connected-reflexive-in-q} gives \(\langle X,R_r^X\rangle\in\mathcal Q\). Theorem~\ref{thm:mckenzie-aq} therefore implies that \(\mathcal T_D(X)\) has the strict refinement property.
\end{proof}

\begin{lemma}\label{lem:metric-prime-threshold-prime}
Let \((X,d_X)\) be a \(D\)-valued finite metric space with \(|X|>1\). Then \((X,d_X)\) is \(\ell^\infty\)-prime if and only if \(\mathcal T_D(X)\) is directly indecomposable.
\end{lemma}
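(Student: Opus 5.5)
We prove both implications by contraposition, using the dictionary between \(\ell^\infty\)-decompositions of \(D\)-valued spaces and direct decompositions of threshold structures. The two key inputs are Lemma~\ref{lem:threshold-product}, which sends metric decompositions to relational ones, and Lemma~\ref{lem:threshold-factor-recovery}, which goes back. Both implications reduce to checking that the cardinality conditions on factors match. This holds because in both definitions the factors are compared through the cardinalities of their underlying sets.

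First suppose \((X,d_X)\) is not \(\ell^\infty\)-prime. Since \(|X|>1\), there is a decomposition \((X,d_X)\cong Y\times_\infty Z\) with \(|Y|>1\) and \(|Z|>1\).
\begin{enumerate}
\item The distance set \(D'=d_X(X\times X)\) is contained in \(D\). By Lemma~\ref{lem:threshold-factor-valued}, \(Y\) and \(Z\) are \(D'\)-valued, hence \(D\)-valued. We should first check that \(0\in D\); this is part of the definition of \(D\)-valued in Definition~\ref{def:threshold-structure}.
\item Lemma~\ref{lem:threshold-product} gives \(\mathcal T_D(Y\times_\infty Z)\cong\mathcal T_D(Y)\times\mathcal T_D(Z)\).
\item An isometry \(X\cong Y\times_\infty Z\) preserves every threshold relation \(R_r\), so it is an isomorphism \(\mathcal T_D(X)\cong\mathcal T_D(Y\times_\infty Z)\).
\item Composing, \(\mathcal T_D(X)\cong\mathcal T_D(Y)\times\mathcal T_D(Z)\), and both factors have fundamental sets of size greater than \(1\).
\end{enumerate}
So \(\mathcal T_D(X)\) is not directly indecomposable.

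Conversely, suppose \(\mathcal T_D(X)\) is not directly indecomposable. Since \(\#(\mathcal T_D(X))=|X|>1\), there is an isomorphism \(\theta:\mathcal T_D(X)\to\mathcal A\times\mathcal B\) with \(\#(\mathcal A)>1\) and \(\#(\mathcal B)>1\). Lemma~\ref{lem:threshold-factor-recovery} then supplies \(D\)-valued metrics \(d_A\) on \(A\) and \(d_B\) on \(B\) with \((X,d_X)\cong A\times_\infty B\). Since \(|A|>1\) and \(|B|>1\), the space \(X\) is not \(\ell^\infty\)-prime.

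McKenzie's definition of a direct decomposition \(\mathcal M\cong\mathcal B\times\mathcal C\) allows arbitrary \(\mathcal L_D\)-structures as factors, not just threshold structures. This is exactly the situation Lemma~\ref{lem:threshold-factor-recovery} handles, so no extra work is needed there. Finiteness of \(X\) is not used beyond what these lemmas require.

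The only point needing care is the bookkeeping of which set \(D\) is in force. The statement fixes an arbitrary finite \(D\ni0\) containing the distances of \(X\), whereas Lemma~\ref{lem:threshold-factor-valued} is phrased with \(D\) equal to the exact distance set. The inclusion \(D'\subseteq D\) in the first step resolves this. Everything else is direct citation, so I expect no substantive obstacle.
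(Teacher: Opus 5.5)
Your proof is correct and follows the paper's argument: contraposition in both directions, with Lemma~\ref{lem:threshold-factor-valued} and Lemma~\ref{lem:threshold-product} for the forward direction and Lemma~\ref{lem:threshold-factor-recovery} for the converse. Your extra remarks make explicit two small steps the paper leaves implicit: passing from the exact distance set \(d_X(X\times X)\) to the ambient \(D\), and noting that an isometry induces an isomorphism of threshold structures.
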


\begin{proof}
We prove the contrapositive of both implications.

Suppose first that \((X,d_X)\) is not \(\ell^\infty\)-prime. Then
\[
(X,d_X)\cong Y\times_\infty Z,
\]
where \(|Y|>1\) and \(|Z|>1\). Lemma~\ref{lem:threshold-factor-valued} shows that \(Y\) and \(Z\) are \(D\)-valued, and Lemma~\ref{lem:threshold-product} gives
\[
\mathcal T_D(X)\cong\mathcal T_D(Y)\times\mathcal T_D(Z).
\]
Hence \(\mathcal T_D(X)\) is not directly indecomposable.

Conversely, suppose that
\[
\mathcal T_D(X)\cong\mathcal A\times\mathcal B,
\]
where both factors have more than one element. Lemma~\ref{lem:threshold-factor-recovery} gives finite \(D\)-valued metric spaces \((A,d_A)\) and \((B,d_B)\) such that
\[
(X,d_X)\cong A\times_\infty B.
\]
Since both structures have more than one element, both metric spaces have more than one point. Hence \((X,d_X)\) is not \(\ell^\infty\)-prime.
\end{proof}

\begin{lemma}\label{lem:strict-refinement-implies-unique-factorization}
Let \(\mathcal M\) be a finite structure with the strict refinement property. Then every two finite direct decompositions of \(\mathcal M\) into nontrivial directly indecomposable finite structures have the same number of factors, and after reindexing corresponding factors are isomorphic.
\end{lemma}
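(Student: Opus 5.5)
We pass from isomorphism-type decompositions to factor relations. Suppose \(\mathcal M\cong\prod_{i=1}^m\mathcal B_i\cong\prod_{j=1}^n\mathcal C_j\), where all \(\mathcal B_i\) and \(\mathcal C_j\) are nontrivial and directly indecomposable. Transporting the coordinate kernels along these isomorphisms gives equivalence relations \(F_i\) and \(G_j\) on the fundamental set \(A\). They satisfy \(\mathrm{id}_A=\prod_i F_i=\prod_j G_j\) in the sense of Equation~\eqref{eq:mckenzie-product-equivalence}, together with \(\mathcal M/F_i\cong\mathcal B_i\) and \(\mathcal M/G_j\cong\mathcal C_j\). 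Each transported relation satisfies McKenzie's three conditions because the original coordinate kernels do. The strict refinement property then gives Equation~\eqref{eq:mckenzie-strict-refinement-components}:
\[
F_i=\prod_{j}(F_iG_j),\qquad G_j=\prod_i(F_iG_j).
\]
Passing to quotients, these identities yield
\[
\mathcal B_i\cong\prod_{j}\mathcal M/(F_iG_j),\qquad \mathcal C_j\cong\prod_i\mathcal M/(F_iG_j).
\]
This is the point where McKenzie's convention that a factor relation \(F\) of \(H\) gives \(\mathcal M/H\cong\prod\mathcal M/F_i\) is applied with \(H=F_i\). It also uses that the relative product \(F_iG_j\) is an equivalence relation containing \(F_i\), which I take from McKenzie's framework. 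In this way we obtain a common refinement \(\mathcal D_{i,j}=\mathcal M/(F_iG_j)\).

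Indecomposability now forces the refinement to be a matching. Fix \(i\). Since \(\mathcal B_i\) is directly indecomposable and \(\mathcal B_i\cong\prod_j\mathcal D_{i,j}\) is a finite product, all but at most one factor \(\mathcal D_{i,j}\) are one-element. To make this step rigorous, I group the product into two parts, \(\mathcal D_{i,j_0}\) and the product of the remaining factors. Since \(\#(\mathcal B_i)>1\), exactly one index \(j=\sigma(i)\) has \(\#(\mathcal D_{i,j})>1\), and therefore \(\mathcal B_i\cong\mathcal D_{i,\sigma(i)}\). Symmetrically, for each \(j\) there is exactly one \(i=\rho(j)\) with \(\#(\mathcal D_{i,j})>1\), and \(\mathcal C_j\cong\mathcal D_{\rho(j),j}\).

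The nontrivial entries of the finite matrix \((\mathcal D_{i,j})\) therefore contain exactly one entry in each row and exactly one in each column. Hence \(\sigma\) and \(\rho\) are mutually inverse bijections between \(\{1,\dots,m\}\) and \(\{1,\dots,n\}\), so \(m=n\). Moreover, \(\mathcal B_i\cong\mathcal D_{i,\sigma(i)}\cong\mathcal C_{\sigma(i)}\), which gives the reindexing.

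The main obstacle is the first step. One has to check carefully that McKenzie's strict refinement identities really yield structure isomorphisms \(\mathcal M/F_i\cong\prod_j\mathcal M/(F_iG_j)\), which amounts to identifying the quotients of \(\mathcal M/F_i\) by the images of the \(F_iG_j\) with \(\mathcal M/(F_iG_j)\). I would verify this with McKenzie's three-condition characterization, applied with \(H=F_i\). The second isomorphism theorem for relational quotients then handles the identification \((\mathcal M/F_i)/(F_iG_j/F_i)\cong\mathcal M/(F_iG_j)\). Finiteness is used only to keep the index sets finite and to guarantee that the one-element factors can be discarded. If this transfer proves delicate, one can instead cite McKenzie's corresponding unique-factorization consequence of strict refinement, from the same source, directly.
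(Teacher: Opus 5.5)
Your overall strategy matches the paper's. The paper cites McKenzie's result that strict refinement implies the refinement property, takes the resulting common refinement $(\mathcal D_{ij})$, and runs the same row/column matching you give. You instead build the refinement explicitly as $\mathcal D_{ij}=\mathcal M/(F_iG_j)$. No second isomorphism theorem is needed for that. By the definition behind Equation~\eqref{eq:mckenzie-product-equivalence} with $H=F_i$, the identity $F_i=\prod_j(F_iG_j)$ already says that $a/F_i\mapsto(a/(F_iG_j))_j$ is an isomorphism $\mathcal M/F_i\cong\prod_j\mathcal M/(F_iG_j)$.

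The step that fails is the transport claim $\mathcal M/F_i\cong\mathcal B_i$. The quotient $\mathcal M/F_i$ is the image of $\mathcal M$ under the $i$-th coordinate map. Projecting a product relation onto one coordinate recovers that coordinate's relation only when the corresponding relations of all the other factors are nonempty. The fact that the coordinate kernels satisfy McKenzie's conditions gives $\mathrm{id}_A=\prod_iF_i$, but it does not give this.

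No better argument can close this gap, because the lemma as stated is false once empty relations are allowed. Use one binary relation $S$ and one unary relation $R$, and set $\mathcal B=\langle\{0,1\};\leq,\{0\}\rangle$ and $\mathcal C=\langle\{0,1\};\leq,\varnothing\rangle$.
\begin{itemize}
\item Then $\mathcal B\times\mathcal C$ and $\mathcal C\times\mathcal C$ are literally the same structure $\mathcal M$: the product order on $\{0,1\}^2$ with $R=\varnothing$.
\item The order $S$ is fundamental, reflexive, connected, and thin. So Lemma~\ref{lem:mckenzie-connected-reflexive-in-q} and Theorem~\ref{thm:mckenzie-aq} give $\mathcal M$ the strict refinement property.
\item All four factors have two elements, so they are nontrivial and directly indecomposable. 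Yet $\mathcal B\not\cong\mathcal C$.
\end{itemize}
In your argument, $\mathcal M/F_1\cong\mathcal C$, not $\mathcal B$. In the paper's argument, the failure is the discarding of trivial factors. A common refinement may contain the one-point structure with $S$ full and $R=\varnothing$, which is not a unit for $\times$; indeed $\mathcal C\cong\mathcal B\times(\text{that point})$.

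The repair for both proofs is to assume that every fundamental relation of $\mathcal M$ is nonempty. This holds in the only place the lemma is used, since each threshold relation $R_r$ is reflexive. Under that hypothesis every factor inherits nonempty relations, so $\mathcal M/F_i\cong\mathcal B_i$, and every one-element structure that appears is a unit. With it, your proof is complete.

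One further correction: you attribute the ability to discard one-element factors to finiteness. What actually allows it is that those factors are units. In your setup this is automatic, because all $\mathcal D_{ij}$ are quotients of the same $\mathcal M$. So, without the extra hypothesis, your argument still proves the correct general statement about the quotients $\mathcal M/F_i$ and $\mathcal M/G_j$.
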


\begin{proof}
Suppose that
\[
\mathcal M\cong\prod_{i=1}^m\mathcal B_i
\qquad\text{and}\qquad
\mathcal M\cong\prod_{j=1}^n\mathcal C_j
\]
are decompositions into nontrivial directly indecomposable finite structures.

McKenzie proves on page~64 of \cite{mckenzie1971cardinal} that the strict refinement property implies the refinement property. Equation~\eqref{eq:mckenzie-common-refinement} therefore gives finite structures \(\mathcal D_{ij}\), where \(1\leq i\leq m\) and \(1\leq j\leq n\), such that
\[
\mathcal B_i\cong\prod_{j=1}^n\mathcal D_{ij}
\]
for every \(i\), and
\[
\mathcal C_j\cong\prod_{i=1}^m\mathcal D_{ij}
\]
for every \(j\).

Fix \(i\in\{1,\dots,m\}\). Since \(\mathcal B_i\) is nontrivial and directly indecomposable, at least one factor \(\mathcal D_{ij}\) is nontrivial. Moreover, at most one factor \(\mathcal D_{ij}\) is nontrivial. Otherwise,
\[
\mathcal B_i
\cong
\mathcal D_{ij_1}
\times
\prod_{\substack{1\leq j\leq n\\ j\neq j_1}}
\mathcal D_{ij}
\]
would be a decomposition into two nontrivial factors, which contradicts direct indecomposability. Therefore there exists a unique index \(j(i)\in\{1,\dots,n\}\) such that \(\mathcal D_{i,j(i)}\) is nontrivial.

The same argument shows that, for each \(j\in\{1,\dots,n\}\), there exists a unique index \(i(j)\in\{1,\dots,m\}\) such that \(\mathcal D_{i(j),j}\) is nontrivial.

Fix \(i\in\{1,\dots,m\}\). Since \(\mathcal D_{i,j(i)}\) is nontrivial, the uniqueness in the definition of \(i(j(i))\) gives \(i(j(i))=i\). Similarly, \(j(i(j))=j\) for every \(j\in\{1,\dots,n\}\). Therefore \(i\mapsto j(i)\) is a bijection, so \(m=n\).

After reindexing, we may assume that \(j(i)=i\) for every \(i\). All factors other than \(\mathcal D_{ii}\) in the decompositions of \(\mathcal B_i\) and \(\mathcal C_i\) are trivial. Removing these trivial factors gives
\[
\mathcal B_i\cong\mathcal D_{ii}\cong\mathcal C_i.
\]
\end{proof}

\begin{theorem}\label{thm:finite-prime-factorization}
Let \((X,d_X)\) be a finite metric space with \(|X|>1\), and define \(D:=d_X(X\times X)\). Then \((X,d_X)\) admits a finite \(\ell^\infty\)-product factorization into finite \(\ell^\infty\)-prime metric spaces. If there exists \(r\in D\) such that \(R_r^X\) is connected over \(X\) in the sense preceding Lemma~\ref{lem:mckenzie-connected-reflexive-in-q} and separating over \(X\), then any two finite \(\ell^\infty\)-prime factorizations of \((X,d_X)\) have the same number of factors, and after reindexing corresponding factors are isometric.
\end{theorem}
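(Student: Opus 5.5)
Existence is exactly Lemma~\ref{lem:finite-prime-existence}, so the work is in uniqueness. The plan is to move both metric factorizations into the threshold structure \(\mathcal T_D(X)\), apply Lemma~\ref{lem:strict-refinement-implies-unique-factorization} there, and transport the conclusion back with Lemma~\ref{lem:threshold-recovery}.

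Suppose
\[
(X,d_X)\cong P_1\times_\infty\cdots\times_\infty P_m
\qquad\text{and}\qquad
(X,d_X)\cong Q_1\times_\infty\cdots\times_\infty Q_n
\]
are two factorizations into finite \(\ell^\infty\)-prime metric spaces. By Lemma~\ref{lem:threshold-factor-valued}, every \(P_i\) and every \(Q_j\) is \(D\)-valued. Lemma~\ref{lem:threshold-product} and the isometry (which is an isomorphism of threshold structures, since it preserves each \(R_r\)) then give
\[
\mathcal T_D(X)\cong\prod_{i=1}^m\mathcal T_D(P_i)\cong\prod_{j=1}^n\mathcal T_D(Q_j).
\]
Each \(P_i\) is \(\ell^\infty\)-prime, so \(|P_i|>1\). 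Lemma~\ref{lem:metric-prime-threshold-prime}, applied to \(P_i\) with the same \(D\), shows that \(\mathcal T_D(P_i)\) is nontrivial and directly indecomposable; the same holds for each \(\mathcal T_D(Q_j)\). That lemma only requires \(P_i\) to be \(D\)-valued and finite, which holds here even though \(D\) may be larger than \(d_{P_i}(P_i\times P_i)\).

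Under the connected and separating hypothesis on \(R_r^X\), Lemma~\ref{lem:threshold-structure-refinement} gives the strict refinement property for \(\mathcal T_D(X)\). Lemma~\ref{lem:strict-refinement-implies-unique-factorization} then yields \(m=n\) and, after reindexing, \(\mathcal T_D(P_i)\cong\mathcal T_D(Q_i)\). Lemma~\ref{lem:threshold-recovery} converts each of these isomorphisms into an isometry \(P_i\cong Q_i\).

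The main point to check is that the transport is legitimate. Lemma~\ref{lem:strict-refinement-implies-unique-factorization} is stated for two decompositions of the same structure \(\mathcal M\) up to isomorphism. Strict refinement is defined through factor relations on \(\mathcal M\), and McKenzie's passage from strict refinement to the refinement property is invariant under isomorphism. It is therefore enough to observe that both product decompositions are decompositions of \(\mathcal T_D(X)\) up to isomorphism, and that the strict refinement property is an isomorphism invariant. Everything else is bookkeeping.
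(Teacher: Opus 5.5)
Your proposal is correct and follows the paper's proof step for step. Existence comes from Lemma~\ref{lem:finite-prime-existence}, and uniqueness comes from transferring both factorizations to \(\mathcal T_D(X)\) via Lemmas~\ref{lem:threshold-factor-valued} and~\ref{lem:threshold-product}, obtaining indecomposability from Lemma~\ref{lem:metric-prime-threshold-prime}, strict refinement from Lemma~\ref{lem:threshold-structure-refinement}, uniqueness from Lemma~\ref{lem:strict-refinement-implies-unique-factorization}, and the return to isometries from Lemma~\ref{lem:threshold-recovery}; your added remarks (that \(D\) may exceed each factor's own distance set, and that the metric isometry induces an isomorphism of threshold structures) are valid details the paper leaves implicit.
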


\begin{proof}
Lemma~\ref{lem:finite-prime-existence} gives existence.

Suppose that
\[
(X,d_X)\cong P_1\times_\infty\cdots\times_\infty P_m
\]
and
\[
(X,d_X)\cong Q_1\times_\infty\cdots\times_\infty Q_n
\]
are finite \(\ell^\infty\)-prime factorizations. Lemma~\ref{lem:threshold-factor-valued} shows that every factor \(P_i\) and every factor \(Q_j\) is \(D\)-valued. Lemma~\ref{lem:threshold-product} gives
\[
\mathcal T_D(X)
\cong
\prod_{i=1}^m\mathcal T_D(P_i)
\qquad\text{and}\qquad
\mathcal T_D(X)
\cong
\prod_{j=1}^n\mathcal T_D(Q_j).
\]
Lemma~\ref{lem:metric-prime-threshold-prime} shows that every factor in these decompositions is directly indecomposable. Since every \(\ell^\infty\)-prime metric space has more than one point, every threshold structure in these decompositions is nontrivial. Lemma~\ref{lem:threshold-structure-refinement} shows that \(\mathcal T_D(X)\) has the strict refinement property. Lemma~\ref{lem:strict-refinement-implies-unique-factorization} therefore gives \(m=n\), and after reindexing
\[
\mathcal T_D(P_i)\cong\mathcal T_D(Q_i)
\]
for every \(1\leq i\leq m\). Lemma~\ref{lem:threshold-recovery} therefore gives an isometry \(P_i\cong Q_i\) for every \(1\leq i\leq m\).
\end{proof}

\begin{corollary}\label{cor:prime-cardinality}
Let \((X,d_X)\) be a finite metric space. If \(|X|\) is prime, then \((X,d_X)\) is \(\ell^\infty\)-prime.
\end{corollary}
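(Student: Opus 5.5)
The argument is a cardinality count applied directly to Definition~\ref{def:linfty-prime}. Let \(|X|=q\) with \(q\) prime; in particular \(q\geq 2\), so \(|X|>1\) as the definition requires. Suppose \((X,d_X)\cong Y\times_\infty Z\) for some metric spaces \(Y\) and \(Z\). We must show that \(|Y|=1\) or \(|Z|=1\).

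The isometry is in particular a bijection between \(X\) and the Cartesian product \(Y\times Z\). Hence \(|Y|\cdot|Z|=|X|=q\). All metric spaces in this subsection are nonempty, so \(|Y|\geq1\) and \(|Z|\geq1\). Both are also finite, since their product is finite and each is nonempty. So \(|Y|\) and \(|Z|\) are positive integers whose product is the prime \(q\).

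Primality of \(q\) then forces one of the factors to equal \(1\), and the other to equal \(q\). This is exactly the condition in Definition~\ref{def:linfty-prime}, so \((X,d_X)\) is \(\ell^\infty\)-prime.

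No step here is a real obstacle. The only point needing care is the use of the standing nonemptiness convention. Without it, a degenerate splitting with an empty factor would be excluded only because \(X\) is nonempty, and the product of cardinalities would still be \(q\neq0\); either way the argument goes through. The metric itself plays no role: any nontrivial \(\ell^\infty\)-factorization would give a nontrivial factorization of the integer \(|X|\).
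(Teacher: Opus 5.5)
Your proposal is correct and follows essentially the same argument as the paper: the isometry gives \(|Y|\cdot|Z|=|X|\), and primality of \(|X|\) forces \(|Y|=1\) or \(|Z|=1\). You are slightly more careful than the paper, which simply takes \(Y\) and \(Z\) to be finite, whereas you derive their finiteness from nonemptiness and also check \(|X|>1\) explicitly.
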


\begin{proof}
Let \(Y\) and \(Z\) be finite metric spaces, and suppose that \((X,d_X)\cong Y\times_\infty Z\). Since the spaces are finite, one has \(|X|=|Y||Z|\). Since \(|X|\) is prime, one of \(|Y|\) or \(|Z|\) equals \(1\). Definition~\ref{def:linfty-prime} therefore shows that \((X,d_X)\) is \(\ell^\infty\)-prime.
\end{proof}

\begin{definition}\label{def:equilateral-space}
Let \(n\geq1\) and let \(\lambda>0\). We write \(E_n(\lambda)\) for any \(n\)-point metric space in which distinct points have distance \(\lambda\). Any two such spaces are isometric.
\end{definition}

\begin{corollary}\label{cor:equilateral-factorization}
Let \(a,b\geq1\) be integers, and let \(\lambda>0\). Then \(E_{ab}(\lambda)\) is isometric to \(E_a(\lambda)\times_\infty E_b(\lambda)\). Moreover, for every integer \(n\geq1\), the space \(E_n(\lambda)\) is \(\ell^\infty\)-prime if and only if \(n\) is prime.
\end{corollary}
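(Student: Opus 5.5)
For the first assertion I will write down an explicit isometry. Realize \(E_a(\lambda)\) on the set \(\{1,\dots,a\}\) and \(E_b(\lambda)\) on \(\{1,\dots,b\}\), with distinct points at distance \(\lambda\). Equip the product with the max product metric from Equation~\eqref{eq:k-product-metric}. For distinct pairs \((i,j)\neq(i',j')\), at least one coordinate distance equals \(\lambda\), and each coordinate distance lies in \(\{0,\lambda\}\). Hence the product distance is exactly \(\lambda\). Thus \(E_a(\lambda)\times_\infty E_b(\lambda)\) is an \(ab\)-point equilateral space. By the isometry uniqueness noted in Definition~\ref{def:equilateral-space}, it is isometric to \(E_{ab}(\lambda)\).

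For the primality characterization I will split into cases on \(n\).

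\emph{The case \(n=1\).} Here \(E_1(\lambda)\) is not \(\ell^\infty\)-prime, since Definition~\ref{def:linfty-prime} requires \(|X|>1\). Since \(1\) is not prime, the equivalence holds.

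\emph{The case \(n\) prime.} Corollary~\ref{cor:prime-cardinality} directly gives that \(E_n(\lambda)\) is \(\ell^\infty\)-prime.

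\emph{The case \(n>1\) composite.} Write \(n=ab\) with \(1<a,b<n\). The first part gives \(E_n(\lambda)\cong E_a(\lambda)\times_\infty E_b(\lambda)\), and both factors have more than one point. Hence \(E_n(\lambda)\) is not \(\ell^\infty\)-prime.

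There is no real obstacle here. The only point needing care is that the max of coordinate distances in \(\{0,\lambda\}\) is \(\lambda\) whenever the pairs differ, which is immediate.
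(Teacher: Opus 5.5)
Your proposal is correct and follows essentially the same route as the paper's proof: the explicit check that the \(\ell^\infty\)-product of two equilateral spaces is equilateral on \(ab\) points, then the case split \(n=1\), \(n\) prime (via Corollary~\ref{cor:prime-cardinality}), and \(n=uv\) composite using the first part. The only cosmetic difference is that you reuse the letters \(a,b\) in the composite case, which is harmless since the first assertion holds for all \(a,b\geq1\).
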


\begin{proof}
The product contains \(ab\) points. Let \((p,q)\neq(p',q')\) in this product. Then \(p\neq p'\) or \(q\neq q'\). Each factor distance belongs to \(\{0,\lambda\}\), and at least one equals \(\lambda\). Hence the \(\ell^\infty\)-product distance from \((p,q)\) to \((p',q')\) equals \(\lambda\). Therefore \(E_a(\lambda)\times_\infty E_b(\lambda)\) is an equilateral metric space on \(ab\) points with common nonzero distance \(\lambda\). Thus \(E_{ab}(\lambda)\) is isometric to \(E_a(\lambda)\times_\infty E_b(\lambda)\).

If \(n=1\), then \(E_1(\lambda)\) is not \(\ell^\infty\)-prime by Definition~\ref{def:linfty-prime}, and \(1\) is not prime. Now suppose that \(n>1\). If \(n\) is prime, then Corollary~\ref{cor:prime-cardinality} shows that \(E_n(\lambda)\) is \(\ell^\infty\)-prime. If \(n=uv\) for integers \(u,v>1\), then \(E_n(\lambda)\) is isometric to \(E_u(\lambda)\times_\infty E_v(\lambda)\), where both factors have more than one point. Definition~\ref{def:linfty-prime} therefore shows that \(E_n(\lambda)\) is not \(\ell^\infty\)-prime.
\end{proof}

\begin{remark}\label{rem:equilateral-threshold-hypothesis}
Equilateral spaces illustrate that the threshold hypothesis in Theorem~\ref{thm:finite-prime-factorization} does not follow from the existence of a finite \(\ell^\infty\)-prime factorization. If \(n>1\), then \(R_0^{E_n(\lambda)}\) is separating over \(E_n(\lambda)\) but not connected over \(E_n(\lambda)\), while \(R_\lambda^{E_n(\lambda)}\) is connected over \(E_n(\lambda)\) but not separating over \(E_n(\lambda)\).
\end{remark}

\subsubsection{Classification of \texorpdfstring{\(k\)}{k}-Power Structures on Finite Metric Spaces}

Theorem~\ref{thm:finite-prime-factorization} reduces the classification of ordered \(k\)-power structures to the multiplicities of the \(\ell^\infty\)-prime factors of \(X\).

Fix an integer \(k\geq2\), and let \((X,d)\) be a finite metric space with \(|X|>1\). Assume that there exists \(r\in d(X\times X)\) such that \(R_r^X\) is connected over \(X\) and separating over \(X\).

Choose pairwise nonisometric finite \(\ell^\infty\)-prime metric spaces \(A_1,\dots,A_s\) and positive integers \(m_1,\dots,m_s\) such that
\begin{equation}\label{eq:k-root-prime-factorization}
(X,d)
\cong
A_1^{m_1}\times_\infty\cdots\times_\infty A_s^{m_s}.
\end{equation}
Theorem~\ref{thm:finite-prime-factorization} implies that the isometry classes represented by \(A_1,\dots,A_s\) and the multiplicities \(m_1,\dots,m_s\) are uniquely determined up to reindexing.

Assume that \(k\mid m_i\) for every \(1\leq i\leq s\). Define
\begin{equation}\label{eq:k-root-definition}
\sqrt[k]{X}
:=
A_1^{m_1/k}\times_\infty\cdots\times_\infty A_s^{m_s/k}.
\end{equation}
Fix a metric space representing this isometry class. For \(k=2\), write \(\sqrt{X}:=\sqrt[2]{X}\).

\[
\begin{tikzcd}[column sep=huge,row sep=large]
\sqrt[k]{X}
\arrow[r,"\Delta_k"]
&
X
\\
A_1^{m_1/k}\times_\infty\cdots\times_\infty A_s^{m_s/k}
\arrow[r,mapsto]
&
A_1^{m_1}\times_\infty\cdots\times_\infty A_s^{m_s}
\end{tikzcd}
\]

\begin{lemma}\label{lem:k-power-product}
Let \(A\) be a finite metric space, and let \(m,k\geq1\). Then \(\Delta_k(A^m)\cong A^{km}\).
\end{lemma}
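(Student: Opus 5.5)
We will write down an explicit bijection and verify directly that it preserves the max product metric; finiteness of \(A\) plays no role. Write points of \(\Delta_k(A^m)\) as \(k\)-tuples \((u^{(1)},\dots,u^{(k)})\), where each block \(u^{(j)}=(u^{(j)}_1,\dots,u^{(j)}_m)\in A^m\). Define
\[
\Lambda:\Delta_k(A^m)\to A^{km},
\qquad
\Lambda\bigl(u^{(1)},\dots,u^{(k)}\bigr)
:=
\bigl(u^{(1)}_1,\dots,u^{(1)}_m,u^{(2)}_1,\dots,u^{(k)}_m\bigr).
\]
This is the concatenation map, which sends the coordinate \((j,i)\) to position \((j-1)m+i\). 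Its inverse cuts a \(km\)-tuple into \(k\) consecutive blocks of length \(m\). Hence \(\Lambda\) is bijective.

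For the distance computation, fix \(u=(u^{(1)},\dots,u^{(k)})\) and \(v=(v^{(1)},\dots,v^{(k)})\). Apply Equation~\eqref{eq:ordered-power-metric} to the outer power and Equation~\eqref{eq:k-product-metric} to each inner factor \(A^m\). This gives
\[
d_{\Delta_k(A^m)}(u,v)
=
\max_{1\leq j\leq k}\ \max_{1\leq i\leq m} d_A\bigl(u^{(j)}_i,v^{(j)}_i\bigr).
\]
The iterated maximum over the finite index set \(\{1,\dots,k\}\times\{1,\dots,m\}\) equals the single maximum over that set. Reindexing by \((j,i)\mapsto(j-1)m+i\), this is exactly \(d_{A^{km}}(\Lambda(u),\Lambda(v))\) by Equation~\eqref{eq:k-product-metric}. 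So \(\Lambda\) is a bijective isometry.

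There is no substantive obstacle. The only care needed is to fix the block indexing explicitly, so that associativity and reindexing of the finite maximum are stated rather than left implicit. If \(m=1\) or \(k=1\), the statement is the identity, and the same formula covers these cases.
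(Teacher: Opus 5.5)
Your proof is correct and is essentially the paper's argument: the paper's one-line proof cites associativity of \(\times_\infty\) and coordinate reindexing, and your block-concatenation map \(\Lambda\) with the iterated-maximum computation is exactly that argument written out. Your remarks that only the flattening (no genuine permutation) is needed and that finiteness of \(A\) plays no role are both accurate.
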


\begin{proof}
Associativity of \(\times_\infty\) and coordinate permutations give the desired isometry.
\end{proof}

\begin{lemma}\label{lem:k-root-existence}
The following conditions are equivalent:
\begin{enumerate}
\item There exists a finite metric space \((Y,d_Y)\) such that \((X,d)\cong\Delta_k(Y,d_Y)\).

\item One has \(k\mid m_i\) for every \(1\leq i\leq s\).
\end{enumerate}

If these conditions hold, then \((X,d)\cong\Delta_k(\sqrt[k]{X})\).
\end{lemma}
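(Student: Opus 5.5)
The implication (2)\(\Rightarrow\)(1) is direct. If \(k\mid m_i\) for every \(i\), set \(Y:=\sqrt[k]{X}\) as in Equation~\eqref{eq:k-root-definition}; it is finite because each \(A_i\) is finite. Then \(\Delta_k(Y)\) is the \(k\)-fold \(\ell^\infty\)-product of \(A_1^{m_1/k}\times_\infty\cdots\times_\infty A_s^{m_s/k}\). Using associativity of \(\times_\infty\) and the isometries given by coordinate permutations, as in Lemma~\ref{lem:k-power-product}, regroup the copies of each \(A_i\) to obtain
\[
\Delta_k(Y)\cong A_1^{m_1}\times_\infty\cdots\times_\infty A_s^{m_s}\cong(X,d)
\]
by Equation~\eqref{eq:k-root-prime-factorization}. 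This proves (1) together with the final assertion \((X,d)\cong\Delta_k(\sqrt[k]{X})\).

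For (1)\(\Rightarrow\)(2), suppose \((X,d)\cong\Delta_k(Y)\) with \(Y\) finite. Since \(|X|=|Y|^k>1\), we have \(|Y|>1\). Lemma~\ref{lem:finite-prime-existence} gives a factorization \(Y\cong B_1\times_\infty\cdots\times_\infty B_t\) into finite \(\ell^\infty\)-prime spaces. Group these as \(Y\cong C_1^{n_1}\times_\infty\cdots\times_\infty C_u^{n_u}\) with the \(C_l\) pairwise nonisometric. Taking the \(k\)-th power and regrouping by associativity and permutation as above gives a finite \(\ell^\infty\)-prime factorization
\[
(X,d)\cong C_1^{kn_1}\times_\infty\cdots\times_\infty C_u^{kn_u}.
\]

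The key step is comparing this with Equation~\eqref{eq:k-root-prime-factorization} using the uniqueness part of Theorem~\ref{thm:finite-prime-factorization}. The standing hypothesis of this subsubsection supplies \(r\in d(X\times X)\) with \(R_r^X\) connected and separating, so that theorem applies. Both factorizations list prime factors with multiplicity and have the same total length. After reindexing, the factors match up to isometry. The number of factors isometric to a fixed \(A_i\) is then the same in both lists, so each isometry class \(A_i\) equals some \(C_l\) and \(m_i=kn_l\). In particular \(k\mid m_i\) for every \(i\).

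The only point needing care is turning ``same number of factors and isometric after reindexing'' into equality of multiplicities per isometry class. I would handle it by a counting argument: the bijection of indices preserves isometry classes, so it restricts to a bijection between the factors isometric to \(A_i\) in each list. This is routine, so the argument reduces almost entirely to Theorem~\ref{thm:finite-prime-factorization}.
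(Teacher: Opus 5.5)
Your proposal is correct and follows essentially the same route as the paper. For (1)$\Rightarrow$(2) the paper likewise factors $Y$ via Lemma~\ref{lem:finite-prime-existence}, regroups $\Delta_k(Y)$ using Lemma~\ref{lem:k-power-product}, and invokes the uniqueness part of Theorem~\ref{thm:finite-prime-factorization}; for (2)$\Rightarrow$(1) it uses the same regrouping of $\Delta_k(\sqrt[k]{X})$. Your explicit per-isometry-class counting simply spells out the step that the paper compresses into ``after reindexing, $A_i\cong B_i$ and $m_i=kn_i$.''
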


\begin{proof}
Assume that \((X,d)\cong\Delta_k(Y,d_Y)\) for some finite metric space \((Y,d_Y)\). Since \(|X|=|Y|^k\) and \(|X|>1\), the space \(Y\) has more than one point. By Lemma~\ref{lem:finite-prime-existence}, there exist pairwise nonisometric finite \(\ell^\infty\)-prime metric spaces \(B_1,\dots,B_t\) and positive integers \(n_1,\dots,n_t\) such that
\[
(Y,d_Y)
\cong
B_1^{n_1}\times_\infty\cdots\times_\infty B_t^{n_t}.
\]
Lemma~\ref{lem:k-power-product} gives
\[
\Delta_k(Y,d_Y)
\cong
B_1^{kn_1}\times_\infty\cdots\times_\infty B_t^{kn_t}.
\]
Since \((X,d)\cong\Delta_k(Y,d_Y)\), Equation~\eqref{eq:k-root-prime-factorization} and the uniqueness statement in Theorem~\ref{thm:finite-prime-factorization} imply that \(s=t\) and, after reindexing, \(A_i\cong B_i\) and \(m_i=kn_i\) for every \(1\leq i\leq s\). Hence \(k\mid m_i\) for every \(i\).

Conversely, assume that \(k\mid m_i\) for every \(1\leq i\leq s\). Lemma~\ref{lem:k-power-product} gives
\[
\Delta_k(\sqrt[k]{X})
\cong
A_1^{m_1}\times_\infty\cdots\times_\infty A_s^{m_s}.
\]
Equation~\eqref{eq:k-root-prime-factorization} therefore gives \((X,d)\cong\Delta_k(\sqrt[k]{X})\).
\end{proof}

\begin{lemma}\label{lem:k-root-uniqueness}
Suppose that \((X,d)\cong\Delta_k(Y,d_Y)\). Then \((Y,d_Y)\cong\sqrt[k]{X}\).
\end{lemma}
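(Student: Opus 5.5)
The plan is to compare the prime factorization of \(Y\) with that of \(\sqrt[k]{X}\) using the uniqueness statement in Theorem~\ref{thm:finite-prime-factorization}. This follows the first half of the proof of Lemma~\ref{lem:k-root-existence}.

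First, \(Y\) is finite and has more than one point. Indeed, \(\Delta_k(Y)\) is in bijection with \(X\), which is finite with \(|X|>1\), so \(|Y|^k=|X|\). Hence \(|Y|\) is finite and \(|Y|>1\).

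Lemma~\ref{lem:finite-prime-existence} then gives pairwise nonisometric finite \(\ell^\infty\)-prime spaces \(B_1,\dots,B_t\) and positive integers \(n_1,\dots,n_t\) with \(Y\cong B_1^{n_1}\times_\infty\cdots\times_\infty B_t^{n_t}\). Any grouping of the prime factors into isometry classes gives this form. Applying Lemma~\ref{lem:k-power-product} factor by factor, together with associativity and coordinate permutations, yields
\[
X\cong\Delta_k(Y)\cong B_1^{kn_1}\times_\infty\cdots\times_\infty B_t^{kn_t}.
\]

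This is a finite \(\ell^\infty\)-prime factorization of \(X\). The standing hypothesis gives an \(r\) with \(R_r^X\) connected and separating, so the uniqueness part of Theorem~\ref{thm:finite-prime-factorization} applies. It says the multiset of prime isometry classes, with multiplicity, agrees with the one from Equation~\eqref{eq:k-root-prime-factorization}. Since the \(B_i\) are pairwise nonisometric and the \(A_i\) are pairwise nonisometric, grouping by isometry class gives \(t=s\). After reindexing, \(B_i\cong A_i\) and \(kn_i=m_i\) for each \(i\). In particular \(k\mid m_i\), so \(\sqrt[k]{X}\) is defined.

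Substituting \(B_i\cong A_i\) and \(n_i=m_i/k\) then gives \(Y\cong A_1^{m_1/k}\times_\infty\cdots\times_\infty A_s^{m_s/k}\), which is the chosen representative of \(\sqrt[k]{X}\). This uses that products of isometric factors are isometric.

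The only delicate point is passing from "same number of factors, corresponding factors isometric after reindexing" to equality of multiplicities per isometry class. This is a counting step. Fix an isometry class \(c\). The bijection of factor positions preserves isometry class, so the number of positions in class \(c\) is the same on both sides. That number is \(kn_i\) on one side and \(m_{i'}\) on the other, where \(B_i\) and \(A_{i'}\) are the members of class \(c\). The argument is otherwise routine.
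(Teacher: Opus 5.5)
Your proposal is correct and follows essentially the same route as the paper. You factor \(Y\) via Lemma~\ref{lem:finite-prime-existence}, push that factorization through \(\Delta_k\) with Lemma~\ref{lem:k-power-product}, and match multiplicities against Equation~\eqref{eq:k-root-prime-factorization} using the uniqueness part of Theorem~\ref{thm:finite-prime-factorization}. The paper instead cites Lemma~\ref{lem:k-root-existence} for \(k\mid m_i\) before repeating the comparison, whereas you get divisibility and the identification of \(Y\) in one pass, and you make explicit both the finiteness of \(Y\) (which that citation tacitly needs) and the per-isometry-class counting step.
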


\begin{proof}
By the first implication of Lemma~\ref{lem:k-root-existence}, one has \(k\mid m_i\) for every \(1\leq i\leq s\).

Let
\[
(Y,d_Y)
\cong
B_1^{n_1}\times_\infty\cdots\times_\infty B_t^{n_t}
\]
be the prime factorization of \(Y\) given by Lemma~\ref{lem:finite-prime-existence}. As in the proof of Lemma~\ref{lem:k-root-existence}, Equation~\eqref{eq:k-root-prime-factorization}, Lemma~\ref{lem:k-power-product}, and the uniqueness statement in Theorem~\ref{thm:finite-prime-factorization} imply that \(s=t\) and, after reindexing, \(A_i\cong B_i\) and \(m_i=kn_i\) for every \(1\leq i\leq s\). Therefore \(n_i=m_i/k\) for every \(i\), so \((Y,d_Y)\cong\sqrt[k]{X}\).
\end{proof}

\begin{corollary}\label{cor:k-root-existence-obstruction}
If some \(m_i\) is not divisible by \(k\), then \(\operatorname{Ob}(\mathbf{PowStr}_k(X))=\varnothing\).
\end{corollary}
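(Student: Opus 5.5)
The plan is a contrapositive argument: assume \(\operatorname{Ob}(\mathbf{PowStr}_k(X))\neq\varnothing\) and derive that \(k\mid m_i\) for every \(1\leq i\leq s\), which contradicts the hypothesis.

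First, fix an ordered \(k\)-power structure \(\mathfrak s=(E_1,\dots,E_k,\tau_2,\dots,\tau_k)\) on \((X,d)\). Lemma~\ref{lem:k-presentation-from-structure} shows that \(\Psi_{\mathfrak s}\) is a bijective isometry from \((X,d)\) onto \(\Delta_k(P_{\mathfrak s})\), where \(P_{\mathfrak s}=(X/E_1,\bar d_{E_1})\). Equivalently, \(\mathcal G_k(\mathfrak s)=(P_{\mathfrak s},\phi_{\mathfrak s})\) is an object of \(\mathbf{PowPres}_k(X)\). Hence \((X,d)\cong\Delta_k(P_{\mathfrak s})\).

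Second, check that \(P_{\mathfrak s}\) is a finite metric space, since Lemma~\ref{lem:k-root-existence} is stated for finite \(Y\). The quotient \(X/E_1\) is a quotient of the finite set \(X\), so it is finite. Also, \(\bar d_{E_1}\) is a metric by condition~\ref{item:k-powstr-quotient-metrics} of Definition~\ref{def:ordered-power-structure}. Thus condition~(1) of Lemma~\ref{lem:k-root-existence} holds with \((Y,d_Y)=P_{\mathfrak s}\).

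Third, Lemma~\ref{lem:k-root-existence} then gives \(k\mid m_i\) for every \(i\). This contradicts the assumption that some \(m_i\) is not divisible by \(k\), so the object set is empty.

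There is no real obstacle here. The only points needing care are that the root produced by the structure is finite, so the lemma applies, and that the standing hypotheses of this subsubsection hold, namely finiteness of \(X\) and the connected separating threshold \(R_r^X\). Those hypotheses are exactly what make the multiplicities \(m_i\) well defined through Theorem~\ref{thm:finite-prime-factorization}.
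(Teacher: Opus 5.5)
Your proposal is correct and follows the paper's proof: an object of $\mathbf{PowStr}_k(X)$ yields a presentation with $(X,d)\cong\Delta_k(P_{\mathfrak s})$ via $\mathcal G_k$ (Lemma~\ref{lem:k-presentation-from-structure}), and Lemma~\ref{lem:k-root-existence} then forces $k\mid m_i$ for every $i$. You also check explicitly that $P_{\mathfrak s}=X/E_1$ is finite, which the lemma requires and which the paper's proof leaves implicit.
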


\begin{proof}
Every object of \(\mathbf{PowStr}_k(X)\) arises from a presentation \((P,\phi)\) satisfying \((X,d)\cong\Delta_k(P)\). Lemma~\ref{lem:k-root-existence} therefore implies that \(k\mid m_i\) for every \(1\leq i\leq s\). The conclusion follows.
\end{proof}

For the remainder of this subsection, we assume that \(k\mid m_i\) for every \(1\leq i\leq s\). Fix an isometry \(\phi_0:\Delta_k(\sqrt[k]{X})\to X\).

\begin{equation}\label{eq:k-root-subgroup}
H_{\phi_0,k}
:=
\{\phi_0\circ f^{\times k}\circ\phi_0^{-1}:f\in\operatorname{Iso}(\sqrt[k]{X})\}.
\end{equation}
The subgroup \(H_{\phi_0,k}\) consists precisely of the isometries of \(X\) obtained from isometries of \(\sqrt[k]{X}\) through the identification \(\phi_0\).

\begin{lemma}\label{lem:powpres-equality-morphism}
Let \((P,\phi)\) and \((Q,\psi)\) be objects of \(\mathbf{PowPres}_k(X)\). Then \(\mathcal F_k(P,\phi)=\mathcal F_k(Q,\psi)\) if and only if there exists a morphism \((P,\phi)\to(Q,\psi)\) in \(\mathbf{PowPres}_k(X)\).
\end{lemma}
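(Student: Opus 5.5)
The plan is to prove the two implications separately. Both rest on results already established: Lemma~\ref{lem:k-functoriality}, Lemma~\ref{lem:k-roundtrip-presentation} and Lemma~\ref{lem:k-naturality}, together with the fact that \(\mathbf{PowStr}_k(X)\) is discrete.

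For the implication from a morphism to equality, let \(f:(P,\phi)\to(Q,\psi)\) be a morphism in \(\mathbf{PowPres}_k(X)\). Lemma~\ref{lem:k-functoriality} gives \(\mathfrak s(\phi)=\mathfrak s(\psi)\) directly. By the definition of \(\mathcal F_k\) on objects, this is exactly \(\mathcal F_k(P,\phi)=\mathcal F_k(Q,\psi)\).

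For the converse, assume \(\mathfrak s:=\mathcal F_k(P,\phi)=\mathcal F_k(Q,\psi)\). Then \(\mathcal G_k\mathcal F_k(P,\phi)=\mathcal G_k(\mathfrak s)=\mathcal G_k\mathcal F_k(Q,\psi)\); this is the same object \((P_{\mathfrak s},\phi_{\mathfrak s})\), because \(\mathcal G_k\) is defined on objects purely in terms of \(\mathfrak s\). We build the morphism through this common object.
\begin{enumerate}
\item Lemma~\ref{lem:k-naturality} shows that \(\eta_{(P,\phi)}:(P_{\mathfrak s},\phi_{\mathfrak s})\to(P,\phi)\) is an isomorphism. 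Its inverse \(\eta_{(P,\phi)}^{-1}:(P,\phi)\to(P_{\mathfrak s},\phi_{\mathfrak s})\) is therefore a morphism, as the proof of that lemma checks via Equation~\eqref{eq:k-eta-inverse-morphism-compatibility}.
\item Lemma~\ref{lem:k-roundtrip-presentation} shows that \(\eta_{(Q,\psi)}:(P_{\mathfrak s},\phi_{\mathfrak s})\to(Q,\psi)\) is a morphism.
\item The composite \(\eta_{(Q,\psi)}\circ\eta_{(P,\phi)}^{-1}=\bar q_1^\psi\circ(\bar q_1^\phi)^{-1}\) is a morphism \((P,\phi)\to(Q,\psi)\), since morphisms in a category compose. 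Explicitly, \(\psi\circ(\bar q_1^\psi)^{\times k}\circ((\bar q_1^\phi)^{-1})^{\times k}=\phi_{\mathfrak s}\circ((\bar q_1^\phi)^{-1})^{\times k}=\phi\), using Equation~\eqref{eq:k-eta-compatibility} for \(\psi\) and then Equation~\eqref{eq:k-eta-inverse-morphism-compatibility} for \(\phi\).
\end{enumerate}

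The only delicate point is step 3's starting premise: that the domains of \(\eta_{(P,\phi)}\) and \(\eta_{(Q,\psi)}\) are literally the same metric space \(X/E_1\) with the same presentation map \(\phi_{\mathfrak s}\), not merely isometric copies. This follows because \(\mathcal G_k\) depends only on the tuple \(\mathfrak s\), and the two tuples are equal by hypothesis, including the maps \(\tau_j\). With that observed, the argument is a formal composition, and no metric estimates are needed.
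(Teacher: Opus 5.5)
Your proof is correct and is essentially the paper's argument. The forward direction is Lemma~\ref{lem:k-functoriality} in both; for the converse, the paper invokes fullness of the equivalence \(\mathcal F_k\) from Theorem~\ref{thm:k-categorical-classification}, while you write out the morphism that the standard fullness argument produces, namely \(\eta_{(Q,\psi)}\circ\eta_{(P,\phi)}^{-1}=\bar q_1^\psi\circ(\bar q_1^\phi)^{-1}\) through the common object \(\mathcal G_k(\mathfrak s)\), checked via Equations~\eqref{eq:k-eta-compatibility} and~\eqref{eq:k-eta-inverse-morphism-compatibility} together with your observation that the two round-trip objects literally coincide.
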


\begin{proof}
If there exists a morphism \(f:(P,\phi)\to(Q,\psi)\), then Lemma~\ref{lem:k-functoriality} gives \(\mathcal F_k(P,\phi)=\mathcal F_k(Q,\psi)\).

Conversely, suppose that \(\mathcal F_k(P,\phi)=\mathcal F_k(Q,\psi)\). Since \(\mathcal F_k\) is an equivalence by Theorem~\ref{thm:k-categorical-classification}, it is full and faithful. Hence the identity morphism of the common image lifts to mutually inverse morphisms \((P,\phi)\to(Q,\psi)\) and \((Q,\psi)\to(P,\phi)\). Therefore a morphism \((P,\phi)\to(Q,\psi)\) exists.
\end{proof}

\begin{lemma}\label{lem:k-root-coset-criterion}
Let \(g,h\in\operatorname{Iso}(X)\). Then \(\mathcal F_k(\sqrt[k]{X},g\circ\phi_0)=\mathcal F_k(\sqrt[k]{X},h\circ\phi_0)\) if and only if \(g^{-1}h\in H_{\phi_0,k}\).
\end{lemma}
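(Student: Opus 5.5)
By Lemma~\ref{lem:powpres-equality-morphism}, the equality \(\mathcal F_k(\sqrt[k]{X},g\circ\phi_0)=\mathcal F_k(\sqrt[k]{X},h\circ\phi_0)\) holds exactly when there is a morphism \(f:(\sqrt[k]{X},g\circ\phi_0)\to(\sqrt[k]{X},h\circ\phi_0)\) in \(\mathbf{PowPres}_k(X)\). So the plan is to turn the existence of such a morphism into membership of \(g^{-1}h\) in \(H_{\phi_0,k}\) by rewriting the morphism condition from Equation~\eqref{eq:powpres-morphism-compatibility}.

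Before that, I would confirm that both pairs are objects of \(\mathbf{PowPres}_k(X)\). This holds because \(g\circ\phi_0\) and \(h\circ\phi_0\) are compositions of bijective isometries \(\Delta_k(\sqrt[k]{X})\to X\). I would also record that \(H_{\phi_0,k}\) is closed under inverses and composition. This follows from \((f^{\times k})^{-1}=(f^{-1})^{\times k}\) and \(f^{\times k}\circ f'^{\times k}=(f\circ f')^{\times k}\), so \(H_{\phi_0,k}\) is indeed a subgroup.

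For the forward direction, a morphism \(f\) is an isometry \(f\in\operatorname{Iso}(\sqrt[k]{X})\) with
\[
h\circ\phi_0\circ f^{\times k}=g\circ\phi_0 .
\]
Rearranging gives \(h^{-1}g=\phi_0\circ f^{\times k}\circ\phi_0^{-1}\in H_{\phi_0,k}\). Since \(H_{\phi_0,k}\) is closed under inverses, it follows that \(g^{-1}h\in H_{\phi_0,k}\).

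For the converse, suppose \(g^{-1}h=\phi_0\circ f^{\times k}\circ\phi_0^{-1}\) for some \(f\in\operatorname{Iso}(\sqrt[k]{X})\). Then \(h\circ\phi_0=g\circ\phi_0\circ f^{\times k}\). This says that \(f\) is a morphism \((\sqrt[k]{X},h\circ\phi_0)\to(\sqrt[k]{X},g\circ\phi_0)\). Either Lemma~\ref{lem:powpres-equality-morphism} applied with the roles swapped, or Lemma~\ref{lem:k-functoriality} directly, then yields equality of the two structures.

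No step is a real obstacle. The only point needing care is the direction of the morphism and the resulting \(g^{-1}h\) versus \(h^{-1}g\), and the inverse-closure of \(H_{\phi_0,k}\) makes this immaterial.
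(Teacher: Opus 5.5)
Your proposal is correct and follows essentially the same route as the paper: both use Lemma~\ref{lem:powpres-equality-morphism} to reduce equality of the structures to the existence of a morphism, and then rearrange the compatibility condition \(\psi\circ f^{\times k}=\phi\). The differences are cosmetic: the paper writes \(g^{-1}h=\phi_0\circ(f^{-1})^{\times k}\circ\phi_0^{-1}\) directly rather than passing through \(h^{-1}g\) and inverse-closure, and in the converse it uses \(f^{-1}\) as a morphism \((\sqrt[k]{X},g\circ\phi_0)\to(\sqrt[k]{X},h\circ\phi_0)\) rather than \(f\) in the reverse direction.
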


\begin{proof}
Suppose that \(\mathcal F_k(\sqrt[k]{X},g\circ\phi_0)=\mathcal F_k(\sqrt[k]{X},h\circ\phi_0)\). Lemma~\ref{lem:powpres-equality-morphism} gives a morphism \(f:(\sqrt[k]{X},g\circ\phi_0)\to(\sqrt[k]{X},h\circ\phi_0)\). Definition~\ref{def:ordered-power-presentation} gives \(h\circ\phi_0\circ f^{\times k}=g\circ\phi_0\). Hence \(g^{-1}h=\phi_0\circ(f^{-1})^{\times k}\circ\phi_0^{-1}\), so \(g^{-1}h\in H_{\phi_0,k}\).

Conversely, suppose that \(g^{-1}h\in H_{\phi_0,k}\). Then there exists \(f\in\operatorname{Iso}(\sqrt[k]{X})\) such that \(g^{-1}h=\phi_0\circ f^{\times k}\circ\phi_0^{-1}\). Hence \(g\circ\phi_0=h\circ\phi_0\circ(f^{-1})^{\times k}\). Definition~\ref{def:ordered-power-presentation} shows that \(f^{-1}:(\sqrt[k]{X},g\circ\phi_0)\to(\sqrt[k]{X},h\circ\phi_0)\) is a morphism in \(\mathbf{PowPres}_k(X)\). Lemma~\ref{lem:powpres-equality-morphism} therefore gives \(\mathcal F_k(\sqrt[k]{X},g\circ\phi_0)=\mathcal F_k(\sqrt[k]{X},h\circ\phi_0)\).
\end{proof}

\begin{theorem}\label{thm:ordered-k-power-structure-classification}
With the preceding notation, the map \(g\mapsto\mathcal F_k(\sqrt[k]{X},g\circ\phi_0)\) induces a bijection
\begin{equation}\label{eq:k-root-coset-bijection}
\operatorname{Iso}(X)/H_{\phi_0,k}
\cong
\operatorname{Ob}(\mathbf{PowStr}_k(X)).
\end{equation}
\end{theorem}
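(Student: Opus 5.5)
We first check that the assignment \(g\mapsto\mathcal F_k(\sqrt[k]{X},g\circ\phi_0)\) makes sense. For \(g\in\operatorname{Iso}(X)\), the composite \(g\circ\phi_0:\Delta_k(\sqrt[k]{X})\to X\) is a bijective isometry. Hence \((\sqrt[k]{X},g\circ\phi_0)\) is an object of \(\mathbf{PowPres}_k(X)\), and \(\mathcal F_k\) sends it to an object of \(\mathbf{PowStr}_k(X)\). By Lemma~\ref{lem:k-root-coset-criterion}, two elements \(g,h\) have the same image exactly when \(g^{-1}h\in H_{\phi_0,k}\), that is, when \(gH_{\phi_0,k}=hH_{\phi_0,k}\). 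So the map is constant on these cosets and separates distinct cosets.

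Before using cosets, we confirm that \(H_{\phi_0,k}\) is a subgroup, so that the coset space is meaningful. The map \(f\mapsto\phi_0\circ f^{\times k}\circ\phi_0^{-1}\) is a group homomorphism \(\operatorname{Iso}(\sqrt[k]{X})\to\operatorname{Iso}(X)\), because \(\Delta_k\) is a functor. Its image is therefore a subgroup. The criterion compares \(g^{-1}h\), so the relevant cosets are those of the form \(gH_{\phi_0,k}\). We identify \(\operatorname{Iso}(X)/H_{\phi_0,k}\) with this set of cosets, matching the statement's convention. The two descriptions differ only by \(g\mapsto g^{-1}\) if the other side is intended, which is a harmless bijection.

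Well-definedness and injectivity on cosets then follow at once from Lemma~\ref{lem:k-root-coset-criterion}. The remaining work is surjectivity. Let \(\mathfrak s\) be an object of \(\mathbf{PowStr}_k(X)\). By Theorem~\ref{thm:k-categorical-classification}, we have \(\mathfrak s=\mathcal F_k\mathcal G_k(\mathfrak s)=\mathcal F_k(P_{\mathfrak s},\phi_{\mathfrak s})\), where \(\phi_{\mathfrak s}:\Delta_k(P_{\mathfrak s})\to X\) is an isometry. Lemma~\ref{lem:k-root-uniqueness} gives an isometry \(u:\sqrt[k]{X}\to P_{\mathfrak s}\). Set \(g:=\phi_{\mathfrak s}\circ u^{\times k}\circ\phi_0^{-1}\in\operatorname{Iso}(X)\). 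Then \(g\circ\phi_0=\phi_{\mathfrak s}\circ u^{\times k}\), so \(u\) is a morphism \((\sqrt[k]{X},g\circ\phi_0)\to(P_{\mathfrak s},\phi_{\mathfrak s})\) in \(\mathbf{PowPres}_k(X)\). Lemma~\ref{lem:k-functoriality} (equivalently Lemma~\ref{lem:powpres-equality-morphism}) then yields \(\mathcal F_k(\sqrt[k]{X},g\circ\phi_0)=\mathfrak s\).

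The main obstacle is this surjectivity step. It is the only place where the prime-factorization hypothesis enters, through Lemma~\ref{lem:k-root-uniqueness}, which requires \(P_{\mathfrak s}\) to be finite. Finiteness holds because \(P_{\mathfrak s}=X/E_1\) is a quotient of the finite set \(X\).
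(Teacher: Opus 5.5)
Your proposal is correct and follows the paper's proof essentially step for step. Injectivity and well-definedness on cosets come from Lemma~\ref{lem:k-root-coset-criterion}, and surjectivity comes from writing $\mathfrak s=\mathcal F_k\mathcal G_k(\mathfrak s)$, taking $u:\sqrt[k]{X}\to P_{\mathfrak s}$ from Lemma~\ref{lem:k-root-uniqueness}, and setting $g=\phi_{\mathfrak s}\circ u^{\times k}\circ\phi_0^{-1}$. Your extra remarks are also correct and make explicit what the paper leaves implicit: that $H_{\phi_0,k}$ is a subgroup, that the criterion $g^{-1}h\in H_{\phi_0,k}$ singles out the cosets $gH_{\phi_0,k}$, and that $P_{\mathfrak s}$ is finite.
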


\begin{proof}
Lemma~\ref{lem:k-root-coset-criterion} shows that the map in the theorem statement is constant on right cosets of \(H_{\phi_0,k}\). It also shows that two isometries \(g,h\in\operatorname{Iso}(X)\) determine the same ordered \(k\)-power structure if and only if they belong to the same right coset. Hence the map induces an injective map from \(\operatorname{Iso}(X)/H_{\phi_0,k}\) into \(\operatorname{Ob}(\mathbf{PowStr}_k(X))\).

We prove surjectivity. Let \(\mathfrak s\in\operatorname{Ob}(\mathbf{PowStr}_k(X))\), and let \((P,\phi):=\mathcal G_k(\mathfrak s)\). Lemma~\ref{lem:k-root-uniqueness} gives an isometry \(u:\sqrt[k]{X}\to P\). Define \(g:=(\phi\circ u^{\times k})\circ\phi_0^{-1}\). Then \(g\in\operatorname{Iso}(X)\), and Equation~\eqref{eq:powpres-morphism-compatibility} shows that \(u:(\sqrt[k]{X},g\circ\phi_0)\to(P,\phi)\) is a morphism in \(\mathbf{PowPres}_k(X)\). Lemma~\ref{lem:powpres-equality-morphism} gives \(\mathcal F_k(\sqrt[k]{X},g\circ\phi_0)=\mathcal F_k(P,\phi)\). Since \((P,\phi)=\mathcal G_k(\mathfrak s)\), Theorem~\ref{thm:k-categorical-classification} gives \(\mathcal F_k(P,\phi)=\mathfrak s\). Hence \(\mathfrak s=\mathcal F_k(\sqrt[k]{X},g\circ\phi_0)\), so the induced map is surjective.
\end{proof}

\begin{corollary}\label{cor:k-root-structure-count}
With the preceding notation,
\begin{equation}\label{eq:k-root-structure-count}
\#\operatorname{Ob}(\mathbf{PowStr}_k(X))
=
[\operatorname{Iso}(X):H_{\phi_0,k}].
\end{equation}
\end{corollary}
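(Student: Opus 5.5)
The count follows from the bijection in Theorem~\ref{thm:ordered-k-power-structure-classification}. That theorem gives a bijection between the right-coset set \(\operatorname{Iso}(X)/H_{\phi_0,k}\) and \(\operatorname{Ob}(\mathbf{PowStr}_k(X))\). Taking cardinalities gives Equation~\eqref{eq:k-root-structure-count}, once we check two points: that \(H_{\phi_0,k}\) is a subgroup of \(\operatorname{Iso}(X)\), and that the index is a finite number.

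First I will verify that \(H_{\phi_0,k}\) is a subgroup, since the index notation presupposes it. The map \(f\mapsto\phi_0\circ f^{\times k}\circ\phi_0^{-1}\) is a group homomorphism \(\operatorname{Iso}(\sqrt[k]{X})\to\operatorname{Iso}(X)\). Its values are bijective isometries of \(X\) because \(\Delta_k\) sends isometries to isometries. It respects products because \((fg)^{\times k}=f^{\times k}g^{\times k}\), and it sends \(\mathrm{id}\) to \(\mathrm{id}\). Hence its image \(H_{\phi_0,k}\) is a subgroup.

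Next, finiteness. Since \(X\) is finite, \(\operatorname{Iso}(X)\) is a subgroup of the finite symmetric group on \(X\). Therefore the coset set is finite, and its cardinality is the index \([\operatorname{Iso}(X):H_{\phi_0,k}]\). Right and left cosets have the same number (via \(gH\mapsto Hg^{-1}\)), so no convention issue arises in reading the bijection of Theorem~\ref{thm:ordered-k-power-structure-classification} as a count.

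Combining the bijection with these two facts gives \(\#\operatorname{Ob}(\mathbf{PowStr}_k(X))=[\operatorname{Iso}(X):H_{\phi_0,k}]\). If one also wants the Lagrange form, injectivity of \(f\mapsto\phi_0 f^{\times k}\phi_0^{-1}\) (evaluate at diagonal points, as in Lemma~\ref{lem:presentation-thinness}) gives \(|H_{\phi_0,k}|=|\operatorname{Iso}(\sqrt[k]{X})|\). Hence the count equals \(|\operatorname{Iso}(X)|/|\operatorname{Iso}(\sqrt[k]{X})|\). There is no real obstacle here; the only care needed is confirming the subgroup property, which the theorem statement assumes implicitly.
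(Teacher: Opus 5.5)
Your proposal is correct and follows the paper's proof: take cardinalities in the right-coset bijection of Theorem~\ref{thm:ordered-k-power-structure-classification}, and use that \(\operatorname{Iso}(X)\) is finite because \(X\) is finite. Your additional checks are valid but not needed for the count itself: the subgroup property, the equality of left and right coset counts (relevant because the criterion \(g^{-1}h\in H_{\phi_0,k}\) actually describes left cosets), and the Lagrange form \(|\operatorname{Iso}(X)|/|\operatorname{Iso}(\sqrt[k]{X})|\).
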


\begin{proof}
Theorem~\ref{thm:ordered-k-power-structure-classification} identifies \(\operatorname{Ob}(\mathbf{PowStr}_k(X))\) with the right-coset set \(\operatorname{Iso}(X)/H_{\phi_0,k}\). Since \(X\) is finite, \(\operatorname{Iso}(X)\) is finite, and the cardinality of this coset set is \([\operatorname{Iso}(X):H_{\phi_0,k}]\).
\end{proof}

\begin{corollary}\label{cor:k-root-rigidity}
With the preceding notation, if \(H_{\phi_0,k}=\operatorname{Iso}(X)\), then \(\#\operatorname{Ob}(\mathbf{PowStr}_k(X))=1\).
\end{corollary}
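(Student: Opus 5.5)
This follows directly from Corollary~\ref{cor:k-root-structure-count}. If \(H_{\phi_0,k}=\operatorname{Iso}(X)\), then the index \([\operatorname{Iso}(X):H_{\phi_0,k}]\) equals \(1\). Equation~\eqref{eq:k-root-structure-count} then gives \(\#\operatorname{Ob}(\mathbf{PowStr}_k(X))=1\).

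One could also argue without passing through cardinalities, using Theorem~\ref{thm:ordered-k-power-structure-classification} directly. When \(H_{\phi_0,k}=\operatorname{Iso}(X)\), the right-coset set \(\operatorname{Iso}(X)/H_{\phi_0,k}\) is a single coset. It is nonempty because it contains the identity isometry. The bijection in Equation~\eqref{eq:k-root-coset-bijection} therefore identifies \(\operatorname{Ob}(\mathbf{PowStr}_k(X))\) with a one-element set. Concretely, every ordered \(k\)-power structure equals \(\mathcal F_k(\sqrt[k]{X},\phi_0)\).

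Two points should be checked, and neither is a real obstacle. First, the standing assumption that \(k\mid m_i\) for every \(i\) is in force, so \(\phi_0\) and \(H_{\phi_0,k}\) are defined. Second, \(\operatorname{Iso}(X)\) is finite because \(X\) is finite, which is what Corollary~\ref{cor:k-root-structure-count} uses.
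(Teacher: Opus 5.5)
Your proposal is correct and matches the paper's proof, which likewise deduces the statement immediately from Corollary~\ref{cor:k-root-structure-count}, since the index \([\operatorname{Iso}(X):H_{\phi_0,k}]\) equals \(1\). Your alternative route through the bijection of Theorem~\ref{thm:ordered-k-power-structure-classification} is also valid; it just makes explicit that the unique structure is \(\mathcal F_k(\sqrt[k]{X},\phi_0)\).
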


\begin{proof}
This follows immediately from Corollary~\ref{cor:k-root-structure-count}.
\end{proof}

\begin{lemma}\label{lem:square-identification}
The category \(\mathbf{SqPres}(X)\) is canonically isomorphic to \(\mathbf{PowPres}_2(X)\). The category \(\mathbf{SqStr}(X)\) is canonically isomorphic to \(\mathbf{PowStr}_2(X)\).
\end{lemma}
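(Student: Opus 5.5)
The plan is to write down explicit identity-on-the-nose isomorphisms of categories, one in each direction, rather than merely equivalences. The key observation is that for \(k=2\) the definitions coincide up to notation. The first step compares \(\Delta_2\) with the \(k=2\) instance of \(\Delta_k\). Under the identification of \(P\times P\) with \(P^2\) fixed before Definition~\ref{def:sqpres}, Equation~\eqref{eq:max-product-metric} is exactly Equation~\eqref{eq:ordered-power-metric} with \(k=2\). Hence \(\Delta_2(P)=\Delta_k(P)\) for \(k=2\) as metric spaces. On isometries we have \(\Delta_2(f)=f^{\times 2}\), which is again the same map.

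For the presentation categories, I define \(I:\mathbf{SqPres}(X)\to\mathbf{PowPres}_2(X)\) as the identity on objects \((P,\phi)\) and on morphisms \(f\). This is well defined for two reasons. An isometry \(\phi:\Delta_2(P)\to X\) is the same datum as an isometry \(\phi:\Delta_k(P)\to X\) with \(k=2\). The morphism condition~\eqref{eq:sqpres-morphism-condition} is literally Equation~\eqref{eq:powpres-morphism-compatibility} for \(k=2\). The inverse functor is also the identity assignment. Identities and composition are preserved trivially, so the two functors are mutually inverse on the nose.

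For the structure categories, I compare Definition~\ref{def:sqstr} with Definition~\ref{def:ordered-power-structure} at \(k=2\). The tuple \((E_1,E_2,\tau_2)\) has exactly one comparison map \(\tau_2:X/E_2\to X/E_1\), matching \(\tau\). The quotient-metric conditions agree, and so do the singleton-intersection conditions and the isometry conditions on \(\tau\) and \(\tau_2\). For the decomposition identity, I check that Definition~\ref{def:distance-decomposition} with \(E_1,E_2\) in place of \(E_0,E_1\) coincides with Equation~\eqref{eq:k-factor-distance-decomposition-identity} for \(k=2\); the maximum of two terms is the same expression. So \((E_1,E_2,\tau)\mapsto(E_1,E_2,\tau)\) is a bijection of object sets. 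Both categories are discrete, so this bijection is an isomorphism of categories.

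There is no real obstacle here. The only point needing care is the bookkeeping of the identification \(P\times P=P^2\), including the coordinate indexing \(\pi_1,\pi_2\). Because of that indexing, the functors \(\mathcal F,\mathcal G\) and \(\mathcal F_2,\mathcal G_2\) also correspond under these isomorphisms: \(q_i^\phi\), \(E_i^\phi\), \(\tau^\phi=\tau_2^\phi\) and \(\Psi_{\mathfrak s}\) are given by identical formulas. I would note this compatibility as a remark, since it is what justifies calling the isomorphisms canonical.
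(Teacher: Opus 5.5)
Your proposal is correct and takes the same approach as the paper, which simply notes that for \(k=2\) the definitions of \(\mathbf{SqPres}(X)\) and \(\mathbf{PowPres}_2(X)\), and of \(\mathbf{SqStr}(X)\) and \(\mathbf{PowStr}_2(X)\), coincide up to relabeling coordinates, so the relabelings are mutually inverse and preserve morphisms. Your condition-by-condition check, together with the identification \(P\times P=P^2\) that you flag as the reason the isomorphisms are canonical, spells out what the paper's short argument leaves implicit.
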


\begin{proof}
For \(k=2\), the definitions of \(\mathbf{SqPres}(X)\) and \(\mathbf{PowPres}_2(X)\) coincide up to relabeling the coordinates. The same relabeling identifies \(\mathbf{SqStr}(X)\) and \(\mathbf{PowStr}_2(X)\). The relabelings are mutually inverse and preserve morphisms. Hence the stated canonical isomorphisms hold.
\end{proof}

\begin{corollary}\label{cor:square-structure-classification}
If some \(m_i\) is odd, then \(\mathbf{SqStr}(X)\) has no objects.

Suppose that every \(m_i\) is even. Fix an isometry \(\phi_2:\Delta_2(\sqrt{X})\to X\), and let \(H_{\phi_2,2}\) denote the subgroup from Equation~\eqref{eq:k-root-subgroup} with \(k=2\). Then \(\operatorname{Iso}(X)/H_{\phi_2,2}\cong\operatorname{Ob}(\mathbf{SqStr}(X))\), where the quotient denotes right cosets. Moreover, \(\#\operatorname{Ob}(\mathbf{SqStr}(X))=[\operatorname{Iso}(X):H_{\phi_2,2}]\).
\end{corollary}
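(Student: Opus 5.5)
The plan is to specialize the $k$-power results to $k=2$ and then move them to square structures using the canonical identification of Lemma~\ref{lem:square-identification}. Throughout, the standing hypotheses of this subsection stay in force: $X$ is finite with $|X|>1$, there is a connected and separating threshold relation, and the prime factorization is given by Equation~\eqref{eq:k-root-prime-factorization}. With $k=2$, the condition $k\mid m_i$ says exactly that $m_i$ is even.

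For the first assertion, suppose some $m_i$ is odd. Then $2\nmid m_i$, and Corollary~\ref{cor:k-root-existence-obstruction} with $k=2$ shows that $\operatorname{Ob}(\mathbf{PowStr}_2(X))=\varnothing$. Lemma~\ref{lem:square-identification} gives an isomorphism of categories $\mathbf{SqStr}(X)\cong\mathbf{PowStr}_2(X)$, which is in particular a bijection on objects. Hence $\mathbf{SqStr}(X)$ also has no objects.

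For the second assertion, suppose every $m_i$ is even, so $2\mid m_i$ for all $i$. Then $\sqrt{X}=\sqrt[2]{X}$ is defined by Equation~\eqref{eq:k-root-definition}, and by Lemma~\ref{lem:k-root-existence} an isometry $\phi_2:\Delta_2(\sqrt{X})\to X$ exists. This isometry plays the role of $\phi_0$ in the remainder of the subsection. Theorem~\ref{thm:ordered-k-power-structure-classification} with $k=2$ then gives a bijection
\[
\operatorname{Iso}(X)/H_{\phi_2,2}\cong\operatorname{Ob}(\mathbf{PowStr}_2(X)),
\]
where the quotient denotes right cosets. Composing with the object bijection from Lemma~\ref{lem:square-identification} gives the stated bijection onto $\operatorname{Ob}(\mathbf{SqStr}(X))$. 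For the count, Corollary~\ref{cor:k-root-structure-count} with $k=2$ gives $\#\operatorname{Ob}(\mathbf{PowStr}_2(X))=[\operatorname{Iso}(X):H_{\phi_2,2}]$, and the same bijection transfers this equality to $\mathbf{SqStr}(X)$. This is finite because $\operatorname{Iso}(X)$ is finite.

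I expect only one point to need care. The earlier results were stated for an arbitrary fixed $\phi_0$, and I have to check that they still apply when $\phi_0$ is replaced by the chosen $\phi_2$. Since $\phi_0$ was an arbitrary isometry $\Delta_k(\sqrt[k]{X})\to X$, this substitution is legitimate. I also have to check that the relabeling in Lemma~\ref{lem:square-identification} is a bijection on objects, and this holds because it is an isomorphism of categories. Beyond these checks, the argument is direct bookkeeping.
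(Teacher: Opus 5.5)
Your proposal is correct and is essentially the paper's own proof. The paper also identifies $\mathbf{SqStr}(X)$ with $\mathbf{PowStr}_2(X)$ via Lemma~\ref{lem:square-identification}, and then reads off all three conclusions from Corollary~\ref{cor:k-root-existence-obstruction}, Theorem~\ref{thm:ordered-k-power-structure-classification}, and Corollary~\ref{cor:k-root-structure-count} with $k=2$. Your extra checks are exactly the bookkeeping the paper leaves implicit: that $\phi_2$ may play the role of the arbitrary $\phi_0$, and that an isomorphism of categories is a bijection on objects.
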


\begin{proof}
Lemma~\ref{lem:square-identification} identifies \(\mathbf{SqStr}(X)\) with \(\mathbf{PowStr}_2(X)\). The conclusions follow from Corollary~\ref{cor:k-root-existence-obstruction}, Theorem~\ref{thm:ordered-k-power-structure-classification}, and Corollary~\ref{cor:k-root-structure-count} with \(k=2\).
\end{proof}

\subsubsection{Extension to Metric Spaces with Finite Distance Set}\label{subsec:finite-distance-refinement}

We extend the uniqueness and classification results above from finite metric spaces to metric spaces with finite distance set. Since Lemma~\ref{lem:finite-prime-existence} uses induction on cardinality, its proof does not extend to metric spaces with finite distance set. Instead, we prove uniqueness of finite \(\ell^\infty\)-prime factorizations when such factorizations exist, and we derive the corresponding classification theorem for ordered \(k\)-power structures under that finite-factorization hypothesis.

Throughout this subsubsection, we use Definition~\ref{def:linfty-prime} for arbitrary metric spaces and Definition~\ref{def:threshold-structure} for metric spaces with finite distance set.

\begin{lemma}\label{lem:finite-distance-prime-threshold-prime}
Let \(D\subseteq[0,\infty)\) be finite and contain \(0\). Let \((P,d_P)\) be a nonempty \(D\)-valued metric space with more than one point. Then \(\mathcal T_D(P)\) is nontrivial, and \((P,d_P)\) is \(\ell^\infty\)-prime if and only if \(\mathcal T_D(P)\) is directly indecomposable.
\end{lemma}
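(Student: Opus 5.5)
The argument follows the proof of Lemma~\ref{lem:metric-prime-threshold-prime} almost verbatim. The only change is that every use of finiteness of the underlying set must be replaced by finiteness of \(D\). The needed inputs are Lemma~\ref{lem:threshold-factor-valued}, Lemma~\ref{lem:threshold-product}, and Lemma~\ref{lem:threshold-factor-recovery}. The first two are stated for metric spaces with finite distance set and arbitrary \(D\)-valued factors. The third is stated for \(D\)-valued metric spaces with no cardinality restriction.

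Nontriviality is immediate. The fundamental set of \(\mathcal T_D(P)\) is \(P\), so \(\#(\mathcal T_D(P))=|P|>1\).

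For the forward direction, argue contrapositively. Suppose \((P,d_P)\cong Y\times_\infty Z\) with \(|Y|>1\) and \(|Z|>1\). Let \(D'=d_P(P\times P)\subseteq D\); this set is finite. Lemma~\ref{lem:threshold-factor-valued} makes \(Y\) and \(Z\) \(D'\)-valued, hence \(D\)-valued. Lemma~\ref{lem:threshold-product} then gives \(\mathcal T_D(Y\times_\infty Z)\cong\mathcal T_D(Y)\times\mathcal T_D(Z)\). We also need the isometry \(P\cong Y\times_\infty Z\) to induce an isomorphism \(\mathcal T_D(P)\cong\mathcal T_D(Y\times_\infty Z)\). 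This holds because an isometry preserves every threshold relation \(R_r\). Both factor structures have more than one element, so \(\mathcal T_D(P)\) is not directly indecomposable.

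For the converse, again argue contrapositively. Suppose \(\mathcal T_D(P)\cong\mathcal A\times\mathcal B\) with both fundamental sets of size greater than one. Lemma~\ref{lem:threshold-factor-recovery} applies directly: it needs only that \(D\) is finite and that \(P\) is \(D\)-valued. Its proof uses finiteness of \(D\) to obtain \(R^\theta_{\max D}\) as the full relation, and never uses finiteness of \(P\). It yields metrics \(d_A\) and \(d_B\) with \((P,d_P)\cong A\times_\infty B\). Here \(|A|>1\) and \(|B|>1\), so \(P\) is not \(\ell^\infty\)-prime.

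The only point needing care is checking that none of the cited lemmas secretly relies on finiteness of the space. I expect this to be routine: the minima in Lemma~\ref{lem:threshold-recovery} and Lemma~\ref{lem:threshold-factor-recovery} are taken over subsets of the finite set \(D\), not over points of \(P\).
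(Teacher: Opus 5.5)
Your proof is correct and matches the paper's argument: nontriviality from \(|P|>1\), the forward direction via Lemma~\ref{lem:threshold-factor-valued} and Lemma~\ref{lem:threshold-product}, and the converse via Lemma~\ref{lem:threshold-factor-recovery}. You also make two steps explicit that the paper leaves implicit: passing through the actual distance set \(D'=d_P(P\times P)\subseteq D\) so that Lemma~\ref{lem:threshold-factor-valued} applies as stated, and noting that the isometry \(P\cong Y\times_\infty Z\) induces an isomorphism of threshold structures.
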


\begin{proof}
Since \(P\) has more than one point, the structure \(\mathcal T_D(P)\) is nontrivial. We prove the equivalence by contraposition.

Suppose first that \(P\) is not \(\ell^\infty\)-prime. Then \(P\cong Y\times_\infty Z\), where \(Y\) and \(Z\) both have more than one point. Lemma~\ref{lem:threshold-factor-valued} shows that \(Y\) and \(Z\) are \(D\)-valued, and Lemma~\ref{lem:threshold-product} gives \(\mathcal T_D(P)\cong\mathcal T_D(Y)\times\mathcal T_D(Z)\). Since both factors are nontrivial, \(\mathcal T_D(P)\) is not directly indecomposable.

Conversely, suppose that \(\mathcal T_D(P)\) is not directly indecomposable. Then \(\mathcal T_D(P)\cong\mathcal A\times\mathcal B\), where both factors are nontrivial. Lemma~\ref{lem:threshold-factor-recovery} gives \(D\)-valued metric spaces \(A\) and \(B\), both with more than one point, such that \(P\cong A\times_\infty B\). Hence \(P\) is not \(\ell^\infty\)-prime.
\end{proof}

\begin{lemma}\label{lem:strict-refinement-finite-factor-transfer}
Let \(\mathcal M\) be a structure with the strict refinement property. Then every two finite direct decompositions of \(\mathcal M\) into nontrivial directly indecomposable structures have the same number of factors, and after reindexing corresponding factors are isomorphic.
\end{lemma}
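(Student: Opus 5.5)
The proof follows the argument of Lemma~\ref{lem:strict-refinement-implies-unique-factorization} word for word, with every use of finiteness of \(\mathcal M\) removed. Only the index sets must be finite, and that is part of the hypothesis. Suppose
\[
\mathcal M\cong\prod_{i=1}^m\mathcal B_i
\qquad\text{and}\qquad
\mathcal M\cong\prod_{j=1}^n\mathcal C_j
\]
are two decompositions into nontrivial directly indecomposable structures.

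The first step is to get a common refinement. McKenzie proves on page~64 of \cite{mckenzie1971cardinal} that the strict refinement property implies the refinement property, and that implication holds for arbitrary structures. So Equation~\eqref{eq:mckenzie-common-refinement} supplies structures \(\mathcal D_{ij}\) with \(\mathcal B_i\cong\prod_j\mathcal D_{ij}\) and \(\mathcal C_j\cong\prod_i\mathcal D_{ij}\). I would pass through the factor-relation form \(\mathrm{id}_A=\prod_iF_i=\prod_jG_j\) of Equation~\eqref{eq:mckenzie-strict-refinement}. Strict refinement then gives \(F_i=\prod_j(F_iG_j)\), and the quotients by the relations \(F_iG_j\) serve as the \(\mathcal D_{ij}\). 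This makes clear that no cardinality assumption on \(A\) is involved.

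Next comes the indecomposability count. For a fixed \(i\), at least one \(\mathcal D_{ij}\) is nontrivial, because a product of one-element structures has one element while \(\mathcal B_i\) is nontrivial. At most one is nontrivial, since otherwise we could split
\[
\mathcal B_i\cong\mathcal D_{ij_1}\times\prod_{j\neq j_1}\mathcal D_{ij}
\]
into two nontrivial factors. Here I need two facts about possibly infinite structures. A finite product of structures is nontrivial exactly when some factor is nontrivial. Direct products are associative up to isomorphism, so the grouping above is itself a direct decomposition of \(\mathcal B_i\). Both facts are routine from Equation~\eqref{eq:mckenzie-product-relation}.

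The result is a unique \(j(i)\) with \(\mathcal D_{i,j(i)}\) nontrivial, and symmetrically a unique \(i(j)\) for each \(j\). As before, these maps are mutually inverse, so \(m=n\). After reindexing, dropping the one-element factors gives \(\mathcal B_i\cong\mathcal D_{ii}\cong\mathcal C_i\), because a one-element structure is a unit for the direct product.

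The main obstacle is to make sure the cited McKenzie implication (strict refinement implies refinement) and the product manipulations are really free of finiteness assumptions. I would check this directly from the factor-relation characterization (conditions 1 to 3 after Equation~\eqref{eq:mckenzie-factor-map}), which holds for arbitrary \(A\).
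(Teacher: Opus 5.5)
Your proposal follows the paper's proof essentially line by line, but it shares one step with the paper that fails: the claim that a one-element structure is a unit for the direct product. The paper's version of this claim reads ``a finite product with all but one factor trivial is isomorphic to its nontrivial factor.''

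\textbf{Why the step fails.} A one-element structure is a unit only when every one of its relations is the full relation $\{(*,\dots,*)\}$. If some relation is empty, multiplying by that structure empties the relation. This is not a technicality, because the lemma is false as stated.

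\textbf{Counterexample.} Take one binary symbol $R$ and one unary symbol $U$, and set $B_-=\langle\{0,1\},\leq,\varnothing\rangle$ and $B_+=\langle\{0,1\},\leq,\{0\}\rangle$.
\begin{enumerate}
\item By Equation~\eqref{eq:mckenzie-product-relation}, $B_+\times B_-$ and $B_-\times B_-$ are literally the same structure $\mathcal M$ on $\{0,1\}^2$. In both, $R^{\mathcal M}$ is the product order and $U^{\mathcal M}=\varnothing$.
\item The fundamental relation $R^{\mathcal M}$ lies in $A(\mathcal M)$. It is a partial order, hence thin. It is reflexive and connected, hence in $\mathcal Q$ by Lemma~\ref{lem:mckenzie-connected-reflexive-in-q}. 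So $\mathcal M$ has the strict refinement property by Theorem~\ref{thm:mckenzie-aq}.
\item Both $B_\pm$ have two elements, so they are nontrivial and directly indecomposable. Yet $B_+\not\cong B_-$, since $|U^{B_+}|=1\neq 0=|U^{B_-}|$.
\end{enumerate}
Any common refinement of these two decompositions must therefore contain a one-element $\mathcal D_{ij}$ with $U^{\mathcal D_{ij}}=\varnothing$; otherwise your argument would yield $B_+\cong B_-$. This $\mathcal M$ is finite, so the same example also defeats Lemma~\ref{lem:strict-refinement-implies-unique-factorization}, the lemma you used as a template.

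\textbf{The factor-relation route does not help.} Let $\mathcal M=B_+\times B_-$ and let $F_1$ be the first-coordinate kernel. Then $\mathcal M/F_1=\pi_{F_1}(\mathcal M)\cong B_-$, not $B_+$, because the image of an empty relation is empty. So the identification $\mathcal B_i\cong\mathcal M/F_i$ that you would need fails precisely when some fundamental relation of $\mathcal M$ is empty.

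\textbf{The repair.} Add the hypothesis that every fundamental relation of $\mathcal M$ is nonempty.
\begin{enumerate}
\item A relation in a direct product of nonempty structures is nonempty exactly when every factor relation is. So the hypothesis passes from $\mathcal M$ to each $\mathcal B_i$ and $\mathcal C_j$, and then to each $\mathcal D_{ij}$.
\item A one-element $\mathcal D_{ij}$ then carries only full relations and really is a unit.
\item Your factor-relation route also becomes valid, since now $\mathcal M/F_i\cong\mathcal B_i$.
\item Your counting steps are correct as written: at least one and at most one nontrivial $\mathcal D_{ij}$ per row and column, the mutually inverse index maps giving $m=n$, and $\mathcal B_i\cong\mathcal D_{ii}\cong\mathcal C_i$.
\end{enumerate}
The hypothesis holds for every threshold structure $\mathcal T_D(X)$, because each $R_r^X$ contains the diagonal. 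So it costs nothing in the paper's applications. Without it, neither your proof nor the paper's establishes the statement, because the statement is false.
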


\begin{proof}
Suppose that \(\mathcal M\cong\prod_{i=1}^m\mathcal B_i\) and \(\mathcal M\cong\prod_{j=1}^n\mathcal C_j\) are decompositions into nontrivial directly indecomposable structures. McKenzie proves on page~64 of \cite{mckenzie1971cardinal} that the strict refinement property implies the refinement property for finite direct decompositions. Equation~\eqref{eq:mckenzie-common-refinement} gives structures \(\mathcal D_{ij}\), where \(1\leq i\leq m\) and \(1\leq j\leq n\), such that \(\mathcal B_i\cong\prod_{j=1}^n\mathcal D_{ij}\) for every \(i\), and \(\mathcal C_j\cong\prod_{i=1}^m\mathcal D_{ij}\) for every \(j\).

Fix \(i\). Since \(\mathcal B_i\) is nontrivial, at least one \(\mathcal D_{ij}\) is nontrivial. Since \(\mathcal B_i\) is directly indecomposable, at most one \(\mathcal D_{ij}\) is nontrivial. Hence there is a unique index \(j(i)\) such that \(\mathcal D_{i,j(i)}\) is nontrivial. The same argument gives, for each \(j\), a unique index \(i(j)\) such that \(\mathcal D_{i(j),j}\) is nontrivial.

The uniqueness conditions imply \(i(j(i))=i\) for every \(i\) and \(j(i(j))=j\) for every \(j\). Hence \(i\mapsto j(i)\) is a bijection, so \(m=n\). After reindexing, assume \(j(i)=i\). Then all factors except \(\mathcal D_{ii}\) are trivial in the decompositions of \(\mathcal B_i\) and \(\mathcal C_i\). Since a finite product with all but one factor trivial is isomorphic to its nontrivial factor, \(\mathcal B_i\cong\mathcal D_{ii}\cong\mathcal C_i\) for every \(i\).
\end{proof}

\begin{lemma}\label{lem:direct-factor-finite-uniqueness}
Let \(\mathcal M\) be a structure with the strict refinement property. Suppose that \(\mathcal M\cong\mathcal N\times\mathcal P\), and suppose that \(\mathcal P\) admits a finite direct decomposition into nontrivial directly indecomposable structures. Then every two finite direct decompositions of \(\mathcal N\) into nontrivial directly indecomposable structures have the same number of factors, and after reindexing corresponding factors are isomorphic.
\end{lemma}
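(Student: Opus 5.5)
The plan is to reduce to Lemma~\ref{lem:strict-refinement-finite-factor-transfer} by gluing a fixed decomposition of \(\mathcal P\) onto each decomposition of \(\mathcal N\). The strict refinement property is not obviously inherited by the direct factor \(\mathcal N\), so I will work inside \(\mathcal M\) instead.

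Fix a finite decomposition \(\mathcal P\cong\prod_{l=1}^{t}\mathcal P_l\) into nontrivial directly indecomposable structures. Let
\[
\mathcal N\cong\prod_{i=1}^m\mathcal B_i
\qquad\text{and}\qquad
\mathcal N\cong\prod_{j=1}^n\mathcal C_j
\]
be two decompositions of \(\mathcal N\) of the stated kind. Associativity of direct products then gives two finite decompositions of \(\mathcal M\) into nontrivial directly indecomposable factors:
\[
\mathcal M\cong\mathcal B_1\times\cdots\times\mathcal B_m\times\mathcal P_1\times\cdots\times\mathcal P_t
\qquad\text{and}\qquad
\mathcal M\cong\mathcal C_1\times\cdots\times\mathcal C_n\times\mathcal P_1\times\cdots\times\mathcal P_t .
\]
Applying Lemma~\ref{lem:strict-refinement-finite-factor-transfer} to these yields \(m+t=n+t\), hence \(m=n\). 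It also gives an equality of the two finite multisets of isomorphism classes:
\[
\{[\mathcal B_i]\}\uplus\{[\mathcal P_l]\}=\{[\mathcal C_j]\}\uplus\{[\mathcal P_l]\}.
\]

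The remaining step is a cancellation of finite multisets. The multiplicity of each isomorphism class on the left equals its multiplicity on the right. Subtracting the finite multiplicities coming from \(\{[\mathcal P_l]\}\) shows that \(\{[\mathcal B_i]\}\) and \(\{[\mathcal C_j]\}\) agree as multisets. Concretely, for each isomorphism class \(\kappa\), the number of indices \(i\) with \(\mathcal B_i\in\kappa\) equals the number of indices \(j\) with \(\mathcal C_j\in\kappa\). Choosing a bijection class by class then provides the required reindexing with \(\mathcal B_i\cong\mathcal C_{\sigma(i)}\).

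I expect no serious obstacle. The main point needing care is that the bijection produced by Lemma~\ref{lem:strict-refinement-finite-factor-transfer} may pair some \(\mathcal B_i\) with a \(\mathcal P_l\) rather than with a \(\mathcal C_j\). This is exactly why I pass to multiplicities of isomorphism classes instead of trying to restrict the bijection directly. That cancellation is legitimate because every multiplicity involved is a finite integer, and finiteness of the decomposition of \(\mathcal P\) is where the hypothesis on \(\mathcal P\) gets used.
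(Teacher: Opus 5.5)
Your proposal is correct and follows essentially the same route as the paper: append a fixed finite indecomposable decomposition of \(\mathcal P\) to each decomposition of \(\mathcal N\), apply Lemma~\ref{lem:strict-refinement-finite-factor-transfer} in \(\mathcal M\), and cancel the common finite multiset of isomorphism classes. Your explicit remark that the bijection may pair some \(\mathcal B_i\) with a \(\mathcal P_l\), which is why you cancel multiplicities rather than restrict the bijection, is precisely the point the paper's phrase ``cancellation in finite multisets'' relies on.
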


\begin{proof}
Let \(\mathcal P\cong\prod_{r=1}^q\mathcal E_r\) be a finite direct decomposition into nontrivial directly indecomposable structures. Let \(\mathcal N\cong\prod_{i=1}^m\mathcal A_i\) and \(\mathcal N\cong\prod_{j=1}^n\mathcal B_j\) be finite direct decompositions into nontrivial directly indecomposable structures. Then \(\mathcal M\cong\prod_{i=1}^m\mathcal A_i\times\prod_{r=1}^q\mathcal E_r\) and \(\mathcal M\cong\prod_{j=1}^n\mathcal B_j\times\prod_{r=1}^q\mathcal E_r\) are finite direct decompositions into nontrivial directly indecomposable structures.

Lemma~\ref{lem:strict-refinement-finite-factor-transfer} identifies the two finite multisets of isomorphism classes \(\{\mathcal A_1,\dots,\mathcal A_m,\mathcal E_1,\dots,\mathcal E_q\}\) and \(\{\mathcal B_1,\dots,\mathcal B_n,\mathcal E_1,\dots,\mathcal E_q\}\). By cancellation in finite multisets, the multisets \(\{\mathcal A_1,\dots,\mathcal A_m\}\) and \(\{\mathcal B_1,\dots,\mathcal B_n\}\) are equal. Hence \(m=n\), and after reindexing one has \(\mathcal A_i\cong\mathcal B_i\) for every \(1\leq i\leq m\).
\end{proof}

\begin{lemma}\label{lem:finite-distance-threshold-refinement}
Let \((X,d)\) be a metric space with finite distance set \(D=d(X\times X)\). Suppose that there exists \(r\in D\) such that \(R_r^X\) is connected over \(X\) and separating over \(X\). Then \(\mathcal T_D(X)\) has the strict refinement property.
\end{lemma}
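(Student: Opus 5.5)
The plan is to repeat the proof of Lemma~\ref{lem:threshold-structure-refinement} verbatim, after checking that nothing in it used finiteness of \(X\); only finiteness of \(D\) was needed. Since \(D=d(X\times X)\) is finite and contains \(0\), the metric space \((X,d)\) is \(D\)-valued in the sense of Definition~\ref{def:threshold-structure}. Hence \(\mathcal T_D(X)=\langle X,(R_s^X)_{s\in D}\rangle\) is a well-defined \(\mathcal L_D\)-structure with finitely many binary fundamental relations. The hypothesis \(r\in D\) means that \(R_r^X\) is one of these fundamental relations.

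The steps are as follows.

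First, \(R_r^X\in A(\mathcal T_D(X))\). Every decomposition function of \(\mathcal T_D(X)\) is, by McKenzie's definition, a decomposition function relative to all fundamental relations at once. In particular it is one for the reduct \(\langle X,R_r^X\rangle\), because the invariance condition~\eqref{eq:mckenzie-factor-invariance} for the single relation \(R_r^X\) is among the conditions already imposed. This is the same observation made in the finite case, and it does not depend on the cardinality of \(X\).

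Second, \(\langle X,R_r^X\rangle\) is thin by Lemma~\ref{lem:separating-implies-thin}. That lemma was stated for an arbitrary metric space and uses only the symmetry of \(R_r^X\) together with the separating hypothesis.

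Third, \(\langle X,R_r^X\rangle\in\mathcal Q\) by Lemma~\ref{lem:mckenzie-connected-reflexive-in-q}. Here \(R_r^X\) is reflexive because \(d(x,x)=0\leq r\), and it is connected by hypothesis. Connectedness in McKenzie's sense uses finite chains between pairs of points, so it makes sense for infinite \(X\).

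Finally, Theorem~\ref{thm:mckenzie-aq} is stated for arbitrary structures, not only finite ones. It therefore yields the strict refinement property for \(\mathcal T_D(X)\).

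The main point to verify, rather than a real obstacle, is the first step. We must confirm that "fundamental relation implies membership in \(A(\mathcal M)\)" is valid in McKenzie's general setting. If one prefers not to rely on the informal remark, the check is direct. A decomposition function \(f\) comes from \(\mathrm{id}_X=F\times F'\) over \(\mathcal T_D(X)\). The three characterizing conditions for \(\mathrm{id}_X=F\times F'\) over the reduct \(\langle X,R_r^X\rangle\) are the intersection condition, the gluing condition, and invariance for \(R_r^X\). Each is implied by the corresponding condition over the full structure. So \(\mathrm{id}_X=F\times F'\) also holds over the reduct, and \(f\), being uniquely determined by \(F\) and \(F'\) through~\eqref{eq:mckenzie-decomposition-function}, is a decomposition function there.

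No finiteness of \(X\) enters anywhere. The proof is therefore a short citation chain in the same order as the proof of Lemma~\ref{lem:threshold-structure-refinement}.
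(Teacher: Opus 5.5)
Your proposal is correct and follows the same citation chain as the paper's proof: thinness from Lemma~\ref{lem:separating-implies-thin}, reflexivity plus connectedness giving membership in \(\mathcal Q\) via Lemma~\ref{lem:mckenzie-connected-reflexive-in-q}, and then Theorem~\ref{thm:mckenzie-aq}. You also explicitly check that the fundamental relation \(R_r^X\) lies in \(A(\mathcal T_D(X))\); the paper's proof of this lemma leaves that step implicit and states it only in the finite-case Lemma~\ref{lem:threshold-structure-refinement}, and your justification through the reduct \(\langle X,R_r^X\rangle\) is sound.
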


\begin{proof}
Lemma~\ref{lem:separating-implies-thin} shows that \(\langle X,R_r^X\rangle\) is thin. Equation~\eqref{eq:threshold-relation} shows that \(R_r^X\) is reflexive. Since \(R_r^X\) is connected over \(X\), Lemma~\ref{lem:mckenzie-connected-reflexive-in-q} gives \(\langle X,R_r^X\rangle\in\mathcal Q\). Theorem~\ref{thm:mckenzie-aq} therefore implies that \(\mathcal T_D(X)\) has the strict refinement property.
\end{proof}

\begin{theorem}\label{thm:finite-distance-prime-uniqueness}
Let \((X,d)\) be a metric space with finite distance set \(D=d(X\times X)\). Suppose that there exists \(r\in D\) such that \(R_r^X\) is connected over \(X\) and separating over \(X\). If \((X,d)\cong P_1\times_\infty\cdots\times_\infty P_m\) and \((X,d)\cong Q_1\times_\infty\cdots\times_\infty Q_n\) are finite decompositions in which every factor \(P_i\) and every factor \(Q_j\) is \(\ell^\infty\)-prime, then \(m=n\), and after reindexing \(P_i\) and \(Q_i\) are isometric for every \(1\leq i\leq m\).
\end{theorem}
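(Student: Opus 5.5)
The plan follows the finite case, Theorem~\ref{thm:finite-prime-factorization}, replacing the finite-structure lemmas by their general counterparts proved just above. First, Lemma~\ref{lem:threshold-factor-valued} applies, since its proof only uses that \(D=d(X\times X)\) is finite and that \(X\) is isometric to the product. It shows that every factor \(P_i\) and every factor \(Q_j\) is \(D\)-valued. Lemma~\ref{lem:threshold-product} then gives two direct decompositions
\[
\mathcal T_D(X)\cong\prod_{i=1}^m\mathcal T_D(P_i)
\qquad\text{and}\qquad
\mathcal T_D(X)\cong\prod_{j=1}^n\mathcal T_D(Q_j)
\]
of \(\mathcal L_D\)-structures. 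Here we transport the threshold structure of the abstract product along the given isometry; an isometry of \(D\)-valued spaces is an isomorphism of threshold structures.

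Next we check that every factor is nontrivial and directly indecomposable. Each \(P_i\) is \(\ell^\infty\)-prime, so it has more than one point. Lemma~\ref{lem:finite-distance-prime-threshold-prime}, which requires no finiteness of the underlying set, then shows that \(\mathcal T_D(P_i)\) is nontrivial and directly indecomposable. The same holds for each \(\mathcal T_D(Q_j)\). The connected and separating hypothesis on \(R_r^X\) together with Lemma~\ref{lem:finite-distance-threshold-refinement} gives the strict refinement property for \(\mathcal T_D(X)\).

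We then apply Lemma~\ref{lem:strict-refinement-finite-factor-transfer} to the two decompositions. This yields \(m=n\) and, after reindexing, isomorphisms \(\mathcal T_D(P_i)\cong\mathcal T_D(Q_i)\). Finally, Lemma~\ref{lem:threshold-recovery} converts each such isomorphism into an isometry \(P_i\cong Q_i\), since both spaces are \(D\)-valued.

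The only step needing care is the invocation of McKenzie's result that strict refinement implies refinement for finite decompositions of possibly infinite structures. This is already packaged in Lemma~\ref{lem:strict-refinement-finite-factor-transfer}, so that step is immediate. The rest is checking that no earlier lemma used finiteness of \(X\). Lemma~\ref{lem:threshold-factor-recovery} uses only finiteness of \(D\), through \(R_{\max D}=(A\times B)^2\) and the existence of the minima, so no genuine obstacle is expected.
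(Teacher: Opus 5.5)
Your proposal is correct and follows the paper's proof essentially step for step. You use Lemmas~\ref{lem:threshold-factor-valued} and \ref{lem:threshold-product} to turn both factorizations into direct decompositions of \(\mathcal T_D(X)\), Lemma~\ref{lem:finite-distance-prime-threshold-prime} for nontriviality and indecomposability of the factors, Lemma~\ref{lem:finite-distance-threshold-refinement} for strict refinement, Lemma~\ref{lem:strict-refinement-finite-factor-transfer} for uniqueness, and Lemma~\ref{lem:threshold-recovery} to return to isometries. The only differences are that the paper establishes strict refinement first and that you add an explicit (and accurate) check that none of these lemmas uses finiteness of \(X\).
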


\begin{proof}
Lemma~\ref{lem:finite-distance-threshold-refinement} shows that \(\mathcal T_D(X)\) has the strict refinement property. Lemma~\ref{lem:threshold-factor-valued} shows that every factor \(P_i\) and every factor \(Q_j\) is \(D\)-valued, and Lemma~\ref{lem:threshold-product} gives
\[
\mathcal T_D(X)
\cong
\prod_{i=1}^m\mathcal T_D(P_i)
\qquad\text{and}\qquad
\mathcal T_D(X)
\cong
\prod_{j=1}^n\mathcal T_D(Q_j).
\]
Lemma~\ref{lem:finite-distance-prime-threshold-prime} shows that every displayed factor is nontrivial and directly indecomposable. Lemma~\ref{lem:strict-refinement-finite-factor-transfer} gives \(m=n\), and after reindexing \(\mathcal T_D(P_i)\cong\mathcal T_D(Q_i)\) for every \(1\leq i\leq m\). Lemma~\ref{lem:threshold-recovery} therefore gives an isometry \(P_i\cong Q_i\) for every \(1\leq i\leq m\).
\end{proof}

\begin{lemma}\label{lem:finite-factor-root-inheritance}
Let \((X,d)\) satisfy the hypotheses of Theorem~\ref{thm:finite-distance-prime-uniqueness}. Suppose that \((X,d)\cong A_1^{m_1}\times_\infty\cdots\times_\infty A_s^{m_s}\), where \(s\geq1\), where \(A_1,\dots,A_s\) are pairwise nonisometric \(\ell^\infty\)-prime metric spaces, and where \(m_i\geq1\) for every \(1\leq i\leq s\). Let \(k\geq2\). If \((X,d)\cong\Delta_k(Y)\), then \(Y\) admits a finite \(\ell^\infty\)-prime factorization, \(k\mid m_i\) for every \(1\leq i\leq s\), and \(Y\cong A_1^{m_1/k}\times_\infty\cdots\times_\infty A_s^{m_s/k}\).
\end{lemma}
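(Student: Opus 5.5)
The plan is to transfer the problem to threshold structures, use the strict refinement property of \(\mathcal T_D(X)\) to force a finite prime decomposition on \(\mathcal T_D(Y)\), and then compare multiplicities with the uniqueness of Theorem~\ref{thm:finite-distance-prime-uniqueness}. The only step that is not routine bookkeeping is existence: \(Y\) need not be finite, so Lemma~\ref{lem:finite-prime-existence} is unavailable. Refinement against the given prime decomposition of \(X\) is what replaces it.

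\textbf{Setup.} Let \(D=d(X\times X)\), which is finite. Since \((X,d)\cong\Delta_k(Y)\) is an \(\ell^\infty\)-product of \(k\) copies of \(Y\), Lemma~\ref{lem:threshold-factor-valued} shows that \(Y\) is \(D\)-valued. Since \(s\geq1\) and \(A_1\) has more than one point, \(|X|>1\), and therefore \(|Y|>1\). Lemma~\ref{lem:threshold-product} gives two direct decompositions
\[
\mathcal T_D(X)\cong\prod_{i=1}^k\mathcal T_D(Y)
\qquad\text{and}\qquad
\mathcal T_D(X)\cong\prod_{l=1}^{M}\mathcal C_l,
\]
where \(M=m_1+\cdots+m_s\) and each \(\mathcal C_l\) is some \(\mathcal T_D(A_i)\). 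By Lemma~\ref{lem:finite-distance-prime-threshold-prime}, each \(\mathcal C_l\) is nontrivial and directly indecomposable. By Lemma~\ref{lem:finite-distance-threshold-refinement}, \(\mathcal T_D(X)\) has the strict refinement property, hence the refinement property (McKenzie, p.~64).

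\textbf{Existence of a prime factorization of \(Y\).} Refinement gives structures \(\mathcal D_{il}\) with
\[
\mathcal T_D(Y)\cong\prod_{l=1}^M\mathcal D_{il}
\qquad\text{and}\qquad
\mathcal C_l\cong\prod_{i=1}^k\mathcal D_{il}.
\]
Because \(\mathcal C_l\) is nontrivial and indecomposable, exactly one \(\mathcal D_{il}\) is nontrivial for each \(l\), and that factor is isomorphic to \(\mathcal C_l\). This is the same argument as in Lemma~\ref{lem:strict-refinement-finite-factor-transfer}. Fix \(i=1\) and let \(J\) be the set of indices \(l\) with \(\mathcal D_{1l}\) nontrivial. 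Dropping trivial factors gives \(\mathcal T_D(Y)\cong\prod_{l\in J}\mathcal C_l\). The set \(J\) is nonempty, since \(|Y|>1\).

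\textbf{Back to metric spaces.} Induct on \(|J|\). Split the product as \(\mathcal C_{l_0}\times\prod_{l\in J\setminus\{l_0\}}\mathcal C_l\) and apply Lemma~\ref{lem:threshold-factor-recovery}. This yields \(D\)-valued metric spaces \(A\) and \(B\) with \(Y\cong A\times_\infty B\), \(\mathcal T_D(A)=\mathcal C_{l_0}\), and \(\mathcal T_D(B)\) equal to the remaining product. Recursing on \(B\) writes \(Y\) as a finite \(\ell^\infty\)-product of \(D\)-valued spaces whose threshold structures are the \(\mathcal C_l\). By Lemma~\ref{lem:finite-distance-prime-threshold-prime}, each such factor is \(\ell^\infty\)-prime, and by Lemma~\ref{lem:threshold-recovery} it is isometric to the corresponding \(A_i\). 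Grouping gives
\[
Y\cong B_1^{n_1}\times_\infty\cdots\times_\infty B_t^{n_t}
\]
with pairwise nonisometric primes \(B_j\).

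\textbf{Comparison of multiplicities.} Associativity and coordinate permutation, as in Lemma~\ref{lem:k-power-product}, give
\[
X\cong\Delta_k(Y)\cong B_1^{kn_1}\times_\infty\cdots\times_\infty B_t^{kn_t}.
\]
Theorem~\ref{thm:finite-distance-prime-uniqueness}, applied to this decomposition and to \(A_1^{m_1}\times_\infty\cdots\times_\infty A_s^{m_s}\), gives \(t=s\) and, after reindexing, \(B_i\cong A_i\) and \(kn_i=m_i\). Hence \(k\mid m_i\) for every \(i\), and \(Y\cong A_1^{m_1/k}\times_\infty\cdots\times_\infty A_s^{m_s/k}\).
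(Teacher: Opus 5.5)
Your proof is correct, and its existence step coincides with the paper's. You refine \(\mathcal T_D(Y)^k\) against \(\prod_{i}\mathcal T_D(A_i)^{m_i}\), read off a decomposition of one copy of \(\mathcal T_D(Y)\) into copies of the \(\mathcal T_D(A_i)\), and pull it back to a finite \(\ell^\infty\)-prime factorization of \(Y\) with Lemmas~\ref{lem:threshold-factor-recovery} and~\ref{lem:threshold-recovery}.

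You diverge at the divisibility step. The paper stays inside the common refinement. It partitions the \(m_1+\cdots+m_s\) prime factors into sets \(I_1,\dots,I_k\), one for each of the \(k\) copies of \(\mathcal T_D(Y)\). It then applies Lemma~\ref{lem:direct-factor-finite-uniqueness} to each copy, viewed as a direct factor whose complement has a finite indecomposable decomposition, to show that all \(I_j\) carry the same multiset of factors. Counting then gives \(m_i=ka_i\).

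You instead keep only one copy and apply \(\Delta_k\) to the recovered factorization \(Y\cong B_1^{n_1}\times_\infty\cdots\times_\infty B_t^{n_t}\). You compare the resulting factorization \(B_1^{kn_1}\times_\infty\cdots\times_\infty B_t^{kn_t}\) of \(X\) with the given one via Theorem~\ref{thm:finite-distance-prime-uniqueness}, exactly as in the finite case (Lemma~\ref{lem:k-root-existence}).

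Your route is shorter. Because the power structure makes multiplicities scale by \(k\), it makes Lemma~\ref{lem:direct-factor-finite-uniqueness} and the multiset cancellation unnecessary here. The paper's route instead uses uniqueness for an arbitrary direct factor of \(\mathcal T_D(X)\). That tool does not depend on the complementary factor being a power of the same structure.

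One small omission: before invoking Lemmas~\ref{lem:threshold-product} and~\ref{lem:finite-distance-prime-threshold-prime} for the \(A_i\), note that each \(A_i\) is \(D\)-valued by Lemma~\ref{lem:threshold-factor-valued}.
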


\begin{proof}
Set \(D:=d(X\times X)\). Lemma~\ref{lem:finite-distance-threshold-refinement} shows that \(\mathcal T_D(X)\) has the strict refinement property. Lemma~\ref{lem:threshold-factor-valued} shows that each \(A_i\) and \(Y\) is \(D\)-valued. Lemma~\ref{lem:threshold-product} gives
\[
\mathcal T_D(X)
\cong
\prod_{i=1}^s\mathcal T_D(A_i)^{m_i}
\qquad\text{and}\qquad
\mathcal T_D(X)
\cong
\mathcal T_D(Y)^k.
\]
Lemma~\ref{lem:finite-distance-prime-threshold-prime} shows that each \(\mathcal T_D(A_i)\) is nontrivial and directly indecomposable.

Since strict refinement implies the refinement property for finite direct decompositions, Equation~\eqref{eq:mckenzie-common-refinement} applies to the two displayed decompositions. This common refinement distributes each copy of each \(\mathcal T_D(A_i)\) among the \(k\) copies of \(\mathcal T_D(Y)\). More precisely, Equation~\eqref{eq:mckenzie-common-refinement} gives structures \(\mathcal D_{i,\ell,j}\), where \(1\leq i\leq s\), \(1\leq\ell\leq m_i\), and \(1\leq j\leq k\), such that \(\mathcal T_D(A_i)\cong\prod_{j=1}^k\mathcal D_{i,\ell,j}\) for every \(i,\ell\), and
\[
\mathcal T_D(Y)
\cong
\prod_{i=1}^s
\prod_{\ell=1}^{m_i}
\mathcal D_{i,\ell,j}
\]
for every \(1\leq j\leq k\).

Since each \(\mathcal T_D(A_i)\) is nontrivial and directly indecomposable, for each pair \((i,\ell)\) there exists a unique index \(j\) such that \(\mathcal D_{i,\ell,j}\) is nontrivial. For that index, one has \(\mathcal D_{i,\ell,j}\cong\mathcal T_D(A_i)\), and every other factor is trivial.

For each \(1\leq j\leq k\), define \(I_j:=\{(i,\ell):\mathcal D_{i,\ell,j}\text{ is nontrivial}\}\). The sets \(I_1,\dots,I_k\) partition \(\{(i,\ell):1\leq i\leq s,\ 1\leq\ell\leq m_i\}\), and \(\mathcal T_D(Y)\cong\prod_{(i,\ell)\in I_j}\mathcal T_D(A_i)\) for every \(1\leq j\leq k\). Thus \(\mathcal T_D(Y)\) and \(\mathcal T_D(Y)^{k-1}\) admit finite direct decompositions into nontrivial directly indecomposable structures.

Fix \(j_0\in\{1,\dots,k\}\), and write \(I_{j_0}=\{(i_1,\ell_1),\dots,(i_q,\ell_q)\}\). Repeated application of Lemma~\ref{lem:threshold-factor-recovery} to the finite product decomposition \(\mathcal T_D(Y)\cong\prod_{t=1}^q\mathcal T_D(A_{i_t})\) gives metric spaces \(B_1,\dots,B_q\) such that \(Y\cong B_1\times_\infty\cdots\times_\infty B_q\) and \(\mathcal T_D(B_t)\cong\mathcal T_D(A_{i_t})\) for every \(1\leq t\leq q\).

Lemma~\ref{lem:threshold-recovery} therefore gives \(B_t\cong A_{i_t}\) for every \(1\leq t\leq q\). Hence \(Y\cong A_{i_1}\times_\infty\cdots\times_\infty A_{i_q}\).

Since each \(A_{i_t}\) is \(\ell^\infty\)-prime, this is a finite \(\ell^\infty\)-prime factorization of \(Y\). Consequently, \(\mathcal T_D(Y)\) admits a finite direct decomposition into nontrivial directly indecomposable structures.

For each \(i\) and \(j\), let \(a_{i,j}:=\#\{\ell:(i,\ell)\in I_j\}\). We claim that \(a_{i,j}\) is independent of \(j\). The preceding paragraph shows that the hypotheses of Lemma~\ref{lem:direct-factor-finite-uniqueness} hold for the direct factor \(\mathcal T_D(Y)\). The isomorphism \(\mathcal T_D(X)\cong\mathcal T_D(Y)^k\) lets us regard any one copy of \(\mathcal T_D(Y)\) as a direct factor and the product of the remaining \(k-1\) copies as its complement. Fix \(j\), and apply Lemma~\ref{lem:direct-factor-finite-uniqueness} with \(\mathcal N\) equal to the \(j\)-th copy of \(\mathcal T_D(Y)\) and \(\mathcal P\) equal to the product of the remaining copies. This shows that every finite decomposition of that \(j\)-th copy into nontrivial directly indecomposable structures has the same multiset of isomorphism classes.

Coordinate permutations of \(\mathcal T_D(Y)^k\) identify any two copies of \(\mathcal T_D(Y)\). Hence the multisets of directly indecomposable factors obtained from the sets \(I_j\) are the same for all \(j\). Therefore the numbers \(a_{i,j}\) do not depend on \(j\). Write the common value as \(a_i\).

Since \(I_1,\dots,I_k\) partition the \(m_i\) copies of \(\mathcal T_D(A_i)\), one has \(m_i=ka_i\). Thus \(k\mid m_i\). The recovered factorization of \(Y\) has exactly \(a_i=m_i/k\) factors isometric to \(A_i\) for every \(i\), so \(Y\cong A_1^{m_1/k}\times_\infty\cdots\times_\infty A_s^{m_s/k}\).
\end{proof}

\begin{theorem}\label{thm:finite-distance-k-root}
Let \((X,d)\) satisfy the hypotheses of Theorem~\ref{thm:finite-distance-prime-uniqueness}. Suppose that \((X,d)\cong A_1^{m_1}\times_\infty\cdots\times_\infty A_s^{m_s}\) is a finite \(\ell^\infty\)-prime factorization, where \(s\geq1\), where \(A_1,\dots,A_s\) are pairwise nonisometric, and where \(m_i\geq1\) for every \(1\leq i\leq s\). Let \(k\geq2\).

If some \(m_i\) is not divisible by \(k\), then \(\mathbf{PowStr}_k(X)\) has no objects. If \(k\mid m_i\) for every \(i\), let \(\sqrt[k]{X}\) denote a metric space representing the isometry class of \(A_1^{m_1/k}\times_\infty\cdots\times_\infty A_s^{m_s/k}\). Then \(\Delta_k(\sqrt[k]{X})\cong (X,d)\), and every metric space \(Y\) with \((X,d)\cong\Delta_k(Y)\) satisfies \(Y\cong\sqrt[k]{X}\).
\end{theorem}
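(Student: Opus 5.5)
The plan is to derive all three assertions of Theorem~\ref{thm:finite-distance-k-root} from Lemma~\ref{lem:finite-factor-root-inheritance}, together with the dictionary between \(\mathbf{PowStr}_k(X)\) and \(\mathbf{PowPres}_k(X)\) from Theorem~\ref{thm:k-categorical-classification}. The only genuinely new ingredient is the forward isometry \(\Delta_k(\sqrt[k]{X})\cong(X,d)\); this needs associativity and commutativity of \(\times_\infty\) for arbitrary (possibly infinite) factors.

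\emph{Step 1: the obstruction.} Suppose \(\mathfrak s\) is an object of \(\mathbf{PowStr}_k(X)\). Lemma~\ref{lem:k-presentation-from-structure} gives the presentation \(\mathcal G_k(\mathfrak s)=(P_{\mathfrak s},\phi_{\mathfrak s})\), where \(\phi_{\mathfrak s}:\Delta_k(P_{\mathfrak s})\to(X,d)\) is an isometry. Applying Lemma~\ref{lem:finite-factor-root-inheritance} with \(Y=P_{\mathfrak s}\) gives \(k\mid m_i\) for every \(i\). Hence, if some \(m_i\) is not divisible by \(k\), no object exists.

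\emph{Step 2: existence of the root.} Now assume \(k\mid m_i\) for all \(i\). I would prove directly that \(\Delta_k(A_1^{m_1/k}\times_\infty\cdots\times_\infty A_s^{m_s/k})\cong A_1^{m_1}\times_\infty\cdots\times_\infty A_s^{m_s}\). This is the same argument as Lemma~\ref{lem:k-power-product}, and finiteness of \(A\) is not used there. The required map is the coordinate-reindexing bijection. It sends a \(k\)-tuple of \(\sum_i m_i/k\)-tuples to a \(\sum_i m_i\)-tuple, grouping coordinates by prime type.

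This map is an isometry because the max product metric is a maximum over a finite index set. A maximum of maxima over a partition of that index set equals the global maximum, and the global maximum is invariant under permuting indices. I would state this explicitly as the general associativity and commutativity of finite \(\ell^\infty\)-products. Composing with the given isometry onto \((X,d)\) yields \(\Delta_k(\sqrt[k]{X})\cong(X,d)\).

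\emph{Step 3: uniqueness of the root.} Let \(Y\) be any metric space with \((X,d)\cong\Delta_k(Y)\). Lemma~\ref{lem:finite-factor-root-inheritance} directly gives \(Y\cong A_1^{m_1/k}\times_\infty\cdots\times_\infty A_s^{m_s/k}\cong\sqrt[k]{X}\). Here it matters that the lemma places no finiteness assumption on \(Y\), since \(Y\) is only \(D\)-valued. I would also check that the result does not depend on which representative of the isometry class is fixed as \(\sqrt[k]{X}\), since isometric representatives are interchangeable.

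Essentially all the real difficulty was already absorbed into Lemma~\ref{lem:finite-factor-root-inheritance}, namely the refinement argument distributing prime factors evenly among the \(k\) copies. The main remaining point of care is Step~2. Lemma~\ref{lem:k-power-product} was stated only for finite \(A\), so I would re-verify the reindexing isometry in the general setting rather than cite that lemma. The verification is routine because only finitely many coordinates are involved.
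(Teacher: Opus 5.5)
Your proposal is correct and follows essentially the same route as the paper. The paper also obtains both the obstruction and the uniqueness of the root from Lemma~\ref{lem:finite-factor-root-inheritance}, applied to a presentation supplied by the categorical dictionary, and it obtains \(\Delta_k(\sqrt[k]{X})\cong(X,d)\) from associativity of \(\times_\infty\) and coordinate permutations over a finite index set. The paper adds only one remark: by Theorem~\ref{thm:finite-distance-prime-uniqueness}, the isometry class of \(\sqrt[k]{X}\) does not depend on the chosen prime factorization, which is a well-definedness point and not something the statement requires.
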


\begin{proof}
If \(\mathbf{PowStr}_k(X)\) has an object, then Theorem~\ref{thm:k-categorical-classification} gives a presentation \((P,\phi)\) with \(X\cong\Delta_k(P)\). Lemma~\ref{lem:finite-factor-root-inheritance} then forces \(k\mid m_i\) for every \(1\leq i\leq s\). Hence \(\mathbf{PowStr}_k(X)\) has no objects if some \(m_i\) is not divisible by \(k\).

Assume that \(k\mid m_i\) for every \(i\). Theorem~\ref{thm:finite-distance-prime-uniqueness} shows that this isometry class does not depend on the chosen finite \(\ell^\infty\)-prime factorization of \(X\). Associativity of \(\times_\infty\) and coordinate permutations give \(\Delta_k(\sqrt[k]{X})\cong (X,d)\). If \((X,d)\cong\Delta_k(Y)\), then Lemma~\ref{lem:finite-factor-root-inheritance} gives \(Y\cong\sqrt[k]{X}\).
\end{proof}

\begin{theorem}\label{thm:finite-distance-k-power-classification}
Under the hypotheses and notation of Theorem~\ref{thm:finite-distance-k-root}, suppose that \(k\mid m_i\) for every \(1\leq i\leq s\), and let \(\sqrt[k]{X}\) be the metric space defined there. Fix an isometry \(\phi_0:\Delta_k(\sqrt[k]{X})\to X\), and define
\begin{equation}\label{eq:finite-distance-k-root-subgroup}
H_{\phi_0,k}
:=
\{\phi_0\circ f^{\times k}\circ\phi_0^{-1}:f\in\operatorname{Iso}(\sqrt[k]{X})\}.
\end{equation}
Then the map \(g\mapsto\mathcal F_k(\sqrt[k]{X},g\circ\phi_0)\) induces a bijection
\begin{equation}\label{eq:finite-distance-k-root-coset-bijection}
\operatorname{Iso}(X)/H_{\phi_0,k}
\cong
\operatorname{Ob}(\mathbf{PowStr}_k(X)),
\end{equation}
where the quotient denotes right cosets.
\end{theorem}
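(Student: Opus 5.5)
The plan is to repeat the proof of Theorem~\ref{thm:ordered-k-power-structure-classification} word for word. Finiteness of \(X\) entered that proof only through Lemma~\ref{lem:k-root-uniqueness}, so we replace that lemma by the root uniqueness statement of Theorem~\ref{thm:finite-distance-k-root}. Everything else is purely categorical and holds for an arbitrary metric space \(X\).

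\textbf{Step 1: \(H_{\phi_0,k}\) is a subgroup.} The assignment \(f\mapsto\phi_0\circ f^{\times k}\circ\phi_0^{-1}\) is a group homomorphism \(\operatorname{Iso}(\sqrt[k]{X})\to\operatorname{Iso}(X)\), because \((f\circ g)^{\times k}=f^{\times k}\circ g^{\times k}\). Hence its image \(H_{\phi_0,k}\) is a subgroup.

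\textbf{Step 2: the fibres of the map are the cosets.} For every \(g\in\operatorname{Iso}(X)\), the pair \((\sqrt[k]{X},g\circ\phi_0)\) is an object of \(\mathbf{PowPres}_k(X)\).
\begin{enumerate}
\item Lemma~\ref{lem:powpres-equality-morphism} holds for arbitrary \(X\), since it uses only Theorem~\ref{thm:k-categorical-classification}.
\item Lemma~\ref{lem:k-root-coset-criterion} then applies verbatim, using only the morphism condition of Definition~\ref{def:ordered-power-presentation}. It gives \(\mathcal F_k(\sqrt[k]{X},g\circ\phi_0)=\mathcal F_k(\sqrt[k]{X},h\circ\phi_0)\) if and only if \(g^{-1}h\in H_{\phi_0,k}\).
\end{enumerate}
So the map is constant on cosets of \(H_{\phi_0,k}\) and separates distinct cosets, and it therefore induces a well-defined injective map on the coset set. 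I would check that the coset convention matches: the condition \(g^{-1}h\in H\) says \(gH=hH\), which is the convention used in the earlier theorem.

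\textbf{Step 3: surjectivity.} Let \(\mathfrak s\) be an object of \(\mathbf{PowStr}_k(X)\), and put \((P,\phi):=\mathcal G_k(\mathfrak s)\), so that \(X\cong\Delta_k(P)\).
\begin{enumerate}
\item Theorem~\ref{thm:finite-distance-k-root} gives an isometry \(u:\sqrt[k]{X}\to P\).
\item Set \(g:=\phi\circ u^{\times k}\circ\phi_0^{-1}\in\operatorname{Iso}(X)\). Then \(u\) is a morphism \((\sqrt[k]{X},g\circ\phi_0)\to(P,\phi)\).
\item Lemma~\ref{lem:k-functoriality} gives \(\mathcal F_k(\sqrt[k]{X},g\circ\phi_0)=\mathcal F_k(P,\phi)\).
\item Theorem~\ref{thm:k-categorical-classification} gives \(\mathcal F_k\mathcal G_k(\mathfrak s)=\mathfrak s\).
\end{enumerate}
Combining the last two equalities shows that \(\mathfrak s\) lies in the image.

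\textbf{Main obstacle.} The only nonformal input is the root uniqueness used in Step 3. In this setting \(P\) is an arbitrary, possibly infinite, space with \(X\cong\Delta_k(P)\), so the finite-case argument does not apply. Lemma~\ref{lem:finite-factor-root-inheritance} handles this case through strict refinement, and Theorem~\ref{thm:finite-distance-k-root} packages that argument. I would verify that the lemma's hypotheses hold for \(P\), in particular that \(P\) is \(D\)-valued, which follows from Lemma~\ref{lem:threshold-factor-valued}. With that checked, the argument is complete.
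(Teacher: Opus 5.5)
Your proposal is correct and follows the paper's proof essentially step for step: the subgroup check via the homomorphism \(f\mapsto f^{\times k}\), injectivity on cosets from Lemma~\ref{lem:k-root-coset-criterion}, and surjectivity by transporting \(\mathcal G_k(\mathfrak s)\) along the root isometry \(u\) from Theorem~\ref{thm:finite-distance-k-root} (you cite Lemma~\ref{lem:k-functoriality} where the paper cites the forward direction of Lemma~\ref{lem:powpres-equality-morphism}, which is the same fact). Your reading of the coset condition is also right: \(g^{-1}h\in H_{\phi_0,k}\) means \(gH_{\phi_0,k}=hH_{\phi_0,k}\), and these are left cosets in the usual convention, so the paper's label ``right cosets'' is a naming slip that does not affect the argument.
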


\begin{proof}
The set \(H_{\phi_0,k}\) is a subgroup of \(\operatorname{Iso}(X)\), since it is the conjugate by \(\phi_0\) of the image of the homomorphism \(f\mapsto f^{\times k}\) from \(\operatorname{Iso}(\sqrt[k]{X})\) to \(\operatorname{Iso}(\Delta_k(\sqrt[k]{X}))\).

Lemma~\ref{lem:k-root-coset-criterion} shows that two isometries \(g,h\in\operatorname{Iso}(X)\) determine the same object of \(\mathbf{PowStr}_k(X)\) if and only if \(g^{-1}h\in H_{\phi_0,k}\). Hence the map induces an injection from the right-coset set \(\operatorname{Iso}(X)/H_{\phi_0,k}\) into \(\operatorname{Ob}(\mathbf{PowStr}_k(X))\).

For surjectivity, let \(\mathfrak s\in\operatorname{Ob}(\mathbf{PowStr}_k(X))\), and let \((P,\phi):=\mathcal G_k(\mathfrak s)\). Theorem~\ref{thm:finite-distance-k-root} gives an isometry \(u:\sqrt[k]{X}\to P\). Define \(g:=(\phi\circ u^{\times k})\circ\phi_0^{-1}\). Equation~\eqref{eq:powpres-morphism-compatibility} gives \(\phi\circ u^{\times k}=g\circ\phi_0\). Hence \(u:(\sqrt[k]{X},g\circ\phi_0)\to(P,\phi)\) is a morphism in \(\mathbf{PowPres}_k(X)\). Lemma~\ref{lem:powpres-equality-morphism} therefore gives \(\mathcal F_k(\sqrt[k]{X},g\circ\phi_0)=\mathcal F_k(P,\phi)\). Since \((P,\phi)=\mathcal G_k(\mathfrak s)\), Theorem~\ref{thm:k-categorical-classification} gives \(\mathcal F_k(P,\phi)=\mathfrak s\). Hence \(\mathfrak s=\mathcal F_k(\sqrt[k]{X},g\circ\phi_0)\). Thus the induced map is surjective.
\end{proof}

\subsection{Self-Square Metric Spaces}

Throughout this subsection, all metric spaces are nonempty and all isometries are bijective. We write \(X\cong Y\) when \(X\) and \(Y\) are isometric.

For metric spaces \((X_i,d_i)\) indexed by a set \(I\), define the \(\ell^\infty\) product metric by
\begin{equation}
\label{eq:k-product-metric-2}
d((x_i),(y_i))
:=
\sup_{i\in I} d_i(x_i,y_i).
\end{equation}
Throughout this subsection, every product carries the metric from Equation~\eqref{eq:k-product-metric-2} unless otherwise stated. We write \(X\times_\infty Y\) for the binary product with this metric. We call such a product a metric space only when Equation~\eqref{eq:k-product-metric-2} takes finite values on every pair of points.

\begin{lemma}
\label{lem:product-reindexing}
Let \(A\) be a set. For each \(a\in A\), let \((P_a,d_a)\) be a metric space.

\begin{enumerate}
\item
Let \(I_a\) and \(J_a\) be sets, and suppose that \(\sigma_a:J_a\to I_a\) is a bijection for every \(a\in A\). Then the map
\[
\Phi:
\prod_{a\in A} P_a^{I_a}
\to
\prod_{a\in A} P_a^{J_a}
\]
defined by
\[
\Phi((x_{a,i})_{a\in A,\,i\in I_a})
:=
(x_{a,\sigma_a(j)})_{a\in A,\,j\in J_a}
\]
is an isometry.

\item
For each \(a\in A\), let \(K_a\) and \(L_a\) be sets. Then
\[
\left(
\prod_{a\in A} P_a^{K_a}
\right)
\times_\infty
\left(
\prod_{a\in A} P_a^{L_a}
\right)
\cong
\prod_{a\in A}
P_a^{(K_a\times\{0\})\cup(L_a\times\{1\})}.
\]
\end{enumerate}
\end{lemma}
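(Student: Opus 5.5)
For part (1), I will first check that \(\Phi\) is well defined and bijective. Then I will compare the two suprema that define the metrics. Throughout, \(\prod_{a\in A}P_a^{I_a}\) is read as the product indexed by the disjoint union \(\bigsqcup_{a}\{a\}\times I_a\), carrying the metric from Equation~\eqref{eq:k-product-metric-2}.

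\emph{Bijectivity.} Since each \(\sigma_a\) is a bijection, I define \(\Phi'\) by \(\Phi'((y_{a,j}))=(y_{a,\sigma_a^{-1}(i)})_{a,i}\). A direct substitution shows that \(\Phi'\circ\Phi\) and \(\Phi\circ\Phi'\) are identities.

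\emph{Distances.} For points \(x=(x_{a,i})\) and \(y=(y_{a,i})\), the distance between \(\Phi(x)\) and \(\Phi(y)\) is
\[
\sup_{a\in A}\sup_{j\in J_a}d_a(x_{a,\sigma_a(j)},y_{a,\sigma_a(j)}).
\]
Because \(j\mapsto\sigma_a(j)\) is a bijection \(J_a\to I_a\), the inner set of values \(\{d_a(x_{a,\sigma_a(j)},y_{a,\sigma_a(j)}):j\in J_a\}\) is exactly \(\{d_a(x_{a,i},y_{a,i}):i\in I_a\}\). Hence the suprema coincide, and this holds whether they are finite or infinite. So \(\Phi\) preserves the sup metric. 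In particular, one side is finite-valued exactly when the other is, so neither side is a metric space without the other being one.

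For part (2), I will define the map
\[
\Psi\bigl((x_{a,k})_{a,k\in K_a},(y_{a,l})_{a,l\in L_a}\bigr)=(z_{a,t})_{a\in A,\,t\in(K_a\times\{0\})\cup(L_a\times\{1\})},
\]
where \(z_{a,(k,0)}=x_{a,k}\) and \(z_{a,(l,1)}=y_{a,l}\). Since \(K_a\times\{0\}\) and \(L_a\times\{1\}\) are disjoint, \(z\) is well defined. Splitting \(z\) back into its \(0\)-tagged and \(1\)-tagged coordinates gives the inverse of \(\Psi\).

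For the metric, the \(\ell^\infty\) binary product distance is the maximum of the two suprema over the factors. The distance in the target is a supremum over the index set \(\bigsqcup_a\bigl((K_a\times\{0\})\cup(L_a\times\{1\})\bigr)\). I will then use the elementary fact that a supremum over a union of two sets equals the maximum of the two suprema, including the value \(+\infty\). Applying this with the decomposition into the \(0\)-tagged and \(1\)-tagged indices gives equality of the distances.

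The only point needing care is degenerate index sets. If some \(I_a\), \(K_a\) or \(L_a\) is empty, the corresponding factor is a one-point space, and the supremum over an empty family should be read as \(0\). With this convention the arguments above remain valid. Beyond that I expect no real obstacle: the proof is bookkeeping with indices and the identity \(\sup(S\cup T)=\max(\sup S,\sup T)\).
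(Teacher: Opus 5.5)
Your proposal is correct and follows essentially the same route as the paper: you prove bijectivity by an explicit inverse reindexing, handle part (1) by observing that a bijective reindexing does not change the inner supremum, and handle part (2) with the tagging map $\Psi$ together with $\sup(S\cup T)=\max(\sup S,\sup T)$. Your extra remarks on infinite suprema and empty index sets are harmless refinements that the paper leaves implicit.
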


\begin{proof}
We prove part~(1). Since each \(\sigma_a\) is bijective, the coordinate reindexing defined by the inverse bijections \(\sigma_a^{-1}\) is inverse to \(\Phi\). Hence \(\Phi\) is bijective.

Fix \(x=(x_{a,i})_{a\in A,\,i\in I_a}\) and \(y=(y_{a,i})_{a\in A,\,i\in I_a}\). Equation~\eqref{eq:k-product-metric-2} gives
\begin{align*}
d(\Phi(x),\Phi(y))
&=
\sup_{a\in A}
\sup_{j\in J_a}
d_a(x_{a,\sigma_a(j)},y_{a,\sigma_a(j)}) \\
&=
\sup_{a\in A}
\sup_{i\in I_a}
d_a(x_{a,i},y_{a,i}) \\
&=
d(x,y).
\end{align*}
Therefore \(\Phi\) is an isometry.

We prove part~(2). Define
\[
\Psi:
\left(
\prod_{a\in A}P_a^{K_a}
\right)
\times_\infty
\left(
\prod_{a\in A}P_a^{L_a}
\right)
\to
\prod_{a\in A}
P_a^{(K_a\times\{0\})\cup(L_a\times\{1\})}
\]
by
\[
\Psi((x,y))_{a,(i,0)}
:=
x_{a,i},
\qquad
\Psi((x,y))_{a,(j,1)}
:=
y_{a,j}.
\]

The inverse map sends
\[
z
\mapsto
\Bigl(
(z_{a,(i,0)})_{a\in A,\,i\in K_a},
(z_{a,(j,1)})_{a\in A,\,j\in L_a}
\Bigr).
\]
Hence \(\Psi\) is bijective.

Fix \((x,y)\) and \((x',y')\) in the domain of \(\Psi\). Equation~\eqref{eq:k-product-metric-2} gives
\begin{align*}
d(\Psi(x,y),\Psi(x',y'))
&=
\sup_{a\in A}
\max\left\{
\sup_{i\in K_a} d_a(x_{a,i},x'_{a,i}),
\sup_{j\in L_a} d_a(y_{a,j},y'_{a,j})
\right\} \\
&=
\max\left\{
\sup_{a\in A}\sup_{i\in K_a} d_a(x_{a,i},x'_{a,i}),
\sup_{a\in A}\sup_{j\in L_a} d_a(y_{a,j},y'_{a,j})
\right\} \\
&=
\max\{d(x,x'),d(y,y')\}.
\end{align*}
The last expression is the distance from \((x,y)\) to \((x',y')\) in the binary \(\ell^\infty\)-product. Therefore \(\Psi\) is an isometry.
\end{proof}

\subsubsection{Binary Branch Representations of Self-Square Metric Spaces}
\label{subsec:binary-branch-representations}

We now derive a binary branch representation of self-square metric spaces. Iterating the coordinate maps of a self-square isometry produces a binary branch geometry indexed by \(\Sigma\).

Define \(\Sigma:=\{0,1\}^{\mathbb N}\). We call elements of \(\Sigma\) branches. We write \(\{0,1\}^{<\mathbb N}\) for the set of finite binary words, \(\emptyset\) for the empty word, and \(|w|\) for the length of a finite word \(w\).

For \(\beta=(\beta_1,\beta_2,\ldots)\in\Sigma\), define \(\beta|0:=\emptyset\) and \(\beta|n:=\beta_1\cdots\beta_n\) for every \(n\geq1\). For \(i\in\{0,1\}\), define \(i\beta:=(i,\beta_1,\beta_2,\ldots)\).

Whenever \((X,d)\) is a metric space and maps \(f_0,f_1:X\to X\) are fixed, define \(f_\emptyset:=\operatorname{id}_X\). For each nonempty word \(w=i_1\cdots i_n\), define \(f_w:=f_{i_n}\circ\cdots\circ f_{i_1}\).

\begin{lemma}
\label{lem:binary-branch-iteration}
Let \((X,d)\) be a metric space, and let \(f_0,f_1:X\to X\) satisfy
\begin{equation}
\label{eq:binary-branch-max}
d(x,y)
=
\max\{
d(f_0(x),f_0(y)),
d(f_1(x),f_1(y))
\}
\end{equation}
for all \(x,y\in X\). Then, for every \(n\geq1\),
\begin{equation}
\label{eq:binary-branch-iterated-max}
d(x,y)
=
\max_{|w|=n} d(f_w(x),f_w(y))
\end{equation}
for all \(x,y\in X\).
\end{lemma}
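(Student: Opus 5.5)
We prove Equation~\eqref{eq:binary-branch-iterated-max} by induction on \(n\), with the base case \(n=1\) being exactly Equation~\eqref{eq:binary-branch-max}, since the words of length one are \(0\) and \(1\) and \(f_0,f_1\) are the corresponding maps.

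For the inductive step, assume Equation~\eqref{eq:binary-branch-iterated-max} holds for some \(n\geq1\) and all \(x,y\in X\). For each word \(w\) with \(|w|=n\), apply Equation~\eqref{eq:binary-branch-max} to the pair \(f_w(x),f_w(y)\) to get
\[
d(f_w(x),f_w(y))
=
\max\{
d(f_0(f_w(x)),f_0(f_w(y))),
d(f_1(f_w(x)),f_1(f_w(y)))
\}.
\]
With the convention \(f_{i_1\cdots i_n}=f_{i_n}\circ\cdots\circ f_{i_1}\), we have \(f_i\circ f_w=f_{wi}\), where \(wi\) is the word obtained by appending \(i\) to \(w\). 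Substituting into the inductive hypothesis gives
\[
d(x,y)
=
\max_{|w|=n}\max_{i\in\{0,1\}} d(f_{wi}(x),f_{wi}(y)).
\]
The map \((w,i)\mapsto wi\) is a bijection from \(\{0,1\}^n\times\{0,1\}\) onto \(\{0,1\}^{n+1}\), so the iterated maximum equals \(\max_{|v|=n+1}d(f_v(x),f_v(y))\). All maxima are over finite sets, so no supremum issues arise.

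The only point requiring care is the order of composition in \(f_w\). The definition makes appending a letter on the right correspond to postcomposition, which is exactly what the inductive step produces. Had the convention been reversed, one would instead expand the inner map first, using Equation~\eqref{eq:binary-branch-max} at the points \(x,y\) inside the hypothesis applied to \(f_0(x),f_0(y)\) and \(f_1(x),f_1(y)\). That version also works because the hypothesis holds for all pairs. I therefore expect no genuine obstacle beyond this bookkeeping.
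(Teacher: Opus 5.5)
Your proposal is correct and follows the paper's proof: induction on \(n\) with base case given by Equation~\eqref{eq:binary-branch-max}, applying that equation to the pair \(f_w(x),f_w(y)\), using \(f_i\circ f_w=f_{wi}\), and taking the maximum over words of length \(n\). Your explicit check of the composition convention and of the bijection \(\{0,1\}^n\times\{0,1\}\to\{0,1\}^{n+1}\) is a detail the paper leaves implicit.
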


\begin{proof}
Equation~\eqref{eq:binary-branch-max} implies that \(f_0\) and \(f_1\) are \(1\)-Lipschitz. We prove Equation~\eqref{eq:binary-branch-iterated-max} by induction on \(n\). The case \(n=1\) is Equation~\eqref{eq:binary-branch-max}.

Assume that Equation~\eqref{eq:binary-branch-iterated-max} holds for some \(n\geq1\). Fix a word \(w\) of length \(n\). Equation~\eqref{eq:binary-branch-max} applied to \(f_w(x)\) and \(f_w(y)\) gives
\[
d(f_w(x),f_w(y))
=
\max_{i\in\{0,1\}}
d(f_{wi}(x),f_{wi}(y)).
\]
Taking the maximum over all words \(w\) of length \(n\) and using the induction hypothesis gives Equation~\eqref{eq:binary-branch-iterated-max} for \(n+1\).
\end{proof}

\begin{lemma}
\label{lem:binary-branch-pseudometrics}
Let \((X,d)\) be a metric space, and let \(f_0,f_1:X\to X\) satisfy Equation~\eqref{eq:binary-branch-max}. For each \(\beta\in\Sigma\), define
\begin{equation}
\label{eq:binary-branch-limit}
\rho_\beta(x,y)
:=
\lim_{n\to\infty}
d(f_{\beta|n}(x),f_{\beta|n}(y)).
\end{equation}
Then each \(\rho_\beta\) is a pseudometric on \(X\), and
\begin{equation}
\label{eq:binary-branch-reconstruction}
d(x,y)
=
\sup_{\beta\in\Sigma}\rho_\beta(x,y)
\end{equation}
for all \(x,y\in X\).
\end{lemma}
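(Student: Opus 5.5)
The plan has three parts. First show that the limit in Equation~\eqref{eq:binary-branch-limit} exists by monotonicity. Then get the pseudometric axioms by passing to the limit. Finally prove Equation~\eqref{eq:binary-branch-reconstruction} as two inequalities, using Lemma~\ref{lem:binary-branch-iteration} together with a compactness (K\"onig-type) argument on the binary tree.

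\textbf{Existence.} Fix \(\beta\in\Sigma\) and \(x,y\in X\), and set \(a_n:=d(f_{\beta|n}(x),f_{\beta|n}(y))\). By definition \(f_{\beta|(n+1)}=f_{\beta_{n+1}}\circ f_{\beta|n}\). Equation~\eqref{eq:binary-branch-max}, applied to the points \(f_{\beta|n}(x)\) and \(f_{\beta|n}(y)\), then gives \(a_{n+1}\leq a_n\). So \((a_n)\) is nonincreasing and bounded below by \(0\), and \(\rho_\beta(x,y)=\inf_n a_n\) exists, with \(\rho_\beta(x,y)\leq a_0=d(x,y)\).

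\textbf{Pseudometric axioms.} Each map \(d_n^\beta(x,y):=d(f_{\beta|n}(x),f_{\beta|n}(y))\) is a pseudometric on \(X\), being the pullback of \(d\) along \(f_{\beta|n}\). Nonnegativity, the vanishing \(d_n^\beta(x,x)=0\), symmetry, and the triangle inequality are all preserved under pointwise limits. Hence \(\rho_\beta\) is a pseudometric.

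\textbf{The bound \(\leq\).} From \(\rho_\beta\leq d\) for every \(\beta\) we get \(\sup_\beta\rho_\beta(x,y)\leq d(x,y)\).

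\textbf{The bound \(\geq\), via a branch of the tree.} This is the main step. I will construct a single branch \(\beta\) with \(a_n=d(x,y)\) for all \(n\). Call a finite word \(w\) \emph{good} if \(d(f_w(x),f_w(y))=d(x,y)\). The empty word is good. If \(w\) is good, Equation~\eqref{eq:binary-branch-max} applied at the points \(f_w(x),f_w(y)\) shows that the maximum over \(i\in\{0,1\}\) of \(d(f_{wi}(x),f_{wi}(y))\) equals \(d(f_w(x),f_w(y))=d(x,y)\). Since the maximum is over two values, it is attained, so some child \(wi\) is also good. Choosing \(\beta_{n+1}\) recursively as such a child yields a branch \(\beta\) all of whose prefixes are good. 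Therefore \(\rho_\beta(x,y)=d(x,y)\), and the supremum is in fact attained.

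The only point to be careful about is that the maximum in Equation~\eqref{eq:binary-branch-max} is genuinely attained because it ranges over a finite set. This is what makes the greedy descent work without any compactness argument on \(\Sigma\); Lemma~\ref{lem:binary-branch-iteration} is not even needed beyond this one-step version.
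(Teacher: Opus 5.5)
Your proof is correct, but it takes a different route from the paper at the main step, the inequality $\sup_{\beta\in\Sigma}\rho_\beta(x,y)\geq d(x,y)$. Your other steps agree with the paper: monotonicity of $n\mapsto d(f_{\beta|n}(x),f_{\beta|n}(y))$, the pseudometric axioms by passage to the limit, and the bound $\rho_\beta\leq d$.

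The paper works with a threshold. It fixes $r<d(x,y)$ and considers the prefix-closed set $T_r(x,y)$ of words $w$ with $d(f_w(x),f_w(y))>r$. It uses Lemma~\ref{lem:binary-branch-iteration} to see that this set contains words of every length. K\H{o}nig's lemma then yields a branch with $\rho_\beta(x,y)\geq r$. That branch depends on $r$, so $d(x,y)$ is recovered only as a supremum.

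You instead use Equation~\eqref{eq:binary-branch-max} locally. The words that preserve $d(x,y)$ exactly include $\emptyset$. Each such word has a child with the same property, because a maximum of two numbers equals one of them. So a branch can be built by plain recursion, for instance preferring the child $0$. This needs no compactness argument and no appeal to Lemma~\ref{lem:binary-branch-iteration}.

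Your route is shorter and gives a stronger conclusion: the supremum in Equation~\eqref{eq:binary-branch-reconstruction} is attained, so $d(x,y)=\max_{\beta\in\Sigma}\rho_\beta(x,y)$. The same one-step observation also shows that every word in the paper's $T_r(x,y)$ already has a child in $T_r(x,y)$. So K\H{o}nig's lemma is more machinery than the hypothesis requires. The paper's argument is the generic tree-compactness template, relying only on level-wise information and prefix monotonicity, whereas yours makes full use of the one-step identity.
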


\begin{proof}
Equation~\eqref{eq:binary-branch-max} implies that \(f_0\) and \(f_1\) are \(1\)-Lipschitz. Hence, for fixed \(\beta,x,y\), the sequence
\[
n
\mapsto
d(f_{\beta|n}(x),f_{\beta|n}(y))
\]
is nonincreasing, so the limit in Equation~\eqref{eq:binary-branch-limit} exists.

Symmetry and the identity \(\rho_\beta(x,x)=0\) are immediate. The triangle inequality for \(d\), applied to \(f_{\beta|n}(x)\), \(f_{\beta|n}(y)\), and \(f_{\beta|n}(z)\), gives
\[
d(f_{\beta|n}(x),f_{\beta|n}(z))
\leq
d(f_{\beta|n}(x),f_{\beta|n}(y))
+
d(f_{\beta|n}(y),f_{\beta|n}(z)).
\]
Taking limits on both sides gives the triangle inequality for \(\rho_\beta\).

Since each \(f_{\beta|n}\) is \(1\)-Lipschitz, we have \(\rho_\beta(x,y)\leq d(x,y)\) for all \(\beta,x,y\). Hence \(\sup_{\beta\in\Sigma}\rho_\beta(x,y)\leq d(x,y)\).

Fix \(x,y\in X\), and let \(r<d(x,y)\). For every \(n\geq1\), Lemma~\ref{lem:binary-branch-iteration} gives a word \(w\in\{0,1\}^n\) such that \(d(f_w(x),f_w(y))>r\). Define
\[
T_r(x,y)
:=
\{
w\in\{0,1\}^{<\mathbb N}
:
d(f_w(x),f_w(y))>r
\}.
\]
Then \(T_r(x,y)\) contains a word of every length.

Suppose that \(v=wu\) and \(v\in T_r(x,y)\). Then \(f_v=f_u\circ f_w\). Since \(f_u\) is \(1\)-Lipschitz,
\[
d(f_v(x),f_v(y))
\leq
d(f_w(x),f_w(y)).
\]
Therefore \(w\in T_r(x,y)\).

Since \(T_r(x,y)\) is finitely branching and contains vertices at every level, K\H{o}nig's lemma \cite[p.~203]{konig2001theorie} gives \(\beta\in\Sigma\) such that \(\beta|n\in T_r(x,y)\) for every \(n\geq1\). Thus \(d(f_{\beta|n}(x),f_{\beta|n}(y))>r\) for every \(n\geq1\), and Equation~\eqref{eq:binary-branch-limit} gives \(\rho_\beta(x,y)\geq r\). Since \(r<d(x,y)\) was arbitrary, we obtain \(\sup_{\beta\in\Sigma}\rho_\beta(x,y)\geq d(x,y)\). Combining this inequality with the reverse inequality gives Equation~\eqref{eq:binary-branch-reconstruction}.
\end{proof}

Assume now that \((X,d)\) admits an isometry \(\Phi:(X,d)\xrightarrow{\sim}X\times_\infty X\). Define \(f_0:=\pi_0\circ\Phi\) and \(f_1:=\pi_1\circ\Phi\). For each \(\beta\in\Sigma\), let \(\rho_\beta\) denote the pseudometric from Lemma~\ref{lem:binary-branch-pseudometrics}, and define \(x\sim_\beta y\) if and only if \(\rho_\beta(x,y)=0\). Let \(M_\beta:=X/\!\sim_\beta\). The pseudometric \(\rho_\beta\) induces a metric on \(M_\beta\), which we again denote by \(\rho_\beta\).

Define \(J:X\to\prod_{\beta\in\Sigma}M_\beta\) by \(J(x):=([x]_\beta)_{\beta\in\Sigma}\), and set \(S:=J(X)\).

\begin{lemma}
\label{lem:binary-branch-shift}
For every \(i\in\{0,1\}\), every \(\beta\in\Sigma\), and every \(x,y\in X\),
\begin{equation}
\label{eq:binary-branch-shift}
\rho_{i\beta}(x,y)
=
\rho_\beta(f_i(x),f_i(y)).
\end{equation}
\end{lemma}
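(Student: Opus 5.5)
The plan is to unwind the definition of \(\rho_{i\beta}\) from Equation~\eqref{eq:binary-branch-limit} and use the convention for composing the maps \(f_w\). The key point is that prefixing a branch by \(i\) corresponds to applying \(f_i\) first.

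First, fix \(i\in\{0,1\}\) and \(\beta\in\Sigma\), and compute the finite prefixes of \(i\beta\). By definition \((i\beta)|0=\emptyset\). For every \(n\geq1\), the prefix \((i\beta)|n\) is the word \(i\,\beta_1\cdots\beta_{n-1}\), that is, the concatenation of \(i\) with \(\beta|(n-1)\).

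Second, use the convention \(f_w=f_{i_n}\circ\cdots\circ f_{i_1}\) for \(w=i_1\cdots i_n\), under which the first letter acts first. This gives
\[
f_{(i\beta)|n}=f_{\beta|(n-1)}\circ f_i
\]
for every \(n\geq1\). The case \(n=1\) reads \(f_{(i\beta)|1}=f_i=f_\emptyset\circ f_i\), which holds because \(f_\emptyset=\mathrm{id}_X\).

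Third, substitute this identity into the defining limit:
\[
\rho_{i\beta}(x,y)=\lim_{n\to\infty}d\bigl(f_{\beta|(n-1)}(f_i(x)),f_{\beta|(n-1)}(f_i(y))\bigr).
\]
Reindexing with \(m=n-1\) turns the right side into the sequence that defines \(\rho_\beta(f_i(x),f_i(y))\). Lemma~\ref{lem:binary-branch-pseudometrics} shows that this limit exists, since the sequence is nonincreasing. A shifted sequence has the same limit as the original, which gives Equation~\eqref{eq:binary-branch-shift}.

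The only hypothesis this argument uses is that \(f_0,f_1\) satisfy Equation~\eqref{eq:binary-branch-max}. That holds here because \(\Phi\) is an isometry onto \(X\times_\infty X\) and \(f_j=\pi_j\circ\Phi\).

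There is no real obstacle in this argument. The one step that needs care is the order of composition: the prefix letter \(i\) must be applied innermost, which is exactly what the convention \(f_w=f_{i_n}\circ\cdots\circ f_{i_1}\) provides.
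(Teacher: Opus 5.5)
Your proposal is correct and follows the paper's proof: both use the identity \(f_{(i\beta)|(n+1)}=f_{\beta|n}\circ f_i\), which comes from the composition convention for \(f_w\), substitute it into the defining limit, and reindex the sequence. Your extra remarks, that the limit exists by Lemma~\ref{lem:binary-branch-pseudometrics} and that Equation~\eqref{eq:binary-branch-max} holds because \(\Phi\) is an isometry onto \(X\times_\infty X\), are left implicit in the paper but are accurate.
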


\begin{proof}
For every \(n\geq0\), the identity \(f_{(i\beta)|(n+1)}=f_{\beta|n}\circ f_i\) follows from the definitions. Hence
\begin{align*}
\rho_{i\beta}(x,y)
&=
\lim_{n\to\infty}
d(f_{(i\beta)|(n+1)}(x),f_{(i\beta)|(n+1)}(y)) \\
&=
\lim_{n\to\infty}
d(f_{\beta|n}(f_i(x)),f_{\beta|n}(f_i(y))) \\
&=
\rho_\beta(f_i(x),f_i(y)).
\end{align*}
\end{proof}

\begin{definition}
\label{def:binary-branch-system}
A reconstructive binary branch system consists of metric spaces \((M_\beta,\rho_\beta)\) indexed by \(\beta\in\Sigma\), isometries \(\alpha_i^\beta:M_{i\beta}\xrightarrow{\sim}M_\beta\) for \(i\in\{0,1\}\) and \(\beta\in\Sigma\), and a subset \(S\subseteq\prod_{\beta\in\Sigma}M_\beta\).

Given such data, define
\[
d_{\sup}(z,z')
:=
\sup_{\beta\in\Sigma}
\rho_\beta(z_\beta,z'_\beta).
\]
Define maps \(L,R:S\to S\) by
\[
L(z)_\beta
:=
\alpha_0^\beta(z_{0\beta}),
\qquad
R(z)_\beta
:=
\alpha_1^\beta(z_{1\beta}).
\]
Define \(\Theta:S\to S\times_\infty S\) by \(\Theta(z):=(L(z),R(z))\).

We require:
\begin{enumerate}
\item
\label{item:binary-branch-finite}
the function \(d_{\sup}\) defines a metric on \(S\);

\item
\label{item:binary-branch-invariant}
the maps \(L\) and \(R\) map \(S\) into itself;

\item
\label{item:binary-branch-bijective}
the map \(\Theta:S\to S\times_\infty S\) is bijective.
\end{enumerate}
\end{definition}

\begin{figure}[ht]
\centering
\includegraphics[width=0.82\textwidth]{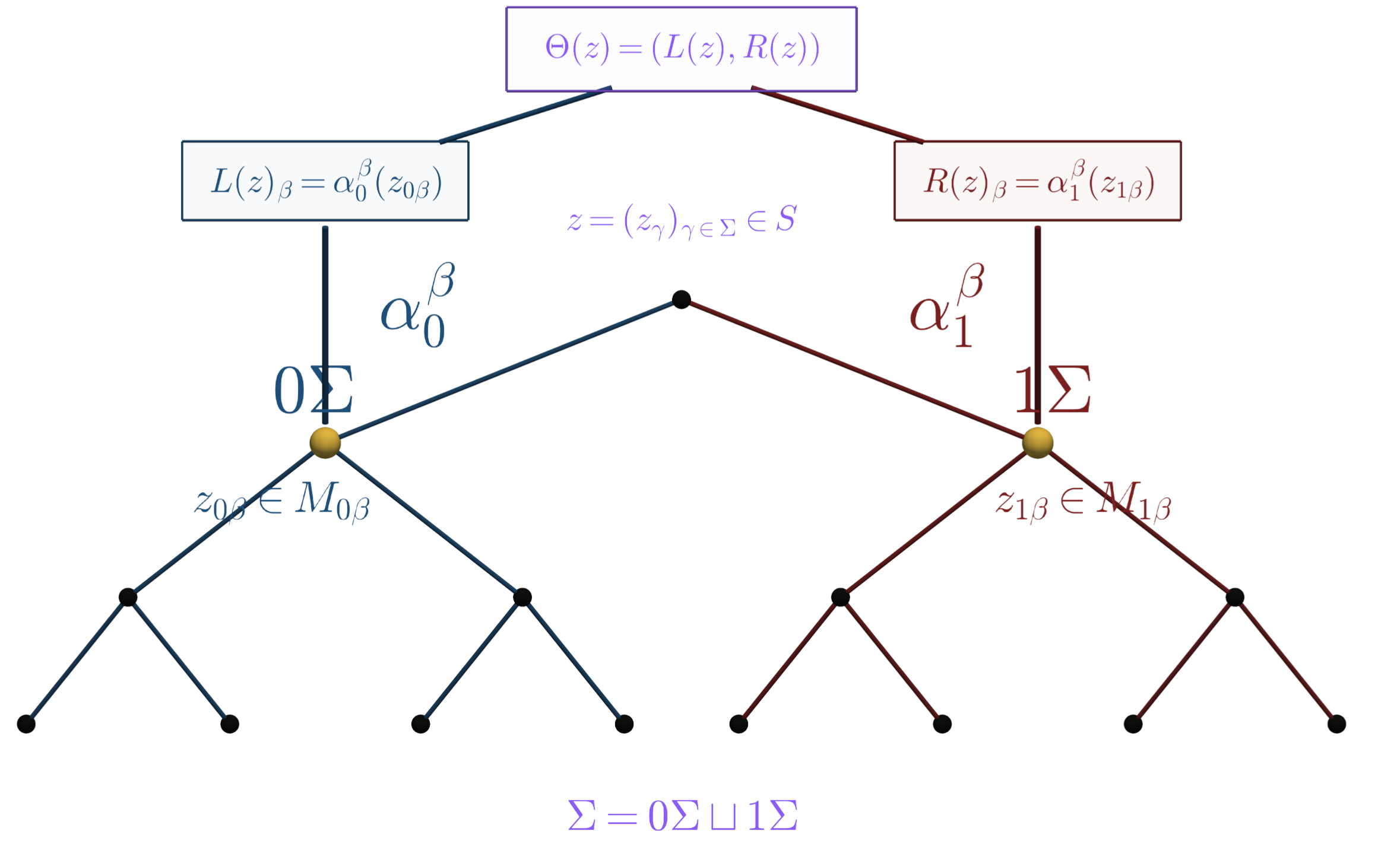}
\caption{
Binary branch reconstruction. The branch families \(0\Sigma\) and \(1\Sigma\) are transported back to \(\Sigma\), producing the two components \(L(z)\) and \(R(z)\) of the self-square map.
}
\label{fig:binary-branch-reconstruction}
\end{figure}

Figure~\ref{fig:binary-branch-reconstruction} shows how the branch families \(0\Sigma\) and \(1\Sigma\) determine the two components of the self-square map.

\begin{lemma}
\label{lem:self-square-to-branch-system}
Let \((X,d)\) be a metric space, and let \(\Phi:(X,d)\xrightarrow{\sim}X\times_\infty X\) be an isometry. Then these data define a reconstructive binary branch system. Moreover, \(J:(X,d)\xrightarrow{\sim}(S,d_{\sup})\) is an isometry.
\end{lemma}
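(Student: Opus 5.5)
The data are the spaces \((M_\beta,\rho_\beta)\) built from \(f_0=\pi_0\circ\Phi\) and \(f_1=\pi_1\circ\Phi\), together with \(S=J(X)\). Two things remain to be supplied. First, we define the branch isometries \(\alpha_i^\beta:M_{i\beta}\to M_\beta\) by
\[
\alpha_i^\beta([x]_{i\beta}):=[f_i(x)]_\beta .
\]
Second, we verify the three conditions of Definition~\ref{def:binary-branch-system} and the claim that \(J\) is an isometry. Since \(\Phi\) is an isometry onto \(X\times_\infty X\), the maps \(f_0,f_1\) satisfy Equation~\eqref{eq:binary-branch-max}. Lemma~\ref{lem:binary-branch-pseudometrics} therefore applies, and every \(\rho_\beta\) is a pseudometric.

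\emph{Branch isometries.} Lemma~\ref{lem:binary-branch-shift} gives \(\rho_{i\beta}(x,y)=\rho_\beta(f_i(x),f_i(y))\). This identity shows at once that \(\alpha_i^\beta\) is well defined, injective and distance preserving. For surjectivity, we use that \(f_i\) is surjective: given \(u\in X\), pick any \(v\in X\) and set \(x:=\Phi^{-1}(u,v)\) or \(x:=\Phi^{-1}(v,u)\), according to \(i\), so that \(f_i(x)=u\). Hence \(\alpha_i^\beta\) is a bijective isometry.

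\emph{\(J\) is an isometry and \(d_{\sup}\) is a metric.} Equation~\eqref{eq:binary-branch-reconstruction} gives
\[
d_{\sup}(J(x),J(y))=\sup_\beta\rho_\beta(x,y)=d(x,y).
\]
Consequently \(d_{\sup}\) is finite on \(S\). It separates points of \(S\): if \(d_{\sup}(J(x),J(y))=0\) then \(d(x,y)=0\), so \(x=y\) and \(J(x)=J(y)\). Thus \(d_{\sup}\) is a metric on \(S\), and \(J:X\to S\) is a surjective isometry, hence bijective. This gives condition~\ref{item:binary-branch-finite}.

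\emph{Intertwining.} The key identity is \(L\circ J=J\circ f_0\) and \(R\circ J=J\circ f_1\). Indeed,
\[
L(J(x))_\beta=\alpha_0^\beta([x]_{0\beta})=[f_0(x)]_\beta=J(f_0(x))_\beta ,
\]
and the same computation works for \(R\). This shows that \(L\) and \(R\) map \(S\) into \(S\), which is condition~\ref{item:binary-branch-invariant}. It also gives
\[
\Theta\circ J=(J\times J)\circ\Phi .
\]
Since \(J\) and \(\Phi\) are bijections, \(J\times J\) is a bijection \(X\times X\to S\times S\), so \(\Theta=(J\times J)\circ\Phi\circ J^{-1}\) is bijective. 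This is condition~\ref{item:binary-branch-bijective}.

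The only delicate points are two bookkeeping checks. One is the index convention in Lemma~\ref{lem:binary-branch-shift}, namely that \(f_{(i\beta)|(n+1)}=f_{\beta|n}\circ f_i\), which makes \(\alpha_i^\beta\) land in \(M_\beta\) and not in some other quotient. The other is the surjectivity of each \(\alpha_i^\beta\), which requires the surjectivity of \(f_i\) coming from \(\Phi\) being onto the square. Everything else follows directly from Lemmas~\ref{lem:binary-branch-pseudometrics} and~\ref{lem:binary-branch-shift}.
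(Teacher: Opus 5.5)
Your proposal is correct and follows essentially the same route as the paper: you use the same definition of \(\alpha_i^\beta\), justified by Lemma~\ref{lem:binary-branch-shift}, get surjectivity of \(\alpha_i^\beta\) from that of \(\Phi\), get the isometry \(J\) from Equation~\eqref{eq:binary-branch-reconstruction}, and use the intertwining identities \(L\circ J=J\circ f_0\) and \(R\circ J=J\circ f_1\). The only difference is cosmetic: you obtain bijectivity of \(\Theta\) in one step from \(\Theta\circ J=(J\times J)\circ\Phi\), whereas the paper checks surjectivity and injectivity separately with the same ingredients.
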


\begin{proof}
Equation~\eqref{eq:binary-branch-reconstruction} gives
\[
d_{\sup}(J(x),J(y))
=
d(x,y)
\]
for all \(x,y\in X\). Hence \(J:(X,d)\to(S,d_{\sup})\) is an isometry. Since \(S=J(X)\), the function \(d_{\sup}\) defines a metric on \(S\).

For each \(i\in\{0,1\}\) and \(\beta\in\Sigma\), define \(\alpha_i^\beta:M_{i\beta}\to M_\beta\) by \(\alpha_i^\beta([x]_{i\beta}):=[f_i(x)]_\beta\).

If \([x]_{i\beta}=[y]_{i\beta}\), then \(\rho_{i\beta}(x,y)=0\). Lemma~\ref{lem:binary-branch-shift} gives \(\rho_\beta(f_i(x),f_i(y))=0\), so \([f_i(x)]_\beta=[f_i(y)]_\beta\). Thus \(\alpha_i^\beta\) is well defined.

For all \(x,y\in X\), Lemma~\ref{lem:binary-branch-shift} gives
\begin{align*}
\rho_\beta(
\alpha_i^\beta([x]_{i\beta}),
\alpha_i^\beta([y]_{i\beta})
)
&=
\rho_\beta(f_i(x),f_i(y)) \\
&=
\rho_{i\beta}(x,y).
\end{align*}
Hence \(\alpha_i^\beta\) preserves distances.

Let \([a]_\beta\in M_\beta\), and choose \(b\in X\). If \(i=0\), surjectivity of \(\Phi\) gives \(x\in X\) such that \(\Phi(x)=(a,b)\). If \(i=1\), surjectivity of \(\Phi\) gives \(x\in X\) such that \(\Phi(x)=(b,a)\). In both cases \(f_i(x)=a\), so \(\alpha_i^\beta([x]_{i\beta})=[a]_\beta\). Hence \(\alpha_i^\beta\) is surjective. Therefore \(\alpha_i^\beta\) is an isometry.

For every \(x\in X\), we have
\[
L(J(x))
=
J(f_0(x))
\]
and
\[
R(J(x))
=
J(f_1(x)).
\]
Hence \(L(S)\subseteq S\) and \(R(S)\subseteq S\).

Let \((u,v)\in S\times_\infty S\). Choose \(a,b\in X\) such that \(u=J(a)\) and \(v=J(b)\). Since \(\Phi\) is surjective, choose \(x\in X\) such that \(\Phi(x)=(a,b)\). Then \(f_0(x)=a\) and \(f_1(x)=b\), so \(\Theta(J(x))=(u,v)\). Hence \(\Theta\) is surjective.

Suppose that \(\Theta(J(x))=\Theta(J(y))\). Then \(L(J(x))=L(J(y))\) and \(R(J(x))=R(J(y))\). Hence
\[
J(f_0(x))
=
J(f_0(y))
\]
and
\[
J(f_1(x))
=
J(f_1(y)).
\]
Since \(J\) is injective, we obtain \(f_0(x)=f_0(y)\) and \(f_1(x)=f_1(y)\). Since \(\Phi\) is injective, we conclude that \(x=y\). Therefore \(\Theta\) is injective.
\end{proof}

\begin{lemma}
\label{lem:branch-system-square-isometry}
Let \((M_\beta,\rho_\beta)\), where \(\beta\in\Sigma\), be metric spaces. Let \(S\subseteq\prod_{\beta\in\Sigma}M_\beta\), and let \(\alpha_i^\beta:M_{i\beta}\xrightarrow{\sim}M_\beta\) be isometries for \(i\in\{0,1\}\) and \(\beta\in\Sigma\).

Assume that the resulting binary branch system is reconstructive. Then \(\Theta:(S,d_{\sup})\xrightarrow{\sim}S\times_\infty S\) is an isometry.
\end{lemma}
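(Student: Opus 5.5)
Since reconstructive systems already make \(\Theta\) bijective by condition~\ref{item:binary-branch-bijective} of Definition~\ref{def:binary-branch-system}, it remains to show that \(\Theta\) preserves distances. Concretely, for all \(z,z'\in S\) we want
\[
d_{\sup}(z,z')
=
\max\{d_{\sup}(L(z),L(z')),d_{\sup}(R(z),R(z'))\}.
\]
We will compute the right side directly from the definitions of \(L\), \(R\) and \(d_{\sup}\), using only that each \(\alpha_i^\beta\) is an isometry and that \(\Sigma\) splits as \(0\Sigma\sqcup 1\Sigma\).

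First fix \(i\in\{0,1\}\). By definition, \(L(z)_\beta=\alpha_0^\beta(z_{0\beta})\) and \(R(z)_\beta=\alpha_1^\beta(z_{1\beta})\). Since \(\alpha_i^\beta:(M_{i\beta},\rho_{i\beta})\to(M_\beta,\rho_\beta)\) preserves distances,
\[
\rho_\beta(L(z)_\beta,L(z')_\beta)=\rho_{0\beta}(z_{0\beta},z'_{0\beta}),
\]
and similarly for \(R\) with \(1\beta\). Taking the supremum over \(\beta\in\Sigma\), we get
\[
d_{\sup}(L(z),L(z'))=\sup_{\gamma\in 0\Sigma}\rho_\gamma(z_\gamma,z'_\gamma)
\]
and the analogous identity for \(R\) over \(1\Sigma\). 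Here we use that \(\beta\mapsto i\beta\) is a bijection from \(\Sigma\) onto \(i\Sigma\), which reindexes the supremum.

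Next, every branch \(\gamma\in\Sigma\) begins with either \(0\) or \(1\), so \(\Sigma=0\Sigma\cup1\Sigma\). The supremum over a union is the maximum of the two suprema, which gives
\[
\max\{d_{\sup}(L(z),L(z')),d_{\sup}(R(z),R(z'))\}=\sup_{\gamma\in\Sigma}\rho_\gamma(z_\gamma,z'_\gamma)=d_{\sup}(z,z').
\]
The left side is the \(\ell^\infty\) distance between \(\Theta(z)\) and \(\Theta(z')\) in \(S\times_\infty S\). Condition~\ref{item:binary-branch-invariant} guarantees that \(L(z),R(z)\in S\), and condition~\ref{item:binary-branch-finite} guarantees that \(d_{\sup}\) is a finite metric on \(S\), so all these quantities are finite and make sense.

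No step is genuinely hard. The only point needing care is the reindexing of the supremum through the bijection \(\beta\mapsto i\beta\), together with finiteness. Finiteness of \(d_{\sup}\) on \(S\) also gives finiteness of the component suprema, since each is bounded by \(d_{\sup}(z,z')\). Combining distance preservation with the assumed bijectivity shows that \(\Theta\) is a bijective isometry.
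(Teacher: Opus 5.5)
Your proof is correct and follows the same route as the paper. You take bijectivity of \(\Theta\) from the reconstructive hypothesis, split \(\Sigma=0\Sigma\sqcup1\Sigma\), and use that each \(\alpha_i^\beta\) preserves distances to identify the two partial suprema with \(d_{\sup}(L(z),L(z'))\) and \(d_{\sup}(R(z),R(z'))\).
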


\begin{proof}
Since \(\Theta\) is bijective by Definition~\ref{def:binary-branch-system}, it suffices to prove that \(\Theta\) preserves distances.

Fix \(z,z'\in S\). Every \(\gamma\in\Sigma\) has a unique form \(0\beta\) or \(1\beta\). Therefore
\begin{align*}
d_{\sup}(z,z')
&=
\sup_{\gamma\in\Sigma}
\rho_\gamma(z_\gamma,z'_\gamma) \\
&=
\max\Bigl\{
\sup_{\beta\in\Sigma}
\rho_{0\beta}(z_{0\beta},z'_{0\beta}),
\sup_{\beta\in\Sigma}
\rho_{1\beta}(z_{1\beta},z'_{1\beta})
\Bigr\}.
\end{align*}

For every \(\beta\in\Sigma\), the maps \(\alpha_0^\beta\) and \(\alpha_1^\beta\) are isometries. Hence
\[
\rho_{0\beta}(z_{0\beta},z'_{0\beta})
=
\rho_\beta(L(z)_\beta,L(z')_\beta)
\]
and
\[
\rho_{1\beta}(z_{1\beta},z'_{1\beta})
=
\rho_\beta(R(z)_\beta,R(z')_\beta).
\]
Therefore
\[
d_{\sup}(z,z')
=
\max\{
d_{\sup}(L(z),L(z')),
d_{\sup}(R(z),R(z'))
\}.
\]
This quantity equals the distance from \(\Theta(z)\) to \(\Theta(z')\) in \(S\times_\infty S\). Therefore \(\Theta\) preserves distances.
\end{proof}

\begin{theorem}
\label{thm:binary-branch-representation}
A metric space \((X,d)\) is isometric to \(X\times_\infty X\) if and only if there exists a reconstructive binary branch system whose associated metric space \((S,d_{\sup})\) is isometric to \((X,d)\).
\end{theorem}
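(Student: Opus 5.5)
The proof splits into the two implications. The forward direction comes from Lemma~\ref{lem:self-square-to-branch-system}, and the reverse direction comes from Lemma~\ref{lem:branch-system-square-isometry} combined with transport of structure along an isometry.

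For the forward implication, assume \((X,d)\cong X\times_\infty X\). I fix an isometry \(\Phi:(X,d)\to X\times_\infty X\) and set \(f_i:=\pi_i\circ\Phi\). These maps satisfy Equation~\eqref{eq:binary-branch-max}: for all \(x,y\in X\),
\[
d(x,y)=d_\infty(\Phi(x),\Phi(y))=\max\{d(f_0(x),f_0(y)),d(f_1(x),f_1(y))\}.
\]
So Lemmas~\ref{lem:binary-branch-iteration}, \ref{lem:binary-branch-pseudometrics} and \ref{lem:binary-branch-shift} apply. I then build the quotients \(M_\beta=X/\!\sim_\beta\), the map \(J\), and the set \(S=J(X)\) exactly as above. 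Lemma~\ref{lem:self-square-to-branch-system} shows that these data, with the maps \(\alpha_i^\beta([x]_{i\beta})=[f_i(x)]_\beta\), form a reconstructive binary branch system. The same lemma shows that \(J:(X,d)\to(S,d_{\sup})\) is an isometry. This gives the required system.

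For the reverse implication, suppose a reconstructive binary branch system is given and \(\iota:(X,d)\to(S,d_{\sup})\) is an isometry. Lemma~\ref{lem:branch-system-square-isometry} shows that \(\Theta:(S,d_{\sup})\to S\times_\infty S\) is an isometry. I then form
\[
(\iota^{-1}\times\iota^{-1})\circ\Theta\circ\iota:X\to X\times_\infty X.
\]
This map is an isometry because \(\iota^{-1}\times\iota^{-1}\) is an isometry of \(\ell^\infty\)-squares, by the coordinatewise computation already used for \(\Delta_2\). Hence \(X\cong X\times_\infty X\).

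There is no substantial obstacle, since the work is in the earlier lemmas. The only points to check are these. In Lemma~\ref{lem:self-square-to-branch-system}, the finiteness of \(d_{\sup}\) on \(S\) follows from Equation~\eqref{eq:binary-branch-reconstruction}. In the reverse direction, the product \(S\times_\infty S\) is a genuine metric space because \(d_{\sup}\) is a metric on \(S\) by condition~\ref{item:binary-branch-finite} of Definition~\ref{def:binary-branch-system}. The second paragraph of the introduction's version of the theorem, that \(\Theta\) is a bijective isometry, is precisely Lemma~\ref{lem:branch-system-square-isometry}.
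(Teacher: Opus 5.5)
Your proposal is correct and follows the paper's proof essentially verbatim: the forward direction applies Lemma~\ref{lem:self-square-to-branch-system} to a chosen isometry \(\Phi\), and the reverse direction conjugates the isometry \(\Theta\) from Lemma~\ref{lem:branch-system-square-isometry} by the given isometry \((X,d)\to(S,d_{\sup})\). Your extra checks are points the paper uses implicitly: that \(f_i=\pi_i\circ\Phi\) satisfy Equation~\eqref{eq:binary-branch-max}, and that \(S\times_\infty S\) is a genuine metric space by condition~\ref{item:binary-branch-finite} of Definition~\ref{def:binary-branch-system}.
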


\begin{proof}
Assume first that \((X,d)\cong X\times_\infty X\). Choose an isometry \(\Phi:(X,d)\xrightarrow{\sim}X\times_\infty X\). Lemma~\ref{lem:self-square-to-branch-system} constructs a reconstructive binary branch system and an isometry \((X,d)\xrightarrow{\sim}(S,d_{\sup})\).

Conversely, suppose that there exists a reconstructive binary branch system whose associated metric space \((S,d_{\sup})\) is isometric to \((X,d)\). Let \(h:(X,d)\xrightarrow{\sim}(S,d_{\sup})\) be an isometry. Lemma~\ref{lem:branch-system-square-isometry} gives an isometry \(\Theta:(S,d_{\sup})\xrightarrow{\sim}S\times_\infty S\). Hence
\[
(h^{-1}\times h^{-1})
\circ
\Theta
\circ
h
:
(X,d)
\to
X\times_\infty X
\]
is an isometry.
\end{proof}

\begin{remark}
\label{rem:binary-branch-nonuniqueness}
Different choices of the self-square isometry \(\Phi\) may produce different binary branch systems. The representation in Theorem~\ref{thm:binary-branch-representation} is therefore generally nonunique.
\end{remark}

\subsubsection{Finite-Spectrum Self-Square Products and Prime Multiplicities}
\label{subsec:finite-spectrum-self-square-products}

We now construct self-square metric spaces from infinite multiplicities in finite-spectrum product decompositions. The finite-spectrum refinement theory from Subsection~\ref{subsec:finite-distance-refinement} gives uniqueness of finite \(\ell^\infty\)-prime decompositions under the hypotheses of Theorem~\ref{thm:finite-distance-prime-uniqueness}. We combine that rigidity with cardinal arithmetic.

If \((P,d_P)\) is a metric space and \(\kappa\) is a cardinal, then \(P^\kappa\) denotes the \(\ell^\infty\)-product of \(\kappa\) copies of \(P\).

\begin{definition}
\label{def:cardinal-multiplicity-uniqueness}
Let \((X,d)\) be a metric space. We say that \(X\) has cardinal multiplicity uniqueness if the following condition holds. Whenever \(X\cong\prod_{a\in A}P_a^{\kappa_a}\) and \(X\cong\prod_{b\in B}Q_b^{\lambda_b}\), where \(A\) and \(B\) are finite, where the metric spaces \(P_a\) and \(Q_b\) are pairwise nonisometric and \(\ell^\infty\)-prime, and where \(\kappa_a\neq0\) and \(\lambda_b\neq0\) for all \(a\in A\) and \(b\in B\), then, after reindexing, \(P_a\cong Q_a\) and \(\kappa_a=\lambda_a\) for every \(a\in A\).
\end{definition}

\begin{lemma}
\label{lem:finite-spectrum-product}
Let \(A\) be a finite set. For each \(a\in A\), let \((P_a,d_a)\) be a finite metric space, and let \(\kappa_a\) be a nonzero cardinal. Define \(X:=\prod_{a\in A}P_a^{\kappa_a}\). Then \(d(X\times X)=\bigcup_{a\in A}d_a(P_a\times P_a)\). In particular, \(d(X\times X)\) is finite.
\end{lemma}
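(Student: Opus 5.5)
We prove the two inclusions \(d(X\times X)\subseteq\bigcup_{a\in A}d_a(P_a\times P_a)\) and \(\bigcup_{a\in A}d_a(P_a\times P_a)\subseteq d(X\times X)\) directly from Equation~\eqref{eq:k-product-metric-2}. Throughout, we index the coordinates of \(X\) by pairs \((a,i)\) with \(a\in A\) and \(i\in\kappa_a\). The coordinate in position \((a,i)\) lies in \(P_a\).

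For the first inclusion, fix \(x,y\in X\). Then
\[
d(x,y)=\sup_{a\in A}\sup_{i\in\kappa_a}d_a(x_{a,i},y_{a,i}).
\]
Each term \(d_a(x_{a,i},y_{a,i})\) lies in the set \(V:=\bigcup_{a\in A}d_a(P_a\times P_a)\). The set \(V\) is finite, because \(A\) is finite and each \(P_a\) is finite. A supremum of a nonempty family of elements of a finite subset of \([0,\infty)\) is attained, so it is a maximum and therefore belongs to \(V\). The family is nonempty because each \(\kappa_a\neq0\). The same observation shows that \(d\) takes finite values, so \(X\) is a genuine metric space in the sense fixed at the start of this subsection. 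Hence \(d(x,y)\in V\).

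For the reverse inclusion, fix \(a_0\in A\) and \(p,p'\in P_{a_0}\). Choose a base point \(c\in X\); this is possible because every \(P_a\) is nonempty. Fix an index \(i_0\in\kappa_{a_0}\), which exists because \(\kappa_{a_0}\neq0\). Let \(x\) and \(y\) agree with \(c\) at every coordinate other than \((a_0,i_0)\), and set \(x_{a_0,i_0}=p\) and \(y_{a_0,i_0}=p'\). Every coordinate distance vanishes except the one in position \((a_0,i_0)\), so \(d(x,y)=d_{a_0}(p,p')\). This is the same construction as in Lemma~\ref{lem:threshold-factor-valued}.

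Finiteness of \(d(X\times X)\) then follows from the finiteness of \(V\). The only point requiring care is that the supremum over a possibly infinite index set is attained. This holds because only finitely many values occur, and it is the step where finiteness of \(A\) and of each \(P_a\) is used.
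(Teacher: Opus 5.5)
Your proof is correct and follows the paper's argument essentially step for step. For one inclusion, the supremum over values drawn from the finite set \(\bigcup_{a\in A}d_a(P_a\times P_a)\) is attained; for the other, two points differing in a single coordinate \((a,i)\) realize each factor distance. One small caveat, shared with the paper's statement: nonemptiness of the family needs \(A\neq\varnothing\) as well as \(\kappa_a\neq0\), since for \(A=\varnothing\) the product is a single point whose distance set \(\{0\}\) is not the empty union.
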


\begin{proof}
Let \(x,y\in X\). Equation~\eqref{eq:k-product-metric-2} gives \(d(x,y)=\sup_{a\in A}\sup_{i\in\kappa_a}d_a(x_{a,i},y_{a,i})\). Since \(A\) is finite and each distance set \(d_a(P_a\times P_a)\) is finite, the preceding supremum is a maximum. Hence every distance in \(X\) belongs to \(\bigcup_{a\in A}d_a(P_a\times P_a)\).

Conversely, fix \(a\in A\), and let \(r\in d_a(P_a\times P_a)\). Choose \(u,v\in P_a\) such that \(d_a(u,v)=r\). Fix \(i\in\kappa_a\). Define \(x,y\in X\) by setting \(x_{a,i}=u\) and \(y_{a,i}=v\), and by setting every remaining coordinate equal in \(x\) and \(y\). Equation~\eqref{eq:k-product-metric-2} gives \(d(x,y)=r\). Hence \(r\in d(X\times X)\).

Therefore \(d(X\times X)=\bigcup_{a\in A}d_a(P_a\times P_a)\). Since \(A\) is finite and each distance set \(d_a(P_a\times P_a)\) is finite, the final assertion follows.
\end{proof}

\begin{lemma}
\label{lem:self-square-double-multiplicity}
Let \(A\) be a finite set. For each \(a\in A\), let \(P_a\) be a nonempty metric space, and let \(\kappa_a\) be a nonzero cardinal. Then
\[
\left(\prod_{a\in A}P_a^{\kappa_a}\right)
\times_\infty
\left(\prod_{a\in A}P_a^{\kappa_a}\right)
\cong
\prod_{a\in A}P_a^{2\kappa_a}.
\]
\end{lemma}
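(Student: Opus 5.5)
This is a direct consequence of Lemma~\ref{lem:product-reindexing}, so no new construction is needed. I will first apply part~(2) of that lemma with \(K_a=L_a=\kappa_a\) for every \(a\in A\), where each cardinal is regarded as an index set. This gives an isometry
\[
\left(\prod_{a\in A}P_a^{\kappa_a}\right)\times_\infty\left(\prod_{a\in A}P_a^{\kappa_a}\right)
\cong
\prod_{a\in A}P_a^{(\kappa_a\times\{0\})\cup(\kappa_a\times\{1\})}.
\]
The index set \((\kappa_a\times\{0\})\cup(\kappa_a\times\{1\})=\kappa_a\times\{0,1\}\) is a disjoint union of two copies of \(\kappa_a\). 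By the definition of cardinal addition, its cardinality is \(\kappa_a+\kappa_a=2\kappa_a\).

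Next, for each \(a\in A\), I will choose a bijection \(\sigma_a:2\kappa_a\to\kappa_a\times\{0,1\}\), where \(2\kappa_a\) denotes a fixed set of that cardinality, which is how \(P_a^{2\kappa_a}\) is interpreted. Part~(1) of Lemma~\ref{lem:product-reindexing}, with \(I_a=\kappa_a\times\{0,1\}\) and \(J_a=2\kappa_a\), then gives an isometry
\[
\prod_{a\in A}P_a^{\kappa_a\times\{0,1\}}\cong\prod_{a\in A}P_a^{2\kappa_a}.
\]
Composing the two isometries proves the lemma.

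There is a possible concern about finiteness. If the supremum metric takes infinite values, the products are not metric spaces in the paper's convention. However, both isometries in Lemma~\ref{lem:product-reindexing} preserve the supremum expression exactly. So each side is finite-valued precisely when the other is, and the statement holds in either reading.

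The argument has no real obstacle. The only point that needs care is the interpretation of \(P^{2\kappa}\) as a product indexed by any set of cardinality \(2\kappa\). That interpretation is legitimate because part~(1) of Lemma~\ref{lem:product-reindexing} shows that the product is well defined up to isometry under bijective reindexing. The choice of the bijections \(\sigma_a\) uses only the definition of cardinal sum. Nonemptiness of the \(P_a\) is not needed for the isometry itself.
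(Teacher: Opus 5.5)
Your proposal is correct and follows essentially the paper's proof: both reduce the claim to Lemma~\ref{lem:product-reindexing} together with a bijection between a set of cardinality \(2\kappa_a\) and \(\kappa_a\sqcup\kappa_a\). The paper instead proves \(P_a^{\kappa_a}\times_\infty P_a^{\kappa_a}\cong P_a^{2\kappa_a}\) one factor at a time and then ``takes products over \(a\in A\)''; you apply part~(2) to the whole \(A\)-indexed product at once, which makes that implicit interchange step explicit.
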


\begin{proof}
For each \(a\in A\), choose a set \(I_a\) with cardinality \(2\kappa_a\). Since \(|\kappa_a\sqcup\kappa_a|=2\kappa_a=|I_a|\), there exists a bijection \(I_a\to\kappa_a\sqcup\kappa_a\). Lemma~\ref{lem:product-reindexing} therefore gives
\[
P_a^{\kappa_a}\times_\infty P_a^{\kappa_a}
\cong
P_a^{I_a}
\cong
P_a^{2\kappa_a}.
\]
Taking products over \(a\in A\) yields the result.
\end{proof}

\begin{lemma}
\label{lem:cardinal-double}
Let \(\kappa\) be a nonzero cardinal. Then \(\kappa=2\kappa\) if and only if \(\kappa\) is infinite.
\end{lemma}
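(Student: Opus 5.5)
The plan is to split into the two directions and handle the finite and infinite cases by elementary cardinal arithmetic. Throughout, \(2\kappa\) denotes the cardinality of the disjoint union \(\kappa\sqcup\kappa\), that is, of \(\kappa\times\{0,1\}\).

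First suppose \(\kappa\) is finite, say \(\kappa=n\) with \(n\geq1\) a natural number. Then \(2\kappa=2n\), and \(2n=n\) forces \(n=0\) in the natural numbers. This contradicts \(\kappa\neq0\). So \(\kappa=2\kappa\) fails for every nonzero finite \(\kappa\). This gives the forward implication by contraposition: if \(\kappa=2\kappa\) and \(\kappa\neq0\), then \(\kappa\) is infinite. In the finite case no choice is involved, since a bijection between finite sets of sizes \(2n\) and \(n\) would contradict the pigeonhole principle.

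Now suppose \(\kappa\) is infinite. I would show \(|\kappa\times\{0,1\}|=|\kappa|\) by the Cantor--Schröder--Bernstein theorem.
\begin{enumerate}
\item The injection \(\alpha\mapsto(\alpha,0)\) gives \(\kappa\leq2\kappa\).
\item For the reverse inequality, use \(2\leq\kappa\) to get \(2\kappa\leq\kappa\cdot\kappa\). Then invoke the standard ZFC fact (Hessenberg's theorem) that \(\kappa\cdot\kappa=\kappa\) for infinite cardinals \(\kappa\). This yields \(2\kappa\leq\kappa\).
\end{enumerate}

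As an alternative to Hessenberg's theorem, one can identify \(\kappa\) with an infinite ordinal. Write each \(\alpha<\kappa\) uniquely as \(\lambda+n\), where \(\lambda\) is a limit ordinal or \(0\) and \(n<\omega\). Then send \((\lambda+n,i)\) to \(\lambda+2n+i\). This map is injective. It lands in \(\kappa\) because each \(\lambda<\kappa\) has \(\lambda+\omega\leq\kappa\) when \(\kappa\) is an infinite cardinal.

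The only nontrivial step is \(2\kappa\leq\kappa\) for infinite \(\kappa\). It depends on the axiom of choice, which is used implicitly in treating cardinals as comparable well-ordered quantities. I would simply cite the standard cardinal arithmetic result \(\kappa+\kappa=\kappa\) for infinite \(\kappa\), consistent with the paper's use of cardinals throughout, and not reprove it.
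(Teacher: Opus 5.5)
Your proposal is correct and follows the paper's proof. The paper rules out finite nonzero \(\kappa\) by elementary arithmetic and simply cites standard cardinal arithmetic for \(2\kappa=\kappa\) when \(\kappa\) is infinite; you add a justification of that cited fact, either via Cantor--Schr\"oder--Bernstein with Hessenberg's theorem or via the explicit ordinal injection \((\lambda+n,i)\mapsto\lambda+2n+i\), and both are valid.
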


\begin{proof}
If \(\kappa\) is finite and nonzero, then \(2\kappa>\kappa\). Hence \(\kappa\neq2\kappa\).

Suppose that \(\kappa\) is infinite. Standard cardinal arithmetic gives \(2\kappa=\kappa\). Hence \(\kappa=2\kappa\).
\end{proof}

\begin{theorem}
\label{thm:infinite-multiplicity-self-square-construction}
Let \(A\) be a finite set. For each \(a\in A\), let \(P_a\) be a nonempty metric space, and let \(\kappa_a\) be a nonzero cardinal. Define \(X:=\prod_{a\in A}P_a^{\kappa_a}\). If every \(\kappa_a\) is infinite, then \(X\cong X\times_\infty X\).
\end{theorem}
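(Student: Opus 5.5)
The plan is to compose two isometries already available in the paper. By Lemma~\ref{lem:self-square-double-multiplicity}, applied with the given finite index set \(A\), nonempty spaces \(P_a\), and nonzero cardinals \(\kappa_a\), we obtain an isometry
\[
X\times_\infty X
\cong
\prod_{a\in A}P_a^{2\kappa_a}.
\]
It then suffices to identify the right side with \(X=\prod_{a\in A}P_a^{\kappa_a}\) itself.

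For this second isometry we use Lemma~\ref{lem:cardinal-double}. Since each \(\kappa_a\) is infinite, it gives \(2\kappa_a=\kappa_a\). Concretely, for every \(a\in A\) we choose a bijection \(\sigma_a:\kappa_a\to I_a\), where \(I_a\) is the index set of cardinality \(2\kappa_a\) used in the proof of Lemma~\ref{lem:self-square-double-multiplicity}. Part~(1) of Lemma~\ref{lem:product-reindexing} then states that coordinate reindexing along the family \((\sigma_a)_{a\in A}\) is an isometry
\[
\prod_{a\in A}P_a^{I_a}\to\prod_{a\in A}P_a^{\kappa_a}=X.
\]
Composing this reindexing with the isometry from Lemma~\ref{lem:self-square-double-multiplicity} yields a bijective isometry \(X\times_\infty X\to X\). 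Its inverse is the required isometry \(X\cong X\times_\infty X\).

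The one point needing care is that all these products are genuine metric spaces, so that each isometry is between metric spaces and not only between extended-valued pseudometric objects. The subsection calls a product a metric space only when the supremum in Equation~\eqref{eq:k-product-metric-2} is finite. This finiteness is implicitly part of the hypothesis that \(X\) is a metric space. The supremum defining the metric on \(X\times_\infty X\), and on the reindexed product \(\prod_aP_a^{I_a}\), runs over the same family of factor distances as the supremum for \(X\); only the indexing differs. Hence both are finite-valued exactly when \(X\) is, for instance when the \(P_a\) have bounded diameter. The distance computations in Lemma~\ref{lem:product-reindexing} then transfer finiteness automatically, and no further argument is needed.
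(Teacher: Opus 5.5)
Your proposal is correct and follows the paper's proof. The paper combines Lemma~\ref{lem:self-square-double-multiplicity}, which gives \(X\times_\infty X\cong\prod_{a\in A}P_a^{2\kappa_a}\), with Lemma~\ref{lem:cardinal-double}, which gives \(2\kappa_a=\kappa_a\) and hence \(\prod_{a\in A}P_a^{2\kappa_a}\cong X\); your explicit reindexing through Lemma~\ref{lem:product-reindexing} and your remark on finite-valuedness of the supremum metrics only make explicit details the paper leaves implicit.
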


\begin{proof}
Lemma~\ref{lem:cardinal-double} gives \(2\kappa_a=\kappa_a\) for every \(a\in A\). Therefore \(\prod_{a\in A}P_a^{2\kappa_a}\cong\prod_{a\in A}P_a^{\kappa_a}\). Lemma~\ref{lem:self-square-double-multiplicity} gives
\begin{align*}
X\times_\infty X
&\cong
\prod_{a\in A}P_a^{2\kappa_a}
\\
&\cong
\prod_{a\in A}P_a^{\kappa_a}
\\
&=
X.
\end{align*}
\end{proof}

\begin{theorem}
\label{thm:conditional-finite-spectrum-self-square-classification}
Let \((X,d)\) satisfy the hypotheses of Theorem~\ref{thm:finite-distance-prime-uniqueness}. Suppose that \((X,d)\cong\prod_{a\in A}P_a^{\kappa_a}\), where \(A\) is finite, where the metric spaces \(P_a\) are pairwise nonisometric finite \(\ell^\infty\)-prime metric spaces, and where every \(\kappa_a\) is a nonzero cardinal. Assume that \(X\) has cardinal multiplicity uniqueness. Then \(X\cong X\times_\infty X\) if and only if every \(\kappa_a\) is infinite.
\end{theorem}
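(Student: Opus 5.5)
The proof splits into the two implications. For the reverse implication, assume every \(\kappa_a\) is infinite. Then Theorem~\ref{thm:infinite-multiplicity-self-square-construction}, applied to the given decomposition \(\prod_{a\in A}P_a^{\kappa_a}\), gives \(\prod_{a\in A}P_a^{\kappa_a}\cong\bigl(\prod_{a\in A}P_a^{\kappa_a}\bigr)\times_\infty\bigl(\prod_{a\in A}P_a^{\kappa_a}\bigr)\). Transport this along the isometry \(X\cong\prod_{a\in A}P_a^{\kappa_a}\), using that \(h\times h\) is an isometry of \(\ell^\infty\)-squares whenever \(h\) is an isometry. This yields \(X\cong X\times_\infty X\). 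Neither the threshold hypothesis nor cardinal multiplicity uniqueness is needed in this direction.

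For the forward implication, assume \(X\cong X\times_\infty X\). Let \(\theta:X\to\prod_{a\in A}P_a^{\kappa_a}\) be the given isometry. Then \(\theta\times\theta\) is an isometry \(X\times_\infty X\to\bigl(\prod_a P_a^{\kappa_a}\bigr)\times_\infty\bigl(\prod_a P_a^{\kappa_a}\bigr)\). Lemma~\ref{lem:self-square-double-multiplicity} identifies the latter with \(\prod_{a\in A}P_a^{2\kappa_a}\). Composing gives a second presentation \(X\cong\prod_{a\in A}P_a^{2\kappa_a}\). It has the same finite index set \(A\), the same pairwise nonisometric \(\ell^\infty\)-prime factors \(P_a\), and nonzero multiplicities \(2\kappa_a\). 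Both presentations therefore meet the requirements of Definition~\ref{def:cardinal-multiplicity-uniqueness}. Cardinal multiplicity uniqueness then gives a reindexing \(\sigma\) of \(A\) with \(P_a\cong P_{\sigma(a)}\) and \(\kappa_a=2\kappa_{\sigma(a)}\).

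The main point to handle carefully is that this reindexing must be the identity. Since the \(P_a\) are pairwise nonisometric, \(P_a\cong P_{\sigma(a)}\) forces \(\sigma(a)=a\), so \(\kappa_a=2\kappa_a\) for every \(a\in A\). Lemma~\ref{lem:cardinal-double} then shows each \(\kappa_a\) is infinite, because each \(\kappa_a\) is nonzero.

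The hypotheses of Theorem~\ref{thm:finite-distance-prime-uniqueness}, and the finiteness of the \(P_a\), guarantee the finite-distance setting; Lemma~\ref{lem:finite-spectrum-product} confirms the distance set is finite. These hypotheses are not otherwise invoked: the whole weight of the forward direction rests on the assumed cardinal multiplicity uniqueness. The only other bookkeeping is to check that both presentations are genuinely metric spaces, with finite sup-distances. This holds because the \(P_a\) are finite and \(A\) is finite, by Lemma~\ref{lem:finite-spectrum-product}.
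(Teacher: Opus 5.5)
Your proposal is correct and follows the paper's proof. The reverse direction is Theorem~\ref{thm:infinite-multiplicity-self-square-construction}, and the forward direction combines Lemma~\ref{lem:self-square-double-multiplicity}, cardinal multiplicity uniqueness, and Lemma~\ref{lem:cardinal-double} to obtain $\kappa_a=2\kappa_a$. Your two extra checks only spell out steps the paper leaves implicit: that transporting along $\theta\times\theta$ is an isometry, and that the reindexing must be the identity because the $P_a$ are pairwise nonisometric.
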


\begin{proof}
Suppose first that every \(\kappa_a\) is infinite. Theorem~\ref{thm:infinite-multiplicity-self-square-construction} gives \(X\cong X\times_\infty X\).

Conversely, suppose that \(X\cong X\times_\infty X\). Lemma~\ref{lem:self-square-double-multiplicity} gives \(X\times_\infty X\cong\prod_{a\in A}P_a^{2\kappa_a}\). Hence \(X\cong\prod_{a\in A}P_a^{2\kappa_a}\).

Since \(X\) has cardinal multiplicity uniqueness, the two decompositions imply \(\kappa_a=2\kappa_a\) for every \(a\in A\). Lemma~\ref{lem:cardinal-double} therefore shows that every \(\kappa_a\) is infinite.
\end{proof}

\begin{corollary}
\label{cor:finite-spectrum-self-square-products}
Let \(P_1,\dots,P_s\) be finite metric spaces, and let \(\kappa_1,\dots,\kappa_s\) be infinite cardinals. Define \(X:=P_1^{\kappa_1}\times_\infty\cdots\times_\infty P_s^{\kappa_s}\). Then \(d(X\times X)\) is finite, and \(X\cong X\times_\infty X\).
\end{corollary}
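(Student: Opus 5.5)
The corollary has two claims: the distance set of \(X\) is finite, and \(X\cong X\times_\infty X\). Both follow from results already in hand, so I will treat them separately.

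\emph{Finiteness of the distance set.} I would apply Lemma~\ref{lem:finite-spectrum-product} with \(A=\{1,\dots,s\}\), the finite metric spaces \(P_a\), and the cardinals \(\kappa_a\). This requires \(\kappa_a\neq0\), which holds because each \(\kappa_a\) is infinite. The only bookkeeping point is identifying the iterated binary product \(P_1^{\kappa_1}\times_\infty\cdots\times_\infty P_s^{\kappa_s}\) with the single indexed product \(\prod_{a\in A}P_a^{\kappa_a}\) carrying the supremum metric from Equation~\eqref{eq:k-product-metric-2}. Part~(2) of Lemma~\ref{lem:product-reindexing}, applied inductively on \(s\), gives this identification, up to a coordinate reindexing handled by part~(1). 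Under this identification the lemma gives \(d(X\times X)=\bigcup_a d_a(P_a\times P_a)\), which is a finite union of finite sets. This also shows the supremum metric is finite-valued, so \(X\) is genuinely a metric space.

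\emph{The self-square isometry.} I would invoke Theorem~\ref{thm:infinite-multiplicity-self-square-construction} directly. Its hypotheses are that \(A\) is finite, each \(P_a\) is nonempty, and each \(\kappa_a\) is a nonzero infinite cardinal, and all of these hold. One small point: the corollary does not say the \(P_a\) are nonempty. However, a factor counted in \(X\) as a metric space is nonempty, since the paper's standing convention is that all metric spaces are nonempty. The theorem does not require the \(P_a\) to be pairwise nonisometric or prime, so no further reduction is needed.

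No step here is a real obstacle. The only place needing care is the identification between the iterated binary \(\ell^\infty\)-product and the indexed product, which Lemma~\ref{lem:product-reindexing} covers.
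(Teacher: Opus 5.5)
Your proposal is correct and follows the paper's own proof: it likewise cites Lemma~\ref{lem:finite-spectrum-product} for finiteness of the distance set and Theorem~\ref{thm:infinite-multiplicity-self-square-construction} for the isometry \(X\cong X\times_\infty X\). Your extra remarks, identifying the iterated binary product with the indexed product and noting that the factors are nonempty, are harmless bookkeeping that the paper leaves implicit.
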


\begin{proof}
Lemma~\ref{lem:finite-spectrum-product} shows that \(d(X\times X)\) is finite. Theorem~\ref{thm:infinite-multiplicity-self-square-construction} gives \(X\cong X\times_\infty X\).
\end{proof}

\begin{corollary}
\label{cor:binary-splitting-self-square}
Let \(X=\prod_{a\in A}P_a^{\kappa_a}\), where \(A\) is finite and every \(\kappa_a\) is infinite. Then every family of bijections \(\sigma_a:\kappa_a\to\{0,1\}\times\kappa_a\) determines an isometry \(X\to X\times_\infty X\). Moreover, an induction on \(n\) gives bijections \(\kappa_a\cong\{0,1\}^n\times\kappa_a\) for every \(n\geq1\).
\end{corollary}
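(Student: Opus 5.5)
The plan is to build the isometry out of the coordinate reindexing of Lemma~\ref{lem:product-reindexing}, and then to iterate the resulting bijections of index sets.

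For the first assertion, fix the family of bijections \(\sigma_a:\kappa_a\to\{0,1\}\times\kappa_a\). Identify \(\{0,1\}\times\kappa_a\) with the disjoint union \((\kappa_a\times\{0\})\cup(\kappa_a\times\{1\})\) by the bijection \((i,\xi)\mapsto(\xi,i)\). Compose \(\sigma_a\) with this identification to get a bijection \(\tau_a:\kappa_a\to(\kappa_a\times\{0\})\cup(\kappa_a\times\{1\})\). Part~(1) of Lemma~\ref{lem:product-reindexing}, applied with \(I_a=\kappa_a\), \(J_a=(\kappa_a\times\{0\})\cup(\kappa_a\times\{1\})\) and the bijection \(\tau_a^{-1}:J_a\to I_a\), gives an isometry
\[
X=\prod_{a\in A}P_a^{\kappa_a}\to\prod_{a\in A}P_a^{(\kappa_a\times\{0\})\cup(\kappa_a\times\{1\})}.
\]
Part~(2) of the same lemma, with \(K_a=L_a=\kappa_a\), identifies the target isometrically with \(X\times_\infty X\). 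The composition is the required isometry. Explicitly, \(x\mapsto(f_0(x),f_1(x))\), where \(f_i(x)_{a,\xi}=x_{a,\sigma_a^{-1}(i,\xi)}\). I would also note that the sup metric here is finite-valued whenever \(X\) is a metric space, since both sides compute the same supremum.

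For the second assertion, argue by induction on \(n\). The case \(n=1\) is \(\sigma_a\) itself. Given a bijection \(\beta_n:\kappa_a\to\{0,1\}^n\times\kappa_a\), compose it with \(\mathrm{id}_{\{0,1\}^n}\times\sigma_a\). This yields a bijection \(\kappa_a\to\{0,1\}^n\times\{0,1\}\times\kappa_a\). Then use the canonical bijection \(\{0,1\}^n\times\{0,1\}\cong\{0,1\}^{n+1}\), which appends a letter. The order of appending should match the word convention \(f_w=f_{i_n}\circ\cdots\circ f_{i_1}\), so that the word-indexed splitting agrees with the iterates \(f_w\) from Subsection~\ref{subsec:binary-branch-representations}.

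The only step needing care is the bookkeeping in the first part: matching the orientation of the bijections in Lemma~\ref{lem:product-reindexing}(1) (whose \(\sigma_a\) runs \(J_a\to I_a\)) with the given \(\sigma_a\), and checking that part~(2) lands exactly on \(X\times_\infty X\) rather than on some reindexed copy of it. No cardinal arithmetic is needed, since the bijections \(\sigma_a\) are given. Lemma~\ref{lem:cardinal-double} only guarantees that such \(\sigma_a\) exist.
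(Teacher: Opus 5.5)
Your proposal is correct and follows the paper's route. The bijections \(\sigma_a\) induce a coordinate reindexing, and Lemma~\ref{lem:product-reindexing} (part~(1) for the reindexing, part~(2) with \(K_a=L_a=\kappa_a\)) turns it into an isometry \(X\to X\times_\infty X\); iterating \(\sigma_a\) then gives \(\kappa_a\cong\{0,1\}^n\times\kappa_a\). Your write-up only adds explicitly the orientation bookkeeping and the induction step that the paper leaves implicit.
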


\begin{proof}
For each \(a\in A\), the bijection \(\sigma_a:\kappa_a\to\{0,1\}\times\kappa_a\) induces a coordinate reindexing \(P_a^{\kappa_a}\to P_a^{\{0,1\}\times\kappa_a}\). Lemma~\ref{lem:product-reindexing} therefore gives an isometry \(X\to X\times_\infty X\).

The asserted bijections follow by induction on \(n\).
\end{proof}

Corollary~\ref{cor:binary-splitting-self-square} provides a concrete source of the binary branch structures from Subsection~\ref{subsec:binary-branch-representations}. The coordinate splittings determined by the bijections \(\kappa_a\cong\{0,1\}\times\kappa_a\) generate the branch decompositions that appear there.

\section*{Statements and Declarations}

\begin{itemize}

\item \textbf{Competing Interests} The authors declare that they have no competing interests.

\item \textbf{Funding} This research received no external funding.

\item \textbf{Authors' Contributions}

C.F. developed the theoretical framework, proved the main results, and wrote the manuscript. M.E.A. advised the project and provided feedback on the framework and manuscript.

\end{itemize}

\bibliography{sn-bibliography}

\end{document}